\let\cal\mathcal
\def\AA{{\cal A}}
\def\CC{{\cal C}}
\def\DD{{\cal D}}
\def\HH{{\cal H}}
\def\LL{{\cal L}}
\def\PP{{\cal P}}
\def\QQ{{\cal Q}}
\def\RR{{\cal R}}
\def\SS{{\cal S}}
\def\TT{{\cal T}}
\def\UU{{\cal U}}
\def\ZZ{{\cal Z}}
\let\blb\mathbb
\def\bZ{{\blb Z}}
\def\bN{{\blb N}}
\def\bS{{\blb S}}
\def\bZ{{\blb Z}}
\let\frak\mathfrak
\def\aa{\frak{a}}
\def\Mod{\operatorname{Mod}}
\def\mod{\operatorname{mod}}
\def\rad{\operatorname {rad}}
\def\rep{\operatorname{rep}}
\def\Ext{\operatorname {Ext}}
\def\Hom{\operatorname {Hom}}
\def\End{\operatorname {End}}
\def\RHom{\operatorname {RHom}}
\def\Sl{\operatorname {Sl}}
\def\im{\operatorname {im}}
\def\cone{\operatorname {cone}}
\def\coker{\operatorname {coker}}
\def\ker{\operatorname {ker}}
\def\End{\operatorname {End}}
\def\r{\rightarrow}
\def\d{\downarrow}
\DeclareMathOperator{\Ind}{Ind}
\DeclareMathOperator{\ind}{ind}
\newcommand\Db{D^{b}}
\newcommand\tri[3]{#1\to #2\to #3\to #1[1]}
\renewcommand\r{{d^{\bullet}}}
\renewcommand\d{d}
\newcommand\Sr{S^{\bullet}}
\renewcommand\Sl{S_{\bullet}}
\newcommand\SrQQ{S^{\bullet}_\QQ}
\newcommand\SlQQ{S_{\bullet}^\QQ}
\newcommand\TLA{T^A_\bullet}
\newcommand\tm{\tau^{-}}
\renewcommand\t{\tau}
\newtheorem{lemma}{Lemma}[section]
\newtheorem{proposition}[lemma]{Proposition}
\newtheorem{theorem}[lemma]{Theorem}
\newtheorem{corollary}[lemma]{Corollary}
\theoremstyle{definition}
\newtheorem{example}[lemma]{Example}
\newtheorem{definition}[lemma]{Definition}
\newtheorem{construction}[lemma]{Construction}
\newtheorem{observation}[lemma]{Observation}
\theoremstyle{remark}
\newtheorem{remark}[lemma]{Remark}
\newdimen\uboxsep \uboxsep=1ex
\def\uboxn#1{\vtop to 0pt{\hrule height 0pt depth 0pt\vskip\uboxsep
\hbox to 0pt{\hss #1\hss}\vss}}
\def\uboxs#1{\vbox to 0pt{\vss\hbox to 0pt{\hss #1\hss}
\vskip\uboxsep\hrule height 0pt depth 0pt}}
\def\Ob{\operatorname{Ob}}
\def\cofin{\operatorname{cofin}}
\def\coinit{\operatorname{coinit}}
\newcommand\exa{\nopagebreak \begin{center}\smallskip \nopagebreak               \begin{minipage}[t]{.4\textwidth}\sloppy}
\newcommand\exb{\end{minipage} \hspace{.05\textwidth} \begin{minipage}[t]{.4\textwidth}\sloppy}
\newcommand\exc{\end{minipage} \smallskip\end{center}}
\title{Hereditary Categories with Serre Duality which are generated by Preprojectives}
\author{Carl Fredrik Berg}
\author{Adam-Christiaan van Roosmalen}
\address{Carl Fredrik Berg\\Institutt for matematiske fag\\
NTNU\\7491 Trondheim\\Norway\\
(Currently working for StatoilHydro R\&D Centre\\Arkitekt Ebbells veg 10\\Rotvoll\\7053 Ranheim\\Norway)} \email{carlpaatur@hotmail.com}
\address{Adam-Christiaan van Roosmalen\\Mathematisches Institut\\Universit\"{a}t Bonn
\\Endenicher Allee 60\\53111 Bonn\\Germany}\email{vroosmal@math.uni-bonn.de}
\subjclass[2010]{18E10, 18E30,16E35}
\begin{document}

\bibliographystyle{amsplain}

\begin{abstract}
We show that every $k$-linear abelian Ext-finite hereditary category with Serre duality which is generated by preprojective objects is derived equivalent to the category of representations of a strongly locally finite thread quiver.
\end{abstract}

\maketitle

\tableofcontents
\section{Introduction}

Throughout, let $k$ be an algebraically closed field.  In \cite{ReVdB02}, Reiten and Van den Bergh classify $k$-linear abelian hereditary Ext-finite noetherian categories with Serre duality.  One result in there is that every such category is a direct sum of a category without nonzero projectives, and a category generated by preprojective objects.  The latter categories were of specific interest since there was no known way to relate them --through equivalences or derived equivalences-- to known abelian categories.  Reiten and Van den Bergh gave a construction by formally inverting the (right) Serre functor, and in \cite{Ringel02b} Ringel gave a construction using ray quivers.  (In \cite{BergVanRoosmalen08} it was shown that these categories were derived equivalent to representations of strongly locally finite quivers, i.e. quivers whose indecomposable projective and injective representations have finite length.)

Reiten and Van den Bergh asked whether every hereditary categories with Serre duality is derived equivalent to a noetherian one, and thus fit --up to derived equivalence-- into their classification.  In \cite{Ringel02} however, Ringel gave a class of counterexamples.  Reiten then asks in \cite{Reiten02} whether it is feasible to have a classification of hereditary categories with Serre duality which are generated by preprojectives, but not necessarily noetherian.

This paper is the third paper of the authors to answer this question (the other two being \cite{BergVanRoosmalen08, BergVanRoosmalen09}); we provide an answer to this question up to derived equivalence in terms of representations of thread quivers (see below):

\begin{theorem}\label{theorem:Introduction}
Let $\AA$ be a $k$-linear abelian hereditary Ext-finite category with Serre duality which is generated by preprojective objects.  Then $\Db \AA \cong \Db \rep_k Q$ where $Q$ is a strongly locally finite thread quiver.
\end{theorem}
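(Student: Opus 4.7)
The plan is to build the thread quiver $Q$ directly out of the preprojective structure of $\AA$, and then construct a functor $\rep_k Q \to \AA$ whose derived extension is the desired equivalence.

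First, I would analyze the full subcategory $\PP \subseteq \AA$ of preprojective objects. Serre duality on $\AA$ provides Auslander-Reiten translates $\tau$ and $\tau^{-}$ on the stable category, and the assumption that $\AA$ is generated by preprojectives says precisely that every indecomposable object of $\AA$ is of the form $\tau^{-n} P$ for some $n \geq 0$ and some indecomposable projective $P$. Let $\{P_i\}_{i \in I}$ be a set of representatives of isomorphism classes of indecomposable projective objects of $\AA$; these will be the vertices of $Q$. In the noetherian case, \cite{BergVanRoosmalen08} shows that the irreducible maps between the $P_i$ organize into an ordinary strongly locally finite quiver, and this gives the desired equivalence.

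Second, to handle the possibly non-noetherian case I would replace arrows by threads (linearly ordered sets) whenever the morphisms between two $P_i$ cannot be organized discretely. Concretely, between $P_i$ and $P_j$ one either obtains finitely many irreducible maps --producing ordinary arrows-- or a linearly ordered family of indecomposable intermediate preprojectives $P_i \to Q_\lambda \to P_j$ indexed by a totally ordered set $\{\lambda\}$, and this family will be a thread of $Q$. The structural input needed here --in particular the fact that the non-discrete part of the morphism structure between two projectives really is totally ordered, and not something more exotic-- comes from the analysis in \cite{BergVanRoosmalen09}. The Ext-finiteness of $\AA$ together with Serre duality translates into the condition that $Q$ is strongly locally finite as a thread quiver.

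Third, I would define a $k$-linear functor $F : \rep_k Q \to \AA$ sending the indecomposable projective representation at vertex $i$ to $P_i$, and sending each arrow or thread point to the chosen corresponding morphism in $\AA$. Since both categories are hereditary, the derived functor $\Db F$ will be an equivalence as soon as $F$ is fully faithful on the projective representations of $Q$ and its essential image generates $\Db \AA$. The first property is the content of the construction of the threads; the second follows from the hypothesis that preprojectives generate $\AA$, together with the observation that every preprojective lies in the triangulated hull of $\{P_i\}$ under iterated use of $\tau^{-}$ and its AR-triangles.

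The main obstacle is the thread-construction step in the non-noetherian setting: one must prove that the "extra" morphism data beyond the noetherian skeleton is always captured by a single linear order, rather than some higher-dimensional combinatorial gadget, and that no relations beyond those visible in the thread quiver are needed. This is the step that genuinely requires the structural results of \cite{BergVanRoosmalen09}; granted those results, the present theorem essentially upgrades the noetherian derived equivalence of \cite{BergVanRoosmalen08} to the general hereditary preprojective setting.
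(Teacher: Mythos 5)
Your proposal attempts to read off the thread quiver $Q$ directly from the category of projectives of $\AA$ itself: take the indecomposable projectives as vertices, inspect the morphism structure between them, and try to package the non-discrete part into linearly ordered threads. This is where the argument breaks down. The category of projectives $\QQ$ of $\AA$ is in general \emph{not} a semi-hereditary dualizing $k$-variety, and so there is in general \emph{no} strongly locally finite thread quiver $Q$ with $kQ \cong \QQ$; the morphism structure between the projectives of $\AA$ simply need not organize itself into a thread quiver at all. Example \ref{example:NotStar2} exhibits a hereditary category generated by preprojectives whose category of projectives $\QQ'$ fails condition (*) (there is no countable $\TT \subseteq \ind \bZ\QQ'$ with $\d(\TT,X)<\infty$ for all $X$), and condition (*) is shown in \S\ref{subsection:Star} to be \emph{necessary} for $\bZ\QQ'$ to contain a hereditary section that is a dualizing $k$-variety. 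In particular, when you invoke ``the structural input\ldots from \cite{BergVanRoosmalen09}'' to assert that the extra morphism data is always captured by a linear order, you are applying a classification theorem for dualizing $k$-varieties to a category that need not be one.

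The theorem, note, only claims a \emph{derived} equivalence $\Db\AA \cong \Db\rep_k Q$, not $\AA \cong \rep_k Q$, and this is essential: the correct projectives to use are those of a \emph{different} heart of $\Db\AA$, not the original one. The paper's strategy is therefore genuinely different from yours. It introduces the notion of a hereditary section in $\Db\AA$ (a subcategory that is the category of projectives of \emph{some} hereditary heart, via a split $t$-structure), develops a distance calculus ($\r$ and $\d$) on $\ind\Db\AA$, and proves (Theorem \ref{theorem:MainTilting}) that any hereditary section whose $\bZ$-closure satisfies condition (*) is $\bZ$-equivalent to one that \emph{is} a dualizing $k$-variety. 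The remaining, and substantial, work (\S\ref{section:RoughsAndThreads}--\S\ref{section:GeneratedBybZQQ}) is to handle the case where the original projectives fail (*): one shows the nonthread objects, rays and corays are countable, introduces marks and comarks, and uses them to produce a larger hereditary section $\QQ'$ with $\bZ\QQ\subseteq\bZ\QQ'$ satisfying (*), which is possible precisely because $\Db\AA$ is generated by $\bZ\QQ$. Your proposal has no mechanism for changing the heart, and so it cannot reach the cases the paper was written to address.

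Two smaller points. First, your claim that $F$ being fully faithful on projective representations plus essential image generating $\Db\AA$ already forces $\Db F$ to be an equivalence is not automatic; one needs, for example, that $F$ is exact and compatible with the Serre/AR structure, which in the paper is built into the split $t$-structure machinery (Theorem \ref{theorem:RightClosed}, Theorem \ref{theorem:SectionTilting}). Second, ``generated by preprojectives'' does not by itself say every indecomposable is $\tau^{-n}P$; the paper's actual hypothesis in Theorem \ref{theorem:GeneratedByZQQ} is the equivalent-but-cleaner statement that $\bZ\QQ$ generates $\Db\AA$ as a thick triangulated subcategory.
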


The undefined concepts in this theorem will be introduced below.  Roughly speaking a thread quiver is a (possibly infinite) quiver where some of the arrows have been replaced by locally discrete (=without accumulation points) linearly ordered set.  Strong local finiteness is an additional finteness property ensuring that the category of finitely presented representations has Serre duality.

The proof of this theorem consists out of two steps.  In the first step (up to and including \S\ref{section:Description}) we prove a version of Theorem \ref{theorem:Introduction} under an additional assumption, namely condition (*) explained below.  The rest of this paper will be devoted to removing this condition.

The first part of this paper (\S\ref{section:Distances},\S\ref{section:HereditarySections}, and \S\ref{section:Description}) follows the proof of \cite[Theorem 4.4]{BergVanRoosmalen08} closely.  Although we reintroduce all relevant concepts, some familiarity with the proof of \cite[Theorem 4.4]{BergVanRoosmalen08} might be helpful to the reader to better understand our arguments below.

We will start our overview of the paper with \S\ref{section:HereditarySections}, where we discuss so-called \emph{split $t$-structures} (for definition, we refer to \S\ref{subsection:Splits}).  Our main result is the following theorem (compare with \cite[Theorem 1]{Ringel05}), which describes the heart of a bounded split $t$-structure.

\begin{theorem}
Let $\AA$ be an abelian category and let $\HH$ be a full subcategory of $\Db \AA$ such that $\Db \AA$ is the additive closure of $\bigcup_{t \in \bZ} \HH[t]$ and $\Hom(\HH[s],\HH[t])=0$ for $t < s$, then $\HH$ is an abelian hereditary category derived equivalent with $\AA$.
\end{theorem}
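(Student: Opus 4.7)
The plan is to identify $\HH$ as the heart of a (split) bounded $t$-structure on $\Db\AA$, and then use the split structure to show both that $\HH$ is hereditary and that the canonical realization functor $\Db\HH\to\Db\AA$ is an equivalence.

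First, I would set $\Db^{\leq n}$ to be the additive closure of $\bigcup_{t\geq -n}\HH[t]$ and $\Db^{\geq n}$ that of $\bigcup_{t\leq -n}\HH[t]$. Semi-orthogonality $\Hom(\Db^{\leq 0},\Db^{\geq 1})=0$ is immediate from the hypothesis. For the truncation triangles, the additive-closure hypothesis writes each $X\in\Db\AA$ as $\bigoplus_t X_t$ with $X_t\in\HH[t]$, so truncations come from split triangles. By Beilinson--Bernstein--Deligne, $\HH$ is abelian and carries cohomology functors $H^n_\HH\colon\Db\AA\to\HH$ with $X\cong\bigoplus_n H^n_\HH(X)[-n]$ and the usual long exact sequences.

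Second, the key vanishing: $\Hom_{\Db\AA}(X,Y[n])=0$ for $X,Y\in\HH$ and $n\geq 2$. Given $f\colon X\to Y[n]$, rotate to the triangle $Y[n-1]\to Z\to X\xrightarrow{f}Y[n]$ and apply the $\HH$-cohomology LES. Since $X,Y$ are each concentrated in a single degree, one finds $H^{1-n}_\HH(Z)\cong Y$, $H^0_\HH(Z)\cong X$, and $H^t_\HH(Z)=0$ otherwise; by the split structure, $Z\cong Y[n-1]\oplus X$. The map $Y[n-1]\to Z$ then decomposes as $(g_1,g_2)$: the hypothesis forces the component $g_2\colon Y[n-1]\to X$ (an element of $\Hom_{\Db\AA}(\HH[n-1],\HH[0])$) to vanish since $n-1\geq 1>0$, while the LES identifies $g_1$ with an isomorphism on $H^{1-n}_\HH$, hence globally. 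Thus $Y[n-1]\to Z$ is, up to iso, the canonical split monomorphism into the direct sum, the triangle splits, and $f=0$. The standard injection $\Ext^2_\HH(X,Y)\hookrightarrow\Hom_{\Db\AA}(X,Y[2])$ then gives $\Ext^2_\HH=0$, and Yoneda bilinearity propagates this to $\Ext^n_\HH=0$ for all $n\geq 2$, so $\HH$ is hereditary.

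Third, invoke the realization functor $R\colon\Db\HH\to\Db\AA$. Essential surjectivity is immediate from $X\cong\bigoplus_n H^n_\HH(X)[-n]$, which exhibits $X$ as the image of a complex in $\HH$ with zero differentials. Full faithfulness reduces to checking $\Ext^n_\HH(X,Y)\cong\Hom_{\Db\AA}(X,Y[n])$ for $X,Y\in\HH$: the case $n=0$ is tautological, for $n\geq 2$ both sides vanish by the previous step, and for $n=1$ the standard injectivity combines with surjectivity from the same cone construction (the triangle forces $Z\in\HH$, exhibiting the morphism as a short exact sequence). The principal obstacle is the second step, specifically lifting the object-level decomposition $Z\cong Y[n-1]\oplus X$ to an actual splitting of the triangle. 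The cohomological count gives the decomposition as objects, but the splitting requires $Y[n-1]\to Z$ to be a split monomorphism, which rests essentially on the Hom-vanishing hypothesis; without it---for a general heart of a bounded $t$-structure---the argument fails and higher Ext groups in the heart can be nonzero.
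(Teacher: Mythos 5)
Your proposal is correct, and the overall strategy agrees with the paper's: build the split bounded $t$-structure from the decomposition hypothesis, show the heart is hereditary, and conclude $\Db\HH\cong\Db\AA$ via the realization functor. The difference is that the paper proves this by citation: it invokes Theorem \ref{theorem:RightClosed} (whose proof rests on Proposition \ref{proposition:HereditaryTilting}, which in turn detours through $\Ind\AA$ via \cite[Proposition 2.14]{LoVdB06} and then cites \cite[Lemma I.3.5]{ReVdB02} for heredity and \cite[Proposition 3.1.16]{Beilinson82} for the triangulated equivalence) together with \cite[Theorem 1]{Ringel05}. You instead give an explicit proof of the heredity step --- the rotation of a map $X\to Y[n]$, the cohomological computation forcing $Z\cong Y[n-1]\oplus X$, and the Hom-vanishing hypothesis forcing the component $Y[n-1]\to X$ to be zero so that the triangle splits --- which is precisely the content of \cite[Lemma I.3.5]{ReVdB02} in the split setting and what makes Ringel's theorem tick. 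You also work with the realization functor directly on $\Db\AA$, which is legitimate since $\Db\AA$ is a genuine derived category and hence has a filtered enhancement, so you bypass the $\Ind\AA$ detour. One small point worth making explicit is that the heart of your $t$-structure coincides with $\HH$ (the split truncations of an object of $D^{\leq 0}\cap D^{\geq 0}$ kill all shifted summands, so such an object lies in $\HH$, provided $\HH$ is closed under summands --- which is implicit throughout). With that understood, the argument is complete and essentially the paper's route with the black boxes opened.
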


Let $\AA$ be an abelian hereditary Ext-finite category with Serre duality.  We are thus interested in finding a split $t$-structure such that the heart is of the form $\rep Q$ for a strongly locally finite thread quiver $Q$.  In particular this means that the category of projectives $\QQ$ of $\HH$ is a semi-hereditary dualizing $k$-variety, i.e. a Hom-finite Karoubian category $\QQ$ such that $\mod \QQ$ is abelian, hereditary, and has Serre duality.

To help find such $t$-structures, we introduce hereditary sections: a full additive subcategory of $\Db \AA$ is a hereditary section if there is a split $t$-structure on $\Db \AA$ and the category of projectives of its hereditary heart coincides with $\QQ$ (see Theorem \ref{theorem:SectionTilting}).

Given a hereditary section $\QQ$ in $\Db \AA$, the full replete (=closed under isomorphisms) additive subcategory generated by all indecomposables of the form $\t^n X$, $X \in \ind \QQ$ and $n \in \bZ$ will be denoted by $\bZ \QQ$.  This coincides with the full additive subcategory of $\Db \AA$ generated by all indecomposables lying in the Auslander-Reiten components of $\Db \AA$ intersecting with $\QQ$.

In order to find hereditary sections, we first introduce the (right) light cone distance $\r(-,-)$ and the round trip distance $\d(-,-)$ working on $\ind \Db \AA$ in \S\ref{section:Distances} as follows: for all $X,Y \in \ind \Db \AA$
$$\r(X,Y) = \inf \{n \in \bZ \mid \mbox{there is a path from $X$ to $\t^{-n} Y$}\}$$
and
$$\d(X,Y) = \r(X,Y) + \r(Y,X).$$
We then have following characterization of a hereditary section (Proposition \ref{proposition:SectionByDistance}): a full additive subcategory $\QQ$ of $\Db \AA$ is a hereditary section if and only if
\begin{enumerate}
\item $\r(X,Y) \geq 0$, for all $X,Y \in \ind \QQ$, and
\item if $X \in \ind \QQ$ and $\d(X,Y) < \infty$ for a $Y \in \ind \Db \AA$, then $Y \in \bZ \QQ$.
\end{enumerate}

For a set $\TT \subseteq \ind \bZ \QQ$, we define $\r(\TT,X) = \inf_{T \in \TT} \r(T,X)$, $\r(X,\TT) = \inf_{T \in \TT} \r(T,X)$, and $\d(\TT,X) = \r(\TT,X) + \r(X,\TT).$

Following the proof of \cite[Theorem 4.4]{BergVanRoosmalen08}, we find a set $\TT \in \ind \QQ$ such that $\d(\TT,X) < \infty$ for all $X \in \ind \QQ$ and we choose a hereditary section $\QQ_\TT$ such that
$$\r (\TT, X) = \left\lfloor \frac{\d (\TT, X)}{2}\right\rfloor$$
for all $X \in \ind \QQ_\TT$ where $\lfloor \cdot \rfloor$ is the floor function.

If $\TT$ is chosen to satisfy some extra properties (as given in Lemma \ref{lemma:ChooseTT}, but in particular $\TT$ has to be countable), then Theorem \ref{theorem:MainTilting} yields that the $\QQ_\TT$ is indeed a semi-hereditary dualizing $k$-variety.  Thus if $\bZ \QQ$ generates $\Db \AA$ as thick triangulated category, then $\Db \AA \cong \Db \rep Q$ for a strongly locally finite thread quiver $Q$.  That $\TT$ can indeed be chosen to satisfy the extra needed assumptions, is exactly the condition (*) mentioned earlier.

Condition (*) can easily be stated as (see \S\ref{subsection:Star}):
$$\mbox{(*) : there is a \emph{countable} subset $\TT \subseteq \ind \bZ \QQ$ such that $\d(\TT,X) < \infty$, for all $X \in \ind \bZ \QQ$.}$$

Hereditary sections not satisfying condition (*) seem to be rather artificial yet they do occur, even when the corresponding heart is, for example, generated by preprojectives (see Example \ref{example:NotStar2})!
\smallskip

We now come to the second part of the article (\S\ref{section:RoughsAndThreads} and \ref{section:GeneratedBybZQQ}) where we will remove the condition (*) from the assumptions.

The first step to understanding condition (*) better is to make a distinction between thread objects and nonthread objects in $\QQ$, whose definitions we now give.  As an easy consequence of Serre duality on $\Db \AA$, it will turn out that $\QQ$ has left and right almost split maps, thus for every $A \in \ind \QQ$, there are nonsplit maps $f:A \to M$ and $g:N \to A$ in $\QQ$ such that every nonsplit map $A \to X$ or $Y \to A$ factors though $f$ or $g$, respectively.  We will say $A$ is a thread object if both $M$ and $N$ are indecomposable.  An indecomposable object which is not a thread object will be called a nonthread object.

One major step in understanding condition (*) will be showing that there are only countably many nonthread objects (Proposition \ref{proposition:NonthreadStructure}); this will be the main result in \S\ref{subsection:Roughs}.

Thus without enlarging the set $\TT \subseteq \QQ$ above to much, we may assume it contains every nonthread object in $\ind \QQ$.  If $\bZ \QQ$ does not satisfy condition (*), then there are objects $X$ which lie ``too far from nonthread objects'', thus $\d(A,X) = \infty$ for every nonthread object $A$.  Such objects $X$ will be divided into two classes: ray objects and coray objects.  If there is a nonthread object $A$ such that $\r(A,X) < \infty$, then the thread object $X$ will be called a ray object; if there is a nonthread object $A$ such that $\r(X,A) < \infty$, then $X$ will is called a coray object.  If $\QQ$ has nonthread objects (and we may always reduce to this case), connectedness implies one of these conditions is satisfied.

On the ray objects, we define an equivalence relation (see \S\ref{subsection:Rays}) given by $X \sim Y$ if and only if $\r(X,Y) < \infty$ or $\r(Y,X) < \infty$.  A full additive category generated by an equivalence class of ray objects will be called a ray and it is shown in \ref{proposition:CountableRays} that there may only be a countable number or rays.

In order to enlarge $\QQ$, we will add an object $M$ for every ray $\RR$, called the mark of $\RR$.  This should be seen as a nonthread object ``lying on the far side of $\RR$''.

As shown in Example \ref{example:AddingMarksToD} we cannot expect to find a hereditary section $\QQ'$ such that $\QQ$ and all marks of all rays lie in $\bZ \QQ'$.  However, if $\Db \AA$ is generated by $\bZ \QQ$, then this will always be the case.

Thus in \S\ref{section:GeneratedBybZQQ} we will construct a hereditary section $\QQ'$ such that $\bZ \QQ'$ satisfies condition (*) and $\bZ \QQ \subseteq \bZ \QQ'$.  Theorem \ref{theorem:Introduction} will follow from this.

{\bf Acknowledgments} The authors like to thank Idun Reiten, Sverre Smal\o, Jan \v S\v tov\'\i\v cek, and Michel Van den Bergh for many useful discussions and helpful ideas.  The second author also gratefully acknowledges the hospitality of the Max-Planck-Institut f\"{u}r Mathematik in Bonn and the Norwegian University of Science and Technology.
\section{Conventions and Preliminaries}

\subsection{Conventions}

Throughout, let $k$ be an algebraically closed field.  All categories will be assumed to be $k$-linear.

We will fix a universe $\UU$ and assume that (unless explicitly noted) all our categories are $\UU$-categories, thus $\Hom_\CC (X,Y) \in \UU$ for any category $\CC$ and all objects $X,Y \in \Ob \CC$.  A category $\CC$ is called \emph{$\UU$-small} (or just small) if $\Ob \CC \in \UU$.

Let $\CC$ be a Krull-Schmidt category.  By $\ind \CC$ we will denote a set of chosen representatives of isomorphism classes of indecomposable objects of $\CC$.  If $\CC'$ is a Krull-Schmidt subcategory of $\CC$, we will assume $\ind \CC' \subseteq \ind \CC$.

If $\CC$ is a triangulated category with Serre duality (see below) and $\QQ$ is a full Krull-Schmidt subcategory, then we will denote by $\bZ \QQ$ the unique full additive replete (= closed under isomorphisms) subcategory of $\CC$ with $\ind \bZ \QQ = \{\tau^n X \mid X \in \ind \QQ, n \in \bZ\}$.  If $\QQ_1$ and $\QQ_2$ are Krull-Schmidt subcategories of $\CC$ such that $\bZ \QQ_1 \cong \bZ \QQ_2$ as subcategories of $\CC$, then we will say $\QQ_1$ and $\QQ_2$ are \emph{$\bZ$-equivalent}.

An \emph{(ordered) path} between indecomposables $X$ and $Y$ in a Krull-Schmidt category $\CC$ is a sequence $X=X_0, X_1, \ldots, X_n=Y$ of indecomposables such that $\Hom(X_i,X_{i+1}) \not= 0$ for all $0\leq i \leq n-1$.  A \emph{nontrivial path} is a path where there are $i,j \in \{0,1,\ldots,n\}$ such that $\rad(X_i,X_j) \not= 0$.  If there is no nontrivial path from $X$ to $X$, then we will say $X$ is \emph{directing}.

We will say a Krull-Schmidt category $\CC$ is \emph{connected} if for all indecomposables $X,Y$, there is a sequence $X=X_0, X_1, \ldots, X_n=Y$ of indecomposables such that there is either a path from $X_i$ to $X_{i+1}$ or from $X_{i+1}$ to $X_i$, for all $0\leq i \leq n-1$.

If $\CC$ is a Krull-Schmidt category and $A,B \in \ind \CC$ then we will denote by $[A,B]$ the full replete additive category containing every indecomposable $C' \in \ind C$ with $\Hom(A,C') \not= 0$ and $\Hom(C',B) \not= 0$.  We define $]A,B]$ similarly, but with the extra condition that $C' \not\cong A$.  The subcategories $[A,B[$ and $]A,B[$ are defined in an obvious way.

\subsection{Abelian hereditary categories}

An abelian category $\AA$ is said to be \emph{Ext-finite} if $\dim\Ext^i(X,Y) < \infty$ for all $i \in \bN$ and $X,Y \in \Ob \AA$.  If $\Ext^i(X,Y) = 0$ for all $i \geq 2$, then $\AA$ is called \emph{hereditary}.  If $\Ext^i(X,Y) = 0$ for all $i \geq 1$, we will say $\AA$ is \emph{semi-simple}.

For an abelian category $\AA$, we will denote by $\Db \AA$ its bounded derived category.  There is a fully faithful functor $i:\AA \to \Db \AA$ mapping every $X \in \AA$ to the complex which is $X$ in degree $0$ and $0$ in all other degrees.  We will often suppress this embedding and write $X \in \Ob \Db \AA$ instead of $iX \in \Ob \Db \AA$.

When $\AA$ is hereditary, the bounded derived category $\Db \AA$ has the following well-known description (\cite{Keller07, Lenzing07, VandenBergh01}): every object $X \in \Db \AA$ is isomorphic to the direct sums of its homologies.

\subsection{Serre duality and almost split maps}
Let $\CC$ be a $k$-linear Hom-finite triangulated category.  A \emph{Serre functor} \cite{BondalKapranov89} is a $k$-linear additive equivalence $\bS:\CC \to \CC$ such that for any two objects $A,B \in \Ob \CC$, there is an isomorphism
$$\Hom(A,B) \cong \Hom(B,\bS A)^*$$
of $k$-vector spaces, natural in $A$ and $B$.  Here, $(-)^*$ denotes the vector space dual.

A Serre functor will always be an exact equivalence.  If $\AA$ is an Ext-finite abelian category, then we will say that $\AA$ has \emph{Serre duality} if and only if $\Db \AA$ has a Serre functor.

It has been shown in \cite{ReVdB02} that an Ext-finite hereditary category has Serre duality if and only if $\AA$ has Auslander-Reiten sequences and there is a 1-1-correspondence between the indecomposable projective objects and the indecomposable injective objects via their simple top and simple socle, respectively.

It has also been shown in \cite{ReVdB02} that $\bS \cong \tau[1]$ where $\tau: \Db \AA \to \Db \AA$ is the Auslander-Reiten translate.  In particular, an Ext-finite triangulated category has Serre duality if and only if it has Auslander-Reiten triangles.

A map $f: A \to B$ is said to be \emph{left (or right) almost split} if every non-split map $A \to X$ (or $X \to B$) factors through $f$.

\subsection{Thread quivers and dualizing $k$-varieties}

We recall some definitions from \cite{Auslander74, AuslanderReiten74}.  A Hom-finite additive category $\aa$ where idempotents split will be called a \emph{finite $k$-variety}.  The functors $\aa(-,A)$ and $\aa(A,-)^*$ from $\aa$ to $\mod k$ will be called \emph{standard projective representations} and \emph{standard injective representations}, respectively.  We will write $\mod \aa$ for the category of contravariant functors $\aa \to \mod k$ which are finitely presentable by standard projectives.

Following \cite[Proposition 4.1]{BergVanRoosmalen09} we will say a finite $k$-variety $\aa$ is \emph{dualizing} \cite{AuslanderReiten74} if and only if $\aa$ has pseudokernels and pseudocokernels (thus $\mod \aa$ and $\mod \aa^\circ$ are abelian, where $\aa^\circ$ is the dual category of $\aa$), every standard projective object is cofinitely generated by standard injectives, and every standard injective object is finitely generated by standard projectives.

A finite $k$-variety $\aa$ is called \emph{semi-hereditary} if and only if the category $\mod \aa$ is abelian and hereditary.  It has been shown (\cite[Proposition 4.2]{vanRoosmalen06}, see also \cite[Theorem 1.6]{AuslanderReiten75}) that $\aa$ is semi-hereditary if and only if every full (preadditive) subcategory with finitely many elements is semi-hereditary.

Let $\aa$ be a finite $k$-variety.  It has been shown in \cite{BergVanRoosmalen09} that $\mod \aa$ is an abelian and hereditary category with Serre duality if and only if $\aa$ is a semi-hereditary dualizing (finite) $k$-varieties.  Thread quiver were then introduced in order to classify these semi-hereditary dualizing $k$-varieties.

A \emph{thread quiver} consists of the following information:
\begin{itemize}
\item A quiver $Q=(Q_0,Q_1)$ where $Q_0$ is the set of vertices and $Q_1$ is the set of arrows.
\item A decomposition $Q_1 = Q_s \coprod Q_t$.  Arrows in $Q_s$ will be called \emph{standard arrows}, while arrows in $Q_t$ will be referred to as \emph{thread arrows}.  Thread arrows will be drawn by dotted arrows.
\item With every thread arrow $\alpha$, there is an associated linearly ordered set $\TT_\alpha$, possibly empty.  When not empty, we will write this poset as a label for the thread arrow.  A finite linearly ordered poset will just be denoted by its number of elements.
\end{itemize}

When $Q$ is a thread quiver, we will denote by $Q_r$ the underlying quiver, thus forgetting labels and the difference between arrows and thread arrows.  We will say $Q$ is \emph{strongly locally finite} when $Q_r$ is strongly locally finite, i.e. all indecomposable projective and injective representations have finite dimension as $k$-vector spaces.

Let $Q$ be a strongly locally finite thread quiver.  With every thread $t \in Q_t$, we denote by $f^t: k(\cdot \to \cdot) \longrightarrow kQ_r$ the functor associated with the obvious embedding $(\cdot \to \cdot) \longrightarrow Q_r$.  We define the functor
$$f: \bigoplus_{t \in Q_t} k(\cdot \to \cdot) \longrightarrow kQ_r.$$

With every thread $t$, there is an associated linearly ordered set $\TT_t$.  We will write $\LL_t = \bN \cdot (\TT_t \stackrel{\rightarrow}{\times} \bZ) \cdot -\bN$.  Denote by
$$g^t: k(\cdot \to \cdot) \longrightarrow k\LL_t$$
a chosen fully faithful functor given by mapping the extremal points of $\cdot \to \cdot$ to the minimal and maximal objects of $\LL$, respectively.  We will write
$$g: \bigoplus_{t \in Q_t} k(\cdot \to \cdot) \longrightarrow \bigoplus_{t \in Q_t} k\LL_t.$$

The category $kQ$ is defined to be a 2-push-out of the following diagram.

$$\xymatrix{
\bigoplus_{t \in Q_t} k (\cdot \to \cdot) \ar[r]^-{f} \ar[d]_{g} & {k Q_r} \ar@{-->}^{i}[d] \\
\bigoplus_{t \in Q_t} k\LL_t \ar@{-->}[r]_-{j} & kQ
}$$

We have the following result which classifies the semi-hereditary dualizing $k$-varieties in function of strongly locally finite thread quivers.

\begin{theorem}\label{corollary:ShDualizing}\cite{BergVanRoosmalen09}
Every semi-hereditary dualizing $k$-variety is equivalent to a category of the form $kQ$ where $Q$ is a strongly locally finite thread quiver.
\end{theorem}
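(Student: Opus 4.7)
The plan is to read off a thread quiver directly from the Auslander-Reiten structure of $\aa$ and then verify that the resulting category is equivalent to the original. First I would set up the basic combinatorial data. Since $\aa$ is Hom-finite over an algebraically closed field with split idempotents, it is Krull-Schmidt, so $\ind \aa$ is well defined. Because $\mod \aa$ is hereditary with Serre duality (the translation of the dualizing semi-hereditary hypotheses proved in \cite{BergVanRoosmalen09}), the category $\aa$ itself inherits left and right almost split maps: for every $A \in \ind \aa$ there exist nonsplit $f_A \colon A \to M_A$ and $g_A \colon N_A \to A$ through which every nonsplit map out of or into $A$ factors. I would then classify indecomposables as either \emph{thread objects} (both $M_A$ and $N_A$ indecomposable) or \emph{nonthread objects}, exactly mirroring the distinction used in the main text.

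With this classification in hand, the vertex set $Q_0$ is taken to be the set of nonthread objects, together with those "end" thread objects that are projective or injective (these play the role of sources/sinks). The arrows between nonthread objects are extracted from $\rad / \rad^2$: standard arrows in $Q_s$ correspond to irreducible maps between two nonthread vertices, while a thread arrow in $Q_t$ records a maximal connected chain of thread objects sitting strictly between two nonthread vertices. The crucial observation is that such a chain is totally ordered in a canonical way: given two thread objects $X, Y$ in the chain, exactly one of $\Hom(X,Y), \Hom(Y,X)$ is nonzero (because each has a unique nonsplit map in and out, and these compose linearly). This produces the linearly ordered label $\TT_\alpha$ attached to the thread arrow $\alpha$, and it is locally discrete because irreducible maps between consecutive thread objects are genuine AR-maps.

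Next, I would verify strong local finiteness of the underlying quiver $Q_r$: the condition that every standard projective is cofinitely generated by standard injectives, and every standard injective is finitely generated by standard projectives, translates precisely into the statement that indecomposable projective and injective representations of $Q_r$ have finite length. Then, to produce the equivalence $\aa \simeq kQ$, I would use the universal property of the 2-push-out defining $kQ$. The embedding $kQ_r \to \aa$ is the $k$-linearization of the vertex/standard-arrow data; the functors $k\LL_t \to \aa$ are supplied by the labeled thread chains constructed above. Compatibility along the common $\bigoplus_{t} k(\cdot \to \cdot)$ is immediate from the construction, so the universal property yields a functor $kQ \to \aa$. This functor is essentially surjective by construction (all indecomposables of $\aa$ are either vertices or live on a thread), and full and faithful because all Hom-spaces on both sides are at most one-dimensional between indecomposables and are determined by the same path/ordering combinatorics, using that $\mod \aa$ is hereditary (so there are no unexpected relations beyond those forced by composition in the ordered threads).

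The main obstacle I expect is the construction of the linear order on each thread chain together with its local discreteness, and the simultaneous verification that there are no "hidden" morphisms between two distinct threads beyond those forced by the quiver of nonthread objects. Semi-hereditariness is what rules out such hidden morphisms: any extra map would create a cycle of irreducibles incompatible with global dimension $\leq 1$ of $\mod \aa$. Once this rigidity is pinned down, the remaining steps, namely matching Hom-spaces and invoking the 2-push-out universal property, are formal.
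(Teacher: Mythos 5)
The theorem you are addressing is stated in this paper only as a citation to \cite{BergVanRoosmalen09}; there is no proof in the present source to compare against directly, so I will assess the proposal on its own merits. The overall architecture is plausible and consistent with the machinery used throughout this paper: take nonthread objects as vertices, maximal chains of thread objects as thread arrows, and invoke the 2-push-out universal property. However, there is a genuine gap in the final step. You argue that the induced functor $kQ \to \aa$ is full and faithful because ``all Hom-spaces on both sides are at most one-dimensional between indecomposables.'' This is false, on both sides. Already for the finite quiver with vertices $a,b,c,d$ and arrows $a \to b$, $a \to d$, $b \to c$, $d \to c$ --- which is a semi-hereditary dualizing $k$-variety in its own right --- one has $\dim \Hom(P_a, P_c) = 2$, counted by the two paths through $b$ and through $d$, and exactly the same multi-dimensionality appears in $kQ$ whenever the quiver of nonthread objects has parallel paths. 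One-dimensionality of Hom-spaces holds only along a single thread; between nonthread vertices it fails. Matching Hom-spaces therefore cannot be waved off as ``the same path/ordering combinatorics''; one must exhibit an explicit comparison (e.g., by showing that the natural map $kQ_r \oplus \bigoplus_t k\LL_t \to \aa$ descends to a bijection on morphism spaces, or by passing through the hereditary hearts and matching generators), and as written the full-and-faithful claim is unsupported.

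A secondary issue is in the extraction of the linearly ordered label $\TT_\alpha$. The thread in $kQ$ attached to a thread arrow $t$ is built from $\LL_t = \bN \cdot (\TT_t \stackrel{\rightarrow}{\times} \bZ) \cdot -\bN$, not from $\TT_t$ directly. So to recover the data $\TT_\alpha$ from a maximal chain of thread objects in $\aa$, you must show that this chain, endowed with its Hom-order, is a bounded locally discrete linear order of precisely the form $\bN \cdot (- \stackrel{\rightarrow}{\times} \bZ) \cdot -\bN$, with identifiable minimal and maximal ends, and then read off $\TT_\alpha$. Local discreteness ``because consecutive irreducible maps are AR-maps'' does not by itself produce this decomposition, nor does it pin down the boundary behavior; your informal move of adding ``end thread objects'' to $Q_0$ sidesteps this rather than resolving it. Both points are fixable in principle, but as written the proposal does not close the equivalence $\aa \simeq kQ$.
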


We will also use the following result (\cite[Corollary 6.4]{BergVanRoosmalen09}).

\begin{proposition}\label{proposition:SinksCountable}
A semi-hereditary dualizing $k$-variety has only countably many sinks and sources.
\end{proposition}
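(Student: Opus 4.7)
By Theorem \ref{corollary:ShDualizing} we may assume the semi-hereditary dualizing $k$-variety has the form $kQ$ for a strongly locally finite thread quiver $Q$. The plan is to transfer sinks and sources of $kQ$ to combinatorial data on $Q$ and then to extract a countable bound using the strong local finiteness of $Q_r$.

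The first step is to show that no sink or source of $kQ$ can sit at an interior point of a thread. For each thread arrow $t \in Q_t$, the pushout construction fills the thread with the linearly ordered set $\LL_t = \bN \cdot (\TT_t \stackrel{\rightarrow}{\times} \bZ) \cdot -\bN$. This set has neither a minimum nor a maximum, so every interior point has both a strict predecessor and a strict successor, hence emits and receives a nonzero non-isomorphism to and from another indecomposable. Consequently, all sinks and sources of $kQ$ are represented by vertices of the underlying quiver $Q_r$, i.e. lie in $Q_0$.

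The second step is to bound the number of vertices in each connected component of $Q_r$. The indecomposable projective, respectively indecomposable injective, at a vertex $v \in Q_0$ has a $k$-basis given by the paths in $Q_r$ starting, respectively ending, at $v$. Strong local finiteness forces these to be finite-dimensional, so every vertex of $Q_r$ has finite in-valence and finite out-valence. Fixing a base point $v_0$ and letting $B_n$ denote the set of vertices reachable from $v_0$ in at most $n$ undirected steps, an easy induction shows that each $B_n$ is finite. Hence the connected component of $v_0$ has only countably many vertices, and consequently at most countably many sinks and sources.

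The main obstacle, which I expect to be the most delicate point, is controlling the number of connected components: without further input one cannot exclude an uncountable family of blocks each contributing its own sink or source. Here I would exploit the dualizing condition, namely that every standard projective is cofinitely generated by standard injectives and every standard injective is finitely generated by standard projectives. These generation conditions couple different components through a common (co)generating set, and by tracing the supports of a chosen set of standard injective generators through the components one should conclude that the total collection of components is countable. Combining this with the per-component count from the previous step then yields that $kQ$ has only countably many sinks and sources.
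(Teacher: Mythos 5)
The paper does not give its own proof of this proposition; it is quoted verbatim from {[}Corollary 6.4{]} of the companion paper \cite{BergVanRoosmalen09}, so a line-by-line comparison is not possible. Your first two steps are essentially sound: the only indecomposables of $kQ$ that can be sinks or sources are those sitting at vertices of $Q_{r}$, and in each connected component of $Q_{r}$ strong local finiteness forces finite in- and out-valence at every vertex, whence countably many vertices per component. One small inaccuracy: you say $\LL_t = \bN \cdot (\TT_t \stackrel{\rightarrow}{\times} \bZ) \cdot -\bN$ has no minimum or maximum, but it has both --- the zero of $\bN$ and the zero of $-\bN$ are the minimum and maximum, and the functor $g^t$ sends the two endpoints of $\cdot \to \cdot$ precisely to them. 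What is true, and what you actually use, is that every \emph{non-extremal} element of $\LL_t$ has both an immediate predecessor and an immediate successor, so it is neither a sink nor a source.

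Your third step is the real gap, and the proposed remedy will not close it. The dualizing conditions are purely \emph{componentwise}: the standard projective $\aa(-,A)$ and standard injective $\aa(A,-)^*$ are supported in the connected component of $A$, and the requirement that one be (co)finitely generated by the other imposes no relation whatsoever between distinct components. There is no ``common (co)generating set'' coupling the blocks. In fact, an uncountable disjoint union of one-vertex quivers is a strongly locally finite thread quiver whose associated category is a semi-hereditary dualizing $k$-variety (it is Hom-finite, has split idempotents, pseudo(co)kernels, its module category is semi-simple and has Serre duality given by the identity, and every standard projective is its own standard injective), yet it has uncountably many sinks and sources. So the proposition as literally stated requires a connectedness hypothesis, and no argument of the sort you sketch can avoid that. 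The good news is that in every place the paper actually invokes the proposition (\S\ref{subsection:Star}), the ambient hereditary section $\QQ$ is assumed connected, so a single component is in play and your first two steps already complete the proof. The correct fix is therefore to state and prove the result for a \emph{connected} semi-hereditary dualizing $k$-variety, not to try to control the number of components via the dualizing axioms.
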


\subsection{Sketching categories}

Throughout this paper, sketches of categories (or more precisely, the Auslander-Reiten quiver) will be provided for the benefit of the reader.  All examples will be directed categories, and we will use the conventions used in \cite{vanRoosmalen06} (see also \cite{Ringel02, Ringel02b}).

We will consider only three shapes of Auslander-Reiten components: those of the form $\bZ A_\infty$, $\bZ A_\infty^\infty$, and $\bZ D_\infty$, which will be represented by squares, triangles, and triangles with a doubled side, respectively (see Figure \ref{fig:SketchesAbridged}).  These components will be ordered such that the maps go from left to right.

\begin{figure}
	\centering
		\includegraphics[width=0.40\textwidth]{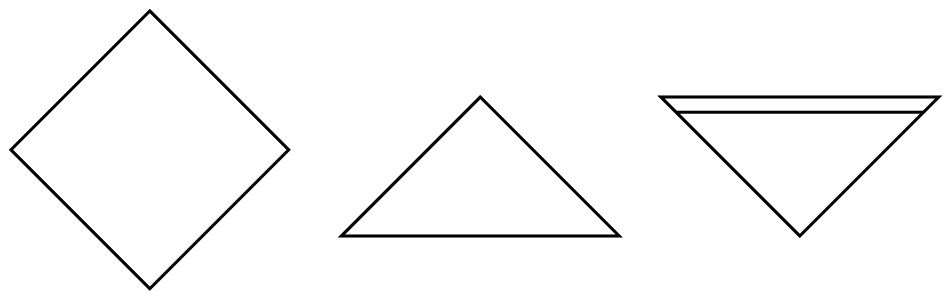}
	\caption{Sketches of $\bZ A_\infty$-, $\bZ A_\infty^\infty$-, and $\bZ D_\infty$-components}
	\label{fig:SketchesAbridged}
\end{figure}

Whenever a triangulated category comes equiped with a $t$-structure, this will be suitably indicated on the corresponding sketch.
\renewcommand{\r}{d^{\bullet}}

\section{Round Trip Distance and Light Cone Distance}\label{section:Distances}

In \cite{BergVanRoosmalen08}, the round trip distance and light cone distance were introduced for stable translation quivers of the form $\bZ Q$.  These distances proved valuable to discuss sections of $\bZ Q$.  Our goal of describing the category of projectives $\QQ$ is similar and we wish to employ similar techniques to this case.  We will have to generalize the techniques of \cite{BergVanRoosmalen08} somewhat since the category $\bZ \QQ$ does not have to be generalized standard in our present setting.  The definitions coincide in case this connecting component is generalized standard.

In this section, let $\CC = \Db \AA$ where $\AA$ is an abelian Ext-finite category with Serre duality.  Although $\AA$ is not required to be hereditary, it follows from Corollary \ref{corollary:DirectingCriterium} that our definitions are only nontrivial if $\CC$ has directing objects, which implies that $\AA$ is derived equivalent to a hereditary category (see Theorem \ref{theorem:RightClosed}).

\subsection{Light cone distance}

For all $X,Y \in \ind \CC$, we define the \emph{(right) light cone distance}\index{right light cone distance!for triangulated categories} as
$$\r(X,Y) = \inf \{n \in \bZ \mid \mbox{there is a path from $X$ to $\t^{-n} Y$}\}.$$
In particular, $\r(X,Y) \in \bZ \cup \{ \pm \infty \}$.  

\begin{remark}
Even when $X$ and $Y$ lie in the same Auslander-Reiten component, the right light cone distance does not need to coincide with the one given in \cite{BergVanRoosmalen08}, as the following example illustrates.  The difference is that the definition above takes all maps into account when determining paths, while the definition in \cite{BergVanRoosmalen08} only considers irreducible morphisms.
\end{remark}

\begin{figure}
	\exa
		\includegraphics[width=\textwidth]{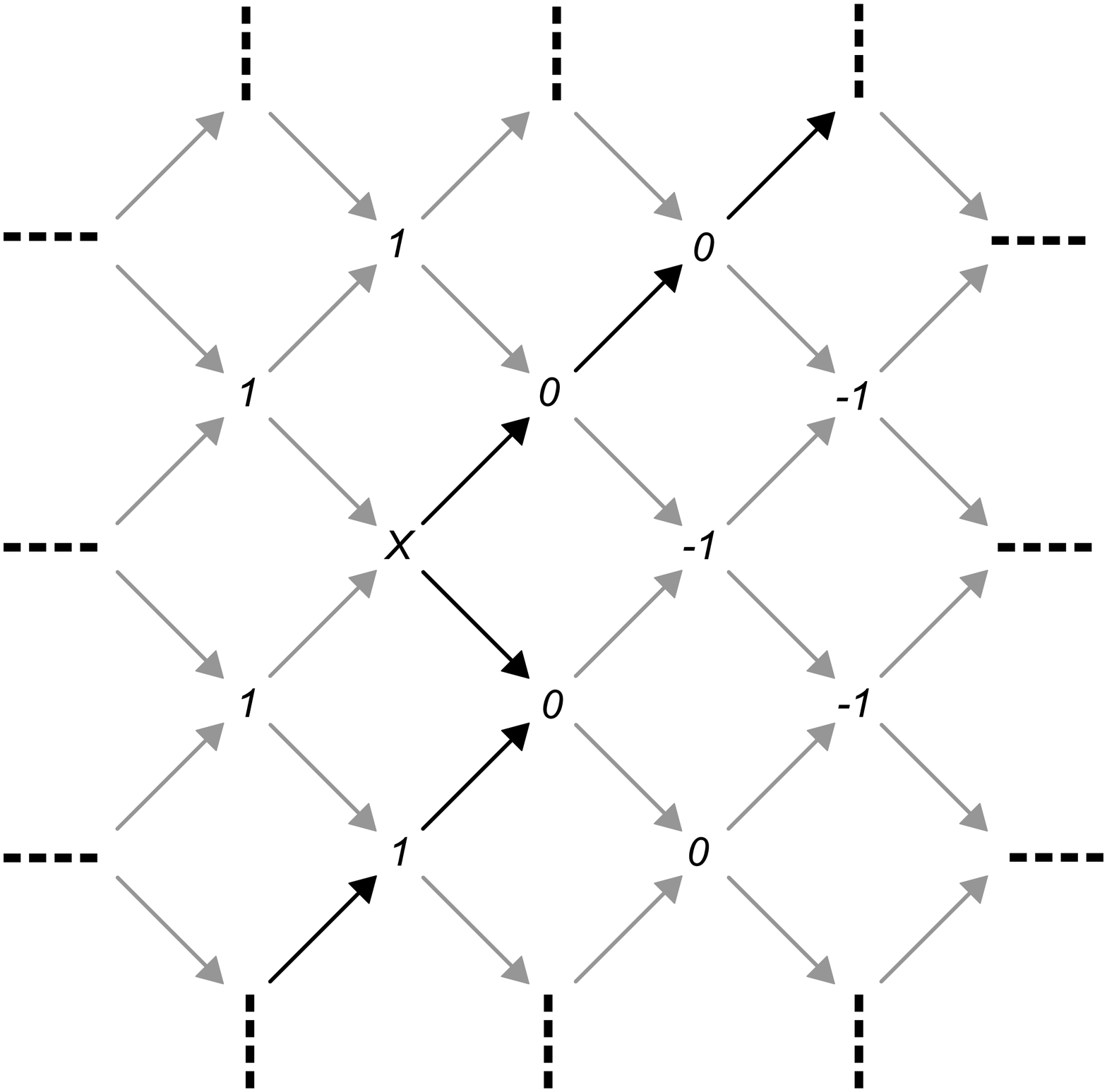}
	\exb
		\includegraphics[width=\textwidth]{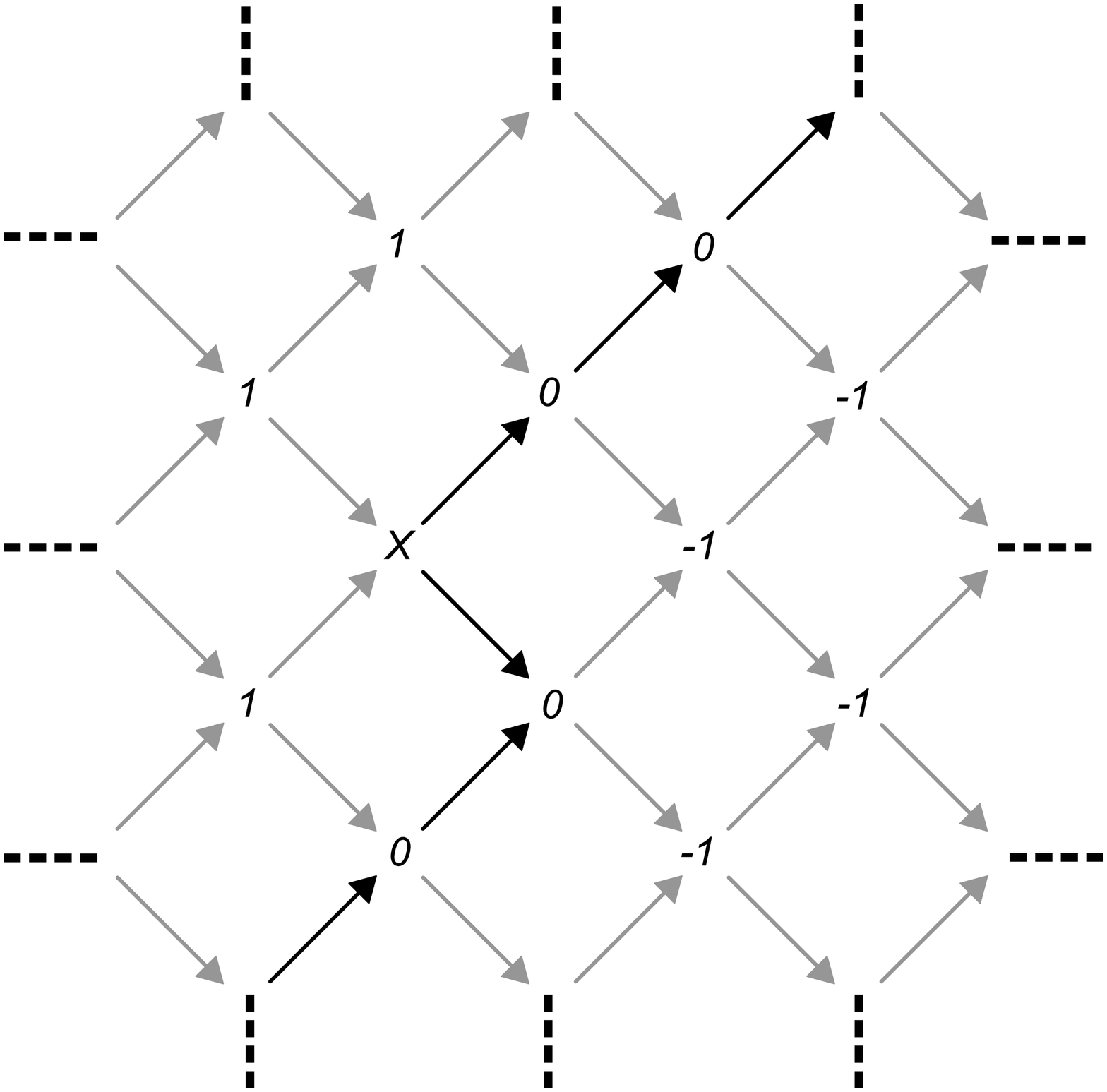}
	\exc
	\caption{The Auslander-Reiten quiver of the category $\bZ \QQ$ in Example \ref{example:DifferentDistances} where every vertex has been labeled with $\r(X,-)$.  For this, the Auslander-Reiten quiver on the left has been interpreted as a stable translation quiver, while on the right we have used the category $\bZ \QQ$ to determine the right light cone disatnce.}
	\label{fig:Thread5}
\end{figure}

\begin{example}\label{example:DifferentDistances}
Let $\aa$ be the semi-hereditary dualizing $k$-variety whose thread quiver is
$$\xymatrix@1{\bullet \ar@<3pt>@{..>}[r] \ar@<-3pt>[r] & \bullet}$$
The Auslander-Reiten quiver of $\Db \mod \aa$ containing the standard projectives of $\mod \aa$ via the standard embedding is of the form $\bZ A_\infty^\infty$.  On the left hand side of Figure \ref{fig:Thread5} we have labeled the vertices with the right light cone distance $\r(X,-)$ as a stable translation quiver (as in \cite{BergVanRoosmalen08}), while on the right hand side we have used the definition of right light cone distance given in this article.  For the benefit of the reader, the arrows between indecomposable projective objects have been drawn in black.

\end{example}

The following lemma is stated for easy reference.

\begin{lemma}\label{lemma:LightConeDistanceAndShifts}
For all $X,Y \in \ind \CC$, we have $\r(X,\t^n Y) = \r(\t^{-n}X,Y) = \r(X,Y) + n$.
\end{lemma}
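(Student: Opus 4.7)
The plan is to proceed directly from the definition of $\r(-,-)$, treating both equalities as index reparametrizations. The crucial structural fact I will use is that $\tau$ is an auto-equivalence of $\CC = \Db \AA$ (since $\tau[1]$ is the Serre functor on $\CC$, which is an equivalence), and hence preserves the existence of paths between indecomposables.

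For the first equality $\r(X,\tau^n Y) = \r(X,Y) + n$, I would just unfold the definition:
$$\r(X,\tau^n Y) = \inf\{m \in \bZ \mid \text{there is a path from } X \text{ to } \tau^{-m}(\tau^n Y)\}.$$
Since $\tau^{-m}\tau^n = \tau^{-(m-n)}$, substituting $k = m - n$ reindexes the set $\{m\}$ to $\{k+n\}$, and I can pull the $+n$ out of the infimum. This gives $n + \r(X,Y)$ immediately and uses nothing beyond the definition.

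For the second equality $\r(\tau^{-n}X,Y) = \r(X,Y) + n$, I would use that $\tau^{-n}$ is an equivalence on $\CC$ and hence induces a bijection on $\ind \CC$ that preserves the Hom-vanishing relation defining paths. Consequently, there is a path $\tau^{-n}X \to \tau^{-m}Y$ if and only if there is a path $X \to \tau^n \tau^{-m} Y = \tau^{-(m-n)} Y$. Reindexing with $k = m-n$ as before yields $\r(\tau^{-n}X,Y) = n + \r(X,Y)$.

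There is no real obstacle here; the only point to be careful about is checking that the argument still makes sense when $\r(X,Y) \in \{\pm\infty\}$, but in those cases the reindexing bijection shows that the infimum on the left is also $\pm\infty$ (shifted by $n$, which still yields $\pm\infty$), so the identities continue to hold formally.
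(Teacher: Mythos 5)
The paper states this lemma ``for easy reference'' without supplying a proof, and your argument is exactly the routine verification one would write: both equalities are reindexings of the infimum in the definition of $\r$, with the second one additionally using that $\tau$ (and hence $\tau^{-n}$) is an autoequivalence of $\CC$ and therefore induces a bijection on $\ind\CC$ preserving nonvanishing of $\Hom$, hence preserving paths. Your check of the $\pm\infty$ cases is also sound. The proposal is correct and is the natural proof.
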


Note that the function $\r$ is not symmetric.  It does however satisfy the triangle inequality.

\begin{proposition}\label{proposition:rTriangle}
For all $X,Y,Z \in \ind \CC$, we have
$$\r (X,Z) \leq \r (X,Y) + \r (Y,Z),$$
whenever this sum is defined.
\end{proposition}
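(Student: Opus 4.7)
The plan is to reduce the inequality to a direct concatenation of paths after translating by a power of the Auslander--Reiten functor $\tau$, which is an auto-equivalence of $\CC$ and hence preserves paths.

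First I would dispose of the easy cases. If $\r(X,Y) = +\infty$ or $\r(Y,Z) = +\infty$ (and the sum is not of the form $\infty + (-\infty)$), the right-hand side equals $+\infty$ and the inequality is trivial. So I may assume $\r(X,Y), \r(Y,Z) < +\infty$. If both are finite integers, then since each is the infimum of a nonempty subset of $\bZ$ bounded below (by its own value), the infimum is attained: there exist integers $n = \r(X,Y)$ and $m = \r(Y,Z)$ together with a path $X = X_0,\ldots,X_p = \tau^{-n}Y$ and a path $Y = Y_0,\ldots,Y_q = \tau^{-m}Z$ in $\ind \CC$. Applying the auto-equivalence $\tau^{-n}$ to the second path yields a path $\tau^{-n}Y = \tau^{-n}Y_0,\ldots,\tau^{-n}Y_q = \tau^{-(n+m)}Z$. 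Concatenating at the common endpoint $\tau^{-n}Y$ produces a path from $X$ to $\tau^{-(n+m)}Z$, so $\r(X,Z) \leq n+m = \r(X,Y) + \r(Y,Z)$ by definition of the infimum.

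For the case where one of the distances is $-\infty$, say $\r(X,Y) = -\infty$ and $\r(Y,Z) < +\infty$ (the symmetric case is analogous; note the sum is well defined by hypothesis), I would argue that the same concatenation trick gives an arbitrarily small upper bound. Fix any integer $m$ with a path $Y \to \tau^{-m}Z$ (such $m$ exists since $\r(Y,Z) < +\infty$ and, if $\r(Y,Z) = -\infty$, any $m$ will do; otherwise take $m = \r(Y,Z)$). For every $N \in \bZ$, the hypothesis $\r(X,Y) = -\infty$ produces an integer $n \leq N$ and a path $X \to \tau^{-n}Y$; applying $\tau^{-n}$ and concatenating as before yields a path from $X$ to $\tau^{-(n+m)}Z$, hence $\r(X,Z) \leq n+m \leq N+m$. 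Letting $N \to -\infty$ forces $\r(X,Z) = -\infty$, which equals $\r(X,Y)+\r(Y,Z)$.

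The only conceptual ingredient is that $\tau$ (equivalently, the Serre functor $\bS = \tau[1]$) is an auto-equivalence of $\CC$, so it sends nonzero Hom-spaces to nonzero Hom-spaces and thus carries paths to paths of the same length; the rest is just bookkeeping with the infimum and the value $\pm\infty$. There is no real obstacle here: the main subtlety is merely the edge-case treatment of infinite values, which the ``whenever this sum is defined'' clause in the statement already excludes the pathological $\infty + (-\infty)$ combination from.
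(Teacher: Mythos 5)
Your proof is correct and is essentially the argument the paper has in mind when it says the result ``follows directly from the definition'': apply the auto-equivalence $\tau^{-n}$ to shift the second path, concatenate with the first, and compare infima, handling the $\pm\infty$ edge cases separately. The one place worth double-checking is your claim that a finite infimum of the defining set is attained; this does hold, because a nonempty subset of $\bZ$ whose infimum is a finite integer must contain that integer (discreteness), so no gap there.
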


\begin{proof}
The proof follows directly from the definition.
\end{proof}

For a subsets $\TT_1,\TT_2 \subseteq \ind \CC$, we define the right light cone distance in an obvious way:
$$\r(\TT_1, \TT_2) = \inf_{\begin{array}{cc} T_1 \in \TT_1 \\ T_2 \in \TT_2 \end{array}} \r(T_1,T_2).$$
The following result follows from the triangle inequality.

\begin{corollary}
Let $X \in \ind \CC$ and $\TT_1, \TT_2 \subseteq \ind \CC$, we have
$$\r (\TT_1,\TT_2) \leq \r (\TT_1,X) + \r (X,\TT_2),$$
whenever this sum is defined.
\end{corollary}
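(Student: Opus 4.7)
The plan is to reduce this statement directly to the pointwise triangle inequality (Proposition \ref{proposition:rTriangle}) and then pass to infima on both sides. First I would fix arbitrary $T_1 \in \TT_1$ and $T_2 \in \TT_2$; applying the triangle inequality for single indecomposables to the triple $(T_1, X, T_2)$ yields
$$\r(T_1, T_2) \leq \r(T_1, X) + \r(X, T_2),$$
whenever the right-hand side is defined.

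Next I would take infima. By definition $\r(\TT_1,\TT_2) = \inf_{T_1 \in \TT_1,\, T_2 \in \TT_2} \r(T_1,T_2)$, so the left-hand side is at most the infimum of the right-hand side. Since in the bound $\r(T_1,X) + \r(X,T_2)$ the variable $T_1$ appears only in the first summand and $T_2$ only in the second, the infimum over the pair $(T_1,T_2)$ splits into a sum of two independent infima, giving
$$\inf_{T_1,T_2}\bigl[\r(T_1,X)+\r(X,T_2)\bigr] \;=\; \inf_{T_1} \r(T_1,X) + \inf_{T_2} \r(X,T_2) \;=\; \r(\TT_1,X) + \r(X,\TT_2).$$
Chaining the two inequalities yields the claim.

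The only point requiring mild care is the ``whenever this sum is defined'' clause: if either $\r(\TT_1,X)$ or $\r(X,\TT_2)$ is $+\infty$ the asserted inequality is vacuous, while if one equals $-\infty$ and the other is finite or $-\infty$ the splitting of the infimum still holds in the extended integers. I expect no real obstacle here — the corollary is essentially a one-line consequence of Proposition \ref{proposition:rTriangle} together with the elementary identity $\inf_{a,b}[f(a)+g(b)] = \inf_a f(a) + \inf_b g(b)$ for extended-integer-valued functions in the absence of an $\infty-\infty$ indeterminacy.
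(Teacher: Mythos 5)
Your proof is correct and is exactly the route the paper intends: the paper's own justification is simply the remark that the result ``follows from the triangle inequality,'' and your write-up merely spells out the elementary passage to infima, including the appropriate care in the extended-integer setting.
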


We now continue to define a \emph{right and left light cone distance sphere}\index{right light cone distance sphere!in a triangulated category}\index{left light cone distance sphere!in a triangulated category} by
$$\Sr(X,n) = \{Y \in \ind \Db \AA \mid \r(X,Y) = n\}$$
and
$$\Sl(X,n) = \{Y \in \ind \Db \AA \mid \r(Y,X) = n\},$$
respectively, for any $n \in \bZ$ and $X \in \Ob \CC$.  Finally, we will denote
$$\mbox{$\SrQQ(X,n) = \Sr(X,n) \cap \ind \QQ$ and $\SlQQ(X,n) = \Sl(X,n) \cap \ind \QQ.$}$$

\subsection{Connection with directing objects}

Although the left and right light cone distances between any two indecomposables are defined, we can only expect nontrivial results in the case where both are directing.

We start by recalling following result.

\begin{proposition}\label{proposition:RingelTriangles}\cite[Lemma 3]{Ringel05}
Let $\xymatrix@1{X \ar[r]^u &Y \ar[r]^v & Z \ar[r]^w& X[1]}$ be a triangle where $X,Y$ are indecomposable and $u$ is nonzero and noninvertible.  Let $Z_1$ be a direct summand of $Z$.  The maps $v_1:Y \to Z_1$ and $w_1:Z_1 \to X[1]$ induced by $v$ and $w$, respectively, are nonzero and noninvertible.
\end{proposition}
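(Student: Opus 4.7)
The plan is to fix a complementary summand, writing $Z = Z_1 \oplus Z_2$ with inclusions $\iota_i$ and projections $\pi_i$, and set $v_i = \pi_i v$, $w_i = w \iota_i$. Throughout I assume $Z_1 \neq 0$ (the statement is vacuous otherwise). The key auxiliary fact I will rely on is the standard observation that, in any triangle $A \xrightarrow{u'} B \xrightarrow{v'} C \xrightarrow{w'} A[1]$, the connecting map $w'$ is a split epimorphism if and only if $u' = 0$. This follows immediately by applying $\Hom(A[1],-)$: a section $s$ of $w'$ shows $\id_{A[1]}$ lies in the image of $w'_{*}$, and hence in the kernel of $(u'[1])_{*}$, forcing $u'[1] = 0$. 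The dual statement (obtained by rotating once) says $v'$ split mono also forces $u' = 0$.

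For the map $v_1$, I would argue by contradiction twice. If $v_1 = 0$, the long exact sequence obtained from $\Hom(-,Z_1)$ applied to the triangle shows that $\pi_1$ factors through $w$, say $\pi_1 = \alpha w$; restricting to $Z_1$ gives $\alpha w_1 = \id_{Z_1}$, so $w_1 : Z_1 \to X[1]$ is a split monomorphism into the indecomposable $X[1]$ and is therefore an isomorphism. This in turn makes $w$ itself a split epimorphism (with section $\iota_1 w_1^{-1}$), contradicting $u \neq 0$ by the auxiliary fact. If instead $v_1$ is invertible, then $v_1^{-1} \pi_1$ is a retraction of $v$, so $v$ is a split monomorphism; rotating the triangle and invoking the dual of the auxiliary fact again yields $u = 0$, another contradiction.

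For the map $w_1$, I proceed dually. Suppose $w_1 = 0$. Then the long exact sequence obtained from $\Hom(Z_1,-)$ shows that $\iota_1 = v\beta$ for some $\beta : Z_1 \to Y$; composing with $\pi_1$ gives $v_1 \beta = \id_{Z_1}$, so $v_1$ is a split epimorphism. Because $Y$ is indecomposable, the idempotent $\beta v_1 \in \End(Y)$ is $0$ or $\id_Y$. The first case implies $\beta = 0$ and hence $Z_1 = 0$, which is excluded; the second case makes $v_1$ a two-sided inverse of $\beta$, hence an isomorphism, contradicting the previous paragraph. Finally, if $w_1$ were an isomorphism, the composition $w \iota_1 = w_1$ would show $w$ is a split epimorphism, and the auxiliary fact again gives $u = 0$.

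The argument is essentially a careful unpacking of long exact sequences, so the only real obstacle is bookkeeping: one must keep track of two distinct long exact sequences (covariant and contravariant in $Z_1$) and one triangle rotation, and not lose the hypothesis $Z_1 \neq 0$ at the point where it is actually used (in the $w_1 = 0$ step, to distinguish the two idempotents in $\End(Y)$). No deeper structural input is needed beyond the triangulated axioms.
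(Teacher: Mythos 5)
The paper does not prove this statement; it simply cites it as Lemma~3 of Ringel's \emph{Hereditary triangulated categories}, so there is no in-paper argument to compare against. Your proof is correct, and the route you take---showing each failure mode ($v_1=0$, $v_1$ invertible, $w_1=0$, $w_1$ invertible) forces $w$ to be a split epimorphism or $v$ a split monomorphism, which in turn forces $u=0$ via the long exact Hom-sequences---is the natural one from the triangulated axioms. A couple of small points worth flagging: the step ``$\beta v_1 = 0 \Rightarrow \beta = 0$'' deserves the one-line justification $\beta = \beta(v_1\beta) = (\beta v_1)\beta = 0$; and the appeal to ``the idempotent $\beta v_1 \in \End(Y)$ is $0$ or $\mathrm{id}_Y$'' silently uses that $\Db\AA$ is Karoubian (true for bounded derived categories of abelian categories) or, more simply in this paper's Hom-finite setting, that $\End(Y)$ is local for indecomposable $Y$. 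Neither is a gap, just worth making explicit. You also correctly observe that the hypothesis $Z_1\neq 0$ must be implicit (otherwise the conclusion is false), and once that is assumed the ``$u$ noninvertible'' hypothesis becomes redundant.
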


\begin{proposition}\label{proposition:LeftmostComposition}
Let $X, Y, Z \in \ind \CC$ such that $\r (X,Z) = 0$.  For all non-zero $f
\in \Hom(X,Y)$ and $g \in \Hom(Y,Z)$ we have that $gf$ is non-zero.  In particular, $\r(X,Z)=0$ implies $\Hom(X,Z) \not= 0$.
\end{proposition}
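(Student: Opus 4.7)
The plan is to prove the principal assertion $gf \ne 0$ by contradiction, using a ``rotate-the-triangle and apply Serre duality'' argument, and then deduce the ``in particular'' statement by induction on the length of a witnessing path.

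For the main step, I would assume $gf = 0$ and complete $f$ to a distinguished triangle $X \xrightarrow{f} Y \xrightarrow{v} C \xrightarrow{w} X[1]$, so that $g$ factors as $g = hv$ for some $h : C \to Z$. The case where $f$ is invertible is trivial (then $gf \ne 0$ follows from $g \ne 0$), so I may assume $f$ is nonzero and noninvertible. Decomposing $C = \bigoplus_i C_i$ into indecomposable summands with components $v_i, w_i, h_i$, the relation $g = hv \ne 0$ picks out an index $i$ with $h_i v_i \ne 0$, hence with both $v_i$ and $h_i$ nonzero. Applying Proposition \ref{proposition:RingelTriangles} (whose hypotheses are met since $X, Y$ are indecomposable and $f$ is nonzero, noninvertible), the map $w_i : C_i \to X[1]$ is forced to be nonzero as well. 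Serre duality now provides the key twist: the natural isomorphism $\Hom(C_i, X[1]) \cong \Hom(X, \tau C_i)^*$, coming from $\bS \cong \tau[1]$, transports $w_i$ to a nonzero map $X \to \tau C_i$. Since $\tau$ is an autoequivalence, $\tau h_i : \tau C_i \to \tau Z$ is also nonzero, and thus $X, \tau C_i, \tau Z$ is a path in $\ind \CC$ witnessing $\r(X, Z) \le -1$, contradicting the hypothesis $\r(X, Z) = 0$.

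For the ``in particular'' claim, I would take a witnessing path $X = X_0, X_1, \ldots, X_n = Z$ and induct on $n$. The case $n = 1$ is immediate. In the step, since there is a length-one path $X_0 \to X_1$ and a length-$(n-1)$ path $X_1 \to \cdots \to X_n$, both $\r(X_0, X_1)$ and $\r(X_1, X_n)$ are $\le 0$; the triangle inequality (Proposition \ref{proposition:rTriangle}) together with $\r(X_0, X_n) = 0$ forces each to vanish. The induction hypothesis supplies a nonzero map $X_1 \to X_n$, which composed with the edge $X_0 \to X_1$ via the first assertion gives a nonzero map $X_0 \to X_n$.

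The main obstacle I anticipate is the coordinated choice of summand $C_i$: one has to guarantee that for a \emph{single} index $i$ both $h_i$ (to feed into $Z$) and $w_i$ (to be rotated by Serre duality) are simultaneously nonzero. The noninvertibility reduction on $f$ is exactly what makes Proposition \ref{proposition:RingelTriangles} applicable, and the Serre-duality identification $C_i \to X[1] \;\leftrightarrow\; X \to \tau C_i$ is the mechanism that converts a purported vanishing of $gf$ into a strict decrease of the light cone distance, which is the crux of the contradiction.
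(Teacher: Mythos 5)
Your proof is correct. It follows the same skeleton as the paper's argument — cone of a noninvertible map, Proposition~\ref{proposition:RingelTriangles} to force the third map nonzero, Serre duality to convert that into a light-cone estimate — but mirrored: you cone off $f$ and reach a contradiction by exhibiting a path $X \to \tau C_i \to \tau Z$ of negative right light-cone distance, whereas the paper cones off $g$, shows $\Hom(X,C[-1]) = 0$ by the triangle inequality, and concludes that $f$ cannot factor through $C[-1]$. Both routes are clean; the paper's version reads slightly more directly (it avoids the explicit summand bookkeeping) while yours makes the ``path too short'' mechanism visible. One small remark: the ``coordinated choice'' worry you raise is in fact a non-issue, since Proposition~\ref{proposition:RingelTriangles} already gives $w_i \ne 0$ for \emph{every} summand $C_i$, so you only need to select an $i$ with $h_i \ne 0$, which is immediate from $g = \sum_i h_i v_i \neq 0$. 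You also spell out the induction on path length behind the ``in particular'' clause, which the paper leaves implicit; that is a welcome clarification.
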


\begin{proof}
Without loss of generality, we may assume $g$ is not an isomorphism, and hence $C = \cone(g:Y \to Z)$ is nonzero.  It follows from Proposition \ref{proposition:RingelTriangles} that $\Hom(Z,C_i) \not= 0$ for every direct summand $C_i$ of $C$.  Using Serre duality we find $\Hom(C_i[-1],\t Z)\not=0$, and therefore $\r(C_i[-1],Z) \leq -1$.

The triangle inequality then gives $\r(X,C_i[-1]) \geq \r(X,Z) - \r(C_i[-1],Z) \geq 1$ and hence $\Hom (X,C[-1])=0$.  We deduce that $f \colon X \to Y$ does not factor through $C[-1]$ and hence $gf$ is non-zero.
\end{proof}

\begin{proposition}\label{Proposition:DirectingCriterium}
An object $X \in \ind \CC$ is directing if and only if $\r(X,X) =
0$, or equivalently, $X$ is non-directing if and only if $\r(X,X) =
-\infty$.
\end{proposition}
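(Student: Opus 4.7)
The plan is to prove the equivalence by concatenating paths with chains coming from Auslander--Reiten triangles.

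First observe that the length-zero sequence $X$ is a path from $X$ to $\t^{0}X=X$, so unconditionally $\r(X,X)\le 0$. It therefore suffices to prove that $X$ is directing if and only if $\r(X,X)\ge 0$, together with the sharpening that $\r(X,X)<0$ actually forces $\r(X,X)=-\infty$.

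The key building block is the following. For each $k\in\bZ$ the Auslander--Reiten triangle
$$\t^{k+1}X\to E^{(k)}\to \t^{k}X\to \t^{k+1}X[1]$$
is non-split with $E^{(k)}\ne 0$. Rotating to
$$\t^{k}X\to \t^{k+1}X[1]\to E^{(k)}[1]\to \t^{k}X[1]$$
puts indecomposables in the first two slots; the first map is nonzero (non-splitness) and non-invertible (otherwise the middle term $E^{(k)}$ of the triangle would have to vanish). Proposition \ref{proposition:RingelTriangles} then applies and yields, for every indecomposable summand $E_k$ of $E^{(k)}$, that the induced maps $\t^{k+1}X\to E_k$ and $E_k\to \t^{k}X$ are nonzero and non-invertible, hence radical. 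This gives a length-two path $\t^{k+1}X,\ E_k,\ \t^{k}X$ whose internal morphisms lie in the radical.

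For ``$X$ directing $\Rightarrow \r(X,X)=0$'' I argue by contrapositive. Suppose $\r(X,X)\le -1$ and fix $n\ge 1$ with a path $X=X_0,X_1,\dots,X_m=\t^{n}X$. Concatenating on the right with the $n$-fold iterate of the building block (for $k=0,1,\dots,n-1$) produces a loop at $X$ of length $m+2n\ge 2$; the final segment $\t X,\ E_0,\ X$ consists of radical morphisms, so the loop is nontrivial and $X$ fails to be directing. Conversely, assume $X$ is not directing and fix a nontrivial loop $X=X_0,\dots,X_m=X$. For every $n\ge 1$, concatenating this loop with the $n$-fold iterate of the building block (now for $k=-1,-2,\dots,-n$) yields a path from $X$ to $\t^{-n}X$, hence $\r(X,X)\le -n$ for all $n$ and $\r(X,X)=-\infty$.

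The main technical obstacle is that the AR-middle term $E^{(k)}$ is in general decomposable, so Proposition \ref{proposition:RingelTriangles} cannot be applied directly to the AR-triangle itself; one first has to rotate so that indecomposables sit in the first two slots of the distinguished triangle, and only then extract the desired information about the indecomposable summands of the now-third slot. Once this is in hand the two implications are mirror images of each other, and both reduce to the elementary observation that concatenating a loop with a chain of radical length-two paths again gives a path.
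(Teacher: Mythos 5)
Your Direction~1 (``directing $\Rightarrow \r(X,X)=0$'', by contrapositive) is essentially sound, but your Direction~2 contains a sign error that is fatal. By definition, $\r(X,X)=\inf\{n\in\bZ\mid \text{there is a path from }X\text{ to }\t^{-n}X\}$. Your concatenated path ends at $\t^{-n}X$ with $n\ge 1$, which only certifies $\r(X,X)\le n$ --- weaker than the trivially known $\r(X,X)\le 0$. To conclude $\r(X,X)=-\infty$ you would need paths from $X$ to $\t^{m}X$ for arbitrarily large $m\ge 1$ (that is, to $\t^{-n}X$ with $n$ arbitrarily \emph{negative}), and the Auslander--Reiten triangle only supplies paths ``downward'' along $\tau$-orbits, $\t^{k+1}X\to E_k\to\t^{k}X$, never ``upward''. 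A related symptom: your Direction~2 never actually uses the nontriviality of the loop, so if the computation were correct it would prove $\r(X,X)=-\infty$ for \emph{every} $X$, which is absurd. The paper proves this implication quite differently, via Proposition~\ref{proposition:LeftmostComposition}: assuming $\r(X,X)=0$ and a nontrivial loop, all round-trip distances inside the loop vanish, so the composite endomorphism is nonzero, hence not nilpotent (iterate the loop), hence invertible in the local ring $\End X$ --- contradicting nontriviality. The remaining piece, that $\r(X,X)\in\{0,-\infty\}$, is a free consequence of the triangle inequality ($\r(X,X)\le 2\,\r(X,X)$), after which both ``iff'' formulations coincide.

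There is also a secondary gap in Direction~1: you assert $E^{(k)}\ne 0$ for all $k$, but this can fail. The middle term vanishes precisely when $\t^{k}X\cong\t^{k+1}X[1]$, i.e.\ when $\bS X\cong X$ (for instance when $\AA$ is semi-simple). In that situation your building block does not exist at all, so this case would need to be disposed of separately. Since the paper's route through Proposition~\ref{proposition:LeftmostComposition} never invokes middle terms of Auslander--Reiten triangles, it sidesteps this issue entirely.

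Summarizing: your approach genuinely differs from the paper's (explicit path surgery with AR-middle terms versus nonvanishing of compositions in the local endomorphism ring), and while the first half can be made to work with the $\bS X\cong X$ caveat patched, the second half collapses and needs to be replaced by an argument along the paper's lines.
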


\begin{proof}
It is clear that directing implies $\r(X,X) = 0$.  To prove the other implication, assume there is a nontrivial path
$$X = X_0 \stackrel{f_0}{\rightarrow} X_1 \stackrel{f_1}{\rightarrow} \cdots \stackrel{f_{n-1}}{\rightarrow} X_{n} \stackrel{f_n}{\rightarrow} X.$$
Since $\r (X,X) = 0$, the triangle inequality yields $\r (X_i,X_j) = 0$ for all $i,j \in \{0, \ldots, n\}$.  Proposition \ref{proposition:LeftmostComposition} now gives that $f = f_n \ldots f_1 f_0 $ is non-zero.

Since $X$ is indecomposable, $\End X$ is a finite dimensional local algebra and thus every element is either nilpotent or invertible.  Proposition \ref{proposition:LeftmostComposition} yields $f$ is not nilpotent, hence it is invertible, a contradiction.
\end{proof}

\begin{corollary}\label{corollary:DirectingCriterium}
Let $X,Y \in \ind \CC$ such that $\r(X,Y) \in \bZ$, then both $X$ and $Y$ are directing.
\end{corollary}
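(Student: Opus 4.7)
The plan is to argue by contrapositive using Proposition \ref{Proposition:DirectingCriterium} together with the triangle inequality from Proposition \ref{proposition:rTriangle}.

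Suppose, for contradiction, that $X$ is non-directing. By Proposition \ref{Proposition:DirectingCriterium}, this means $\r(X,X) = -\infty$. Applying the triangle inequality to the triple $(X,X,Y)$ yields
$$\r(X,Y) \leq \r(X,X) + \r(X,Y),$$
and since $\r(X,Y) \in \bZ$ is finite, the right-hand side is well-defined and equal to $-\infty$. This forces $\r(X,Y) = -\infty$, contradicting the assumption that $\r(X,Y) \in \bZ$. Hence $X$ is directing.

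The same reasoning with the triple $(X,Y,Y)$ shows that if $\r(Y,Y) = -\infty$, then $\r(X,Y) \leq \r(X,Y) + \r(Y,Y) = -\infty$, again contradicting $\r(X,Y) \in \bZ$. Therefore $\r(Y,Y) = 0$ and $Y$ is directing by Proposition \ref{Proposition:DirectingCriterium}.

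There is no real obstacle here; the corollary is essentially a formal consequence of the triangle inequality together with the characterization of directing objects via $\r(X,X)$. The only subtle point is making sure that the arithmetic $-\infty + n$ is meaningful in the statement of the triangle inequality, which it is since a finite integer added to $-\infty$ is unambiguously $-\infty$, so the sum ``is defined'' in the sense of Proposition \ref{proposition:rTriangle}.
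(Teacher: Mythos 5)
Your proof is correct and takes essentially the same route as the paper: apply the triangle inequality with the triples $(X,X,Y)$ and $(X,Y,Y)$, then invoke Proposition \ref{Proposition:DirectingCriterium}. The only cosmetic difference is that you phrase it as a contradiction (assuming $\r(X,X) = -\infty$), whereas the paper directly cancels the finite term $\r(X,Y)$ from both sides of the inequality to conclude $\r(X,X) \geq 0$ and hence $\r(X,X) = 0$; both are the same computation.
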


\begin{proof}
Using triangle inequality, we have $\r (X,Y) \leq \r(X,X) + \r(X,Y)$, and hence $0 \leq \r(X,X)$.  We always have $\r(X,X) \leq 0$, so we get $\r(X,X) = 0$.  Proposition \ref{Proposition:DirectingCriterium} shows $X$ is directing. Showing $Y$ is directing is similar.
\end{proof}

\begin{corollary}
Let $X \in \ind \CC$.  If $X$ is directing, then so is every indecomposable $Y$ in the Auslander-Reiten component of $X$.
\end{corollary}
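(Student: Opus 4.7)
The strategy is to invoke Corollary \ref{corollary:DirectingCriterium}: it suffices to show that $\r(X,Y) \in \bZ$ for every indecomposable $Y$ lying in the Auslander-Reiten component of $X$.

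First I would establish the following finiteness fact: if $X$ and $Y$ lie in the same Auslander-Reiten component of $\CC$, then $\r(X,Y)<+\infty$ and $\r(Y,X)<+\infty$. An Auslander-Reiten component of a triangulated category with Serre duality is a stable translation quiver whose arrows correspond to nonzero irreducible morphisms. Given any walk from $X$ to $Y$ in the underlying quiver of the component, the mesh relations allow me to replace every ``backward'' step $Z_{i+1}\to Z_i$ by the forward step $\tau Z_i\to Z_{i+1}$, at the cost of an extra $\tau$-shift. Iterating this on a walk from $X$ to $Y$ produces a directed path $X\to\cdots\to \tau^{m}Y$ for some $m\in\bZ$, which by definition yields $\r(X,Y)\leq -m<+\infty$; symmetrically $\r(Y,X)<+\infty$.

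Next, since $X$ is directing, Proposition \ref{Proposition:DirectingCriterium} gives $\r(X,X)=0$. The triangle inequality (Proposition \ref{proposition:rTriangle}) then yields
$$0=\r(X,X)\leq \r(X,Y)+\r(Y,X),$$
and the right-hand side is well-defined because both summands are strictly less than $+\infty$. Since each summand lies in $\bZ\cup\{-\infty\}$ and their sum is non-negative, neither can equal $-\infty$. Hence $\r(X,Y)\in\bZ$, and Corollary \ref{corollary:DirectingCriterium} implies that $Y$ is directing, as required.

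The main obstacle is the straightening step, i.e.\ converting an arbitrary walk in an Auslander-Reiten component into a directed path up to a $\tau$-shift; once this standard feature of stable translation quivers is in hand, the remainder is a short application of the triangle inequality combined with the characterisation of directing objects already established in Proposition \ref{Proposition:DirectingCriterium} and Corollary \ref{corollary:DirectingCriterium}.
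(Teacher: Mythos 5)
Your proof is correct and follows essentially the same route as the paper: deduce from Proposition \ref{Proposition:DirectingCriterium} that $\r(X,X)=0$, apply the triangle inequality $0 \leq \r(X,Y)+\r(Y,X)$, and use finiteness of the light cone distances within an Auslander--Reiten component to conclude $\r(X,Y)\in\bZ$, at which point Corollary \ref{corollary:DirectingCriterium} finishes. The only substantive difference is that the paper simply asserts $\r(X,Y)<\infty$ for $Y$ in the same component, whereas you supply the justification (straightening a walk in the translation quiver into a directed path up to a $\tau$-shift, which is valid here since $\tau$ is an autoequivalence of $\Db\AA$ so the component has no boundary vertices); this makes your write-up slightly more self-contained but does not change the argument.
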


\begin{proof}
Since $Y$ lies in the same Auslander-Reiten component as $X$, we know $\r(X,Y) < \infty$.  Then by Proposition \ref{Proposition:DirectingCriterium} and triangle inequality, $0 = \r(X,X) \leq \r(X,Y)+\r(Y,X)$, and hence $\r(Y,X) > -\infty$.  Invoking Corollary \ref{corollary:DirectingCriterium} completes the proof.
\end{proof}

\subsection{Round trip distance}

For $X,Y \in \ind \CC$, we define the \emph{round trip distance}\index{round trip distance!for triangulated categories} $\d(X,Y)$ as the symmetrization of the right light cone distance, thus
$$\d(X,Y) = \r(X,Y) + \r(Y,X),$$
whenever this is well-defined.  It is easy to see that $\d(X,Y)$ depends only on the $\t$-orbit of $X$ and $Y$, thus $\d(X,Y) = \d(\t^n X, \t^m Y)$ for all $m,n \in \bZ$ (compare with Lemma \ref{lemma:LightConeDistanceAndShifts}).

When we restrict ourselves to indecomposables of $\bZ \QQ$, where $\QQ$ is the category of projectives of a hereditary category $\AA$ with Serre duality, then we know that both $\r(X,Y)$ and $\r(Y,X)$ will be in $\bZ \cup \{ \infty \}$, hence $\d(X,Y)$ will be well-defined.

Following proposition shows $\d$ defines a pseudometric.

\begin{proposition}\label{proposition:dDistance}
Let $\bZ \QQ$ as above.  For all $X,Y,Z \in \ind \bZ \QQ$ we have
\begin{enumerate}
\item $\d(X,Y) \geq 0$
\item $\d(X,X) = 0$
\item $\d(X,Y) = \d(Y,X)$
\item $\d(X,Z) \leq \d(X,Y) + \d(Y,Z)$
\end{enumerate}
\end{proposition}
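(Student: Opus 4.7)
The plan is to derive all four properties directly from the triangle inequality for $d^\bullet$ (Proposition \ref{proposition:rTriangle}) together with the directing criterion of Proposition \ref{Proposition:DirectingCriterium}, so the proof is mostly formal manipulation once we know the key fact that $d^\bullet(X,X)=0$ for $X \in \ind \bZ \QQ$.

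First I would establish this key fact. Since $\QQ$ is the category of projectives of a hereditary category with Serre duality, every indecomposable projective is directing (there are no nontrivial paths in a hereditary category ending at a projective). Hence by Proposition \ref{Proposition:DirectingCriterium}, $d^\bullet(P,P)=0$ for every $P\in\ind\QQ$. An arbitrary $X\in\ind \bZ\QQ$ has the form $X=\tau^n P$ with $P\in\ind\QQ$, and by Lemma \ref{lemma:LightConeDistanceAndShifts} we get
$$d^\bullet(\tau^n P,\tau^n P)=d^\bullet(\tau^{-n}\tau^n P,P)=d^\bullet(P,P)=0,$$
so $d^\bullet(X,X)=0$ for every $X\in\ind \bZ\QQ$.

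With this in hand I would verify the four properties. Property (2) is immediate: $d(X,X)=2\,d^\bullet(X,X)=0$. Property (3) is immediate from the symmetrized definition. For property (1), the triangle inequality applied to the triple $(X,Y,X)$ gives
$$0=d^\bullet(X,X)\leq d^\bullet(X,Y)+d^\bullet(Y,X)=d(X,Y),$$
where the sum is well-defined because, as noted in the statement, both $d^\bullet(X,Y)$ and $d^\bullet(Y,X)$ lie in $\bZ\cup\{\infty\}$. For property (4) I would simply write out
$$d(X,Z)=d^\bullet(X,Z)+d^\bullet(Z,X)\leq \bigl(d^\bullet(X,Y)+d^\bullet(Y,Z)\bigr)+\bigl(d^\bullet(Z,Y)+d^\bullet(Y,X)\bigr)=d(X,Y)+d(Y,Z),$$
again using Proposition \ref{proposition:rTriangle} twice.

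There is essentially no obstacle here: the only subtle point is the well-definedness of the sums, which is guaranteed by restricting to $\ind \bZ \QQ$ (so no $-\infty$ appears), and the identification $d^\bullet(X,X)=0$, which is taken care of by the directing property of the projectives together with $\tau$-invariance of the diagonal. The four inequalities are then routine consequences of the already-established triangle inequality for $d^\bullet$.
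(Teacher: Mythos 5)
Your proof is correct and follows essentially the same route as the paper: establish $\r(X,X)=0$ via the directing criterion, get symmetry from the definition, obtain the triangle inequality for $\d$ from Proposition~\ref{proposition:rTriangle}, and deduce positivity as a consequence. One small imprecision worth flagging: your parenthetical justification that projectives are directing --- ``there are no nontrivial paths in a hereditary category ending at a projective'' --- is false as literally stated (e.g.\ a radical inclusion already gives a nontrivial path into a projective); what one actually needs is the absence of nontrivial \emph{cyclic} paths through a projective, a standard fact that the paper also uses tacitly when asserting $\r(X,Y)\in\bZ\cup\{\infty\}$ for $X,Y\in\ind\bZ\QQ$.
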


\begin{proof}
The claims (2), (3), and (4) follow from Proposition \ref{Proposition:DirectingCriterium}, the definition, and Proposition \ref{proposition:rTriangle}, respectively.
Since then $0 = \d(X,X) \leq \d(X,Y) + \d(Y,X) = 2 \d(X,Y)$, the first claim holds as well.
\end{proof}

A \emph{round trip distance sphere}\index{round trip distance sphere!in a triangulated category} is defined in an obvious way.
\section{Hereditary sections}\label{section:HereditarySections}

Let $\AA$ be an abelian hereditary Ext-finite category with Serre duality.  In what follows, we shall discuss the category of projectives of hereditary categories $\HH$ derived equivalent to $\AA$.  These projectives will form hereditary sections in $\Db \AA$ and, likewise, a hereditary section in $\Db \AA$ will give a hereditary category $\HH$ derived equivalent to $\AA$.

We start with a some results concerning split $t$-structures. 

\subsection{Split $t$-structures}\label{subsection:Splits}

The concept of a $t$-structure was introduced by Be{\u{\i}}linson, Bernstein and Deligne in \cite{Beilinson82}.  Specifically, we will be interested in so-called split $t$-structures of which the heart will be a hereditary category (\cite{Ringel05}).

\begin{definition}\label{definition:t}
A \emph{$t$-structure} on a triangulated category $\CC$ is a pair $(D^{\geq 0}, D^{\leq 0})$ of non-zero full subcategories of $\CC$ satisfying the following conditions, where we denote $D^{\leq n} = D^{\leq 0} [-n]$ and $D^{\geq n} = D^{\geq 0} [-n]$
\begin{enumerate}
\item $D^{\leq 0} \subseteq D^{\leq 1}$ and $D^{\geq 1} \subseteq D^{\geq 0}$
\item $\Hom(D^{\leq 0}, D^{\geq 1}) = 0$
\item \label{enumerate:Triangles} $\forall Y \in \CC$, there exists a triangle $X \to Y \to Z \to X[1]$ with $X \in D^{\leq 0}$ and $Z \in D^{\geq 1}$.
\end{enumerate}
Let $D^{[n,m]} = D^{\geq n} \cap D^{\leq m}$ .  We will say the $t$-structure $(D^{\geq 0}, D^{\leq 0})$ is \emph{bounded} if and only if every object of $\CC$ is contained in some $D^{[n,m]}$.  We call $(D^{\geq 0}, D^{\leq 0})$ \emph{split} if every triangle occurring in (\ref{enumerate:Triangles}) is split.
\end{definition}

It is shown in \cite{Beilinson82} that the \emph{heart} $\HH = D^{\leq 0} \cap D^{\geq 0}$ is an abelian category.  Unfortunately, if $\AA$ is an abelian category, then not every $t$-structure on $\Db \AA$ defines a heart $\HH$ which is derived equivalent to $\AA$.  Following proposition shows that in our setting we may expect derived equivalence between $\AA$ and $\HH$.

\begin{proposition}\label{proposition:HereditaryTilting}
Let $\AA$ be an abelian category and let $(D^{\geq 0}, D^{\leq 0})$ be a bounded $t$-structure on $\Db \AA$.  If all the triangles $\tri X Y Z$ with $X \in D^{\leq 0}$ and $Z \in D^{\geq 1}$ split, then $D^{\leq 0} \cap D^{\geq 0} = \HH$ is hereditary and $\Db \AA \cong \Db \HH$ as triangulated categories.
\end{proposition}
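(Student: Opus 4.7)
The plan is to reduce this proposition to the theorem stated just above it, which takes as input a full subcategory $\HH$ of $\Db \AA$ satisfying two conditions: (a) $\Db \AA$ is the additive closure of $\bigcup_{t \in \bZ} \HH[t]$, and (b) $\Hom(\HH[s],\HH[t]) = 0$ whenever $t < s$. Since the heart $\HH = D^{\leq 0} \cap D^{\geq 0}$ is automatically a full subcategory of $\Db \AA$, the proof reduces to verifying these two properties.

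The Hom-vanishing (b) is essentially tautological: for $X, Y \in \HH$ and integers $t < s$, one has $\Hom_{\Db \AA}(X[s], Y[t]) = \Hom_{\Db \AA}(X, Y[t-s])$ with $X \in D^{\leq 0}$ and $Y[t-s] \in D^{\geq s-t} \subseteq D^{\geq 1}$, so the defining axiom $\Hom(D^{\leq 0}, D^{\geq 1}) = 0$ finishes this step.

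The substantive content is (a), for which I would prove the stronger cohomological decomposition $Y \cong \bigoplus_{n \in \bZ} H^n(Y)[-n]$ for every $Y \in \Db \AA$, where $H^n(Y) \in \HH$ is the $n$-th cohomology object relative to the given $t$-structure. Fixing $Y$ and using boundedness, choose integers $a \leq b$ with $Y \in D^{[a,b]}$, and induct on the width $b - a$. If $a = b$ then $Y \in \HH[-a]$ and there is nothing to do. Otherwise, consider the canonical truncation triangle $\tau^{\leq b-1} Y \to Y \to \tau^{\geq b} Y \to (\tau^{\leq b-1} Y)[1]$ and shift it by $-(b-1)$: the resulting triangle has its first term in $D^{\leq 0}$ and its third term in $D^{\geq 1}$, so by hypothesis it splits, whence the original splits as well. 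This yields $Y \cong \tau^{\leq b-1} Y \oplus \tau^{\geq b} Y$. Since $\tau^{\geq b} Y = H^b(Y)[-b]$ and $\tau^{\leq b-1} Y \in D^{[a,b-1]}$ has strictly smaller width, the induction hypothesis completes the decomposition.

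With both hypotheses verified, the preceding theorem delivers the conclusion that $\HH$ is abelian, hereditary, and derived equivalent to $\AA$. The only real obstacle is recognizing in the inductive step that the shifted truncation triangle is precisely of the form to which the splitness hypothesis applies; once this is observed, the remainder is formal bookkeeping built on the $t$-structure axioms and the boundedness assumption.
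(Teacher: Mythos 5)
Your verifications of (a) and (b) are correct as far as they go: the Hom-vanishing is the $t$-structure axiom $\Hom(D^{\leq 0}, D^{\geq 1}) = 0$ after shifting, and the decomposition $Y \cong \bigoplus_n H^n(Y)[-n]$ follows by the induction you describe, since the shifted truncation triangle is indeed of the form to which the split hypothesis applies. But the overall argument is circular. The result you invoke is Corollary \ref{corollary:InverseOfHereditary}, which appears \emph{after} Proposition \ref{proposition:HereditaryTilting} in the paper and whose triangulated content rests on it: as the remark following that corollary explains, \cite[Theorem 1]{Ringel05} (which the corollary quotes) only produces $\Db \AA \cong \Db \HH$ \emph{as additive categories}, and the upgrade to a \emph{triangulated} equivalence is precisely what Proposition \ref{proposition:HereditaryTilting} contributes. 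You therefore cannot use the corollary to prove the proposition.

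The substantive gap is the triangulated equivalence $\Db \AA \cong \Db \HH$, which does not follow from the decomposition (a)/(b) by ``formal bookkeeping'' on the $t$-structure axioms; some realization-functor machinery is required to promote an additive identification to an exact one. The paper's proof bypasses the decomposition entirely: it embeds $\Db \AA$ fully and exactly into $\Db(\Ind \AA)$ using \cite[Proposition 2.14]{LoVdB06} (with $\Ind \AA$ a Grothendieck category), deduces that $\HH$ is hereditary from \cite[Lemma I.3.5]{ReVdB02}, and then applies the realization functor of \cite[Proposition 3.1.16]{Beilinson82} to obtain the triangulated equivalence. A complete proof of this proposition must contain an argument of that kind rather than outsource it to a later result that depends on the one being proved.
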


\begin{proof}
It is well known that the category $\Ind\AA$ of left exact contravariant functors from $\AA$ to $\Mod{k}$ is a $k$-linear Grothendieck category and that the Yoneda embedding of $\AA$ into $\Ind\AA$ is a full and exact embedding.  By \cite[Proposition 2.14]{LoVdB06}, this embedding extends to a full and exact embedding $\Db \AA \to \Db \Ind \AA$.

Since all triangles $\tri X Y Z$ with $X \in D^{\leq 0}$ and $Z \in D^{\geq 1}$ split, we may use \cite[Lemma I.3.5]{ReVdB02} to see that $\HH$ is hereditary.  It is now an easy consequence of \cite[Proposition 3.1.16]{Beilinson82} that $\Db \AA \cong \Db \HH$ as triangulated categories.
\end{proof}

\begin{remark}
Since the above category $\Ind \AA$ is not a $\UU$-category, we tacitly assume an enlargement of the universe.
\end{remark}

We will say a subcategory $\DD$ of $\Db \AA$ is \emph{closed under successors} if it satisfies the following property: if $X \in \DD$ and $Y \in \ind \Db \AA$ such that $\Hom(X,Y) \not= 0$ or $Y \cong X[1]$, then $Y \in \DD$.  As the following theorem shows, this is a useful property to find split $t$-structures. 

\begin{theorem}\label{theorem:RightClosed}
Let $\AA$ be a connected abelian category and let $\DD$ be a nontrivial full subcategory of $\Db \AA$ closed under successors, then $(D^{\geq 0}, D^{\leq 0})$ is a bounded and split $t$-structure on $\Db \AA$ where $D^{\leq 0}=\DD$ and $D^{\geq 1} =  \DD^\perp$.
\end{theorem}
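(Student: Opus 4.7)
The plan is to derive a dichotomy on indecomposables, which will give the $t$-structure and the split property essentially for free; the connectedness of $\AA$ is needed only to obtain boundedness at the end.

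First I would show that $\ind\Db\AA=\ind\DD\sqcup\ind\DD^\perp$. Given $Y\in\ind\Db\AA$ not lying in $\DD$, the contrapositive of the definition of ``closed under successors'' yields $\Hom(X,Y)=0$ for every $X\in\ind\DD$, so $Y\in\DD^\perp$; conversely, an indecomposable in $\DD\cap\DD^\perp$ would satisfy $\End Y=0$, which is impossible. By Krull--Schmidt, every $Y\in\Db\AA$ then decomposes uniquely as $X\oplus Z$ with $X\in\DD$ and $Z\in\DD^\perp$. The triangle $X\to Y\to Z\to X[1]$ thus has zero connecting morphism, yielding axiom~(3) of a $t$-structure and the split property in one stroke.

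The remaining axioms are formal consequences of the closure hypothesis. The inclusion $\DD\subseteq\DD[-1]$ is the $[1]$-part of successor closure. For $\DD^\perp\subseteq\DD^\perp[1]$, one uses $\Hom(X,Y[-1])=\Hom(X[1],Y)=0$ whenever $X\in\DD$ and $Y\in\DD^\perp$, since $X[1]\in\DD$. The orthogonality $\Hom(\DD,\DD^\perp)=0$ is the definition of $\DD^\perp$.

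The main obstacle is boundedness: for every indecomposable $Y$ one needs $Y[n]\in\DD$ and $Y[-m]\in\DD^\perp$ for suitable $n,m$. Fix an indecomposable $X_0\in\DD$, available since $\DD$ is nontrivial. Given $Y\in\ind\Db\AA$, connectedness of $\AA$ produces a zigzag of indecomposables $X_0,X_1,\dots,X_r=Y$ linked at each step by a nonzero Hom in one direction. I propose to propagate an invariant of the form ``$X_i[k_i]\in\DD$'' along this zigzag: a forward edge $X_i\to X_{i+1}$ uses successor closure directly with no new shift, while a reverse edge $X_{i+1}\to X_i$ is handled by shifting both sides by $[1]$, so that $X_i[k_i+1]$ remains in $\DD$ by the $[1]$-part of the closure, and the dichotomy is invoked to rule out $X_{i+1}[k_i+1]\in\DD^\perp$. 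A symmetric argument starting from an indecomposable of $\DD^\perp$ produces $Y[-m]\in\DD^\perp$. Together the two bounds place $Y$ in $D^{[1-m,n]}$. The delicate point is the reverse-edge step, since a priori a nonzero map from $\DD^\perp$ to $\DD$ need not vanish; this is where Serre duality and the hereditary structure implicit in the surrounding section really enter the argument, forcing such maps to vanish after enough shifts and completing boundedness.
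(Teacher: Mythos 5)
Your dichotomy $\ind\Db\AA=\ind\DD\sqcup\ind\DD^\perp$, the resulting split triangles, and the checks of axioms (1) and (2) of Definition \ref{definition:t} are all correct, and they make explicit the part the paper dismisses as ``straightforward to check.''

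The boundedness argument, however, contains a genuine gap which you have noticed but not closed. On a reverse edge with $\Hom(X_{i+1},X_i)\neq 0$, knowing $X_i[k]\in\DD$ imposes no constraint whatsoever on $X_{i+1}[k]$: a nonzero map from $\DD^\perp$ to $\DD$ is perfectly compatible with the vanishing $\Hom(\DD,\DD^\perp)=0$, so ``invoking the dichotomy to rule out $X_{i+1}[k_i+1]\in\DD^\perp$'' simply does not follow. Shifting both sides by $[1]$ is also no help, since $\Hom(X_{i+1}[k'],X_i[k'])\cong\Hom(X_{i+1},X_i)$ is independent of $k'$; these maps do not ``vanish after enough shifts.'' What actually repairs the reverse-edge step is Serre duality: $\Hom(X_{i+1},X_i)\neq 0$ gives $\Hom(X_i,\bS X_{i+1})\neq 0$, so $\bS X_{i+1}=\tau X_{i+1}[1]$ is a forward successor of $X_i$, and an Auslander--Reiten triangle then furnishes a genuinely forward two-step path from $\tau X_{i+1}$ to $X_{i+1}$, allowing your invariant to propagate. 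But Ext-finiteness, heredity and a Serre functor are not among the hypotheses of the theorem as stated, and the paper itself makes no attempt at a direct argument here: it delegates boundedness to \cite[Lemma 7]{Ringel05}. So your plan stalls precisely where the real work lies, and the extra structure you correctly suspect you need is not available from the statement you are trying to prove.
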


\begin{proof}
It is straightforward to check $(D^{\geq 0}, D^{\leq 0})$ defines a split $t$-structure.  It follows from \cite[Lemma 7]{Ringel05} that it is also bounded.
\end{proof}

Combining the previous theorem with \cite[Theorem 1]{Ringel05}, we get the following attractive description of a hereditary heart in a derived category.

\begin{corollary}\label{corollary:InverseOfHereditary}
Let $\AA$ be an abelian category and let $\HH$ be a full subcategory of $\Db \AA$ such that $\Db \AA$ is the additive closure of $\bigcup_{t \in \bZ} \HH[t]$ and $\Hom(\HH[s],\HH[t])=0$ for $t < s$, then $\HH$ is an abelian hereditary category derived equivalent with $\AA$.
\end{corollary}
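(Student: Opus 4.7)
The plan is to exhibit $\HH$ as the heart of a bounded split $t$-structure on $\Db\AA$, after which Proposition \ref{proposition:HereditaryTilting} immediately yields that $\HH$ is hereditary abelian and $\Db\AA\cong\Db\HH$.

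For the setup I would let $\DD$ be the full replete additive subcategory of $\Db\AA$ generated by $\bigcup_{s\geq 0}\HH[s]$, with the aim of applying Theorem \ref{theorem:RightClosed} to it. Verifying that $\DD$ is closed under successors rests on one key observation: every indecomposable $Y\in\Db\AA$ lies in a unique $\HH[t]$. Existence follows from the additive closure hypothesis (after, if necessary, replacing $\HH$ by its Karoubian closure in $\Db\AA$, which does not affect the derived category), while uniqueness follows because if $Y\in\HH[t_1]\cap\HH[t_2]$ with $t_1<t_2$, then $\id_Y$ represents a non-zero element of $\Hom(\HH[t_2],\HH[t_1])$, violating the vanishing hypothesis. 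Given this, the shift condition $\HH[s][1]=\HH[s+1]\subseteq\DD$ is obvious, and any non-zero map from an indecomposable in $\HH[s]$ with $s\geq 0$ to an indecomposable $Y\in\HH[t]$ forces $t\geq s\geq 0$, hence $Y\in\DD$.

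Theorem \ref{theorem:RightClosed} then produces a bounded split $t$-structure with $D^{\leq 0}=\DD$ and $D^{\geq 1}=\DD^{\perp}$. Next I would identify the heart $D^{\leq 0}\cap D^{\geq 0}$ with $\HH$: the inclusion $\HH\subseteq D^{\leq 0}$ is clear, and $\HH\subseteq D^{\geq 0}$ follows from $\Hom(\DD[1],\HH)=0$, which is the hypothesis applied to the pairs $(\HH[s+1],\HH[0])$ for $s\geq 0$. Conversely, for an indecomposable $Y$ in the heart lying in its unique $\HH[t]$, membership in $\DD$ forces $t\geq 0$ while orthogonality $\Hom(\DD[1],Y)=0$ forces $t\leq 0$, so $t=0$ and $Y\in\HH$. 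Proposition \ref{proposition:HereditaryTilting} then closes the argument.

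I expect the main obstacle to be the uniqueness-of-shift lemma for indecomposables, which requires $\HH$ to be closed under direct summands and $\Db\AA$ to admit a sufficient Krull--Schmidt-type decomposition; once this is in place, the remaining steps are mechanical manipulations of the Hom-vanishing inequality. A secondary technicality is that Theorem \ref{theorem:RightClosed} is stated under a connectedness assumption on $\AA$, easily addressed by decomposing $\AA$ into its connected components and running the argument block by block.
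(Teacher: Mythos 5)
Your proof follows the same route as the paper: exhibit $\HH$ as the heart of a bounded split $t$-structure via Theorem~\ref{theorem:RightClosed} and then invoke Proposition~\ref{proposition:HereditaryTilting}. The only substantive difference is that you re-derive by hand what the paper outsources to Ringel's \cite[Theorem~1]{Ringel05}, namely that every indecomposable of $\Db\AA$ lies in a unique $\HH[t]$; your worry that this needs a Krull--Schmidt hypothesis is misplaced, since the Hom-vanishing makes any idempotent of a finite sum $\bigoplus_t A_t$ with $A_t\in\HH[t]$ triangular and hence conjugate to a block-diagonal one, so an indecomposable summand is already a summand of a single $A_t$; together with idempotent completeness of $\Db\AA$ (which holds for any abelian $\AA$ by Balmer--Schlichting) this gives existence, and the $\id_Y$ argument you give handles uniqueness.
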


\begin{remark}
In \cite[Theorem 1]{Ringel05} one starts with a full subcategory $\HH$ of \emph{a} triangulated category $\TT$ (not necessarily a derived category) and obtains that $\TT$ is equivalent \emph{as additive category} to $\Db \HH$, for a hereditary category $\HH$.  In Theorem \ref{theorem:RightClosed} and Corollary \ref{corollary:InverseOfHereditary} we restrict ourselves to the case where $\TT$ is a derived category (with the induced triangulated structure) and find $\TT \cong \Db \HH$ \emph{as triangulated categories}.
\end{remark}

\subsection{Definition and characterization of hereditary sections}

Before defining a hereditary section, we need a preliminary concept.  Throughout, let $\AA$ be an abelian Ext-finite category with Serre duality and write $\CC = \Db \AA$.

\begin{definition}
Let $\QQ$ be a full subcategory of $\CC$.  We will say $\QQ$ is \emph{convex} if every path in $\CC$ starting and ending in $\QQ$ lies entirely in $\QQ$. A subcategory $\QQ$ of $\CC$ is called \emph{$\t$-convex} if $\bZ \QQ$ is convex.
\end{definition}

\begin{example}
Any object $X \in \ind \CC$ spans a convex subcategory $\QQ$ of $\CC$ if and only if $X$ is directing in $\CC$.
\end{example}

\begin{remark}
Since there is always a trivial path between isomorphic objects, a convex subcategory will always be replete.
\end{remark}

In what follows $\QQ$ will consists only of directing objects.  In this case, we may give an alternative formulation of $\tau$-convex: $\QQ$ will be $\tau$-convex if and only if for every $X \in \ind \CC$, the condition $\d(\QQ,X) \not= \infty$ implies that $\QQ$ meets the $\tau$-orbit of $X$.

\begin{definition}
A \emph{hereditary section} is a nontrivial (= having at least one nonzero object), full, convex, and $\t$-convex additive subcategory $\QQ$ of $\CC$ such that $\QQ$ meets every $\tau$-orbit at most once.
\end{definition}

\begin{remark}
The notion of a hereditary section is self-dual.  If $\QQ$ is a hereditary section in $\CC$, then $\QQ^\circ$ is a hereditary section in $\CC^\circ$.
\end{remark}

\begin{remark}
If $\AA$ is semi-simple, then $\bS \cong id_{\Db \AA}$ such that $\tau \cong [-1]$.  Since a hereditary section $\QQ$ of $\Db \AA$ may meet every $\tau$-orbit at most once, we have that $X \in \Ob \QQ$ implies that $X[n] \not\in \Ob \QQ$ for all $n \not= 0$.
\end{remark}

\begin{example}
If $\AA$ is a hereditary abelian Ext-finite category with Serre duality with $\QQ_\AA$ as category of projectives, then $\QQ_\AA$ is a hereditary section in $\Db \AA$.  In Theorem \ref{theorem:SectionTilting} the converse of this statement will be shown.
\end{example}

\begin{proposition}\label{proposition:SectionByDistance}
The subcategory $\QQ$ is a hereditary section if and only if it is a full and $\t$-convex additive subcategory
$\QQ$ of $\CC$ such that $\r (X, Y) \geq 0$ for all $X, Y \in \ind \QQ$.
\end{proposition}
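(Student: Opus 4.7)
The statement is an equivalence; I will prove the two implications separately, using the interaction between the right light-cone distance $\r$ and the defining conditions of a hereditary section.

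For the ``if'' direction, assume $\QQ$ is full, $\t$-convex, additive, with $\r(X,Y) \geq 0$ on $\ind\QQ$. First, $\QQ$ meets each $\tau$-orbit at most once: if $X$ and $\tau^n X$ both lay in $\ind\QQ$ for some $n \geq 1$, then the identity on $\tau^n X$ witnesses a path from $\tau^n X$ to $\t^{-(-n)}X$, giving $\r(\tau^n X, X) \leq -n < 0$, contradicting the hypothesis. For convexity, take a path $X_0 \to \cdots \to X_m$ in $\CC$ with $X_0, X_m \in \QQ$. By $\t$-convexity every $X_i$ sits in $\bZ\QQ$, so the previous step lets us write uniquely $X_i \cong \tau^{n_i} X_i'$ with $X_i' \in \ind\QQ$ and $n_i \in \bZ$, where $n_0 = n_m = 0$. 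The nonzero map $X_i \to X_{i+1}$ yields $\r(X_i', X_{i+1}') \leq n_i - n_{i+1}$; combined with $\r \geq 0$ this forces $(n_i)$ to be nonincreasing, and the vanishing endpoints pin all $n_i$ to zero, so every $X_i \in \QQ$.

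For the ``only if'' direction, assume $\QQ$ is a hereditary section. A preparatory step is that $\bZ\QQ$ is closed under Auslander-Reiten middle terms: for $X \in \ind\bZ\QQ$ with AR triangle $\tau X \to E \to X \to$, Proposition \ref{proposition:RingelTriangles} supplies, for each indecomposable summand $E_i$ of $E$, nonzero noninvertible maps $\tau X \to E_i \to X$, so $E_i \in \bZ\QQ$ by $\t$-convexity; iterating shows the entire AR component of $X$ lies in $\bZ\QQ$. Now suppose toward contradiction that $\r(X,Y) \leq -1$ for some $X, Y \in \ind\QQ$; pick $n \geq 1$ with a path from $X$ to $\tau^n Y$, which $\t$-convexity places in $\bZ\QQ$. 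Concatenating with the AR-component path $\tau^n Y \to E_{n-1} \to \tau^{n-1} Y \to \cdots \to E_0 \to Y$ (again in $\bZ\QQ$ by the preparatory step) produces a path in $\bZ\QQ$ from $X \in \QQ$ to $Y \in \QQ$. Convexity of $\QQ$ then forces the entire path into $\QQ$; in particular $\tau^n Y \in \QQ$.

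The remaining obstacle --- the crux of the forward direction --- is to convert this into a contradiction with the ``at most once per $\tau$-orbit'' axiom. Provided $\tau^n Y \not\cong Y$, the contradiction is immediate. The $\tau$-periodic case $\tau^n Y \cong Y$ with minimal period $n \geq 2$ is handled by a parallel convexity argument: the AR loop $Y \to E_0 \to \tau^{-1} Y \to \cdots \to \tau^{-(n-1)} Y \to E_{n-1} \to Y$ connects $Y$ to itself inside $\bZ\QQ$, so convexity drags $\tau^{-1}Y, \ldots, \tau^{-(n-1)}Y$ into $\QQ$ --- pairwise non-isomorphic members of the $\tau$-orbit of $Y$. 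The genuinely delicate case $n=1$ (a homogeneous tube) does not arise in the paper's intended setting, where $\AA$ is generated by preprojective objects and thus has no tubes. Once periodicity is excluded, the argument of the previous paragraph yields the desired contradiction, giving $\r(X,Y) \geq 0$ and completing the proof.
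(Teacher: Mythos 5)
Your proof is correct and takes essentially the same route as the paper's: for the ``if'' direction, the $\tau$-orbit uniqueness from $\r(X,\tau^{-n}X)=-n$ and then the triangle inequality combined with Lemma~\ref{lemma:LightConeDistanceAndShifts} for convexity (the paper runs this on a two-step path $X \to Z \to Y$ rather than tracking shifts $n_i$ along a general path, but the bookkeeping amounts to the same thing); for ``only if'', a path from $X$ to $\tau^n Y$, concatenated with an AR path down to $Y$, and convexity. Two remarks. First, the $\tau$-periodicity worry you flag is legitimate if the statement is read in a vacuum, but the paper disposes of it by a standing convention announced immediately before the definition of a hereditary section: \emph{``In what follows $\QQ$ will consist only of directing objects.''} By Proposition~\ref{Proposition:DirectingCriterium} this gives $\r(Y,Y)=0$ for every $Y\in\ind\QQ$ and rules out $\tau$-periodicity entirely, so neither your $n\geq 2$ case nor your $n=1$ case arises; the ``generated by preprojectives'' hypothesis you invoke is not actually in force in \S\ref{section:HereditarySections}. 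Second, your preparatory lemma that $\bZ\QQ$ is closed under AR middle terms is more than is needed: the AR path from $\tau^n Y$ down to $Y$ lives in $\CC$ unconditionally, and convexity of $\QQ$ itself, applied to the concatenated path from $X\in\ind\QQ$ to $Y\in\ind\QQ$, is what forces $\tau^n Y$ (and the intermediate terms) into $\QQ$; you do not first need them in $\bZ\QQ$.
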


\begin{proof}
We may assume $\Db \AA$ is connected.  Furthermore, the statement is trivial if $\AA$ is semi-simple, thus assume the global dimension of $\AA$ is at least one.

Assume $\QQ$ is a hereditary section in $\Db \AA$.  If $\r (X, Y) < 0$, then there is a path from $X$ to $\tau Y$.  Since $\AA$ is not semi-simple, there is also a path from $\tau Y$ to $Y$ and thus, using that $\QQ$ is convex, we see that $\tau Y \in \QQ$, a contradiction.  This proves one direction.

Assume $\QQ$ is a full and $\t$-convex additive subcategory of $\ind \CC$ such that $\r (X, Y) \geq 0$ for all $X,
Y \in \QQ$.  Since $\r (X,  \t^{-n} X) < 0$ for all $n > 0$,
$\QQ$ contains at most one object from each $\t$-orbit.

Assume $X, Y \in \QQ$ with paths from $X$ to $Z$ and from $Z$ to $Y$, thus $\r(X,Z) \leq 0$ and $\r(Z,Y) \leq 0$. Since $\QQ$ is $\t$-convex, $\QQ$ contains an object of the $\t$-orbit of $Z$.  Using the triangle inequality, we find $\r(X,Y) \leq \r(X,Z) + \r(Z,Y) \leq 0$.  Since we have assumed $\r(X,Y) \geq 0$, we see $\r (X, Z) = 0$ and $\r (Z, Y) = 0$.  Thus Lemma \ref{lemma:LightConeDistanceAndShifts} shows that the object $\QQ$ contains from the $\tau$-orbit of $Z$ must be $Z$ itself.  Hence $\QQ$ is convex.
\end{proof}

We now come to the main result about hereditary sections, characterizing them to be categories of projectives of a hereditary heart.

\begin{theorem}\label{theorem:SectionTilting}
Let $\AA$ be a connected Ext-finite abelian category with Serre duality and let $\QQ$ be a hereditary section of $\Db \AA$, then there exists an Ext-finite abelian hereditary category $\HH$ with Serre duality, such that $\AA$ is derived equivalent to $\HH$ and the category of projectives of $\HH$ is given by $\QQ$.
\end{theorem}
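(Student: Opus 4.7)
The plan is to exhibit $\HH$ as the heart of a bounded split $t$-structure on $\Db \AA$ whose aisle is the successor closure of $\QQ$, and then to verify that $\QQ$ coincides with the subcategory of projective objects of this heart. After reducing to the case $\Db\AA$ is connected, I first observe that by Proposition \ref{proposition:SectionByDistance} every $Q \in \ind \QQ$ satisfies $\r(Q,Q) \geq 0$, which combined with the trivial $\r(Q,Q) \leq 0$ forces $\r(Q,Q) = 0$; Proposition \ref{Proposition:DirectingCriterium} then gives that $Q$ is directing. Let $\DD$ be the smallest full replete additive subcategory of $\Db \AA$ containing $\QQ$ and closed under successors in the sense of \S\ref{subsection:Splits}. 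By Theorem \ref{theorem:RightClosed}, $D^{\leq 0} := \DD$ is the aisle of a bounded split $t$-structure on $\Db \AA$, and by Proposition \ref{proposition:HereditaryTilting} its heart $\HH$ is abelian, hereditary, and satisfies $\Db \HH \cong \Db \AA$ as triangulated categories. Ext-finiteness of $\HH$ is inherited from $\AA$ along this equivalence, and the Serre functor on $\Db \AA$ transports to one on $\Db \HH$, giving Serre duality on $\HH$.

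To identify $\QQ$ with the projectives of $\HH$, I first show $\QQ \subseteq \HH$. The inclusion $\QQ \subseteq D^{\leq 0} = \DD$ is immediate; for $\QQ \subseteq D^{\geq 0}$ one must verify $\Hom_{\Db \AA}(\DD[1], Q) = 0$ for every $Q \in \ind \QQ$. Serre duality rewrites the base case $\Hom(Q'[1], Q)$ as $\Hom(Q, \tau Q'[2])^{*}$, so nonvanishing would give a path $Q \to \tau Q'[2]$; the non-existence of such a path is extracted from Lemma \ref{lemma:LightConeDistanceAndShifts}, Proposition \ref{proposition:SectionByDistance}, and the $\t$-convexity of $\QQ$ in $\Db \AA$. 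An induction on the successor-closure generating $\DD$, handling path-successors and shift-successors separately, then propagates the vanishing from $\QQ$ to all of $\DD$.

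The projectivity of $Q \in \ind \QQ$ in $\HH$ reduces, via Serre duality, to showing $\Hom(X, \tau Q) = 0$ for all $X \in \HH$, which in turn follows from $\tau Q \notin \DD$: indeed $\r(Q', \tau Q) = \r(Q', Q) + 1 \geq 1$ for every $Q' \in \ind \QQ$ rules out $\tau Q$ being a path-successor of $\QQ$, and a similar argument handles the shift-successor case. By construction of $\DD$, every indecomposable $X \in \HH$ admits a nonzero map from some $Q_0 \in \QQ$, which is necessarily a surjection in the hereditary heart; this yields two-term $\QQ$-presentations of objects of $\HH$, and the Krull--Schmidt property then forces every indecomposable projective of $\HH$ to lie in $\QQ$. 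The main obstacle I anticipate is the inductive verification of $\QQ \subseteq D^{\geq 0}$ together with the identification $\tau Q \notin \DD$: at each stage one must carefully balance path-closure and shift-closure and translate via Serre duality into the distance inequalities characterizing hereditary sections, which is where the full force of $\t$-convexity and of Proposition \ref{proposition:SectionByDistance} is employed.
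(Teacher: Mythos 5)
Your overall strategy — build a split $t$-structure with a successor-closed aisle $\DD\supseteq\QQ$, apply Theorem \ref{theorem:RightClosed} and Proposition \ref{proposition:HereditaryTilting}, then identify $\QQ$ with the projectives of the heart — is the route the paper takes. But the final step, $\QQ_\HH\subseteq\QQ$, is where your argument breaks down. First, membership of an indecomposable $X$ in your $\DD$ (the smallest successor-closed additive subcategory containing $\QQ$) only gives a chain $Q_0=Y_0,Y_1,\ldots,Y_n=X$ whose consecutive steps are nonzero morphisms or shifts $Y_{i+1}\cong Y_i[1]$; this does \emph{not} produce a nonzero map $Q_0\to X$, since the composite along such a chain may vanish and the shift-steps contribute no morphism at all. (To actually get a nonzero map from $\QQ$ one needs $\r(\QQ,X)=0$ together with Proposition \ref{proposition:LeftmostComposition}, which requires the explicit distance description of $\DD$ you never establish for your abstractly-defined $\DD$.) Second — and this is simply false — a nonzero map from a projective $Q_0$ to an indecomposable $X\in\HH$ is \emph{not} "necessarily a surjection in the hereditary heart": the inclusion of an indecomposable projective into a larger indecomposable projective, e.g.\ $P_2\hookrightarrow P_1$ for the quiver $A_2$, is nonzero and not epi. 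Consequently the claimed two-term $\QQ$-presentation and the Krull--Schmidt step do not go through, and $\QQ_\HH\subseteq\QQ$ is not proved.

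The paper closes this step differently, and this is exactly where the $\tau$-convexity in the definition of a hereditary section does its real work. It takes the aisle explicitly as $\DD=\{X : \r(X,\QQ)\geq 0 \text{ and } \r(\QQ,X)<\infty\}$, so that from $X\in\QQ_\HH$ (i.e.\ $X\in\DD$ and $\tau X\notin\DD$) one reads off $\r(X,\QQ)=0$ and $\r(\QQ,X)<\infty$, hence $\d(\QQ,X)<\infty$; $\tau$-convexity then forces $\tau^kX\in\QQ$ for some $k$, and the hereditary-section inequality $\r(-,-)\geq 0$ on $\QQ$ pins down $k=0$. You invoke $\tau$-convexity only in the earlier verification $\QQ\subseteq D^{\geq 0}$ — which, with the paper's explicit $\DD$, is the one-line observation $\QQ[-1]\subseteq\DD^{\perp}$ — but not in the projectives-identification step, where it is the essential ingredient. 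Relatedly, your abstractly-defined $\DD$ is not visibly the paper's (it is contained in it, but equality requires an argument), and without the distance characterization the key inequalities are not available to you.
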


\begin{proof}
If $\AA$ is semi-simple, the category $\HH$ is just $\QQ$ itself.  Thus assume now that $\AA$ is not semi-simple.

Let $\DD$ be the full replete additive subcategory of $\Db \AA$ spanned by all indecomposable objects $X$ with $\r(X,\QQ) \geq 0$ and $\r(\QQ,X) < \infty$.  We check that $\DD$ satisfies the conditions of Theorem \ref{theorem:RightClosed}.

Let $X \in \ind \QQ$.  Since $\r(X,\QQ) = \r(\QQ,X) = 0$ we know that $X \in \Ob \DD$, and Lemma \ref{lemma:LightConeDistanceAndShifts} shows that $\t X \not \in \Ob \DD$ such that $\DD$ is indeed a nontrivial subcategory of $\Db \AA$.

Let $X \in \ind \DD$ and $Y \in \ind \Db \AA$ such that $\Hom(X,Y) \not= 0$, or thus in particular $\r(X,Y) \leq 0$.  The triangle inequality implies that $Y \in \ind \DD$.  Furthermore, the conditions on $\AA$ imply there is an Auslander-Reiten triangle $X \to M_X \to \t^- X \to X[1]$, such that Proposition \ref{proposition:RingelTriangles} yields that $\r(X,X[1]) \leq 0$.  As above the triangle inequality will implies that $X[1] \in \DD$.

We conclude that the conditions of Theorem \ref{theorem:RightClosed} are indeed satisfied such that there is a split $t$-structure on $\Db \AA$ with $\DD^{\leq 0} = \DD$ and $\DD^{\geq 1} = \DD^\perp$.  Denote the hereditary heart by $\HH$.  We only need to show that the catgeory of projectives $\QQ_\HH$ of $\HH$ coincides with $\QQ$ in $\Db \AA$.

Note that $\QQ \subseteq \DD$ and $\QQ[-1] \subseteq \DD^\perp$, so that $\QQ \subseteq \HH$.

Let $X \in \ind \QQ_\HH$, thus $X \in \ind \DD$ but $\t X \not\in \ind \DD$.  In this case, we have $\r(X,\QQ) = 0$ such that $\tau$-convexity implies that $X \in \ind \QQ$.  If $X \in \ind \QQ$, then $\t X \not\in \ind \DD$ but $X \in \HH$ such that $X$ is a projective object in $\HH$ and hence $X \in \ind \QQ_\HH$.  We conclude that $\QQ \cong \QQ_\HH$ as subcategories of $\Db \AA$.
\end{proof}

\begin{observation}\label{observation:PropertiesSection}
Since every hereditary section is the image of the category of projectives of a hereditary category in its derived category, we see that every hereditary section $\QQ$ of $\CC$ is semi-hereditary, a partial tilting set, has left and right almost split maps, and consists of only directing objects.
\end{observation}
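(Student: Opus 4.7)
The plan is to invoke Theorem \ref{theorem:SectionTilting}, which presents $\QQ$ as the category $\QQ_\HH$ of projective objects of a $k$-linear Ext-finite abelian hereditary category $\HH$ with Serre duality satisfying $\Db \HH \simeq \Db \AA$, and then read off each of the four asserted properties from this identification. The only property that does not pass through Theorem \ref{theorem:SectionTilting} is directedness, which follows directly from the distance characterization.

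First I would verify the semi-hereditary property. By definition one needs $\mod \QQ$ to be abelian and hereditary. Under the identification $\QQ \simeq \QQ_\HH$, the Yoneda embedding identifies the standard projective representations of $\QQ_\HH$ with a full subcategory of $\HH$ consisting of objects finitely generated by projectives. Since $\HH$ is hereditary, any subobject of a projective is projective, so the kernel of a morphism between standard projectives is again finitely generated by projectives. Combined with the presence of pseudokernels and pseudocokernels in $\QQ_\HH$ (coming from the availability of projective resolutions in $\HH$), it follows that $\mod \QQ_\HH$ is abelian and hereditary.

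The partial tilting property is then immediate: the objects of $\QQ_\HH$ are projective in $\HH$, hence $\Ext^i_\HH(X,Y)=0$ for all $X,Y\in\QQ_\HH$ and all $i\geq 1$, and the derived equivalence transports this vanishing to $\Db\AA$. Left and right almost split maps in $\QQ_\HH$ follow from the fact that $\HH$, being an Ext-finite hereditary abelian category with Serre duality, has Auslander--Reiten sequences (as recalled from \cite{ReVdB02}): for an indecomposable $P\in\ind\QQ_\HH$ the AR sequence in $\HH$ ending at $P$ is right almost split, and projection of its middle term onto the summands lying in $\QQ_\HH$ yields a right almost split map inside $\QQ_\HH$. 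A dual argument, using the AR sequence in $\HH$ starting at $P$, produces left almost split maps.

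Finally, directedness comes directly from Proposition \ref{proposition:SectionByDistance} combined with Proposition \ref{Proposition:DirectingCriterium}: for any $X\in\ind\QQ$ we have $\r(X,X)\geq 0$ by Proposition \ref{proposition:SectionByDistance}, while the trivial path gives $\r(X,X)\leq 0$, whence $\r(X,X)=0$, so Proposition \ref{Proposition:DirectingCriterium} yields that $X$ is directing. The only step requiring a small amount of care is the extraction of the almost split maps inside $\QQ_\HH$ from the AR sequences in $\HH$; the rest is either definitional or a direct transfer along the derived equivalence supplied by Theorem \ref{theorem:SectionTilting}.
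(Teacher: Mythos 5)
Your overall strategy is the same as the paper's: the Observation is meant to follow at once from Theorem~\ref{theorem:SectionTilting}, which presents $\QQ$ as the category of projectives of a hereditary Ext-finite category $\HH$ with Serre duality, and then each listed property is a standard feature of such a category of projectives. The semi-hereditary claim and the partial tilting claim are handled correctly (semi-heredity of the projectives of a hereditary category is exactly the content of the cited results from Auslander--Reiten and \cite[Proposition 4.2]{vanRoosmalen06}; Ext-vanishing transports along the derived equivalence). Your directing argument via Propositions~\ref{proposition:SectionByDistance} and~\ref{Proposition:DirectingCriterium} is cleaner than what the paper implicitly has in mind (projectives are preprojective, preprojectives are directing) and is perfectly correct.

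The one place your write-up is wrong as stated is the almost split maps. For an indecomposable projective $P$ there is \emph{no} Auslander--Reiten sequence in $\HH$ ending at $P$; the right almost split map terminating at a projective is the inclusion $\rad P \hookrightarrow P$, a monomorphism rather than the epimorphism from an AR triangle. Fortunately no projection is needed there: since $\HH$ is hereditary, $\rad P$ is a subobject of a projective, hence projective, so it already lies in $\QQ_\HH$. For the left almost split map your "project onto projective summands" step is also not obviously sound: if $f\colon P\to E$ is left almost split in $\HH$ with $E=E'\oplus E''$ and $E'$ the projective part, a factorization $g=h'f'+h''f''$ does not immediately collapse the $E''$-component, and one needs either an inductive argument on the radical filtration or a different argument entirely. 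The cleanest fix is to appeal to self-duality: the paper remarks that $\QQ^\circ$ is a hereditary section in $\Db(\AA^\circ)$, so by Theorem~\ref{theorem:SectionTilting} applied there, $\QQ^\circ$ is the projectives of a hereditary category with Serre duality and hence has right almost split maps $\rad P^\circ\to P^\circ$; dualizing gives left almost split maps in $\QQ$. With that correction the proof goes through.
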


\begin{remark}
Theorem \ref{theorem:SectionTilting} shows that, given a hereditary section $\QQ$, there is a $t$-structure on $\Db \AA$ such that $\QQ$ is the category of projectives of the heart $\HH$.  However, the $t$-structure is not uniquely determined by $\QQ$ as the next example illustrates.
\end{remark}

\begin{example}\label{example:NotUniqueTilt}
Let $Q$ be the thread quiver $\xymatrix@1{x \ar@{..>}[r]^{1} & y}$, and let $\QQ$ be the standard hereditary section in $\Db \rep Q$.  Denote by $P_x \in \QQ$ the indecomposable object associated with $x$.  The category $\Db \rep Q$ is sketched in Figure \ref{fig:NotUniqueTilt}.

We find a smaller hereditary section $\QQ'$ spanned by all objects $A \in \QQ$ with $\d(P_x,A) < \infty$, thus $\QQ'$ contains exactly those indecomposables of $\QQ$ which lie in the same Auslander-Reiten component of $\Db \rep Q$ as $P_x$.

There are at least two different hearts in $\Db \rep Q$ such that $\QQ'$ is the category of projectives, as shown in Figure \ref{fig:NotUniqueTilt}.  The middle picture corresponds to the $t$-structure given in the proof of Theorem \ref{theorem:SectionTilting}.
\begin{figure}[tb]
	\centering
		\includegraphics[width=0.60\textwidth]{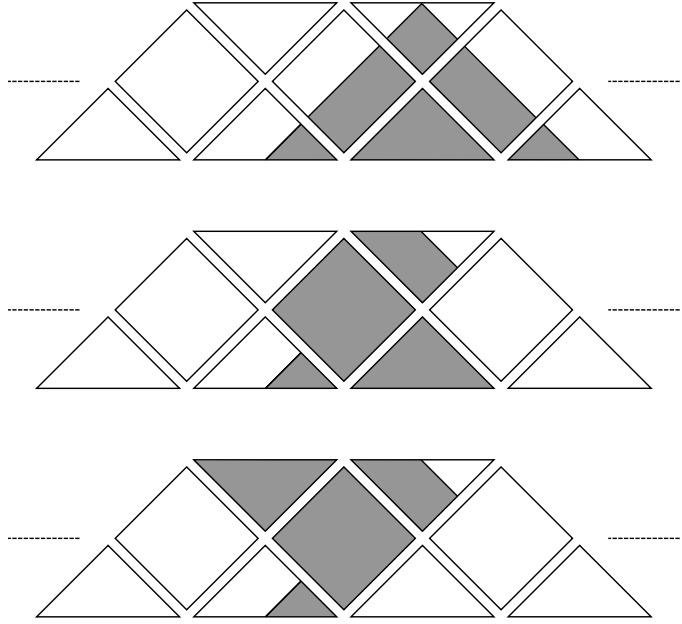}
	\caption{Illustration of Example \ref{example:NotUniqueTilt}}
	\label{fig:NotUniqueTilt}
\end{figure}
\end{example}

The following statement is a special case of Proposition \ref{proposition:SectionByDistance}.

\begin{corollary}\label{corollary:TauTilting}
Let $\AA$ be an abelian hereditary Ext-finite $k$-linear category satisfying Serre duality and let $\QQ$ be the category of projectives of $\AA$.  Let $\QQ'$ be a full preadditive subcategory of $\Db \AA$ such that $\bZ \QQ = \bZ \QQ'$ and $\r (X, Y) \geq 0$ for all $X,Y \in \ind \QQ'$, then $\QQ'$ is a hereditary section in $\Db \AA$.
\end{corollary}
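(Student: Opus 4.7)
The plan is to reduce the corollary directly to Proposition \ref{proposition:SectionByDistance}, which says that a full additive subcategory $\QQ'$ of $\CC = \Db\AA$ is a hereditary section exactly when it is $\tau$-convex and satisfies $\r(X,Y) \geq 0$ for all $X,Y \in \ind \QQ'$. The non-negativity of $\r(X,Y)$ and fullness are already among the hypotheses. Passing to the additive closure of $\QQ'$ inside $\bZ \QQ$ if necessary (which does not change $\bZ \QQ' = \bZ \QQ$ and preserves the inequality $\r(X,Y) \geq 0$ on indecomposables), I may assume $\QQ'$ is additive. Hence the only thing left to verify is $\tau$-convexity.

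The key observation is that $\tau$-convexity is, by definition, a property of the subcategory $\bZ \QQ'$ rather than of $\QQ'$ itself: $\QQ'$ is $\tau$-convex if and only if $\bZ \QQ'$ is convex in $\CC$. Since the hypothesis gives $\bZ \QQ' = \bZ \QQ$, it suffices to prove that $\bZ \QQ$ is convex. But $\QQ$, being the category of projectives of the abelian hereditary Ext-finite $k$-linear category $\AA$ with Serre duality, is a hereditary section of $\Db \AA$ by the example noted just before Proposition \ref{proposition:SectionByDistance}. In particular, $\QQ$ is $\tau$-convex, so $\bZ \QQ$ is convex, which transfers immediately to $\bZ \QQ'$.

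With all three conditions of Proposition \ref{proposition:SectionByDistance} verified, I conclude that $\QQ'$ is a hereditary section of $\Db \AA$. There is essentially no obstacle in this argument: the entire content is the remark that $\tau$-convexity depends only on the $\tau$-orbit closure, which is fixed by the hypothesis $\bZ \QQ = \bZ \QQ'$ to coincide with the convex subcategory coming from the canonical hereditary section $\QQ$.
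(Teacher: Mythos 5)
Your proof is correct and matches the paper's (implicit) reasoning. The paper disposes of this corollary with only the remark that it is a special case of Proposition~\ref{proposition:SectionByDistance}, and the content you supply is exactly what that remark leaves unsaid: the hypothesis $\bZ\QQ' = \bZ\QQ$ transfers $\tau$-convexity (a property of $\bZ\QQ'$ alone) from the canonical hereditary section $\QQ$ to $\QQ'$, after which the given inequality $\r(X,Y)\geq 0$ finishes the verification of Proposition~\ref{proposition:SectionByDistance}.
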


\subsection{(Co)reflective subcategories of hereditary sections}

We prove an analogue of Proposition \cite[Proposition 5.2]{BergVanRoosmalen09} for hereditary sections.  We introduce the following notation.  Let $\AA$ be an Ext-finite abelian category and $\QQ$ a hereditary section in $\Db \AA$.  For any object $\ZZ \subseteq \Ob \Db \AA$, we define $\QQ_{^\perp \ZZ}$ as the full subcategory of $\QQ$ left-orthogonal on $\ZZ$, thus
$$A \in \Ob \QQ_{^\perp \ZZ} \Leftrightarrow \forall Z \in \ZZ: \RHom(A,Z) = 0.$$

\begin{proposition}\label{proposition:WhenAdjoints}
Let $\QQ$ be a hereditary section in $\Db \AA$.
\begin{enumerate}
\item Let $\ZZ \subset \Ob \Db \AA$ with $\sum_{Z \in \ZZ} \dim \Hom(A,Z) < \infty$ for all $A \in \QQ$ and $\Hom(Z_1,Z_2[n]) = 0$ for all $Z_1, Z_2 \in \Ob \Db \AA$ and $n \in \bZ\setminus\{0\}$.  Then the embedding $\QQ_{^\perp \ZZ} \to \QQ$ has a left and a right adjoint.
\item Let $X,Y \in \QQ$.  The embedding $[X,Y] \to \QQ$ has a left and a right adjoint.
\end{enumerate}
\end{proposition}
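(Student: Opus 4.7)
The plan is to construct both adjoints in part (1) via explicit triangles built from canonical evaluation maps between $A \in \QQ$ and the objects of $\ZZ$, and to deduce part (2) by reformulating $[X,Y]$ as an orthogonality condition to which part (1) applies. For the right adjoint $R$, given $A \in \QQ$ the finiteness hypothesis guarantees that $V_A := \bigoplus_{Z \in \ZZ} \Hom(A,Z)^{\ast} \otimes_k Z$ is a finite direct sum in $\Db \AA$, and the canonical evaluation defines $\eta_A \colon A \to V_A$. I then set $RA$ via the triangle
$$RA \to A \stackrel{\eta_A}{\to} V_A \to RA[1].$$
To verify $RA \in \QQ_{^\perp \ZZ}$, apply $\RHom(-,Z')$ for $Z' \in \ZZ$: the Ext-vanishing hypothesis collapses $\RHom(V_A,Z')$ to degree zero, and combining the canonical design of $\eta_A$ with Serre duality and projectivity of $A$ in the hereditary heart $\HH$ associated to $\QQ$ (Theorem \ref{theorem:SectionTilting}) shows the induced map $\RHom(V_A,Z') \to \RHom(A,Z')$ is a quasi-isomorphism. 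For the adjunction, for $B \in \QQ_{^\perp \ZZ}$ one has $\Hom(B,V_A[n]) = 0$ for all $n$ directly from $\RHom(B,Z) = 0$ on each summand, so applying $\RHom(B,-)$ to the triangle yields $\RHom(B,RA) \cong \RHom(B,A)$.

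The left adjoint $L$ is built dually, exploiting the Serre duality isomorphism $\Hom(\bS^{-1}Z,A) \cong \Hom(A,Z)^{\ast}$ to promote the finiteness of $\Hom(A,Z)$ to control the dual side. Setting $W_A := \bigoplus_{Z \in \ZZ} \Hom(A,Z) \otimes_k \bS^{-1}Z$ with canonical co-evaluation $W_A \to A$, I define $LA$ by $W_A \to A \to LA \to W_A[1]$. The verification parallels the right-adjoint case: Serre duality rewrites the right-orthogonality of $B \in \QQ_{^\perp \ZZ}$ as $\Hom(W_A,B[n]) = 0$ for all $n$, yielding the adjunction through the long exact sequence of $\Hom(-,B)$. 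In both constructions, the remaining check is that $RA$ and $LA$ lie in $\QQ$ rather than merely in $\Db \AA$; this uses that $\QQ$ is semi-hereditary (Observation \ref{observation:PropertiesSection}), so that fibers and cofibers of morphisms between projectives in the hereditary heart remain projective.

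For part (2) I would reduce to part (1) by encoding membership in $[X,Y]$ via orthogonality: indecomposables $C \in \ind \QQ$ with $\Hom(X,C) = 0$ form one component of a set $\ZZ_{X,Y}$ to orthogonalize against, while the complementary condition $\Hom(C,Y) = 0$ is recast via Serre duality into orthogonality against a Serre-shifted family. The finiteness hypothesis of part (1) for this $\ZZ_{X,Y}$ follows from Hom-finiteness of $\QQ$ together with the directed structure (Observation \ref{observation:PropertiesSection}), which constrains how many indecomposables of $\ZZ_{X,Y}$ can receive a nonzero map from any fixed $A$. The principal obstacle throughout is keeping the constructed fibers inside $\QQ$: only the hereditary structure of the heart guarantees that fibers of morphisms between projectives remain projective, so this is where the semi-heredity of $\QQ$ is essential. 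A secondary subtlety specific to part (2) is setting up $\ZZ_{X,Y}$ so that both the finiteness and Ext-vanishing hypotheses of part (1) are simultaneously satisfied.
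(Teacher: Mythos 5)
There are genuine gaps in your construction. First, the object $RA$ you define by the triangle $RA \to A \to V_A \to RA[1]$ need not lie in $\QQ$: since $\HH$ is hereditary, the fiber of a map between objects of the heart decomposes as $\ker(\eta_A)\oplus\coker(\eta_A)[-1]$, so it has a nonzero summand in degree $-1$ whenever $\eta_A$ fails to be an epimorphism (which is the generic case). The phrase ``fibers and cofibers of morphisms between projectives remain projective'' also does not apply, since $V_A$ is assembled from the objects $Z\in\ZZ$, which are not assumed to be projective; what is actually needed is that a \emph{subobject} of a projective in a hereditary abelian category is projective, so the correct object to form is the kernel $\ker(\eta_A)$ computed inside $\HH$. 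Second, and more seriously, your verification that $RA\in\QQ_{^\perp\ZZ}$ rests on the claim that $\RHom(V_A,Z')\to\RHom(A,Z')$ is a quasi-isomorphism, and this is false in general: in degree $0$ the map is $\bigoplus_Z\Hom(A,Z)\otimes\Hom(Z,Z')\to\Hom(A,Z')$ given by composition with $\eta_A$, which is surjective by design of $\eta_A$ but fails to be injective as soon as some $\Hom(Z,Z')\neq 0$ with $Z\neq Z'$---a situation the hypothesis explicitly allows (it only kills $\Hom(Z_1,Z_2[n])$ for $n\neq 0$) and which actually occurs in the application in Lemma~\ref{lemma:SubcategoryInMainProof}. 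The correct verification goes through the short exact sequence $0\to\ker\eta_A\to A\to\im\eta_A\to 0$ in $\HH$: every map $A\to Z'$ factors through $\im\eta_A$, yielding $\Hom(\ker\eta_A,Z')\cong\Ext^1(\im\eta_A,Z')$, and the latter vanishes because $\im\eta_A\hookrightarrow V_A$ and $\Ext^1(V_A,Z')=0$ by the hypothesis $\Hom(Z,Z'[1])=0$. Together with $\Ext^1(\ker\eta_A,-)=0$ from projectivity, this gives $\ker\eta_A\in\QQ_{^\perp\ZZ}$.

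For part~(2), the proposed reduction to part~(1) is not substantiated. Membership in $[X,Y]$ is the conjunction of two \emph{non}-vanishing conditions, $\Hom(X,C)\neq 0$ and $\Hom(C,Y)\neq 0$, whereas $\QQ_{^\perp\ZZ}$ is a conjunction of vanishing conditions, so it is not clear how to produce a set $\ZZ_{X,Y}$ with $\QQ_{^\perp\ZZ_{X,Y}}=[X,Y]$ while simultaneously meeting the finiteness and Ext-vanishing hypotheses of part~(1); your sketch of how Serre duality flips one of the two conditions changes the direction of $\Hom$ but not its non-vanishing character. The paper handles both parts uniformly by passing to the hereditary heart $\HH$ of Theorem~\ref{theorem:SectionTilting} and repeating the argument of the companion paper's Proposition~5.2 for the semi-hereditary $k$-variety $\QQ$; you should either do the same, or give a direct argument for part~(2) in $\HH$ along the lines of the kernel/cokernel construction above.
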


\begin{proof}
Theorem \ref{theorem:SectionTilting} yields there is a hereditary category $\HH \subset \Db \AA$ with $\QQ$ as its category of projectives.  The proof is obtained by repeating the proof of \cite[Proposition 5.2]{BergVanRoosmalen09} in $\HH$.
\end{proof}

\subsection{Criterium for being a dualizing $k$-variety}

We will be interested in hereditary sections which are dualizing $k$-varieties.  The following criterion will be useful.

\begin{proposition}\label{proposition:ConditionsToBeDualizing}
A hereditary section $\QQ$ is dualizing if and only if for every $A \in \ind \QQ$ there are $C_1, C_2 \in \Ob \QQ$ such that for every $B \in \ind \QQ$
\begin{enumerate}
\item $\Hom(B,C_1) \not= 0$ when $\r(A,B) = 0$, and
\item $\Hom(C_2,B) \not= 0$ when $\r(B,A) = 0$.
\end{enumerate}
\end{proposition}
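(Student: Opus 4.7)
My plan is to prove the two directions separately, using Theorem \ref{theorem:SectionTilting} to regard $\QQ$ as the category of indecomposable projectives in a hereditary heart $\HH \subseteq \Db \AA$ with Serre duality. The standard projective and injective $\QQ$-modules are $P_A = \QQ(-,A)$ and $I_A = \QQ(A,-)^\ast$, and the dualizing condition unpacks into: $\QQ$ has pseudokernels and pseudocokernels, each $I_A$ admits a surjection $P_{C_1} \twoheadrightarrow I_A$, and each $P_A$ admits a monomorphism $P_A \hookrightarrow I_{C_2}$.

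For the forward direction, fix $A \in \ind \QQ$ and choose a surjection $\pi \colon P_{C_1} \twoheadrightarrow I_A$. For $B \in \ind \QQ$ with $\r(A,B) = 0$, Proposition \ref{proposition:LeftmostComposition} gives $\Hom(A,B) \neq 0$, so $I_A(B) = \Hom(A,B)^\ast \neq 0$, and surjectivity of $\pi_B$ forces $\Hom(B,C_1) = P_{C_1}(B) \neq 0$, yielding condition (1). Condition (2) is obtained analogously by evaluating a monomorphism $P_A \hookrightarrow I_{C_2}$ at an indecomposable $B$ with $\r(B,A) = 0$.

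For the reverse direction, $\QQ$ is semi-hereditary by Observation \ref{observation:PropertiesSection}, and the self-duality of the hereditary-section notion applied in $\Db \AA^\circ$ gives the same for $\QQ^\circ$, producing pseudokernels and pseudocokernels. Given $A \in \ind \QQ$ and $C_1$ supplied by (1), I set $n = \dim \Hom(A,C_1)$, pick a basis $\psi_1, \ldots, \psi_n$ of $\Hom(A,C_1)^\ast$, and define the natural transformation $P_{C_1}^n \to I_A$ whose component at $B$ sends $(g_1, \ldots, g_n)$ to the functional $f \mapsto \sum_i \psi_i(g_i f)$ on $\Hom(A,B)$. Pointwise surjectivity at $B$ reduces, by a short linear-algebra argument, to showing that $\bigcap_{g \in \Hom(B,C_1)} \ker(g \circ -) = 0$ inside $\Hom(A,B)$: for a nonzero $f \in \Hom(A,B)$ we have $\r(A,B) = 0$ (by Proposition \ref{proposition:SectionByDistance}), condition (1) supplies a nonzero $g \in \Hom(B,C_1)$, and by the triangle inequality $\r(A,C_1) = 0$, so Proposition \ref{proposition:LeftmostComposition} applied along $A \to B \to C_1$ forces $gf \neq 0$. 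The object $C_1^n \in \QQ$ then witnesses the finite generation of $I_A$; cofinite generation of $P_A$ from (2) follows by the dual construction.

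The main obstacle is promoting the merely pointwise nonvanishing given by condition (1) to a single surjection from a \emph{finite} direct sum of standard projectives. This is resolved by the Ext-finiteness of $\HH$, which makes $\Hom(A,C_1)$ finite-dimensional so that finitely many $\psi_i$ span $\Hom(A,C_1)^\ast$, together with Proposition \ref{proposition:LeftmostComposition} to ensure the compositions $gf$ remain nonzero.
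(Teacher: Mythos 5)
Your proof is correct and follows the same approach as the paper's: pass to a hereditary heart via Theorem \ref{theorem:SectionTilting}, translate the two conditions into the finite-generation (respectively cofinite-generation) requirements on the standard injective $I_A$ (respectively standard projective $P_A$) appearing in the definition of a dualizing $k$-variety, and use Proposition \ref{proposition:LeftmostComposition} together with the fact that $\r \geq 0$ on a hereditary section to pass between $\Hom$-nonvanishing and $\r(-,-)=0$; your explicit construction of the surjection $P_{C_1}^n \twoheadrightarrow I_A$ via a dual basis of $\Hom(A,C_1)^\ast$ supplies a detail the paper's proof leaves implicit. One small caveat: Proposition \ref{proposition:LeftmostComposition} requires its objects to be indecomposable, while $C_1 \in \Ob\QQ$ need not be, so instead of writing $\r(A,C_1)$ you should replace $g$ by a nonzero component $g' \colon B \to C_1'$ onto an indecomposable direct summand $C_1'$ of $C_1$; then the triangle inequality gives $\r(A,C_1')=0$, Proposition \ref{proposition:LeftmostComposition} gives $g'f \neq 0$, and hence $gf \neq 0$ as needed.
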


\begin{proof}
Let $\HH$ be a hereditary category of which $\QQ$ is the category of projectives (Theorem \ref{theorem:SectionTilting}).  The first statement is equivalent to saying there is an epimorphism $\QQ(-,C_1) \to \QQ(A,-)^*$ and the second statement is equivalent to saying there is a monomorphism $\QQ(-,A) \to \QQ(C_2,-)^*$.  Since the cokernel of the first map and the kernel of the second map are finitely generated projectives, we know that $\QQ(-,A)$ is cofinitely presented and $\QQ(A,-)$ is finitely presented.

By Observation \ref{observation:PropertiesSection} $\QQ$ is semi-hereditary and thus Corollary \ref{corollary:ShDualizing} yields the required result.
\end{proof}
\subsection{Light cone}

Let $\AA$ be an abelian category with Serre duality and $X \in \Db \AA$ be an indecomposable directing object.  We define the light cone centered on $X$ to be full replete additive category $\QQ_X$ with $\ind\QQ_X = \Sr (X,0)$, thus $\QQ_X$ is generated by those indecomposable objects $Y$ such that $X$ admits a path to $Y$, but no path to $\t Y$.  Using Proposition \ref{proposition:SectionByDistance} one easily checks that $\QQ_X$ is a hereditary section.

If $\AA$ is connected then  Theorem \ref{theorem:SectionTilting} shows that $\QQ_X$ defines a $t$-structure with heart a hereditary category $\HH_X$.  We will refer to $\HH_X$ as the light cone tilt centered on $X$.  A similar construction has been used by Ringel in \cite{Ringel05}.

Dually we define the co-light cone and the co-light cone tilt centered on $X$.

\begin{lemma}
In the light cone tilt centered on $X$, we have $\Hom(X,P) \not= 0$, for all projectives $P$.
\end{lemma}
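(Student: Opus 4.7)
The plan is very short, because the lemma is essentially just an unpacking of definitions combined with the final clause of Proposition~\ref{proposition:LeftmostComposition}.

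First I would reduce to the case where $P$ is indecomposable: since $\HH_X$ is an Ext-finite abelian category and $\QQ_X$ is Krull--Schmidt as a full subcategory of $\Db\AA$, any nonzero projective decomposes as a finite direct sum of indecomposable projectives, and $\Hom(X,-)$ is additive, so it suffices to produce a nonzero map $X \to P$ for each indecomposable projective $P$ of $\HH_X$.

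Next I would identify these indecomposable projectives. By Theorem~\ref{theorem:SectionTilting}, the category of projectives of the heart $\HH_X$ is exactly $\QQ_X$, and by construction $\ind\QQ_X = \Sr(X,0)$. Thus every indecomposable projective $P$ of $\HH_X$ is an indecomposable $P\in\ind\Db\AA$ with $\r(X,P)=0$.

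Finally, the last sentence of Proposition~\ref{proposition:LeftmostComposition} states that $\r(X,P)=0$ implies $\Hom(X,P)\neq 0$, which is exactly what we want. The main (in fact only) step that requires a small comment is that Proposition~\ref{proposition:LeftmostComposition} indeed applies: $X$ was assumed to be directing in $\Db\AA$, and Corollary~\ref{corollary:DirectingCriterium} together with $\r(X,P)=0\in\bZ$ guarantees $P$ is directing as well, so both $X$ and $P$ satisfy the hypotheses used implicitly in the proof of that proposition. There is no substantive obstacle.
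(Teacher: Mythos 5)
Your proof is correct and follows exactly the same route as the paper: reduce to indecomposable projectives, identify them as the objects with $\r(X,P)=0$ via the construction of the light cone $\QQ_X$ and Theorem~\ref{theorem:SectionTilting}, and then invoke the final clause of Proposition~\ref{proposition:LeftmostComposition}. The paper's proof is just the one-line remark that the result follows directly from that proposition; you have simply spelled out the intermediate bookkeeping.
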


\begin{proof}
The result follows directly from Proposition \ref{proposition:LeftmostComposition}.
\end{proof}

\begin{lemma}\label{lemma:ProjectivesCopresented}
In the light cone tilt centered on $X$, all projectives objects have an injective resolution.
\end{lemma}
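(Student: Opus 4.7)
The plan is to show that $\bS X$ lies in $\HH_X$ (so that, being the Serre dual of the projective $X\in\QQ_X$, it is an injective object of the heart) and then to verify that, for every indecomposable projective $P\in\QQ_X$, the nonzero morphism $P\to \bS X$ coming from Serre duality is a monomorphism in $\HH_X$. The cokernel will be automatically injective because $\HH_X$ is hereditary, giving the desired injective resolution
\[
0\to P\to \bS X\to \bS X/P\to 0.
\]

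The first step is to verify that $\bS X\in\HH_X$. This amounts to checking the two defining conditions of the heart described in the proof of Theorem \ref{theorem:SectionTilting}, namely that $\bS X\in D^{\leq 0}_X$ and $\bS X\in D^{\geq 0}_X$. Since $\bS=\tau[1]$, these conditions translate, via Lemma \ref{lemma:LightConeDistanceAndShifts}, into bounds on the light cone distances $\r(\bS X,\QQ_X)$ and $\r(\QQ_X,\bS X)$. The equality $\r(X,P)=0$ for every $P\in\QQ_X$ together with the nonzero map $P\to \bS X$ supplied by Serre duality applied to the previous lemma (so that $\r(P,\bS X)\leq 0$ and hence $\r(\QQ_X,\bS X)\leq 0$) provide the inputs needed to deduce these bounds, using Proposition \ref{proposition:rTriangle}.

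The second step is to show that the map $P\to \bS X$ is a monomorphism in $\HH_X$. If its kernel $K$ were nonzero, then $K$ would be a projective subobject of $P$ (since subobjects of projectives in a hereditary category are projective), hence a direct sum of indecomposables in $\QQ_X$. Pick any indecomposable summand $K_i$; by the previous lemma $\Hom(X,K_i)\neq 0$, so there is a nonzero composite $X\to K_i\hookrightarrow P\to \bS X$ which must vanish because $K_i$ lies in the kernel. Combining this with the nondegeneracy of the Serre duality pairing $\Hom(X,P)\times\Hom(P,\bS X)\to\Hom(X,\bS X)\cong k$ and with Proposition \ref{proposition:LeftmostComposition} (which forces composites along short paths of indecomposables at $\r=0$ to be nonzero), one reaches a contradiction, so $K=0$. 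The main obstacle is the first step: verifying that $\bS X$ does not escape the heart $\HH_X$ under the Serre functor, which requires careful bookkeeping of how shifts interact with light cone distances; once this is in place, the remaining arguments are formal manipulations of Serre duality and hereditary homological algebra.
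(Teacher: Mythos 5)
The approach is right in spirit (inject $P$ into copies of the injective $\bS X$, then use the preceding lemma to force the kernel to vanish), but your execution has a genuine gap in the second step. You work with a single nonzero morphism $P\to\bS X$. Serre duality gives an isomorphism $\Hom(P,\bS X)\cong\Hom(X,P)^{*}$, not a canonical nonzero element of $\Hom(P,\bS X)$, and when $\dim_{k}\Hom(X,P)>1$ a single such map generally fails to be a monomorphism. This is why the paper uses the canonical map $P\to\bS X\otimes_{k}\Hom(P,\bS X)^{*}$ (adjoint to the identity of $\Hom(P,\bS X)$), whose target is still injective (a finite sum of copies of $\bS X$) and for which the desired nondegeneracy really holds.

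Your argument that the kernel vanishes is also not sound as written. You take an indecomposable summand $K_i$ of $K$, note $\Hom(X,K_i)\ne 0$ by the previous lemma, and then appeal to nondegeneracy of the Serre pairing and to Proposition~\ref{proposition:LeftmostComposition} to claim the composite $X\to K_i\hookrightarrow P\to\bS X$ ought to be nonzero. But nondegeneracy of the pairing $\Hom(X,P)\times\Hom(P,\bS X)\to k$ only guarantees that for a nonzero $h\colon X\to P$ \emph{there exists some} $g\colon P\to\bS X$ with $g\circ h\ne 0$; it does not say the particular $g$ you fixed works. And invoking Proposition~\ref{proposition:LeftmostComposition} to force the full composite to be nonzero would require $\r(X,\bS X)=0$, which you have not established (and do not need if you switch to the canonical map). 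Once you replace $P\to\bS X$ by the canonical map into $\bS X\otimes\Hom(P,\bS X)^{*}$, nondegeneracy of the Serre pairing directly gives $\Hom(X,K)=0$, and the previous lemma then forces $K=0$; this is exactly the paper's proof.
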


\begin{proof}
Let $P$ be a projective and consider the canonical map $P \to \bS X \otimes \Hom(P,\bS X)^*$ with kernel $K$. Since $P$ is projective, the kernel needs to be projective as well.

It is straightforward to check that $\Hom(X, K) = 0$, hence $K = 0$ and the canonical map is a monomorphism.  An injective resolution is then given by $0 \to P \stackrel{f}{\rightarrow} \bS X \otimes \Hom(P,\bS X)^* \to \coker f \to 0$.
\end{proof}

\begin{proposition}\label{proposition:PreprojectivesCopresented}
In a light cone tilt, all preprojectives have projective and injective resolutions.
\end{proposition}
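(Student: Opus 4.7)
The plan is to prove both parts by induction on the Auslander-Reiten translate grade. Every indecomposable preprojective in $\HH_X$ has the form $\tau^{-n} P$ with $P \in \ind \QQ_X$ and $n \geq 0$; I induct on $n$, with the base case $n = 0$ handled by Lemma \ref{lemma:ProjectivesCopresented} and the triviality of the projective resolution of a projective object. Throughout, I rely on the structural fact that subobjects and quotients of preprojectives in a hereditary category with Serre duality are again preprojective.

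For the injective resolution of $R = \tau^{-n} P$ in the inductive step, I would mimic the proof of Lemma \ref{lemma:ProjectivesCopresented}: form the canonical evaluation $R \to \bS X \otimes \Hom(R, \bS X)^*$, whose codomain is injective since $\bS X$ is the Serre image of the projective $X$. Serre duality renders the induced map on $\Hom(X, -)$ an isomorphism, so the kernel $K$ satisfies $\Hom(X, K) = 0$. To conclude $K = 0$, I establish the auxiliary claim that every nonzero preprojective $R'$ admits a nonzero map from $X$, by a subsidiary induction on the grade of $R'$: applying $\Hom(X, -)$ to the AR-sequence $0 \to \tau R' \to M' \to R' \to 0$ together with the Ext-vanishing $\Ext^1(X, \tau R') \cong D \Hom(R', X) = 0$ (which follows from $\r(R', X) \geq 1$ for positive-grade $R'$ via Lemma \ref{lemma:LightConeDistanceAndShifts}) yields a surjection $\Hom(X, M') \to \Hom(X, R')$, reducing the inductive step to finding a summand of $M'$ with a nonzero map from $X$; the base case $R'$ projective is the unlabelled lemma immediately preceding Lemma \ref{lemma:ProjectivesCopresented}. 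A small case distinction for when the source projective is a sink of $\QQ_X$ is handled by iterating the AR-sequence once more, using that a predecessor of a sink is never itself a sink.

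For the projective resolution, form the universal evaluation $\bigoplus_{Q \in \ind \QQ_X} Q \otimes \Hom(Q, R) \to R$. Only finitely many indecomposable projectives contribute, because $\Hom(Q, R) \neq 0$ forces $0 \leq \r(Q, P) \leq n$ and this bounded set is finite by the classification of AR-components (of shape $\bZ A_\infty$, $\bZ A_\infty^\infty$, or $\bZ D_\infty$). The universal map thus has projective domain, and its cokernel $C$ is preprojective (as a quotient of $R$) with $\Hom(X, C) = 0$ (since $X \in \QQ_X$). The auxiliary claim above then forces $C = 0$, so the evaluation is surjective; since $\HH_X$ is hereditary, the kernel of this surjection is automatically projective, completing the projective resolution.

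The main obstacle is verifying the structural ingredients used above: that subobjects and quotients of preprojectives remain preprojective, and that the set $\{Q \in \ind \QQ_X : \r(Q, P) \leq n\}$ is finite for each fixed $n$. Both rest on the detailed structure of AR-components in Hom-finite hereditary categories with Serre duality, and require care to carry out rigorously within the framework of the paper.
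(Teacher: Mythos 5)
Your proposal takes a genuinely different route from the paper, and it has a fatal gap: the auxiliary claim that every nonzero preprojective $R'$ in $\HH_X$ admits a nonzero map from $X$ is false. For a counterexample, let $\AA = \rep_k(1 \to 2 \to 3)$ and center the light cone on the simple projective $X = P_3$. Then $\QQ_X$ is the ordinary category of projectives, $\HH_X = \AA$, and $\tau^{-1}P_3 = S_2$, but $\Hom(P_3, S_2) = 0$. This kills your inductive step for the injective resolution (the canonical evaluation $R \to \bS X \otimes \Hom(R,\bS X)^*$ is the zero map here, since $\Hom(S_2, \bS P_3) = \Hom(S_2, I_1) = 0$, so its kernel is $S_2 \neq 0$) and also undercuts your cokernel-vanishing argument for the projective resolution. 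The subsidiary induction you offer for the auxiliary claim is moreover logically backwards: the surjection $\Hom(X,M') \twoheadrightarrow \Hom(X,R')$ cannot certify $\Hom(X,R') \neq 0$, because maps $X \to M'$ may all factor through $\tau R'$ — exactly what happens in the $A_3$ example, where $M' = P_2$, $\tau R' = P_3 = X$, and every $X \to M'$ factors through $\tau R'$.

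The paper avoids all of this with a much shorter argument: given the injective resolution $0 \to \tau^{-n}Y \to I \to J \to 0$ from the inductive hypothesis, apply $\bS^{-1}$ (recalling $\bS \cong \tau[1]$) to obtain directly a projective resolution $0 \to \bS^{-1}I \to \bS^{-1}J \to \tau^{-n-1}Y \to 0$, whence an injective resolution of $\tau^{-n-1}Y$ follows from Lemma \ref{lemma:ProjectivesCopresented} applied to the two projective terms. No nonvanishing claim about $\Hom(X,-)$ on preprojectives is ever needed; Lemma \ref{lemma:ProjectivesCopresented} uses $\Hom(X,P) \neq 0$ only for $P$ projective, which is the most the light cone construction actually guarantees.
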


\begin{proof}
It suffices to show this for all indecomposable preprojective objects.  Every such object is of the form $\t^{-n} Y$ for an indecomposable projective object $Y$.  We will prove the statement by induction on $n$.  If $n=0$ then the statement is Lemma \ref{lemma:ProjectivesCopresented}.

Assume that $\t^{-n} Y$ has a projective and an injective resolution.  If $0 \to \t^{-n} Y \to I \to J \to 0$ is an injective resolution of $\t^{-n} Y$ then $0 \to \bS^{-1} I \to \bS^{-1}J \to \t^{-n-1} Y \to 0$ is a projective resolution of $\t^{-n-1} Y$.  Since the projectives $\bS^{-1}I$ and $\bS^{-1}J$ have injective resolutions, the same holds for $\t^{-n-1} Y$.
\end{proof}

\section{Hereditary sections $\bZ$-equivalent to dualizing $k$-varieties}\label{section:Description}

\subsection{The condition (*)} \label{subsection:Star}

Let $\AA$ be a connected abelian hereditary Ext-finite category satisfying Serre duality and denote the category of projectives by $\QQ$.  We will assume $\bZ \QQ$ is connected.

If $\QQ$ is a dualizing $k$-variety, then $\QQ(-,A)$ is cofinitely presented.  This means that at least one source $S$ maps non-zero to $A$, hence $\r(S,A)=0$.  Dually we find that $A$ maps non-zero to at least one sink $T$, such that $\r(A,T)=0$.

Proposition \ref{proposition:SinksCountable} yields there are only a countable amount of sinks and sources, hence $\QQ$ satisfies the following property: there is a countable subset $\TT \subseteq \ind \QQ$ such that $\d(\TT,X)=0$, for all $X \in \ind \QQ$.

We will weaken this property to :
$$\mbox{(*) : there is a countable subset $\TT \subseteq \ind \bZ \QQ$ such that $\d(\TT,X) < \infty$, for all $X \in \ind \bZ \QQ$.}$$

It is thus clear (*) needs to be satisfied when $\QQ$ is a dualizing $k$-variety.  Moreover if there is a hereditary section $\QQ'$ in $\Db \AA$ with $\bZ \QQ = \bZ \QQ'$ where $\QQ'$ is a dualizing $k$-variety, then $\bZ \QQ$ also needs to satisfy condition (*).

Before giving an example where condition (*) is not satisfied, we recall following definitions.

\begin{definition}
Let $\PP$ be a poset.  The subset $\TT \subseteq \PP$ is said to be \emph{cofinal} if for every $X \in \PP$ there is a $Y \in \TT$ such that $X \leq Y$.  The least cardinality of the cofinal subsets of $\PP$ is called the \emph{cofinality} of $\PP$ and is denote by $\cofin \PP$.

Dually, one defines a \emph{coinitial} subset of $\PP$ and the \emph{coinitiality} of $\PP$ is denoted by $\coinit \PP$.
\end{definition}

Next example shows (*) is not always satisfied.

\begin{example}\label{example:NotStar}
Let $\LL$ be a linearly ordered and locally discrete set such that $\cofin \LL > \aleph_0$.  For example, if $\TT$ is a linearly ordered set with $\cofin \TT > \aleph_0$ we may define the poset $\LL = \TT \stackrel{\rightarrow}{\times} \bZ$.

Let $\PP$ be the poset $\bN \cdot (\TT \stackrel{\rightarrow}{\times} \bZ) \cdot -\bN$, thus $k\PP$ is the semi-hereditary dualizing $k$-variety given by the thread quiver
$\xymatrix@1@C+10pt{\cdot \ar@{..>}[r]^{\TT} & \cdot}$.
We may sketch the category as the upper part of Figure \ref{fig:NotStar}.

\begin{figure}[tb]
	\centering
	\psfrag{L}[][]{$\LL$}
		\includegraphics[width=0.60\textwidth]{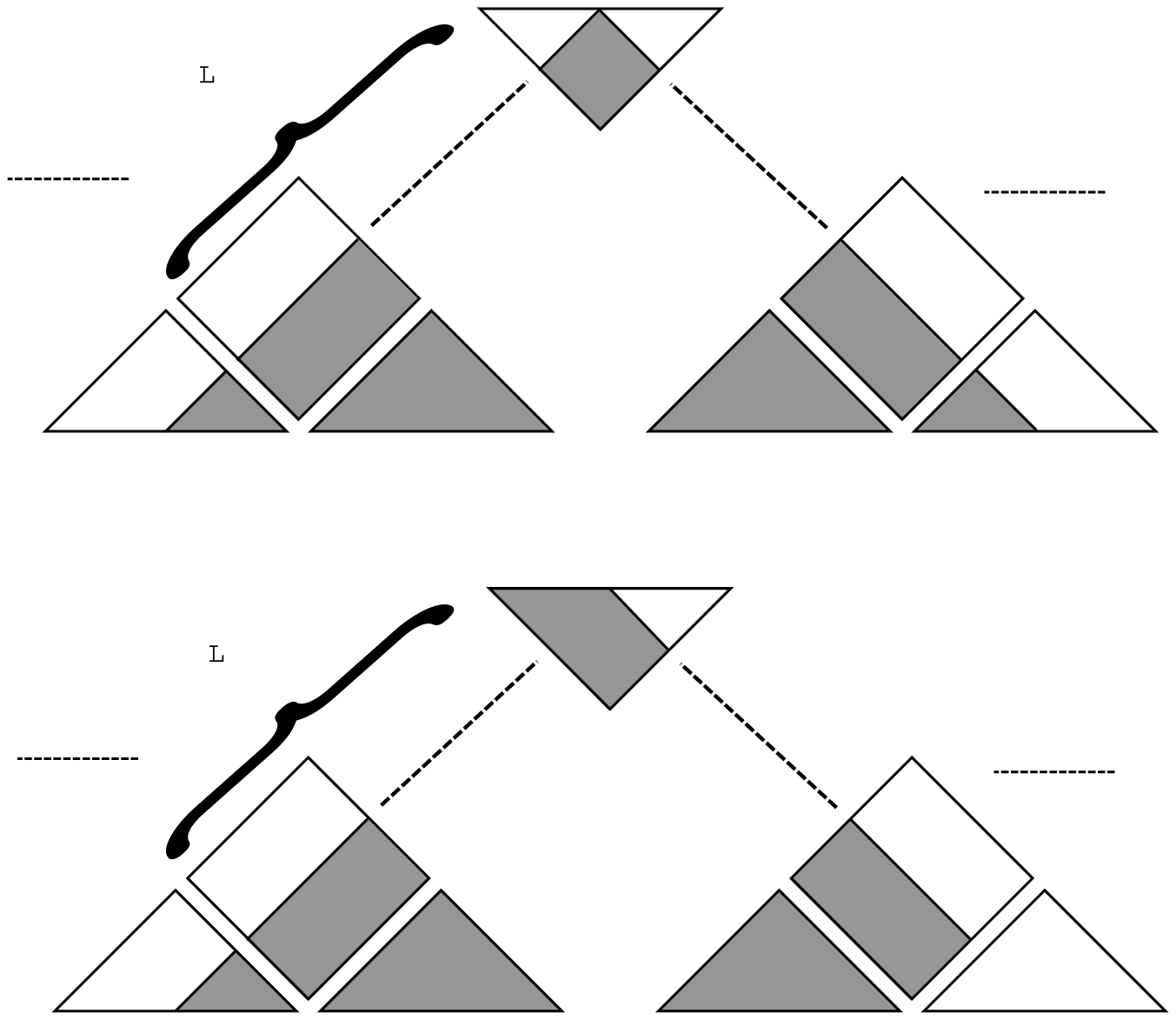}
	\caption{Illustration of Example \ref{example:NotStar}}
	\label{fig:NotStar}
\end{figure}

In $\mod k\PP$, we consider a new hereditary category $\HH$ by choosing a hereditary section $\QQ$ in $\mod k\PP$ generated by all standard projectives of the form $\PP(-,A)$ where $A \in \bN$ or $A \in \LL$.  The category $\HH$ is marked with gray in Figure \ref{fig:NotStar}.

The new category $\HH$ has category of projectives $\QQ$ and $\bZ \QQ$ does not satisfy (*).
\end{example}

\begin{example}\label{example:NotStar2}
Let $\HH'$ be the dual category of the category $\HH$ defined in Example \ref{example:NotStar} (see Figure \ref{fig:NotStar2}).  This category is generated by preprojective objects.  Denote by $\QQ'$ the category of projectives of $\HH'$.  It is clear that $\bZ \QQ'$ does not satisfy condition (*).
\begin{figure}[tb]
	\centering
	\psfrag{L}[][]{$\LL$}
		\includegraphics[width=0.60\textwidth]{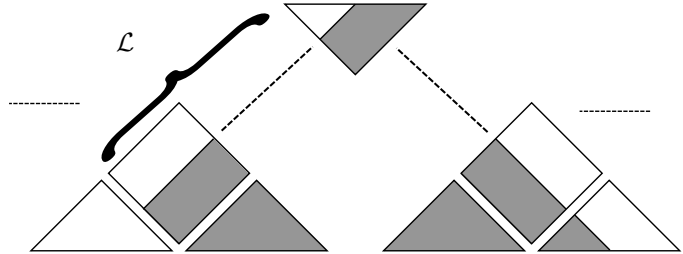}
	\caption{Sketch of a category generated by preprojective objects, but which does not satisfy condition (*).}
	\label{fig:NotStar2}
\end{figure}
\end{example}

The following lemma says that, under the condition (*), we can choose the set $\TT$ to satisfy some additional properties.

\begin{lemma}\label{lemma:ChooseTT}
Let $\QQ$ be a hereditary section such that $\bZ \QQ$ satisfy condition (*).  There is a countable subset $\TT=\{T_i\}_{i \in I} \subseteq \ind \bZ \QQ$, with $I \subseteq \bN$, satisfying the following properties.
\begin{enumerate}
\item $\d(\TT,X) < \infty$ for all $X \in \ind \bZ \QQ$,
\item $\d(\TT_j,T_k) = \infty$ for all $j < k$ and where $\TT_j=\{T_i\}_{i \leq j}$,
\item $\r(T_i,T_j) \geq \max\{i,j\}$ for all $i\not=j$.
\end{enumerate}
\end{lemma}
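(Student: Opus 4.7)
The plan is to partition $\ind \bZ \QQ$ into $\d$-equivalence classes, choose exactly one representative per class (which makes (1) and (2) automatic), and then shift each representative along its $\tau$-orbit to force (3).

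Since every object of $\ind \bZ \QQ$ is directing (Observation \ref{observation:PropertiesSection}), Proposition \ref{Proposition:DirectingCriterium} gives $\r(-,-) \in \bZ \cup \{\infty\}$ on $\ind \bZ \QQ$, so $\d$ is a pseudometric (Proposition \ref{proposition:dDistance}). Declare $X \sim Y \Leftrightarrow \d(X,Y) < \infty$; condition (*) guarantees that there are only countably many $\sim$-classes. On the set of classes, define $[X] \preceq [Y] \Leftrightarrow \r(X,Y) < \infty$. By the triangle inequality (Proposition \ref{proposition:rTriangle}) this is well-defined and transitive; antisymmetry follows from the definition of $\sim$, and reflexivity from $\r(X,X) = 0$, so $\preceq$ is a partial order.

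Enumerate the classes as $(\EE_k)_{k \in I}$ with $I \subseteq \bN$ by a linear extension of $\preceq$, so that $\EE_i \prec \EE_j$ implies $i < j$. Pick $T_k'' \in \EE_k$ for each $k$; then for every $i < k$ we have $\EE_k \not\preceq \EE_i$, hence $\r(T_k'', T_i) = \infty$. Using Lemma \ref{lemma:LightConeDistanceAndShifts}, set $T_k := \tau^{n_k} T_k''$ with $n_k \in \bZ$ chosen large enough that $\r(T_i, T_k'') + n_k \geq k$ for each of the finitely many $i < k$ with $\r(T_i, T_k'')$ finite. Then $\r(T_i, T_k) = \r(T_i, T_k'') + n_k \geq k$ and $\r(T_k, T_i) = \r(T_k'', T_i) - n_k = \infty$ for all $i < k$. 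The three conditions now follow at once: for (1), every $X$ lies in some $\EE_k$, so $\d(T_k, X) < \infty$; for (2), for $j < k$ and $i \leq j$ the classes $\EE_i$ and $\EE_k$ differ, whence $\d(T_i, T_k) = \infty$ and $\d(\TT_j, T_k) = \infty$; for (3), if $i < j$ the construction gives $\r(T_i, T_j) \geq j$ and $\r(T_j, T_i) = \infty \geq j$, so $\r(T_a, T_b) \geq \max\{a,b\}$ for all $a \neq b$ by symmetry.

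The main obstacle sits in the second paragraph: because $\tau$-shifts move $\r(T_i, T_j)$ and $\r(T_j, T_i)$ in opposite directions (Lemma \ref{lemma:LightConeDistanceAndShifts}), one cannot satisfy (3) in both directions by shifting alone, so the enumeration must force one side to $\infty$. Producing a linear extension of $\preceq$ of order type at most $\omega$ is therefore the genuine technical point: one must invoke the structure of $\bZ \QQ$ (Krull--Schmidt, directing, with Auslander--Reiten triangles) to rule out pathological countable posets, or alternatively solve the equivalent DAG of difference constraints $n_j - n_i \geq \max\{i,j\} - \r(T_i'', T_j'')$ directly, using that distinct classes cannot be mutually $\preceq$-comparable so that no cycles arise.
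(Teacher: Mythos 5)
The claim in your first paragraph that ``condition (*) guarantees that there are only countably many $\sim$-classes'' is false, and this is the crux of the matter. The subtlety is in the definition $\d(\TT,X) = \r(\TT,X) + \r(X,\TT)$: the two infima may be attained at \emph{different} elements of $\TT$, so $\d(\TT,X) < \infty$ only gives you $T,T' \in \TT$ with $\r(T,X) < \infty$ and $\r(X,T') < \infty$; it does \emph{not} give you a single $T$ with $\d(T,X) < \infty$, i.e.\ $T \sim X$. Indeed, the paper itself exploits this: just before stating (*) it takes $\TT$ to be the (countable) set of sinks and sources and observes $\d(\TT,X)=0$ because \emph{some} source maps to $X$ and $X$ maps to \emph{some} sink, not because any single sink or source is $\sim$-equivalent to $X$. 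For a concrete failure of your claim, take the thread quiver $\xymatrix@1{\cdot \ar@{..>}[r]^{\bR} & \cdot}$, so $\aa = k\PP$ with $\PP = \bN \cdot (\bR\stackrel{\rightarrow}{\times}\bZ)\cdot -\bN$, and let $\QQ$ be the standard hereditary section in $\Db\mod\aa$. Condition (*) holds with $\TT = \{P_{\min},P_{\max}\}$, yet the objects $P_{(r,0)}$ for $r \in \bR$ lie in pairwise distinct $\sim$-classes (for $r<r'$ one has $\r(P_{(r,0)},P_{(r',0)})=0$ but $\r(P_{(r',0)},P_{(r,0)})=\infty$), so there are uncountably many classes. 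Your set $\TT$ of class representatives is then uncountable, and the argument collapses at step one.

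The second problem, which you do flag, is independent and also genuine: even granting countably many classes, the poset $\preceq$ need not admit an $\omega$-enumeration with $\EE_i \prec \EE_j \Rightarrow i<j$ (take e.g.\ the thread quiver with label $\bN$, where the class poset contains a copy of $\omega+1$; the top class has infinitely many $\preceq$-predecessors, so no such enumeration exists). Your suggestion that structural input on $\bZ\QQ$ might ``rule out pathological countable posets'' does not work --- the pathology already occurs in the simplest thread quivers.

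The paper avoids both obstacles precisely because it never tries to pick one representative per $\sim$-class. It starts from the given countable $\TT$, prunes it to $\TT' = \{T_k : \forall j<k,\ \d(\TT_j,T_k)=\infty\}$ (which may be \emph{much} smaller than a transversal of the classes --- in the $\bR$-example, $\TT'$ has just two elements), and only then $\tau$-shifts. The key observation replacing your monotone enumeration is that condition (2) forces $\d(\TT_{i-1},T_i)=\infty$, hence \emph{either} $\r(\TT_{i-1},T_i)=\infty$ \emph{or} $\r(T_i,\TT_{i-1})=\infty$; whichever side is finite can be pushed up by shifting $T_i$ (in the positive direction in one case, the negative direction in the other), while the infinite side is unaffected. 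So the paper can deal with $T_i$ sitting ``before'' all previous terms just as well as ``after'' all of them, which is exactly the flexibility your fixed linear extension gives up. Your diagnosis that ``the enumeration must force one side to $\infty$'' is therefore half right: one side must be $\infty$, but that comes from the pruning condition, not from any global linearisation.
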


\begin{proof}
The first condition is exactly condition (*), so we may assume there is a countable subset $\TT=\{T_i\}_{i \in I} \subseteq \ind \bZ \QQ$ satisfying the first property.

For the second property, consider $\TT' =\{T_k \in \TT \mid \forall j<k:\d(\TT_j, T_k) = \infty\}$ instead of $\TT$.   It is clear that $\TT' \subseteq \ind \bZ \QQ$ satisfies the second condition.  It follows from the triangle inequality that $\d(\TT',X) < \infty$ for all $X \in \bZ \QQ$.

For the last property, assume $\TT=\{T_i\}_{i \in I} \subseteq \ind \bZ \QQ$ is a countable set satisfying the first two properties.  To ease notations, assume $I=\{0,1, \ldots, n\}$ or $I=\bN$.  We will define sets $\SS_i$ recursively.  Firstly let $\SS_0 = \{T_0\}$.  For every $i > 0$, choose an object $S_i$ on the $\t$-orbit of $T_i$ such that $\r(\SS_{i-1},S_i) \geq i$ and $\r(S_i,\SS_{i-1}) \geq i$ (see Lemma \ref{lemma:LightConeDistanceAndShifts}).  This is possible since, by the second condition, one of these will be infinite.

The set $\SS = \cup_{i \in I} \SS_i$ satisfies the required properties.
\end{proof}
\subsection{Finding a dualizing $k$-variety $\bZ$-equivalent to $\QQ$}

Let $\AA$ be a connected Ext-finite abelian category with Serre duality and let $\QQ$ be a hereditary section.  We have remarked above that $\QQ$ (or $\bZ \QQ$) needs to satisfy condition (*) for there to be a hereditary section $\QQ'$ which is a dualizing $k$-variety and $\bZ$-equivalent to $\QQ$.  The main result of this section will be to show the condition (*) is also sufficient, namely if $\QQ$ is a hereditary section in $\Db \AA$ such that there is a countable set $\TT \subseteq \ind \bZ \QQ$ with $\d(\TT,X) < \infty$ for all $X \in \ind \bZ \QQ$, then $\QQ$ is $\bZ$-equivalent to a semi-hereditary dualizing $k$-variety $\QQ_\TT$.

We start by choosing such a set $\TT$ and constructing an associated hereditary section $\QQ_\TT$.  We will then show that $\QQ_\TT$ is a dualizing $k$-variety.

\begin{construction}\label{construction:ChoosingTilt}
We start by choosing a set $\TT$ with the properties of Lemma \ref{lemma:ChooseTT}.  Associated to this set $\TT$, we will consider the full subcategory $\QQ_\TT$ of $\Db \AA$ as follows: for every $X \in \ind \bZ \QQ$, fix a $\tau$-shift of $X$ such that
$$\r (\TT, X) = \left\lfloor \frac{\d (\TT, X)}{2}\right\rfloor.$$
\end{construction}

\begin{example}\label{example:LargerTilt}
Let $\aa$ be the dualizing $k$-variety given by the thread quiver $\xymatrix@1{\cdot\ar@{..>}[r]^{2}&\cdot}$ thus $\aa$ is equivalent to $k(\bN \cdot \bZ \cdot \bZ \cdot -\bN)$.  The Auslander-Reiten quiver of $\Db \mod \aa$ is as sketched in the upper part of Figure \ref{fig:LargerTiltSketch}.  We will consider the hereditary section $\QQ$ spanned by all objects of $\aa \subset \Db \mod \aa$ lying in a $\bZ A_\infty^\infty$-component.  The corresponding hereditary category $\AA$ is as given by the middle part of Figure \ref{fig:LargerTiltSketch}.

\begin{figure}[tb]
	\centering
		\includegraphics[width=0.50\textwidth]{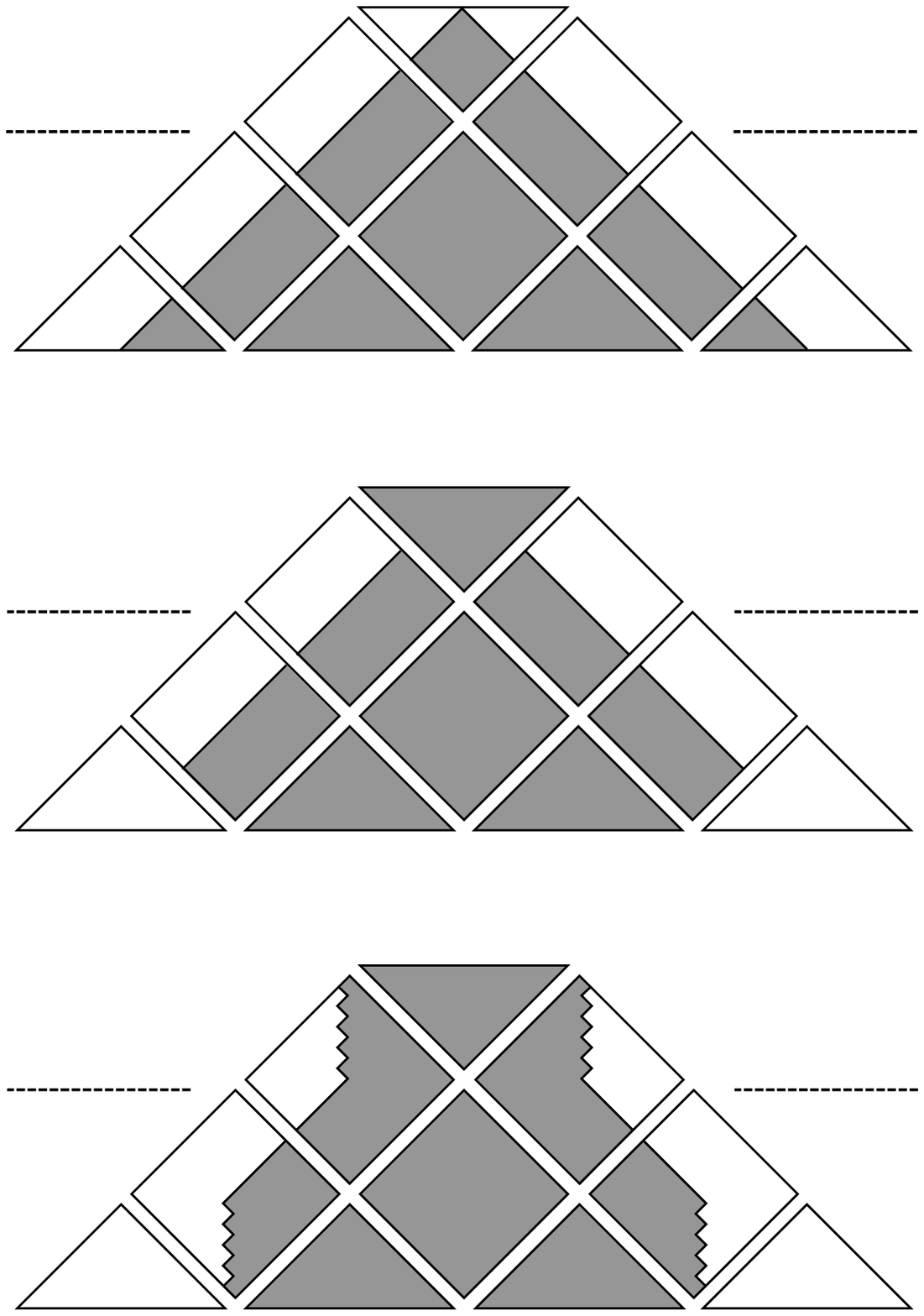}
	\caption{Illustrations for $\mod \aa$, $\AA$, and $\HH$ of Example \ref{example:LargerTilt}}
	\label{fig:LargerTiltSketch}
\end{figure}

We choose a set $\TT = \{T_0, T_1\}$ as in Figure \ref{fig:LargerTilt}, satisfying the conditions $\d(T_0,T_1) = \infty$ and $\r(T_0,T_1) \geq 1$ from Lemma \ref{lemma:ChooseTT}.  In Figure \ref{fig:LargerTilt}, the light cones $\Sr(\TT,0)$ and $\Sl(\TT,0)$ have been marked by black arrows, and the corresponding full subcategory $\QQ_\TT$ of $\Db \AA$ has been indicated by '$\bullet$'.
\begin{figure}[tbp]
	\centering
	  \psfrag{x}[][]{$T_0$}
		\psfrag{Y}[][]{$T_1$}
		\psfrag{B}[][]{$\bullet$}
		\includegraphics[width=0.90\textwidth]{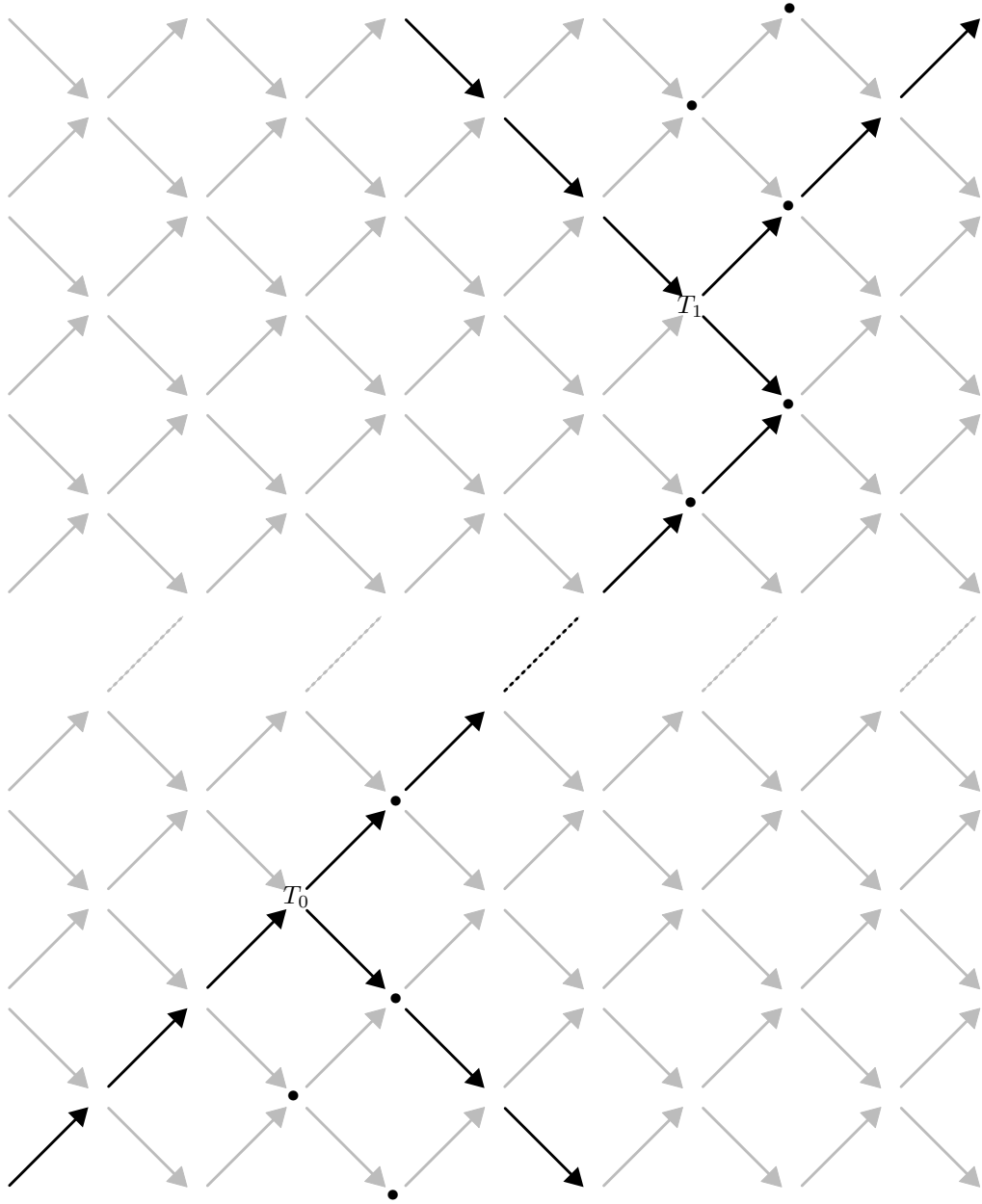}
	\caption{The light cones and chosen hereditary section of Example \ref{example:LargerTilt}}
	\label{fig:LargerTilt}
\end{figure}
\end{example}

We first verify that $\QQ_\HH$ defined above is indeed a hereditary section.

\begin{proposition}\label{proposition:IsHereditarySection}
The subcategory $\QQ$ defined in Construction \ref{construction:ChoosingTilt} is a hereditary section.
\end{proposition}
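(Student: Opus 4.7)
The plan is to invoke Proposition \ref{proposition:SectionByDistance}, which reduces the statement to checking two conditions: that $\QQ_\TT$ is a full $\tau$-convex additive subcategory of $\Db \AA$, and that $\r(X,Y) \geq 0$ for all $X,Y \in \ind \QQ_\TT$.

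First I would verify that Construction \ref{construction:ChoosingTilt} picks exactly one $\tau$-shift per $\tau$-orbit of $\bZ \QQ$. Since condition (*) ensures $\d(\TT,X) < \infty$, both $\r(\TT,X)$ and $\r(X,\TT)$ are integers, and Lemma \ref{lemma:LightConeDistanceAndShifts} together with the $\tau$-invariance of $\d$ show that $n(X) := \lfloor \d(\TT,X)/2 \rfloor - \r(\TT,X)$ is the unique integer for which $\r(\TT, \tau^{n(X)} X) = \lfloor \d(\TT,X)/2 \rfloor$. A direct computation gives $n(\tau^m X) = n(X) - m$, so the chosen shift depends only on the $\tau$-orbit. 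Consequently $\bZ \QQ_\TT = \bZ \QQ$, and $\QQ_\TT$ inherits $\tau$-convexity from $\QQ$.

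The core step is then to show $\r(X,Y) \geq 0$ for $X, Y \in \ind \QQ_\TT$. By construction $\r(\TT, Z) = \lfloor \d(\TT, Z)/2 \rfloor$ for every $Z \in \ind \QQ_\TT$, and consequently $\r(Z,\TT) = \d(\TT,Z) - \r(\TT,Z) = \lceil \d(\TT,Z)/2 \rceil$. Taking infima over $T \in \TT$ of the triangle inequality (Proposition \ref{proposition:rTriangle}) applied at each $T$ separately yields
\[
\r(\TT,Y) \leq \r(\TT,X) + \r(X,Y) \qquad \text{and} \qquad \r(X,\TT) \leq \r(X,Y) + \r(Y,\TT),
\]
and after rewriting these become
\[
\r(X,Y) \geq \lfloor \d(\TT,Y)/2 \rfloor - \lfloor \d(\TT,X)/2 \rfloor \quad \text{and} \quad \r(X,Y) \geq \lceil \d(\TT,X)/2 \rceil - \lceil \d(\TT,Y)/2 \rceil.
\]
Since both $\lfloor \cdot /2 \rfloor$ and $\lceil \cdot /2 \rceil$ are nondecreasing on $\bN$, at least one of these lower bounds is nonnegative: the first if $\d(\TT,X) \leq \d(\TT,Y)$, the second otherwise. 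In either case $\r(X,Y) \geq 0$, completing the verification.

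I do not expect any serious obstacle here; the argument is essentially a symmetrization around $\TT$ using the two triangle inequalities, and the only delicate bookkeeping point is checking that the triangle inequality descends cleanly to an infimum over $\TT$, which is immediate because $\r(X,Y)$ does not involve the summation variable $T$. The more substantial work has already been done in isolating condition (*) and in the structure of $\TT$ provided by Lemma \ref{lemma:ChooseTT}; the heavier assertion that $\QQ_\TT$ is actually a semi-hereditary dualizing $k$-variety is left for subsequent results.
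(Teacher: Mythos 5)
Your proof is correct and follows essentially the same approach as the paper: the core step is exactly the same pair of triangle-inequality estimates applied to $\r(\TT,-)$ and $\r(-,\TT)$, with the observation that $\r(\TT,Z) = \lfloor \d(\TT,Z)/2\rfloor$ forces $\r(Z,\TT) = \lceil \d(\TT,Z)/2\rceil$, and that one of the two lower bounds is automatically nonnegative depending on whether $\d(\TT,X) \le \d(\TT,Y)$ or not. The only cosmetic difference is that the paper cites Corollary \ref{corollary:TauTilting} (which already packages the $\bZ\QQ_\TT=\bZ\QQ$ hypothesis), whereas you go back to Proposition \ref{proposition:SectionByDistance} and explicitly verify that Construction \ref{construction:ChoosingTilt} picks a unique $\tau$-shift per orbit so that $\tau$-convexity is inherited from $\QQ$; the paper leaves that bookkeeping implicit in the phrasing of the construction.
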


\begin{proof}
According to Corollary \ref{corollary:TauTilting} we only need to check that $\r (Y, Z) \geq 0$ for all $Y, Z \in \ind \QQ_\HH$.  Using the triangle inequality, we find
\begin{eqnarray*}
\r (Y, Z) &\geq& \r(\TT,Z) - \r(\TT,Y) \\
&=& \left\lfloor \frac{\d (\TT, Z)}{2}\right\rfloor - \left\lfloor \frac{\d (\TT, Y)}{2}\right\rfloor \geq 0
\end{eqnarray*}
if $\d (\TT, Y) \leq \d (\TT, Z)$, and
\begin{eqnarray*}
\r (Y,Z) &\geq& \r(Y,\TT) - \r(Z,\TT) \\
&=& \left\lceil \frac{\d (\TT, Y)}{2}\right\rceil - \left\lceil \frac{\d (\TT, Z)}{2}\right\rceil \geq 0.
\end{eqnarray*}
if $\d (\TT, Z) \leq \d (\TT, Y)$.
\end{proof}

\begin{lemma}\label{lemma:HomsInQ}
Let $A, B \in \ind \QQ_\TT$ with $\Hom (A, B) \not= 0$, then
\begin{enumerate}
\item $\d(\TT,A) - 1 \leq \d(\TT,B) \leq \d(\TT,A) + 1$,
\item
\begin{enumerate}
\item $\r(\TT,A) - 1 \leq \r(\TT,B) \leq \r(\TT,A)$
\item $\r(A,\TT) \leq \r(B,\TT) \leq \r(A,\TT) + 1$
\end{enumerate}
\end{enumerate}
\end{lemma}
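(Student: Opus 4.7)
The plan is to derive everything from two easy applications of the triangle inequality together with the defining property of $\QQ_\TT$, namely $\r(\TT, X) = \lfloor \d(\TT,X)/2 \rfloor$ and, equivalently, $\r(X, \TT) = \lceil \d(\TT,X)/2 \rceil$ for $X \in \ind \QQ_\TT$. First I would observe that $\r(A,B) = 0$: the inequality $\r(A,B) \leq 0$ is immediate from $\Hom(A,B) \neq 0$, while $\r(A,B) \geq 0$ follows from Proposition \ref{proposition:SectionByDistance}, since $\QQ_\TT$ is a hereditary section by Proposition \ref{proposition:IsHereditarySection}. Plugging this into the triangle inequality (Proposition \ref{proposition:rTriangle}), for every $T \in \TT$ one has $\r(T, B) \leq \r(T, A) + \r(A, B) = \r(T, A)$ and $\r(A, T) \leq \r(A, B) + \r(B, T) = \r(B, T)$; taking infima over $T \in \TT$ yields the ``easy halves'' $\r(\TT, B) \leq \r(\TT, A)$ and $\r(A, \TT) \leq \r(B, \TT)$.

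I would then translate these two inequalities back through the construction of $\QQ_\TT$: the first rewrites as $\lfloor \d(\TT, B)/2 \rfloor \leq \lfloor \d(\TT, A)/2 \rfloor$, which forces $\d(\TT, B) \leq \d(\TT, A) + 1$; the second becomes $\lceil \d(\TT, A)/2 \rceil \leq \lceil \d(\TT, B)/2 \rceil$, forcing $\d(\TT, A) \leq \d(\TT, B) + 1$. This is exactly Part (1). The remaining halves of Parts (2a) and (2b) are then obtained by reading Part (1) back into the construction via the elementary bounds $\lfloor (d+1)/2 \rfloor \leq \lfloor d/2 \rfloor + 1$ and $\lceil (d+1)/2 \rceil \leq \lceil d/2 \rceil + 1$: for instance
\[
\r(\TT, A) = \lfloor \d(\TT,A)/2 \rfloor \leq \lfloor (\d(\TT,B)+1)/2 \rfloor \leq \r(\TT, B) + 1,
\]
and symmetrically $\r(B, \TT) \leq \r(A, \TT) + 1$.

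There is no serious obstacle here; the only thing to resist is the tempting (but in general false) shortcut of trying to prove $\r(B, A) \leq 1$ directly, which would give Parts (2a) and (2b) in one line through the triangle inequality. Such a shortcut already fails inside the category of projectives of $\rep A_3$: for $A = P_3$ and $B = P_1$ one has $\Hom(A, B) \neq 0$, yet $\r(B, A) = 2$. The whole point of the argument is that the $\tau$-rebalancing built into the construction of $\QQ_\TT$ makes $\d(\TT, -)$ nearly constant on Hom-neighbours, even though the raw quantity $\r(B, A)$ can be much larger than~$1$.
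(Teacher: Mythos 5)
Your proof is correct and follows essentially the same approach as the paper's: both combine the triangle inequality and $\r(A,B)=0$ with the defining floor/ceiling relation of $\QQ_\TT$, first obtaining $\r(\TT,B)\leq\r(\TT,A)$ and $\r(A,\TT)\leq\r(B,\TT)$, and then reading off the remaining bounds. The paper condenses the floor/ceiling arithmetic you spell out into ``the required inequalities follow readily,'' so your version simply makes that step (and the pitfall about $\r(B,A)$) explicit.
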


\begin{proof}
Since $\Hom(A,B) \not= 0$, one finds 
\begin{eqnarray*}
0 &=& \r (A, B) \\
&\geq& \r (\TT, B) - \r (\TT, A) \\
&=& \left\lfloor \frac{\d (\TT, B)}{2}\right\rfloor - \left\lfloor \frac{\d (\TT, A)}{2}\right\rfloor.
\end{eqnarray*}
Hence $\r(\TT,B) \leq \r(\TT,A)$ and $\d(\TT,B) \leq \d(\TT,A) + 1$.  Likewise, one finds
\begin{eqnarray*}
0 &=& \r (A,B) \\
&\geq& \r (A,\TT) - \r (B,\TT) \\
&=& \left\lceil \frac{\d (\TT,A)}{2}\right\rceil - \left\lceil \frac{\d (\TT,B)}{2}\right\rceil
\end{eqnarray*}
so that $\r(A,\TT) \leq \r(B,\TT)$ and $\d(\TT,A)-1 \leq \d(\TT,B)$.  The required inequalities follow readily.
\end{proof}

\begin{lemma}\label{lemma:FiniteTLA}
For any $A \in \ind \QQ_T$, there is a finite subset $\TLA \subseteq \TT$ with the following property:
$$\forall B \in \ind \QQ_\TT: \r(A,B)= 0 \Rightarrow \r(B,\TT) = \r(B,\TLA).$$
\end{lemma}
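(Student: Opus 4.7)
The plan is to take $\TLA$ to be the initial segment $\{T_i \in \TT : i \leq N\}$ for a sufficiently large $N$ depending only on $A$. The argument rests on two quantitative bounds: an upper bound on $\r(B, \TT)$ valid uniformly for all $B$ with $\r(A, B) = 0$, and a lower bound on $\r(B, T_i)$ that grows with $i$. Together they will force the infimum defining $\r(B, \TT)$ to be realized within the finite set $\TLA$.

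For the upper bound, set $\mu := \r(\TT, A)$, which is a non-negative integer (since $\r(\TT, B) = \lfloor \d(\TT, B)/2 \rfloor$ and $\d(\TT, -) \geq 0$) and, being an attained infimum of integers bounded below, is realized at some $T_{i^*} \in \TT$. If $\r(A, B) = 0$, the triangle inequality gives $\r(\TT, B) \leq \r(\TT, A) + \r(A, B) = \mu$; combined with the defining relation $\r(\TT, B) = \lfloor \d(\TT, B)/2 \rfloor$ of Construction \ref{construction:ChoosingTilt}, this gives $\d(\TT, B) \leq 2\mu + 1$ and hence $\r(B, \TT) = \lceil \d(\TT, B)/2 \rceil \leq \mu + 1$. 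For the lower bound, combining $\r(T_{i^*}, B) \leq \r(T_{i^*}, A) + \r(A, B) = \mu$ with property (3) of $\TT$ from Lemma \ref{lemma:ChooseTT} (which gives $\r(T_{i^*}, T_i) \geq i$ for $i > i^*$), the triangle inequality yields
\[ i \;\leq\; \r(T_{i^*}, T_i) \;\leq\; \r(T_{i^*}, B) + \r(B, T_i) \;\leq\; \mu + \r(B, T_i), \]
so $\r(B, T_i) \geq i - \mu$ whenever $i > i^*$. Setting $N := \max\{i^*, 2\mu + 1\}$ and $\TLA := \{T_i \in \TT : i \leq N\}$, every $i > N$ satisfies $\r(B, T_i) > \mu + 1 \geq \r(B, \TT)$, so the attained infimum $\r(B, \TT)$ must be realized by some $T_i$ with $i \leq N$, proving $\r(B, \TT) = \r(B, \TLA)$.

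The main obstacle is the upper bound on $\r(B, \TT)$: the naive estimate $\r(B, \TT) \leq \r(B, A) + \r(A, \TT)$ is useless, since $\r(B, A)$ can fail to be finite under the single hypothesis $\r(A, B) = 0$. The fix is to first bound $\r(\TT, B)$ from above --- which the triangle inequality actually supports --- and only then convert this into a bound on $\r(B, \TT)$ via the floor/ceiling identity forced by the choice of $\t$-shifts in Construction \ref{construction:ChoosingTilt}. This is precisely where the particular structure of $\QQ_\TT$ (and hence the hypothesis (*)) is genuinely used; the property (3) of $\TT$ then takes care of the lower bound through a single application of the triangle inequality.
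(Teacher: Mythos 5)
Your proof is correct and follows essentially the same route as the paper: fix a witness $T_{i^*}$ for $\r(\TT,A)$, use the floor/ceiling relation from Construction \ref{construction:ChoosingTilt} together with the triangle inequality to bound $\r(B,\TT)$ in terms of $\d(\TT,A)$, and then use property (3) of Lemma \ref{lemma:ChooseTT} to show that $\r(B,T_i)$ grows with $i$, so only finitely many indices can realize the infimum. The only cosmetic difference is that the paper cites Lemma \ref{lemma:HomsInQ} for the upper bound $\r(B,\TT)\leq\r(A,\TT)+1$ whereas you re-derive the (slightly sharper) bound $\r(B,\TT)\leq\r(\TT,A)+1$ directly.
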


\begin{proof}
Fix a $T_i \in \TT$ such that $\r(\TT,A) = \r(T_i,A)$.  Let $T_j \in \TT$ such that $\r(B,\TT) = \r(B,T_j)$ for some $B \in S_\QQ^\bullet(A,0)$.  If $i \not= j$, then using the triangle inequality we find
\begin{eqnarray*}
\r(\TT,A) + \r(B,\TT) &=& \r(T_i,A) + \r(B,T_j) \\
&=& \r(T_i,A) + \r(A,B) + \r(B,T_j)  \\
&\geq& \r(T_i,T_j) \geq \max\{i,j\}.
\end{eqnarray*}
By Lemma \ref{lemma:HomsInQ} we know that $\r(B,\TT) \leq \r(A,\TT)+1$ so that
$$\r(\TT,A) + \r(A,\TT) +1 \geq \max\{i,j\}.$$
This shows that $j$ is bounded and hence that $\TLA$ is finite.
\end{proof}

\begin{theorem}\label{theorem:MainTilting}
Let $\AA$ be a connected abelian hereditary category satisfying Serre duality with category of projectives $\QQ_\AA$.  Assume that $\bZ \QQ_\AA$ satisfies (*).  Then there is a hereditary section $\QQ_\TT$ in $\bZ \QQ_\AA$ which is a dualizing $k$-variety, and $\bZ \QQ_\TT = \bZ \QQ$.
\end{theorem}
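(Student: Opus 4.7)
The plan is to apply Construction \ref{construction:ChoosingTilt} and then verify the criterion of Proposition \ref{proposition:ConditionsToBeDualizing}. First, hypothesis $(*)$ together with Lemma \ref{lemma:ChooseTT} lets us select a countable subset $\TT = \{T_i\}_{i \in I} \subseteq \ind \bZ \QQ_\AA$ with the three listed properties; Construction \ref{construction:ChoosingTilt} then produces a full subcategory $\QQ_\TT$ of $\bZ\QQ_\AA$, and Proposition \ref{proposition:IsHereditarySection} ensures it is a hereditary section. Since the construction selects exactly one $\tau$-representative from each $\tau$-orbit of $\bZ \QQ_\AA$, we have $\bZ \QQ_\TT = \bZ \QQ_\AA$ as required.

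It remains to show $\QQ_\TT$ is dualizing. By Observation \ref{observation:PropertiesSection} it is already semi-hereditary, so by Proposition \ref{proposition:ConditionsToBeDualizing} it suffices to produce, for each $A \in \ind \QQ_\TT$, objects $C_1, C_2 \in \Ob \QQ_\TT$ witnessing conditions (1) and (2); by the left-right symmetry it is enough to handle $C_1$. Fix $A \in \ind \QQ_\TT$ and consider all $B \in \ind \QQ_\TT$ with $\r(A, B) = 0$. Lemma \ref{lemma:HomsInQ} bounds $\r(B, \TT) \leq \r(A, \TT) + 1$ uniformly in $B$, and Lemma \ref{lemma:FiniteTLA} yields a finite subset $\TLA \subseteq \TT$ with $\r(B, \TT) = \r(B, \TLA)$ for every such $B$. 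Hence each $B$ is classified by a pair $(T_j, m)$ with $T_j \in \TLA$ and $0 \leq m \leq \r(A, \TT) + 1$ satisfying $\r(B, T_j) = m$; there are finitely many such pairs.

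It therefore suffices to produce, for each such pair, a single object $C_{j, m} \in \ind \QQ_\TT$ with $\Hom(B, C_{j, m}) \neq 0$ for every $B$ in the corresponding class, and then set $C_1 = \bigoplus_{j, m} C_{j, m}$, a finite sum in $\QQ_\TT$. The obvious candidate $\tau^{-m} T_j$ satisfies $\r(B, \tau^{-m} T_j) = 0$ and hence $\Hom(B, \tau^{-m} T_j) \neq 0$ by Proposition \ref{proposition:LeftmostComposition}, but for $m > 0$ it lies outside $\QQ_\TT$ since the $\tau$-orbit of $T_j$ is represented in $\QQ_\TT$ only by $T_j$ itself. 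This mismatch is the main obstacle.

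To resolve it, I would apply Proposition \ref{proposition:WhenAdjoints} to a convex interval subcategory of $\QQ_\TT$ containing the finitely many objects of $\QQ_\TT$ through which paths from $A$ to $\tau^{-m} T_j$ can pass, and use the resulting right adjoint to replace $\tau^{-m} T_j$ by an honest object $C_{j, m}$ of $\QQ_\TT$ absorbing every map $B \to \tau^{-m} T_j$ for $B$ in the class; the uniform finiteness bounds from Lemmas \ref{lemma:HomsInQ} and \ref{lemma:FiniteTLA}, together with convexity of $\QQ_\TT$, are what make this replacement work for all $B$ simultaneously. Dual reasoning supplies $C_2$, and Proposition \ref{proposition:ConditionsToBeDualizing} then concludes that $\QQ_\TT$ is a dualizing $k$-variety, completing the proof.
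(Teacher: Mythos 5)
The setup is right and you've correctly identified both the obstacle and the tools that should be used, but the heart of the argument — constructing $C_{j,m}$ — is hand-waved in a way that would not go through as stated. The problem with your final paragraph is that the adjoint $r$ to an embedding $[X,Y]\hookrightarrow\QQ_\TT$ coming from Proposition~\ref{proposition:WhenAdjoints} is a functor defined on $\QQ_\TT$; you cannot apply it to $\tau^{-m}T_j$ precisely because, as you observe, $\tau^{-m}T_j$ is typically \emph{not} an object of $\QQ_\TT$ (its $\tau$-orbit is represented in $\QQ_\TT$ only by $T_j$). So there is no ``replacement'' to perform. Moreover, it is also unclear which interval $[X,Y]$ you mean: ``the finitely many objects of $\QQ_\TT$ through which paths from $A$ to $\tau^{-m}T_j$ can pass'' may well be infinite (threads are infinite), so this does not pin down a subcategory.

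The paper's actual construction gets around exactly this difficulty with a two-stage argument that your sketch omits. First one does \emph{not} work in $\QQ_\TT$ at all, but in the light cone $\QQ_A$ centered on $A$ and its associated hereditary heart $\HH_A$ (Theorem~\ref{theorem:SectionTilting}); there the shifted object $\tau^{-i}T$ is a preprojective object of $\HH_A$, so Proposition~\ref{proposition:PreprojectivesCopresented} supplies a genuine projective cover $X\to\tau^{-i}T$ with $X\in\QQ_A$, and the relevant summand $Y$ of $X$ receives nonzero maps from all $B$ in the class. Second, $Y$ need not lie in $\QQ_\TT$, and to remedy this the paper invokes the ad hoc Lemma~\ref{lemma:SubcategoryInMainProof} (a targeted application of part (1) of Proposition~\ref{proposition:WhenAdjoints} via a universal-extension trick, not an interval), which gives an adjoint $r\colon\QQ_A\to\QQ_A'$ with explicit control over $\r(\TT,-)$; one then takes the appropriate summand $Z$ of $r(Y)$ and verifies through a sequence of triangle-inequality estimates that every indecomposable summand of $Z$ actually lies in $\QQ_\TT$ and receives a nonzero map from every $B$. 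These two steps, and the distance bookkeeping that makes them land inside $\QQ_\TT$, are the technical substance of the proof, and they are exactly what is missing from your proposal.

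Everything up to that point — using Lemma~\ref{lemma:ChooseTT} to pick $\TT$, Construction~\ref{construction:ChoosingTilt} and Proposition~\ref{proposition:IsHereditarySection} to get the hereditary section $\QQ_\TT$, the observation that $\bZ\QQ_\TT=\bZ\QQ_\AA$, the reduction to one side of Proposition~\ref{proposition:ConditionsToBeDualizing}, and the finite partition of $\{B : \r(A,B)=0\}$ via Lemmas~\ref{lemma:HomsInQ} and~\ref{lemma:FiniteTLA} — matches the paper and is fine.
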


\begin{proof}
Let $\QQ_\TT$ be a hereditary section as described in Construction \ref{construction:ChoosingTilt}.  We need to check that the two conditions of Proposition \ref{proposition:ConditionsToBeDualizing} are satisfied.  We will only prove the first part, the second part is shown dually.

Let $A \in \ind \QQ_\TT$ and divide the set of indecomposables $B \in \ind \QQ_\TT$ with $\r(A,B) = 0$ into subsets
$$\SS_{T,i} = \{B \in \ind \QQ \mid \r(A,B) = 0, \r(B,\TT) = \r(B,T) = i\}$$
where $i \in \bZ$, $T \in \TT$.  It follows from Lemmas \ref{lemma:HomsInQ} and \ref{lemma:FiniteTLA} that only finitely many of these subsets are nonempty.

For each of these nonempty subsets $\SS_{T,i}$ we will construct, in two steps, an object $C_{T,i} \in \QQ_\TT$ such that $\Hom(B,C_{T,i}) \not= 0$ when $B \in \SS_{T,i}$.  The object
$$C = \bigoplus_{\SS_{T,i} \not= \emptyset} C_{T,i}$$
is then the required object from the first condition of Proposition \ref{proposition:ConditionsToBeDualizing}.

Let $\QQ_A$ be the light cone centered on $A$ and let $\HH_A$ be an associated hereditary category in the sense of Theorem \ref{theorem:SectionTilting}, thus $\HH_A$ is the hereditary heart of a $t$-structure on $\Db \AA$ such that the category of projectives of $\HH_A$ correspond to $\QQ_A$.  In particular any $B \in \SS_{T,i}$ corresponds to a projective object in $\HH_A$ and because $\Hom(B,\t^{-i} T) \not= 0$ (due to Proposition \ref{proposition:LeftmostComposition}) we know that $\t^{-i} T \in \Ob \HH_A[0]$.  Moreover, since $\r(A,\t^{-i} T) \not= -\infty$, we know that $\t^{-i} T$ is even a preprojective object in $\HH_A$.  Proposition \ref{proposition:PreprojectivesCopresented} shows there is a projective cover $X \to \t^{-i} T$ in $\HH_A$.  Note that $\Hom(B,X) \not= 0$ for all $B \in \SS_{T,i}$.

Let $Y$ be a maximal direct summand of $X$ such that for every indecomposable direct summand $Y'$ of $Y$ there is a $B \in \SS_{T,i}$ with $\Hom(B,Y') \not= 0$, thus $\r(B,Y') = 0$.  Using the triangle inequality we find $\r(Y',\TT) = \r(Y',T)=i$, and $\r(\TT,Y') \geq \r(\TT,T) - \r(Y',T) = -i$.

In general the object $Y$ does not have to lie on $\QQ_\TT$.  In the second step of this construction, we will use the object $Y$ to construct the required object $C_{T,i}$.

Let $j \in \bZ$ be the smallest integer such that $\lfloor \frac{i+j}{2}\rfloor = j$, thus an object $Z' \in \ind \bZ \QQ_\TT$ with $j \leq \r(\TT,Z') \leq \r(\TT,B)$ and $\r(Z',\TT) = i$ would lie in the subcategory $\QQ_\TT$, if $B \in \ind \SS_{T,i}$.  Note that $j \leq \r(\TT,B)$ for all $B \in \SS_{T,i}$.

Let $\TT_f \subseteq \TT$ be the subset consisting of all objects $T_k \in \TT$ such that $\r(T_k, T) < i+j$.  Since $\TT$ satisfies the conditions of Lemma \ref{lemma:ChooseTT}, this is necessarily a finite set.  Note that the triangle inequality implies that any $T' \in \TT$ with $\r(T',Y') < j$ lies in $\TT_f$.  We now apply Lemma \ref{lemma:SubcategoryInMainProof} below to the hereditary section $\QQ_A$ with $i_1 = -i$ and $i_2 = j+1$.  We obtain a full subcategory $\QQ'_A$ of $\QQ_A$ and a right adjoint $r:\QQ_A \to \QQ_A'$ to the embedding.  We will write $Z$ for the maximal direct summand of $r(Y)$ such that for every indecomposable direct summand $Z'$ of $Z$ there is a $B \in \SS_{T,i}$ with $\Hom(B,Z') \not= 0$.  Note that $\Hom(B,Z) \not= 0$ for all $B \in \SS_{T,i}$.

We claim that every direct summand $Z'$ of $Z$ lies in $\QQ_\TT$.  Note that $\Hom(Z',r(Y)) \not= 0$ and thus $\Hom(Z',Y) \not= 0$.  This implies that $\r(Z',Y') \not= 0$, for a direct summand $Y'$ of $Y$, and thus the triangle inequality gives $\r(Z',\TT) \leq \r(Y',\TT) = i$.  There is also a $B \in \SS_{T,i}$ with $\r(B,Z') = 0$, and we use the triangle inequality to shows that $\r(Z',\TT) \geq i$.  We conclude that $\r(Z',\TT) = i$.

Next, Lemma \ref{lemma:SubcategoryInMainProof} yields that $j \leq \r(\TT_f, Z')$.  Since $\TT_f \subseteq \TT$ we know that $\r(\TT_f,Z') \leq \r(\TT,Z')$.  To proof that $j \leq \r(\TT, Z')$, let $T' \in \TT$ such that $\r(T',Z') < j$.  Then $\r(T',T) \leq \r(T',Z') + \r(Z',T) < j + i$ and thus by definition we have $T' \in \TT_f$.  This shows that indeed $j \leq \r(\TT, Z')$.

Since there is a $B \in \SS_{T,i}$ with $\r(B,Z') = 0$ we know that $\r(\TT,Z') \leq \r(\TT,B)$.  We conclude that 
$Z' \in \QQ_\TT$.  This shows that $Z$ is the required object $C_{T,i} \in \QQ$.
\end{proof}

We have used the following lemma.

\begin{lemma}\label{lemma:SubcategoryInMainProof}
Let $\QQ$ be a hereditary section in $\Db \AA$, and let $\TT_f  \subset \ind \bZ \QQ$ be a finite set.  Let $i_1, i_2 \in \bZ$ with $i_1 \leq i_2$.  There is a full subcategory $\QQ' \subseteq \QQ$ satisfying the following properties:
\begin{enumerate}
\item the embedding $\QQ' \to \QQ$ has a left and a right adjoint,
\item if $A \in \ind \QQ$ with $i_2 < \r(\TT,A)$, then $A \in \ind \QQ'$,
\item if $A \in \ind \QQ$ with $i_1 \leq \r(\TT,A) \leq i_2$, then $A \not\in \ind \QQ'$.
\end{enumerate}
\end{lemma}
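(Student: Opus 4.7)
We define $\QQ'$ to be the full replete additive subcategory of $\QQ$ spanned by those indecomposables $A \in \ind \QQ$ with $\r(\TT_f, A) \notin [i_1, i_2]$; conditions (2) and (3) of the lemma are then immediate from this definition. The substantive content of the lemma is condition (1), the existence of both adjoints to the inclusion $\QQ' \hookrightarrow \QQ$.

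By Theorem \ref{theorem:SectionTilting}, $\QQ$ is the category of projectives of a hereditary heart $\HH \subseteq \Db \AA$. The key structural observation is that in the derived category of a hereditary abelian category, every nonzero morphism between indecomposables increases degree by at most one; hence any path between two objects of the same degree stays in that degree, and in particular any path between objects of $\QQ$ lies entirely in $\HH$. Using Lemma \ref{lemma:LightConeDistanceAndShifts} one may also normalize, by a $\tau$-shift, to the case $\TT_f \subseteq \ind \QQ$, at the expense of adjusting the bounds $i_1, i_2$ on a per-element basis; this reduces the question to a Hom-vanishing question inside $\HH$.

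The adjoints are then constructed via Proposition \ref{proposition:WhenAdjoints}. Consider the finite set
$$\ZZ := \{\tau^{-n}T \mid T \in \TT_f,\ i_1 \leq n \leq i_2\} \subseteq \Db \AA,$$
which is finite because both $\TT_f$ and $[i_1,i_2] \cap \bZ$ are. The first hypothesis of Proposition \ref{proposition:WhenAdjoints}(1) is immediate from finiteness, while the Ext-rigidity hypothesis follows from $\HH$ being hereditary together with the preprojective structure of the $\tau$-orbits of $\QQ$ in $\HH$. The resulting subcategory $\QQ_{^\perp \ZZ}$ has both adjoints; one checks that it coincides with our $\QQ'$ on indecomposables, possibly after intersecting with finitely many interval subcategories $[X,Y]$ supplied by Proposition \ref{proposition:WhenAdjoints}(2) to cut off the $\r(\TT_f,A) < i_1$ tail. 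Since left and right adjoint pairs compose under finite iterations of reflective and coreflective embeddings, (1) follows.

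The principal obstacle is the gap between path-existence, which is what the definition of $\r$ tracks, and the direct Hom-nonvanishing tracked by the orthogonality condition $\RHom(-,Z)=0$. The bridge is precisely the observation that paths between objects of $\QQ$ stay inside the hereditary heart $\HH$: an arbitrary path from an element of $\ZZ$ to a projective $A$ can, using the preprojective AR-structure and Serre duality, be replaced by a nonzero morphism from a suitable AR-predecessor of that element, so the finite list $\ZZ$ already suffices as the set of obstructions. Getting this translation precise, and confirming that no additional objects need to be appended to $\ZZ$, will be the main technical step of the proof.
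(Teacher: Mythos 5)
Your high-level instinct — use Proposition \ref{proposition:WhenAdjoints} on a finite set of $\tau$-shifts of $\TT_f$ — matches the paper, but there are two genuine gaps in the execution, and they are the two points the paper's proof spends its effort on.

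First, you define $\QQ'$ directly by the distance condition $\r(\TT_f,A)\notin[i_1,i_2]$ and then try to identify it with an orthogonal complement $\QQ_{^\perp\ZZ}$. This overconstrains. The lemma does not say what happens when $\r(\TT_f,A)<i_1$, and the orthogonality-defined subcategory does \emph{not} coincide with your distance-defined one on that tail: $\r(\tau^j T,A)<0$ only gives a path from $\tau^j T$ to a positive $\tau$-shift of $A$, which neither forces nor precludes $\Hom(\tau^j T,A)\neq 0$. Your patch — ``intersecting with finitely many interval subcategories $[X,Y]$ to cut off the $\r(\TT_f,A)<i_1$ tail'' — goes the wrong way, since intersecting can only shrink the orthogonal complement further, while you would need to \emph{enlarge} it to recover the tail; and there is no reason an intersection of (co)reflective subcategories should itself be (co)reflective. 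The paper avoids all of this by simply \emph{defining} $\QQ'$ as $\{A : \forall Z\in\ZZ,\ \RHom(Z,A)=0\}$ and verifying (2) and (3) post hoc with Lemma \ref{lemma:LightConeDistanceAndShifts} and Proposition \ref{proposition:LeftmostComposition}; it makes no claim about the $\r(\TT_f,A)<i_1$ region.

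Second, and more seriously, you assert that the hypothesis $\Hom(Z_1,Z_2[n])=0$ for $n\neq 0$ ``follows from $\HH$ being hereditary together with the preprojective structure.'' This is false for the raw set $\ZZ=\{\tau^j T_i : i_1\leq j\leq i_2\}$: for a single $T$ and $j_1>j_2$ one typically has $\Ext^1(\tau^{j_1}T,\tau^{j_2}T)\cong\Hom(\tau^{j_2}T,\tau^{j_1+1}T)^*\neq 0$. The paper handles this by ordering $\ZZ$ so that $\Ext(Z_b,Z_a)=0$ for $a\leq b$, then iteratively forming universal extensions to produce a single object $Z^{(l)}$ with $\Ext(Z^{(l)},Z^{(l)})=0$; since it works in the heart where $\QQ$ is the category of \emph{injectives} (the dual of Theorem \ref{theorem:SectionTilting}), the orthogonality condition reduces cleanly to $\Hom(Z^{(l)},A)=0$, and the dual of Proposition \ref{proposition:WhenAdjoints} applies to the rigid singleton $\{Z^{(l)}\}$. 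This universal-extension step is the crux of the argument and is missing from your proposal. (Minor: your $\ZZ=\{\tau^{-n}T : i_1\leq n\leq i_2\}$ has the wrong sign relative to what Lemma \ref{lemma:LightConeDistanceAndShifts} demands; it should be $\tau^{j}T$ with $j\in[i_1,i_2]$.)
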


\begin{proof}
Let $\HH$ be a hereditary heart corresponding to the hereditary section $\QQ$ as in the dual of Theorem \ref{theorem:SectionTilting}, thus such that $\QQ$ corresponds to the image of the category of injectives of $\HH$ into $\Db \AA$.

Write $\TT_f = \{T_0, T_1, \ldots, T_k\}$ and consider the set $\ZZ = \{\t^{j} T_i \mid T_i \in \TT_f, i_1 \leq j \leq i_2 \}$.  By possibly removing some elements from $\ZZ$, we may assume every element $Z \in \ZZ$ lies in $\HH \subset \Db \AA$.  Furthermore, every element of $\ZZ$ is directed so we can write $\ZZ = \{Z_0, Z_1, \ldots, Z_l\}$ such that $\Ext(Z_b,Z_a) = 0$ whenever $a \leq b$.

We define a full replete subcategory $\QQ'$ of $\QQ$ as follows:
$$A \in \Ob \QQ' \Leftrightarrow \forall Z \in \ZZ: \RHom(Z,A) = 0.$$

We prove that the category $\QQ'$ is the category from the statement of the lemma.  Note that Lemma \ref {lemma:LightConeDistanceAndShifts} implies that $\RHom(Z,A) = 0$ for all $Z \in \ZZ$ when $i_2 < \r(\TT_f,A)$, and that Proposition \ref{proposition:LeftmostComposition} implies that $\Hom(Z,A) \not= 0$ for some $Z \in \ZZ$ when $i_1 \leq \r(\TT_f,A) \leq i_2$.

Set $Z^{(0)} = Z_0$.  For $0 < a \leq l$ we define $Z^{(a)} = Z^{(a-1)} \oplus Z_{a}$ if $\Ext(Z_a, Z^{(a-1)}) = 0$, and by the universal extension
$$0 \to Z^{(a-1)} \to Z^{(a)} \to Z_a \otimes_k \Ext(Z_a, Z^{(a-1)}) \to 0$$
if $\Ext(Z_a, Z^{(a-1)}) \not= 0$.  It is straightforward to verify that $\Ext(Z^{(l)},Z^{(l)}) = 0$.  Since $A$ is an injective object in $\HH$, we now have
$$A \in \Ob \QQ' \Leftrightarrow \Hom(Z^{(l)},A) = 0.$$
The required result now follows from (the dual of) Proposition \ref{proposition:WhenAdjoints}.
\end{proof}

\section{Nonthread objects and threads in hereditary sections}\label{section:RoughsAndThreads}

\subsection{$\r$-in-between and threads}

As with dualizing $k$-varieties, the concepts of threads will be paramount in our discussion of hereditary sections.  However, a major difference between dualizing $k$-varieties and hereditary sections is that in the latter one can encounter so-called broken threads and a sort of half-open threads, called rays or corays.  To describe these cases, we start with a definition.

Let $\QQ$ be a hereditary section in $\Db \AA$ where $\AA$ is an abelian category with Serre duality and let $X,Y \in \ind \QQ$ with $\r(X,Y) < \infty$.  We will say $Z \in \ind \QQ$ is \emph{$\r$-in-between} $X$ and $Y$ if $\r(X,Y) = \r(X,Z) + \r(Z,Y)$.  We denote the full replete additive subcategory of $\QQ$ generated by all indecomposables $\r$-in-between $X$ and $Y$ by $[X,Y]^\bullet_\QQ$, thus
$$\ind [X,Y]_\QQ^\bullet = \{Z \in \ind \QQ \mid \r(X,Z) + \r(Z,Y) = \r(X,Y)\}.$$
If there is no confusion, we will often write $[X,Y]^\bullet$ instead of $[X,Y]_\QQ^\bullet$.

We will define $]X,Y]_\QQ^\bullet$ to be the full replete additive subcategory of $[X,Y]_\QQ^\bullet$ spanned by the objects not supported on $X$.  Likewise one defines $[X,Y[^\bullet_\QQ$ and $]X,Y]^\bullet_\QQ$

\begin{remark}\label{remark:IntervalsCoincide}
If $\r(X,Y) = 0$, then $[X,Y]^\bullet_\QQ = [X,Y]$.
\end{remark}

\begin{example}\label{example:rInBetween}
Let $Q$ be the quiver given by
$$\xymatrix{&b \ar[d] \ar[rd]& \\
a \ar[ru] \ar[r] \ar[rd] & c & e \\
&d \ar[u] \ar[ru]&}$$
Denote by $P_i$ the projective object in $\rep Q$ associated with the vertex $i$ of $Q$.  Let $\QQ$ be the standard hereditary section in $\Db \rep Q$.  We have
\begin{eqnarray*}
\ind [P_a,P_e]_\QQ^\bullet &=& \{P_a,P_b,P_d,P_e\} \\
\ind [P_b,P_d]_\QQ^\bullet &=& \{P_a,P_b,P_c,P_d,P_e\} \\
\ind [P_a,P_d]_\QQ^\bullet &=& \{P_a,P_d\} \\
\ind [P_d,P_a]_\QQ^\bullet &=& \{P_a,P_d,P_c\}
\end{eqnarray*}
Note that $\ind [P_b,P_d]^\bullet \not\subseteq \ind [P_a,P_e]^\bullet$ (even though $P_b,P_d \in \ind [P_a,P_e]^\bullet$) and that $[P_a,P_d]^\bullet \not\cong [P_d,P_a]^\bullet$.
\end{example}

\begin{remark}
As the previous example indicates, the subcategories $[X,Y]^\bullet$ are the replacement of $\r$-geodesics on a quiver.
\end{remark}

\begin{proposition}\label{Proposition:SameOrbits}
Let $X,Y \in \QQ$ with $n=\r(X,Y) < \infty$.  The sets $\ind [X,Y]^\bullet_\QQ$ and $\ind [X, \t^{-n} Y]$ intersect the same $\t$-orbits.
\end{proposition}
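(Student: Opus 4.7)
The plan is to set up a correspondence between the $\tau$-orbits intersecting $\ind [X,Y]^\bullet_\QQ$ and those intersecting $\ind [X,\tau^{-n}Y]$: given $Z\in \ind [X,Y]^\bullet_\QQ$, the shift $\tau^{-\r(X,Z)} Z$ should land in $\ind [X,\tau^{-n}Y]$; conversely, any $Z'\in \ind [X,\tau^{-n}Y]$ should automatically lie on a $\tau$-orbit meeting $\QQ$, thanks to $\tau$-convexity, and unshifting produces an element of $\ind [X,Y]^\bullet_\QQ$.

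For the forward direction, fix $Z\in \ind [X,Y]^\bullet_\QQ$ and set $a=\r(X,Z)$. Proposition \ref{proposition:SectionByDistance} gives $a\geq 0$, and the defining equation $\r(X,Z)+\r(Z,Y)=n$ together with $\r(Z,Y)\geq 0$ yields $a\leq n$. Put $Z'=\tau^{-a}Z$. Lemma \ref{lemma:LightConeDistanceAndShifts} delivers
$\r(X,Z')=\r(X,Z)-a=0$ and $\r(Z',\tau^{-n}Y)=\r(Z,Y)+a-n=0$.
Proposition \ref{proposition:LeftmostComposition} then ensures $\Hom(X,Z')\neq 0$ and $\Hom(Z',\tau^{-n}Y)\neq 0$, so $Z'\in \ind [X,\tau^{-n}Y]$ lies on the $\tau$-orbit of $Z$.

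For the backward direction, take $Z'\in \ind [X,\tau^{-n}Y]$. The nonvanishing of $\Hom(X,Z')$ and $\Hom(Z',\tau^{-n}Y)$ gives $\r(X,Z')\leq 0$ and $\r(Z',Y)\leq n$. Chaining these through the triangle inequality, $n=\r(X,Y)\leq \r(X,Z')+\r(Z',Y)\leq n$, forces equality throughout, hence $\r(X,Z')=0$ and $\r(Z',Y)=n$. In particular $\d(\QQ,Z')\leq 0+n<\infty$, so the alternative formulation of $\tau$-convexity (recalled right after its definition) places $Z'$ on a $\tau$-orbit meeting $\QQ$, say $Z'=\tau^{-a}Z$ with $Z\in \ind \QQ$. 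Applying Lemma \ref{lemma:LightConeDistanceAndShifts} once more gives $\r(X,Z)=a$ and $\r(Z,Y)=n-a$, which sum to $n=\r(X,Y)$, so $Z\in \ind [X,Y]^\bullet_\QQ$ as required.

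I do not anticipate a serious obstacle: the whole proof is careful bookkeeping with shift invariance and the triangle inequality. The one nontrivial ingredient is the use of $\tau$-convexity in the backward direction to guarantee that $Z'$ actually sits on a $\tau$-orbit meeting $\QQ$; everything else is mechanical.
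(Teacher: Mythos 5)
Your proof is correct and expands what the paper compresses into a single line (``This follows immediately from Lemma~\ref{lemma:LightConeDistanceAndShifts}''), taking essentially the same route: use the shift formula to match $\r$-distances across the two intervals, with Proposition~\ref{proposition:LeftmostComposition} converting $\r=0$ into $\Hom\neq 0$ and $\tau$-convexity returning the orbit to $\QQ$ in the backward direction. Your elaboration helpfully makes visible the two auxiliary ingredients the paper's terse proof leaves implicit.
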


\begin{proof}
This follows immediately from Lemma \ref{lemma:LightConeDistanceAndShifts}.
\end{proof}

\begin{example}
Let $Q$ be the quiver from Example \ref{example:rInBetween}.  The light cones centered on $P_a, P_b$ and $P_d$ are given by
$$\xymatrix{&P_b \ar[d] \ar[rd]&&&P_b \ar[d] \ar[rd] \ar[ld]&&& \tm P_b & \\
P_a \ar[ru] \ar[r] \ar[rd] & P_c & P_e & \tm P_a \ar[rd] & P_c \ar[d] \ar[l] & P_e \ar[ld]& \tm P_a \ar[ru] & P_c \ar[u] \ar[l] & P_e \ar[lu] \\
&P_d \ar[u] \ar[ru]&&& \tm P_d &&& P_d \ar[lu] \ar[u] \ar[ru]}$$
\end{example}

\begin{corollary}\label{corollary:HalfConvex}
Let $X,Y,Z \in \ind \QQ$ with $\r(X,Y) < \infty$ and $Z \in [X,Y]^\bullet$.  In this case $[X,Z]^\bullet \subseteq [X,Y]^\bullet$ and $[Z,Y]^\bullet \subseteq [X,Y]^\bullet$.   
\end{corollary}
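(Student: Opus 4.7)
The plan is to derive both inclusions as a direct consequence of the triangle inequality (Proposition \ref{proposition:rTriangle}) together with the defining equality of $[X,Y]^\bullet$. The hypothesis $Z \in \ind [X,Y]^\bullet$ gives us $\r(X,Z) + \r(Z,Y) = \r(X,Y)$, and in particular $\r(X,Z), \r(Z,Y) < \infty$, so all quantities appearing below are finite and the triangle inequality may be applied freely.

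First I would prove $[X,Z]^\bullet \subseteq [X,Y]^\bullet$. Pick $W \in \ind [X,Z]^\bullet$, so that $\r(X,W) + \r(W,Z) = \r(X,Z)$. Chaining this with the equation coming from $Z \in [X,Y]^\bullet$ gives
\[
\r(X,W) + \r(W,Z) + \r(Z,Y) = \r(X,Y).
\]
Applying the triangle inequality to the middle two terms yields $\r(W,Y) \leq \r(W,Z) + \r(Z,Y)$, hence $\r(X,W) + \r(W,Y) \leq \r(X,Y)$. Combined with the reverse triangle inequality $\r(X,Y) \leq \r(X,W) + \r(W,Y)$ we obtain equality, i.e.\ $W \in \ind [X,Y]^\bullet$.

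The second inclusion $[Z,Y]^\bullet \subseteq [X,Y]^\bullet$ is entirely symmetric: for $W \in \ind [Z,Y]^\bullet$ one has $\r(Z,W) + \r(W,Y) = \r(Z,Y)$; combining with $\r(X,Z) + \r(Z,Y) = \r(X,Y)$ and applying the triangle inequality to $\r(X,W) \leq \r(X,Z) + \r(Z,W)$ gives $\r(X,W) + \r(W,Y) \leq \r(X,Y)$, and the reverse triangle inequality produces equality.

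There is no real obstacle here; the statement is essentially the assertion that the triangle inequality is saturated along sub-segments of an $\r$-geodesic, which one might call the ``half-convexity'' of $[X,Y]^\bullet$ (as indicated by the corollary's name). Note that no use is made of the hypothesis that the indecomposables lie in a hereditary section beyond the fact that the pseudometric-like function $\r$ is defined on them and satisfies the triangle inequality.
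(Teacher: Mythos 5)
Your proof is correct, and it is the natural argument: the paper offers no explicit proof of this corollary, treating it as an immediate consequence of the definition of $[X,Y]^\bullet$ together with the triangle inequality (Proposition \ref{proposition:rTriangle}), which is exactly what you supply. One small point worth making explicit: the reason you may conclude $\r(X,Z),\r(Z,Y) < \infty$ from $\r(X,Z)+\r(Z,Y)=\r(X,Y)<\infty$ is that for objects of a hereditary section one also has $\r(\cdot,\cdot) \geq 0$ (by Proposition \ref{proposition:SectionByDistance}, or because such objects are directing by Observation \ref{observation:PropertiesSection} and Corollary \ref{corollary:DirectingCriterium}), so $-\infty$ cannot occur and the sum genuinely forces both summands to be finite.
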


\begin{definition}
Let $X \in \ind \QQ$.  It follows from Observation \ref{observation:PropertiesSection} that $\QQ$ has right and left almost split maps.  Let $X \to M$ and $N \to X$ be a left and right almost split map, respectively.  If $M$ and $N$ are indecomposable, we will say $X$ is a \emph{thread object}.  We will denote $M$ and $N$ by $X^+$ and $X^-$, respectively.  An object which is not a thread object is called a \emph{nonthread object}.

If $[X,Y]^\bullet$ consists of only thread objects in $\QQ$, then we call $[X,Y]^\bullet$ a \emph{thread}\index{thread}.  If furthermore $\r(X,Y) > 0$ or $\r(X,Y)=0$, then we call $[X,Y]^\bullet$ a \emph{broken thread}\index{thread!broken} or an \emph{unbroken thread}\index{thread!unbroken}, respectively.
\end{definition}

\begin{example}\label{example:BrokenThread}
Let $\aa = kQ$ where $Q$ is the thread quiver $\xymatrix@1{\cdot\ar@{..>}[r]&\cdot}$  Thus $\aa = k(\bN \cdot -\bN)$ and the indecomposable projectives of $\mod \aa$ are given by $\aa(-,n)$ and $\aa(-,-n)$ for $n \in \bN$.

The Auslander-Reiten quiver of $\Db \mod \aa$ may be sketched as in the upper part of Figure \ref{fig:BrokenThread} where the triangles represent $\bZ A_\infty$-components and where the category $\mod \aa$ has been marked with gray.

We will denote by $\QQ$ the hereditary section in $\Db \mod \aa$ corresponding to the projectives of $\mod \aa$.  The interval $[\aa(-,1),\aa(-,-1)]=[\aa(-,1),\aa(-,-1)]^\bullet \subset \ind \QQ$ is an (unbroken) thread.

\begin{figure}[tb]
	\centering
		\includegraphics[width=0.75\textwidth]{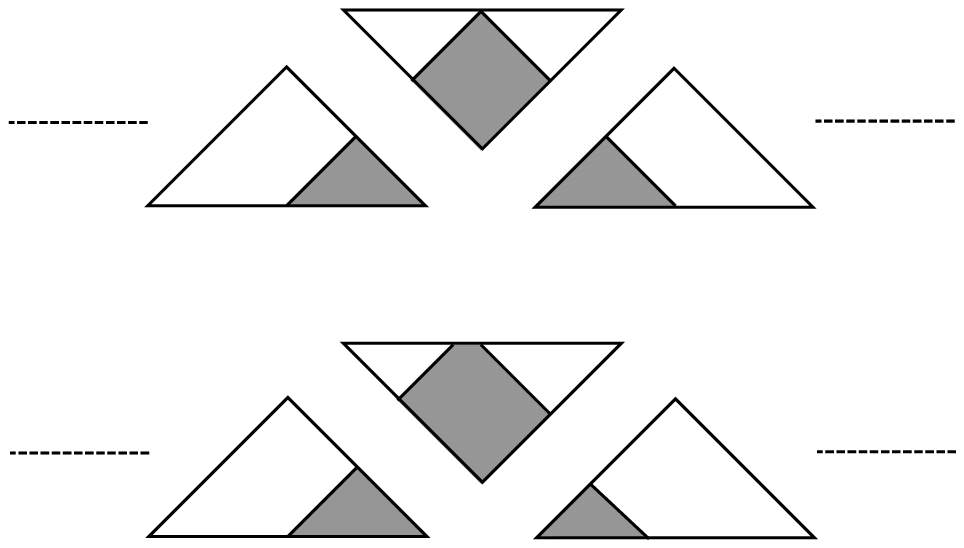}
	\caption{Illustration of Example \ref{example:BrokenThread}}
	\label{fig:BrokenThread}
\end{figure}

Consider the hereditary section $\QQ' \subseteq \Db \mod \aa$ spanned by all objects of the form $\aa(-,n)$ and $\tau \aa(-,-n)$ where $n \in \bN$ as in the lower part of Figure \ref{fig:BrokenThread}.  Now $[\aa(-,1), \tau \aa(-,-1)]^\bullet \subset \ind \QQ'$ is a broken thread.
\end{example}

A reason to introduce thread objects is given by the following observation: let $X,Y \in \ind \QQ$ and consider the left adjoint $l$ to the embedding $i:[X,Y] \to \QQ$ (see Proposition \ref{proposition:WhenAdjoints}).  Let $A$ be any indecomposable object in $\QQ$.  If $A$ does not lie in $[X,Y]$, then the only thread object which can occur as a direct summand of $l(A)$ is $X$.  Indeed, let $Z$ be a thread object which is a direct summand of $l(A)$.  If $Z \not\cong X$ then $Z^- \in \Ob [X,Y]$.  Since $\QQ$ is semi-hereditary, we know that $\dim \Hom(l(A),Z) > \dim \Hom(l(A),Z^-)$.  However, since no map $A \to Z$ is a split map, we have $\dim \Hom(A,iZ) = \dim \Hom(A,iZ^-)$.  A contradiction.

This proves the following lemma.

\begin{lemma}\label{lemma:ThreadsAndAdjoints}
Let $X,Y \in \ind \QQ$ and let $l$ be a left adjoint to the embedding $i:[X,Y] \to \QQ$.  A thread object in $]X,Y]$ cannot be a direct summand of $l(A)$, for any $A \in \ind \QQ \setminus \ind [X,Y]$.
\end{lemma}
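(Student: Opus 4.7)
The plan is to argue by contradiction, fleshing out the sketch given in the paragraph immediately preceding the lemma. Suppose a thread object $Z \in \,]X,Y]$ occurs as a direct summand of $l(A)$ for some $A \in \ind \QQ \setminus \ind [X,Y]$.

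First I would verify the unstated intermediate claim $Z^- \in \ind [X,Y]$. Since $Z \in \,]X,Y]$ we have $Z \not\cong X$, so the right almost split map $Z^- \to Z$ in $\QQ$ is defined and non-zero, giving $\r(Z^-,Z) \leq 0$; combined with $\r(Z^-,Z) \geq 0$ from Proposition \ref{proposition:SectionByDistance} this forces $\r(Z^-,Z)=0$. The thread property of $Z$ means every non-zero map into $Z$ factors through $Z^-$, so any path realising $\r(X,Z)$ may be taken to terminate via $Z^-$. A short application of the triangle inequality then yields $\r(X,Z^-)+\r(Z^-,Y)=\r(X,Y)$, placing $Z^- \in \ind [X,Y]$.

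The contradiction is produced by a dimension count inside the hereditary heart $\HH$ of $\QQ$ supplied by Theorem \ref{theorem:SectionTilting}. Because $\QQ$ is semi-hereditary (Observation \ref{observation:PropertiesSection}), the thread object $Z$, being an indecomposable projective of $\HH$ with indecomposable radical $Z^-$, fits into a short exact sequence $0 \to Z^- \to Z \to S \to 0$ where $S$ is the simple top of $Z$. Applying $\Hom_\HH(P,-)$ and using that every object of $\QQ$ is projective in $\HH$, one obtains
\begin{equation*}
\dim \Hom_\HH(P,Z) - \dim \Hom_\HH(P,Z^-) = \dim \Hom_\HH(P,S)
\end{equation*}
for every $P \in \QQ$. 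Taking $P = l(A)$ gives a strictly positive right-hand side, since $Z$ being a direct summand of $l(A)$ produces a non-zero composition $l(A) \twoheadrightarrow Z \twoheadrightarrow S$; taking $P = A$ gives zero, since $A$ is indecomposable with $A \not\cong Z$, so $A$ has no $Z$-summand and hence admits no non-zero map to the simple $S$. The adjunction $\Hom_{[X,Y]}(l(A),-) \cong \Hom_\QQ(A,i(-))$, together with $iZ = Z$ and $iZ^- = Z^-$ (valid because both lie in $[X,Y]$), identifies the two left-hand sides, and the two values ($>0$ versus $=0$) produce the required contradiction.

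The step I expect to require the most care is the first one: converting the thread/almost-split information into the additivity $\r(X,Z^-)+\r(Z^-,Y)=\r(X,Y)$ cleanly from the path-based definition of $\r$. Once $Z^- \in \ind [X,Y]$ is in hand, the dimension count is a direct application of the Euler characteristic in the hereditary heart together with the standard adjunction identities for $l$ and $i$.
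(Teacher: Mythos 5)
Your proof is correct and follows the same route as the paper's sketch in the paragraph preceding the lemma: show $Z^- \in [X,Y]$, then play $\dim\Hom(l(A),Z) > \dim\Hom(l(A),Z^-)$ (which you justify, as the paper leaves implicit, via the short exact sequence $0\to Z^-\to Z\to S\to 0$ in the heart $\HH$) against $\dim\Hom(A,iZ)=\dim\Hom(A,iZ^-)$ and the adjunction. One small stylistic point: for the first step you do not need to route through $\r$-additivity and the implicit identification $[X,Y]=[X,Y]^\bullet$; since $X\to Z$ is a non-split map between indecomposables of $\QQ$ it factors through the right almost split map $Z^-\to Z$, giving $\Hom(X,Z^-)\neq 0$ directly, and $\Hom(Z^-,Y)\neq 0$ follows from $\r(Z^-,Y)=0$ and Proposition~\ref{proposition:LeftmostComposition}, which is exactly the Hom-based definition of $[X,Y]$.
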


\begin{example}
Let $Q$ be the quiver
$$\xymatrix{
&f\ar[ld]\ar[rd] \\
a \ar[r]& b \ar[r]& c \ar[r]& d \ar[r]& e}$$
Denote by $P_i \in \ind \rep Q$ an indecomposable projective associated with the vertex $i$ of $Q$, let $\QQ$ be the standard hereditary section in $\Db \rep \QQ$, and let $l: \QQ \to [P_b,P_d]$ be the left adjoint to the embedding $[P_b, P_d] \to \QQ$.  We have that $l(P_f) \cong P_b \oplus P_c$.
\end{example}

Let $A,B \in \ind \QQ$.  While the subcategory $[A,B]$ will only be nontrivial if $\Hom(A,B) \not= 0$, a similar statement is not true for $[A,B]^\bullet$.  In fact, as Remark \ref{remark:IntervalsCoincide} indicates we will mostly be interested in cases where $\r(A,B) \not= 0$.  This means however, as the following example shows, that we can encounter situations where we consider $[A,B]^\bullet$ where $\r(B,A) = 0$.

\begin{example}
Let $Q$ be the quiver $A_5$ with linear orientation, thus $Q$ is given by
$$\xymatrix@1{a \ar[r]& b \ar[r]& c \ar[r]& d \ar[r]& e}$$
Denote by $P_i \in \ind \rep Q$ an indecomposable projective associated with the vertex $i$ of $Q$, and let $\QQ$ be the standard hereditary section in $\Db \rep \QQ$.  We have that $[P_d,P_b]^\bullet = [P_b,P_d]$.
\end{example}

In some sense the interval $[P_d,P_b]^\bullet$ from the previous example does not have the ``natural'' orientation.  The following lemma and Proposition \ref{Proposition:InbetweenObjects} below indicate that we can look at the neighbors of $P_b$ and $P_d$ to somewhat compensate for this lack of orientation.

\begin{lemma}\label{lemma:Neighbors}
Let $\QQ$ be a hereditary section, and let $A,B \in \ind \QQ$ with $\r(A,B) < \infty$.
\begin{enumerate}
\item If $A \not= B$, then $A$ and $B$ have a least one neighbor lying in $[A,B]^\bullet$.  An $X \in \ind [A,B]^\bullet$ with $A \not= X \not= B$ has at least two (non-isomorphic) direct neighbors in $[A,B]^\bullet$.
\item Assume $[A,B]^\bullet$ is a thread (with $A \not= B$) in $\QQ$.  If $B^- \not\in [A,B]^\bullet$, then $A^-,B^+ \in \ind [A,B]^\bullet$ and $\r(B,A) < \infty$.
\end{enumerate}
\end{lemma}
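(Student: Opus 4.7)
The plan is to prove (1) directly from the definition of $\r$ via paths together with the existence of left and right almost split maps in $\QQ$ (Observation~\ref{observation:PropertiesSection}), and then derive (2) by induction on $n = \r(A,B)$, using (1) at each step.

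For (1), I would start by fixing a path $A = Y_0 \to Y_1 \to \cdots \to Y_l = \tau^{-n} B$ realizing $n = \r(A,B)$. First, $l \geq 1$: if $l = 0$ then $A \cong \tau^{-n}B$, and since $\QQ$ meets each $\tau$-orbit at most once with $A,B \in \ind\QQ$, this forces $n = 0$ and $A \cong B$, contradicting $A \neq B$. After trimming, I may assume $Y_0 \not\cong Y_1$. The nonzero, non-iso map $A \to Y_1$ is not a split monomorphism, so it factors through the left almost split map $A \to A^+$, producing an indecomposable summand $A'$ of $A^+$ with $\Hom(A', Y_1) \neq 0$. Then $\r(A, A') = 0$ (from the irreducible map combined with $A, A' \in \QQ$) and $\r(A', B) \leq n$ (via the shortened path $A' \to Y_1 \to \cdots \to \tau^{-n}B$); the triangle inequality forces $\r(A,A') + \r(A', B) = n$, i.e.\ $A' \in \ind[A,B]^\bullet$. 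The analogous argument at the other endpoint, using the right almost split map $\tau^{-n}B^- \to \tau^{-n}B$ and the last edge of the path, produces an indecomposable summand $B'$ of $B^-$ lying in $\ind[A,B]^\bullet$. For an interior $X$ with $A \neq X \neq B$, both $\r(A,X)$ and $\r(X,B)$ are finite and non-negative, so I apply the two endpoint results to $[A,X]^\bullet$ and $[X,B]^\bullet$ separately; by Corollary~\ref{corollary:HalfConvex} each is contained in $[A,B]^\bullet$, so I obtain neighbors $X^{(-)} \to X$ and $X \to X^{(+)}$ lying in $[A,B]^\bullet$ via irreducible maps. These two neighbors cannot coincide, since $X^{(-)} \cong X^{(+)}$ would produce an irreducible cycle $X \to X^{(-)} \to X$ giving a nontrivial path from $X$ to itself, contradicting that $X$ is directing (Observation~\ref{observation:PropertiesSection}).

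For (2), I would induct on $n = \r(A,B) \in \bN$. The base case $n = 0$ is vacuous: then $[A,B]^\bullet = [A,B]$ by Remark~\ref{remark:IntervalsCoincide} and $\Hom(A,B) \neq 0$ by Proposition~\ref{proposition:LeftmostComposition}; any such nonzero map is non-split (as $A \not\cong B$ are indecomposable), so factors through the right almost split $B^- \to B$, forcing $B^- \in \ind[A,B]$. For the inductive step $n \geq 1$, part (1) provides a neighbor of $B$ in $[A,B]^\bullet$; thread-ness of $B$ combined with $B^- \notin [A,B]^\bullet$ forces this neighbor to be $B^+$, so $B^+ \in \ind[A,B]^\bullet$. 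A short computation using the AR triangle $B \to B^+ \to \tau^{-1}B \to B[1]$ (for the upper bound) together with the directing property of $B$ (for the lower bound, ruling out a path $B^+ \to B$) gives $\r(B^+, B) = 1$, whence $\r(A, B^+) = n - 1$. Now the key structural step: since $B^+$ lies in the thread $[A,B]^\bullet$, it is itself a thread object, so $(B^+)^-$ is indecomposable; the irreducible map $B \to B^+$ then shows $B \cong (B^+)^-$ (the alternative $B \cong (B^+)^+$ would produce an irreducible cycle at $B$, contradicting directing). Since $\r(A,B) + \r(B, B^+) = n \neq n - 1 = \r(A, B^+)$, we have $(B^+)^- = B \notin [A, B^+]^\bullet$, so the induction hypothesis applies to the pair $(A, B^+)$ (the degenerate case $A \cong B^+$, forcing $n = 1$, is handled directly: then $A^- \cong B \in [A,B]^\bullet$, $B^+ \cong A \in [A,B]^\bullet$, and $\r(B,A) \leq \r(B,B^+) = 0$). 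The induction yields $A^- \in \ind[A, B^+]^\bullet \subseteq \ind[A,B]^\bullet$ and $\r(B^+, A) < \infty$, so $\r(B, A) \leq \r(B, B^+) + \r(B^+, A) = \r(B^+, A) < \infty$.

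I expect the main obstacle to be the identification $B \cong (B^+)^-$ together with the subsequent verification that this object does not lie in $[A, B^+]^\bullet$: this is precisely where the thread hypothesis meets the directing property to drive the induction from $n$ down to $n - 1$. Everything else reduces to routine bookkeeping with the triangle inequality and the definitions of $[A,B]^\bullet$ and the $(\pm)$-notation.
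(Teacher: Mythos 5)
Your proof of part~(1) has a genuine gap at the first step. You fix a path $A = Y_0 \to Y_1 \to \cdots \to Y_l = \tau^{-n}B$ in $\Db\AA$ and factor the non-split map $A \to Y_1$ through the left almost split map $A \to A^+$ of $\QQ$, concluding that an indecomposable summand $A'$ of $A^+$ lies in $\QQ$ and in $[A,B]^\bullet$. But the almost split map $A \to A^+$ in $\QQ$ only factors non-split maps $A \to X$ with $X \in \QQ$, and $Y_1$ generally does \emph{not} lie in $\QQ$ (it lies in the light cone $\QQ_A$, not in $\QQ$). If instead you factor through the Auslander--Reiten triangle $A \to E \to \tau^{-1}A$ in $\Db\AA$, the resulting summand of $E$ need not lie in $\QQ$. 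Concretely, in the paper's $A_5$ example with $A = P_d$, $B = P_b$, the map $P_d \to \tau^{-1}P_c$ (first step of the minimizing path) does \emph{not} factor through the left almost split map $P_d \to P_e$ of $\QQ$, and the unique direct neighbor of $P_d$ in $[P_d,P_b]^\bullet$ is the \emph{predecessor} $P_c$ --- which your construction, producing only direct \emph{successors} of $A$, cannot reach. The same issue undermines the interior case: the two neighbors you produce need not be one predecessor and one successor, so your argument for their non-isomorphism via a directing cycle does not go through as stated. The paper avoids all of this by invoking Proposition~\ref{Proposition:SameOrbits}: one transfers the problem to $[A,\tau^{-n}B]$, which lives entirely inside the light cone $\QQ_A$ where the almost split maps of $\QQ_A$ do apply on the nose, and then transfers the resulting neighbors back to $[A,B]^\bullet$ along $\tau$-orbits, where the correct $\tau$-shift automatically decides whether the neighbor is a predecessor or successor of $A$.

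Your proof of part~(2) --- an induction on $n = \r(A,B)$ passing from the pair $(A,B)$ to $(A,B^+)$, with the identifications $\r(B^+,B)=1$ and $(B^+)^-\cong B$ --- is sound in structure and is essentially the paper's argument (following the unique chain of successors $B=B_0\to B_1\to\cdots$ until it terminates at $A$) repackaged as an induction; the bookkeeping with $[A,B^+]^\bullet\subseteq[A,B]^\bullet$ and the degenerate case $A\cong B^+$ are handled correctly. However, since each inductive step depends on part~(1), the gap above propagates into this part as well.
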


\begin{proof}
The first result follows immediately from Proposition \ref{Proposition:SameOrbits}.  For the second result, let $B=B_0 \to B_1 \to B_2 \to \cdots$ be a (possibly finite) sequence of direct successors.  Since $B^-$ does not lie in $[A,B]^\bullet$, and $B$ is a thread object, we know $B_1$ lies in $[A,B]^\bullet$.  If $B_1 \not= A$, then it is a thread object and we know $B_2$ also lies in $[A,B]^\bullet$.

Iteration shows either the entire sequence lies in $[A,B]^\bullet$, or some $B_i=A$.  Since $\r(B_{j+1},B_{j}) = 1$, we find that $\r(B_j,B) = j$.  This shows that $B_i = A$ where $i = \r(A,B)$ and that both $B^+ = B_1$ and $A^- = B_{i-1}$ lie in $[A,B]^\bullet$.
\end{proof}

\begin{example}
Let $Q$ be the quiver given by
$$\xymatrix@1{a \ar[r]& b \ar[r] & c \ar[r] & d \ar[r] & e}$$
Denote by $P_i \in \ind \rep Q$ an indecomposable projective associated with the vertex $i$ of $Q$, and let $\QQ$ be the standard hereditary section in $\Db \rep \QQ$.  Since we have $\ind [P_b,P_d]^\bullet = \{P_b,P_c,P_d\} = \ind [P_d,P_b]^\bullet$, both $[P_b,P_d]^\bullet$ and $[P_d,P_b]^\bullet$ are threads.  We easily see that the results of the previous lemma are valid in this case.

If we replace the quiver $Q$ by
$$\xymatrix@1{a \ar[r]& b \ar[r] & c & d \ar[l] & e \ar[l]}$$
then, with the same notations as above, $\ind [P_d,P_b]^\bullet = \{P_b,P_c,P_d\}$ would not be a thread.  Note that $P_b^- \not \in [P_d,P_b]^\bullet$, but also $P_d^- \not\in [P_d,P_b]^\bullet$.
\end{example}

The following proposition resembles Proposition \cite[Proposition 6.2]{BergVanRoosmalen09}.

\begin{proposition}\label{Proposition:InbetweenObjects}
Let $[X,Y]^\bullet$ be a thread.  If $[X,Y]^\bullet$ and $[X,Y']^\bullet$ share an indecomposable apart from $X$, then $[X,Y]^\bullet \subseteq [X,Y']^\bullet$ or $[X,Y']^\bullet \subseteq [X,Y]^\bullet$.
\end{proposition}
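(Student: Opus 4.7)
The plan is to split into two cases according to whether $Y' \in \ind[X,Y]^\bullet$.  In the easy case $Y' \in \ind[X,Y]^\bullet$, Corollary \ref{corollary:HalfConvex} immediately gives $[X,Y']^\bullet \subseteq [X,Y]^\bullet$.  So suppose $Y' \not\in \ind[X,Y]^\bullet$; the remaining goal is to show $Y \in \ind[X,Y']^\bullet$, as a second application of Corollary \ref{corollary:HalfConvex} then yields $[X,Y]^\bullet \subseteq [X,Y']^\bullet$.

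Set $S = \ind[X,Y]^\bullet \cap \ind[X,Y']^\bullet$.  By hypothesis $X, Z \in S$ with $Z \neq X$, and by case assumption $Y' \not\in S$.  The crucial observation will be the following closure property: if $W \in S$ with $W \neq X, Y$, then both direct neighbors $W^+$ and $W^-$ again lie in $S$.  Indeed, $W$ is a thread object since it belongs to the thread $[X,Y]^\bullet$, so its only direct neighbors in $\QQ$ are $W^+, W^-$.  Applying Lemma \ref{lemma:Neighbors} inside $[X,Y]^\bullet$ (valid since $W \neq X, Y$) forces two neighbors of $W$ into $[X,Y]^\bullet$, and these must be $W^+, W^-$.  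Applying the same lemma inside $[X,Y']^\bullet$ (valid since $W \neq X$, and $W \neq Y'$ because $W \in [X,Y]^\bullet$ while $Y' \not\in [X,Y]^\bullet$) likewise forces $W^+, W^- \in [X,Y']^\bullet$.

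To conclude, I would use that the thread $[X,Y]^\bullet$ is a finite, linearly ordered chain of thread objects.  By Proposition \ref{Proposition:SameOrbits}, $\ind[X,Y]^\bullet$ is in $\tau$-orbit bijection with $\ind[X, \tau^{-\r(X,Y)} Y]$; the latter has vanishing $\r$-distance, so is an unbroken thread realized as a finite chain $X \to X^+ \to \cdots \to \tau^{-\r(X,Y)} Y$ in $\QQ$.  Transporting this order back to $[X,Y]^\bullet$ produces an enumeration $X = W_0, W_1, \ldots, W_m = Y$ of $\ind[X,Y]^\bullet$ whose consecutive terms are direct neighbors in $\QQ$.  Let $k$ be the largest index with $W_k \in S$; since $Z \in S \setminus \{X\}$ we have $k \geq 1$.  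If $k < m$, then $W_k \neq X, Y$, so the closure property forces $W_{k+1} \in S$, contradicting maximality.  Hence $k = m$ and $Y \in S$.

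The hard part will be rigorously extracting the finite linear enumeration of $\ind[X,Y]^\bullet$.  The passage through Proposition \ref{Proposition:SameOrbits} reduces matters to the unbroken case, but some bookkeeping is needed to ensure that the direct-neighbor structure, and in particular the identification of which of $W_k^+, W_k^-$ plays the role of $W_{k+1}$, survives this transport.  Here the second part of Lemma \ref{lemma:Neighbors} --- which handles the degenerate situation where $Y^- \not\in [X,Y]^\bullet$ --- should cover the irregularities arising from broken threads.
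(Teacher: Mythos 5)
The closure property you establish (if $W \in S$ with $W \neq X, Y$, then $W^{+}, W^{-} \in S$) is a correct and nice observation, following correctly from Lemma \ref{lemma:Neighbors} applied in both intervals. However, the conclusion step has a genuine gap which is not merely a matter of ``bookkeeping.'' You assume $\ind[X,Y]^\bullet$ admits a finite linear enumeration $X = W_0, W_1, \ldots, W_m = Y$ in which consecutive terms are direct neighbors in $\QQ$. This fails in general: a broken thread need not be finite, and its two ``ends'' can lie in different Auslander--Reiten components of $\Db\AA$, so there is no chain of direct neighbors in $\QQ$ connecting $X$ to $Y$. Example \ref{example:BrokenThread} is already a counterexample to the claimed chain structure: in the hereditary section $\QQ'$ there, $[\aa(-,1),\,\tau\aa(-,-1)]^\bullet$ is a broken thread, $\aa(-,1)$ and $\tau\aa(-,-1)$ lie in different AR components, and the set $\ind[\aa(-,1),\,\tau\aa(-,-1)]^\bullet$ is (bi-)infinite. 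Iterating direct successors from $X = \aa(-,1)$ runs off to infinity without ever reaching $Y$, so there is no ``largest index $k$'' and the induction collapses. The passage through $\QQ_X$ does not rescue this, because the direct-neighbor structure in the light cone $\QQ_X$ differs from that in $\QQ$ (these are different hereditary sections), and pulling back the order from $[X,\tau^{-n}Y]$ does not preserve adjacency in $\QQ$; in fact if $\r(X,W_i) \neq \r(X,W_{i+1})$ then $W_i$ and $W_{i+1}$ will generally sit in different AR components of $\Db\AA$ and cannot be direct neighbors.

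The paper avoids this issue entirely by working with the right adjoint $r:\QQ_X \to [X,\tau^{-n}Y]$ furnished by Proposition \ref{proposition:WhenAdjoints}, and then invoking Lemma \ref{lemma:ThreadsAndAdjoints}: applied to $r(\tau^{-n'}Y')$, this lemma shows that the only way a thread object of $]X,\tau^{-n}Y]$ can appear as a summand is if either $\tau^{-n'}Y'$ already lies in $]X,\tau^{-n}Y[$ (giving one inclusion) or $\tau^{-n}Y$ maps nonzero to $r(\tau^{-n'}Y')$ (giving the other inclusion). This argument needs neither finiteness nor any chain structure, so it handles broken and infinite threads uniformly. If you want to keep your propagation-of-$S$ idea, you would still need an argument in the style of the paper's adjoint lemma to cross the ``break'' between the two AR components of a broken thread; the combinatorics of direct neighbors alone cannot do it.
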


\begin{proof}
We will work in the light cone $\QQ_X$ centered on $X$.  Write $n= \r(X,Y)$ and $n'=\r(X,Y')$.  The assumption in the statement shows there is a $Z \in \ind ]X,\t^{-n}Y] \cap \ind ]X,\t^{-n'}Y']$.  Note that $]X,\t^{-n}Y]$ is a thread.

Proposition \ref{proposition:WhenAdjoints} yields that the embedding $[X,\t^{-n}Y] \to \QQ_X$ has a right adjoint $r: \QQ \to [X,\t^{-n}Y]$.  Since $Z \in \ind ]X,\t^{-n}Y]$ and $\Hom(iZ,\t^{-n'}Y') \not= 0$, we find that $r(\t^{-n'}Y')$ has a nonzero direct summand lying in $]X,\t^{-n}Y]$.

Lemma \ref{lemma:ThreadsAndAdjoints} yields that either $\t^{-n'}Y' \in \Ob ]X,\t^{-n}Y[$ and thus $[X,\t^{-n'}Y'] \subseteq [X,\t^{-n} Y]$ by Corollary \ref{corollary:HalfConvex}, or that $\Hom(\t^{-n}Y, r(\t^{-n'}Y')) \not= 0$ and thus $\t^{-n}Y \in \Ob ]X,\t^{-n'}Y']$ so that $[X,\t^{-n}Y] \subseteq [X,\t^{-n'} Y']$.

Applying Proposition \ref{Proposition:SameOrbits} shows the required property.
\end{proof}

\begin{example}
Let $Q$ be the quiver $A_5$ with linear orientation, thus $Q$ is given by
$$\xymatrix@1{a \ar[r]& b \ar[r]& c \ar[r]& d \ar[r]& e}$$
Denote by $P_i \in \ind \rep Q$ an indecomposable projective associated with the vertex $i$ of $Q$, and let $\QQ$ be the standard hereditary section in $\Db \rep \QQ$.  The threads $[P_c, P_b]^\bullet$ and $[P_c,P_d]^\bullet$ do have $P_c$ in common, but no other indecomposable.  Neither thread is a subcategory of the other such that the result from Proposition \ref{Proposition:InbetweenObjects} does not hold.
\end{example}

\subsection{Nonthread objects}\label{subsection:Roughs}

In this subsection, we will give a short discussion of the nonthread objects of $\QQ$.  Our main result will be that, if $\bZ \QQ$ is connected, $\QQ$ has only countably many nonthread objects.

\begin{lemma}\label{lemma:FewRoughsInIntervals}
Let $\QQ$ be a hereditary section in $\Db \AA$ and let $X \in \ind \QQ$.
\begin{enumerate}
\item For every $Y \in \ind \QQ$ with $\r(X,Y)=0$, there are only finitely many nonthread objects in $[X,Y]$.
\item For every $Y \in \ind \QQ$ with $\r(X,Y)\in \bZ$, there are only finitely many nonthread objects in $[X,Y]^\bullet$.
\item Assume $X$ is a nonthread object.  For every $Y \in \ind \QQ$ with $\r(X,Y)\in \bZ$, there is a nonthread object $Z \in [X,Y]^\bullet$ such that $]Z,Y[^\bullet$ has no nonthread objects.
\end{enumerate}
\end{lemma}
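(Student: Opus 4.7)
The plan is to prove the three parts in the given order, with (3) a quick consequence of (2), and (2) reducing to (1) via Proposition \ref{Proposition:SameOrbits}; the substantive content lies in (1).

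For part (1), the assumption $\r(X,Y)=0$ gives $\Hom(X,Y)\neq 0$ with $\dim\Hom(X,Y)<\infty$ by Ext-finiteness, and I would bound the number of nonthread indecomposables in $[X,Y]$ by a quantity controlled by $\dim\Hom(X,Y)$. By Proposition \ref{proposition:WhenAdjoints} the embedding $[X,Y]\to\QQ$ admits a left adjoint $l$ and a right adjoint $r$; for each $Z\in\ind[X,Y]$, a standard adjunction argument shows that the induced maps $Z\to l(Z^+)$ and $r(Z^-)\to Z$ are left and right almost split in $[X,Y]$ respectively.  If $Z$ is nonthread in $\QQ$ and at least two indecomposable summands of $Z^+$ (or of $Z^-$) lie in $[X,Y]$, then $l(Z^+)$ (respectively $r(Z^-)$) decomposes nontrivially inside $[X,Y]$, so $Z$ remains ``branching'' in $[X,Y]$; using Proposition \ref{proposition:LeftmostComposition} the composition map $\Hom(X,Z)\otimes_{\End Z}\Hom(Z,Y)\to\Hom(X,Y)$ is nonzero, and distinct branchings contribute linearly independent subspaces, yielding only finitely many such $Z$. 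The residual case, where every ``extra'' summand of $Z^+$ (and of $Z^-$) leaves $[X,Y]$, is ruled out by Lemma \ref{lemma:ThreadsAndAdjoints}: an extra summand that exits would have to be a thread object in $]X,Y]$ (respectively $[X,Y[$) coming from $l$ or $r$ applied to objects outside $[X,Y]$, contradicting the lemma.

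For part (2), set $n=\r(X,Y)\in\bZ$. By Proposition \ref{Proposition:SameOrbits} the sets $\ind[X,Y]^\bullet_\QQ$ and $\ind[X,\tau^{-n}Y]$ meet the same $\tau$-orbits in $\bZ\QQ$. The thread/nonthread property is $\tau$-orbit invariant: since $\tau$ is an autoequivalence of $\Db\AA$, it sends the almost split triangle at $Z$ to the one at $\tau Z$, preserving indecomposability of the relevant middle terms inside $\bZ\QQ$. It therefore suffices to count nonthread $\tau$-orbits meeting $[X,\tau^{-n}Y]$. Since $\r(X,\tau^{-n}Y)=0$ by Lemma \ref{lemma:LightConeDistanceAndShifts}, both $X$ and $\tau^{-n}Y$ lie in the light cone $\QQ_X$ centered on $X$, which is a hereditary section by Proposition \ref{proposition:SectionByDistance}; applying part (1) inside $\QQ_X$ to the interval $[X,\tau^{-n}Y]$ gives finiteness.

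For part (3), by part (2) the set $N$ of nonthread objects in $[X,Y]^\bullet$ is finite and nonempty (it contains $X$). Choose $Z\in N$ minimizing the cardinality of $N\cap[Z,Y]^\bullet$. If some $W\in N\cap\,]Z,Y[^\bullet$ existed, Corollary \ref{corollary:HalfConvex} would give $N\cap[W,Y]^\bullet\subseteq N\cap[Z,Y]^\bullet$; provided $Z\notin[W,Y]^\bullet$ this would strictly shrink the set and contradict minimality. The condition $Z\in[W,Y]^\bullet$ is equivalent to $\r(Z,W)=\r(W,Z)=0$, i.e.\ $\d(Z,W)=0$; this edge case is handled by replacing $Z$ with any minimizer in its $\d$-zero class and repeating, a process that terminates in the finite set $N$.

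The main obstacle is part (1), specifically showing that a nonthread $Z\in\QQ$ inside $[X,Y]$ truly contributes a new direction to $\Hom(X,Y)$ regardless of whether its branching remains inside $[X,Y]$; this hinges on a careful combined use of Proposition \ref{proposition:LeftmostComposition}, the adjoints of Proposition \ref{proposition:WhenAdjoints}, and Lemma \ref{lemma:ThreadsAndAdjoints}, and is the only step that is genuinely global rather than formal.
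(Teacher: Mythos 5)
Your reduction of (2) to (1), and of (3) to (2), follows the paper's outline, but the two substantive steps each have gaps that the paper resolves differently.

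\textbf{Part (1).} Your ``residual case'' --- a nonthread $Z\in[X,Y]$ whose extra summand of $M_Z$ or $N_Z$ lies outside $[X,Y]$ --- is \emph{not} ruled out by Lemma~\ref{lemma:ThreadsAndAdjoints}. That lemma constrains direct summands of $l(A)$ for $A\in\ind\QQ\setminus\ind[X,Y]$; it says nothing about the direct summands of the almost-split targets $M_Z$, $N_Z$ of an object $Z$ that already lies in $[X,Y]$. Such $Z$ genuinely occur (take a vertex in a quiver with two outgoing arrows, one staying in the interval and one leaving), so you still need to bound their number. The paper instead works in a heart $\HH$ associated with $\QQ$ via Theorem~\ref{theorem:SectionTilting} and considers the canonical map $Y\to\bS X\otimes\Hom(Y,\bS X)^*$, with kernel $K$ a finitely generated projective. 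Semi-heredity plus $\dim\Hom(X,Y)<\infty$ forces, for all but finitely many $A\in\ind[X,Y]$, an equality $\dim\Hom(X,N_A)=\dim\Hom(X,A)$; for those $A$ any summand of $N_A$ escaping $[X,Y]$ must be a summand of the fixed object $K$, which finitely generatedness bounds. Your ``branching contributes linearly independent subspaces'' idea is reasonable for the case where both summands stay inside, but you must supply a substitute for the escaping case, and the paper's mechanism (the finitely generated projective $K$) is what does that job.

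\textbf{Part (2).} The claim that the thread/nonthread property is $\tau$-orbit invariant is not correct in the sense you need. Being a thread object is a property \emph{relative to a hereditary section}: the targets $M_A,N_A$ are the almost-split targets \emph{inside} $\QQ$, not in $\Db\AA$. Passing from $\QQ$ to the light cone $\QQ_X$ changes the section, so the same $\tau$-orbit can have a nonthread representative in $\QQ$ while its representative $A'\in[X,\tau^{-n}Y]\subseteq\QQ_X$ is a thread object of $\QQ_X$ (this happens precisely when $A$ is a sink or a source in $\QQ$ with exactly two neighbours, so that one neighbour's appropriate $\tau$-shift falls outside $\QQ_X$). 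The paper's proof acknowledges exactly this: after applying part~(1) inside $\QQ_X$, it is left with an a priori infinite family of such sinks/sources, and it kills that family with a separate argument using semi-heredity and $\dim\Hom(X,\tau^{-n}Y)<\infty$ to extract an infinite chain and then derive a contradiction from the additivity $\r(X,A_k)=\r(X,A_j)+\r(A_j,A_k)$. Your proposal omits this second step entirely, and it is not formal.

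\textbf{Part (3).} This is essentially right and matches the paper. One simplification: the ``edge case'' $\d(Z,W)=0$ with $Z\neq W$ cannot occur --- if $\r(Z,W)=\r(W,Z)=0$ for nonisomorphic indecomposables then $\rad(Z,W)\neq 0$ and $\rad(W,Z)\neq 0$, giving a nontrivial path $Z\to W\to Z$, contradicting that objects of a hereditary section are directing (Observation~\ref{observation:PropertiesSection}). So the replace-and-repeat dance is unnecessary; the paper simply takes $Z$ minimizing the number of nonthread objects in $[Z,Y[^\bullet$ and applies Corollary~\ref{corollary:HalfConvex}.
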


\begin{proof}
\begin{enumerate}
\item Let $A$ be a nonthread object in $[X,Y]$.  If $A$ is not isomorphic to $X$ or $Y$, then Lemma \ref{lemma:Neighbors} implies there are (nonzero) almost split maps $N_A \to A$ and $A \to M_A$ in $\QQ$.  Since $A$ is a nonthread object, either $M_A$ or $N_A$ is not indecomposable.  Seeking a contradiction, assume there are infinitely many nonthread objects $A$ such that $N_A$ is not indecomposable.

Let $\AA$ be a heart of a $t$-structure associated with $\QQ$ as in Theorem \ref{theorem:SectionTilting}.  We denote $Z = \im (Y \to \bS X \otimes \Hom(Y,\bS X)^*)$ and $K = \ker(Y \to \bS X \otimes \Hom(Y,\bS X)^*)$.

Since $\QQ$ is semi-hereditary (Observation \ref{observation:PropertiesSection}) and $\dim \Hom(X,Y) < \infty$, there can only be finitely many objects $A \in \ind [X,Y]$ such that $\dim \Hom(X,N_A) > \Hom(X,A)$.  There are hence infinitely many objects $A \in \ind [X,Y]$ such that $\dim \Hom(X,N_A) = \Hom(X,A)$.  Any direct summand $B$ of $N_A$ not lying in $[X,Y]$ is necessarily a direct summand of $K$, but $K$ is a finitely generated projective object.  We conclude that there are infinitely many nonthread objects $A \in [X,Y]$ such that $N_A$ is not indecomposable.

Likewise, one shows there are only finitely many nonthread objects $A \in [X,Y]$ such that $M_A$ is not indecomposable.
  
\item   Seeking a contradiction, assume $[X,Y]^\bullet_\QQ$ has infinitely many nonthread objects in $\QQ$.  Denote $n=\r(X,Y)$ and let $\QQ_X$ be the light cone centered on $X$.  It follows from the previous part that $[X,\t^{-n} Y]$ has only finitely many nonthread objects in $\QQ_X$, thus infinitely many nonthread objects in $[X,Y]^\bullet_\QQ$ are either a sink or a source with exactly two direct neighbors.  Denote by $\{A_i\}_{i \in I} \subseteq \ind [X,Y]^\bullet_\QQ$ such an infinite set of sinks and sources, and denote by $A_i'$ the object in $\ind [X,\t^{-n} Y]$ lying in the same $\t$-orbit as $A_i$ (see Proposition \ref{Proposition:SameOrbits}).  We define a partial ordering on $I$ by $i \leq j \Leftrightarrow \Hom(A'_i,A'_j)\not=0$.

Since $\QQ_X$ is semi-hereditary (Observation \ref{observation:PropertiesSection}) and $\dim \Hom(X,\t^{-n} Y) < \infty$, we know that there is an infinite linearly ordered subposet $J$ of $I$.  Furthermore either infinitely many elements of $\{A_j\}_{j \in J}$ are sinks or infinitely many are sources.

If $A_j$ is a sink, then $\r(A_j,A_k) > 0$ when $j,k \in J$ with $j<k$.  Also note that, since $A'_j \in [X,A'_k]$, Proposition \ref{Proposition:SameOrbits} shows that $A_j \in [X,A_k]^\bullet$ and thus $\r(X,A_k) = \r(X,A_j) + \r(A_j,A_k)$.  We infer that $\r(X,A_j) > \r(X,A_k)$ for any $k > j$ and hence $\{A_j\}_{j \in J}$ cannot have infinitely many sinks.

Likewise one shows that $\{A_j\}_{j \in J}$ cannot have infinitely many sources.  A contradiction.

\item Take a nonthread object $Z \in \ind [X,Y[^\bullet$ such that $[Z,Y[^\bullet$ has a minimal number of nonthread objects.  Using Corollary \ref{corollary:HalfConvex} it is easy to see that $Z$ is the only nonthread object in $[Z,Y[^\bullet$.
\end{enumerate}
\end{proof}

\begin{lemma}\label{lemma:CountablyInFiniteDistance}\label{lemma:RoughZigZag}
Let $\QQ$ be a hereditary section in $\Db \AA$ and let $X \in \ind \QQ$.
\begin{enumerate}
\item There are only finitely many nonthread objects $Y$ such that $]X,Y[$ is nonempty and has no nonthread objects.
\item There are only countably many nonthread objects $Y \in \ind \QQ$ with $\r(X,Y) = 0$.
\item There are only countably many nonthread objects $Y \in \ind \QQ$ with $\r(X,Y) < \infty$.
\end{enumerate}
\end{lemma}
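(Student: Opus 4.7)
I handle the three parts in order, with (1) supplying the ``one-step'' combinatorics that powers the iteration in (2), and (3) following from (2) by a $\tau$-shift reduction.

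For (1), the hypothesis $]X,Y[ \neq \emptyset$ forces $\r(X,Y) \leq 0$, hence $\r(X,Y)=0$ since $\QQ$ is a hereditary section (Proposition \ref{proposition:SectionByDistance}), and $[X,Y]^\bullet = [X,Y]$. I enumerate the candidate $Y$'s by using the left almost split map $X \to M$ in $\QQ$ (Observation \ref{observation:PropertiesSection}) and its decomposition $M = M_1 \oplus \cdots \oplus M_n$ into finitely many indecomposable summands (by Hom-finiteness). For every thread summand $M_i$, the unique chain $M_i \to M_i^+ \to M_i^{++} \to \cdots$ of direct successors consists of thread objects until (possibly) reaching a first nonthread object $\xi_i$, and the finite set $\{M_i\} \cup \{\xi_i\}$ collects the candidates. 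To verify that every $Y$ in our set appears here, pick $Z \in ]X,Y[$ (a thread object) and factor the non-split map $X \to Z$ through $M$ to obtain some $M_i$ with $\Hom(M_i,Z) \neq 0$; the forward $+$-chain from $M_i$ then traces through $Z$ and reaches $Y$ as its first nonthread output. The possibility of $M_i$ being a nonthread summand is excluded by noting that one must then have $\Hom(M_i,Y) = 0$ (otherwise $M_i \in ]X,Y[$ would be a forbidden nonthread), and the factorization argument can be iterated at $M_i$ through its own left almost split map; by Lemma \ref{lemma:FewRoughsInIntervals}(1) this iteration must terminate at a thread object after finitely many steps.

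For (2), I iterate the conclusion of (1). Set $N_0 = \{X\}$ and, recursively, let $N_{k+1}$ consist of all nonthread $W$ for which there is some $Z \in N_k$ with either $W$ a direct successor of $Z$ in $\QQ$ or $]Z,W[$ nonempty and containing no nonthread objects. Each $N_{k+1}$ is a finite union of finite sets (by finiteness of direct summands of each left almost split target together with (1) applied at each $Z$), so $\bigcup_{k \geq 0} N_k$ is countable. It suffices to show that each nonthread $Y$ with $\r(X,Y) = 0$ lies in some $N_k$; I argue by induction on $\mu(X,Y) := \#\{W \in ]X,Y] : W \text{ nonthread}\}$, finite by Lemma \ref{lemma:FewRoughsInIntervals}(2). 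The base case $\mu(X,Y) = 1$ places $Y$ in $N_1$. For the inductive step, the dual of Lemma \ref{lemma:FewRoughsInIntervals}(3) --- obtained by the symmetric minimality argument, picking the nonthread in $]X,Y]$ closest to $X$ instead of closest to $Y$ --- furnishes a nonthread $Z \in ]X,Y]$ with $]X,Z[$ nonthread-free, so $Z \in N_1$ and $\mu(Z,Y) < \mu(X,Y)$, allowing the induction to place $Y$ further along in the chain of $N_k$'s.

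For (3), let $Y \in \ind\QQ$ be nonthread with $n = \r(X,Y) \in \bZ$. By Lemma \ref{lemma:LightConeDistanceAndShifts} we have $\r(X, \t^{-n}Y) = 0$, so $\t^{-n}Y$ lies in the light cone $\QQ_X$, itself a hereditary section (Proposition \ref{proposition:SectionByDistance} together with Theorem \ref{theorem:SectionTilting}). Applying (2) inside $\QQ_X$ shows that $\ind\QQ_X$ contains only countably many nonthread objects. Since $\QQ$ meets each $\t$-orbit at most once, the assignment $Y \mapsto \t^{-n}Y$ is injective into $\ind\QQ_X$; moreover, thread-status is invariant along $\t$-orbits because the defining left and right almost split structure descends from the AR-component structure of $\Db\AA$, which is intrinsic. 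Countability of the set in (3) follows.

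The main obstacle is the inductive step in (2), which needs the dual of Lemma \ref{lemma:FewRoughsInIntervals}(3); this is obtained by a symmetric minimality argument and is essentially immediate but must be stated explicitly. The most delicate aspect of (1) is the extra analysis needed when a direct summand $M_i$ of the left almost split target at $X$ is itself nonthread --- in that case an iterated factorization through its own almost split map is required, and the finiteness of this iteration relies on the cap provided by Lemma \ref{lemma:FewRoughsInIntervals}(1).
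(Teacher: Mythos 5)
Your proof of part (2) is sound and close in spirit to the paper's (the paper instead shows each $N^X_i$ is finite directly via the inclusion $N_i^X \subseteq \bigcup_{j,k<i}\bigcup_{Z\in N_j^X}N_k^Z$, but the organization you chose, together with the dual of Lemma~\ref{lemma:FewRoughsInIntervals}(3), is an acceptable variant). Parts (1) and (3), however, have genuine gaps.

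For (1), the forward $+$-chain from $M_i$ need not reach $Y$. Threads can have ``limit gaps'': take $\aa = k(\bN\cdot -\bN)$, $\QQ$ the standard hereditary section of projectives, $X = \aa(-,a_0)$ the simple projective at the minimal vertex, and $Y = \aa(-,b_0)$ the indecomposable projective at the maximal vertex of $-\bN$. Then $Y$ is a nonthread object, $]X,Y[ = \{\aa(-,a_i)\}_{i\geq 1}\cup\{\aa(-,b_j)\}_{j\leq -1}$ is nonempty and nonthread-free, the almost split target of $X$ is $M=M_1=\aa(-,a_1)$, and the chain $\aa(-,a_1), \aa(-,a_2),\ldots$ never crosses to the $-\bN$ side; $\xi_1$ does not exist, so $Y$ is absent from your candidate set. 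What actually works (and what the paper does) is an adjoint argument: given two distinct such $Y_1,Y_2$ with $\Hom(M_i,Y_1)\neq 0$ and $\Hom(M_i,Y_2)\neq 0$, one has $\Hom(Y_1,Y_2)=0$, so the right adjoint $i_R$ to the embedding $[M_i,Y_1]\hookrightarrow\QQ$ sends $Y_2$ to a nonzero object whose summands lie in $[M_i,Y_1[$ and are all thread objects, contradicting (the dual of) Lemma~\ref{lemma:ThreadsAndAdjoints}. Thus there is at most one $Y$ per summand $M_i$.

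For (3), the assertion that ``thread-status is invariant along $\tau$-orbits'' is false, and the justification (``descends from the AR-component structure of $\Db\AA$, which is intrinsic'') misidentifies where thread status lives. Being a thread or nonthread object is a property of $Y$ relative to the ambient hereditary section: it is about the almost split maps \emph{inside} $\QQ$ (Observation~\ref{observation:PropertiesSection}), not the irreducible maps in $\Db\AA$. When one passes from $\QQ$ to the light cone $\QQ_X$ and $\tau$-shifts, some direct neighbors of $Y$ in $\QQ$ are shifted out of the light cone, so $\t^{-n}Y$ can have fewer neighbors in $\QQ_X$ than $Y$ has in $\QQ$. In particular a nonthread $Y\in\QQ$ can become a thread object $\t^{-n}Y\in\QQ_X$ --- the paper's proof explicitly allows for this and is forced into a substantially longer argument: it introduces the auxiliary hereditary sections $\QQ_m$ (generated by $\t^{-\min(\r(X,A),m)}A$), verifies these are hereditary sections via Corollary~\ref{corollary:TauTilting}, and then splits into cases according to whether the ``extra'' neighbor of $Y$ sits on the predecessor or the successor side, reducing in each case either to countability in $\QQ_n$ or to proximity to a nonthread object of $\QQ_{n-1}$. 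That machinery cannot be bypassed by a $\tau$-invariance claim.
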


\begin{proof}
\begin{enumerate}
\item Let $X \to M$ be a left almost split map in $\QQ$.  Let $M'$ be an indecomposable summand of $M$ and let $Y_1,Y_2$ be two nonisomorphic nonthread objects with $\Hom(M',Y_1) \not= 0$ and $\Hom(M',Y_2) \not= 0$ such that $]X,Y_1[$ and $]X,Y_2[$ are nonempty and have no nonthread objects.  Note that this implies that $Y_1 \not\cong M' \not\cong Y_2$.  In particular we know that $M'$ is a thread object in $\QQ$.

The embedding $i:[M',Y_1] \to \QQ$ has a right adjoint $i_R$.  Since $\Hom(Y_1,Y_2) = 0$ we know that the object $i \circ i_R(Y_2)$ lies in $[M',Y_1[$.  However, every indecomposable direct summand of $i \circ i_R(Y_2)$ is a thread object in $\QQ$, contradicting Lemma \ref{lemma:ThreadsAndAdjoints}.

\item Denote by $N_i^X$ the set of all nonthread objects $Y \ind \QQ$ such that $\r(X,Y) = 0$ and $]X,Y[$ has exactly $i$ nonthread objects.  Lemma \ref{lemma:FewRoughsInIntervals} yields that it is sufficient to prove that the set $\cup_{i \in \bN} N_i^X$ is countable.

It was shown above that $N^X_0$ is finite for all $X \in \ind \QQ$; we will proceed by induction.  Assume therefore that $N_j^Z$ is finite for every $j < i$ and every $Z \in \ind \QQ$.  We will prove that $N_{i}^X$ is finite.  Let $Y \in N_{i}^X$ and let $Z$ be a nonthread object in $]X,Y[$, thus $Z \in N_j^X$ for some $j < i$.  Corollary \ref{corollary:HalfConvex} yields that $Y \in N_k^Z$ for some $k < i$ so that
$$N_i^X \subseteq \bigcup_{j,k < i} \bigcup_{Z \in N_j^X} N_k^Z.$$
Since the right hand side is a finite union of finite sets, the left hand side is finite as well.  This shows that the set $\cup_{i \in \bN} N_i^X$ is countable.

\item We will prove there are only countably many nonthread objects $Y$ with $\r(X,Y) = n$.  Seeking a contradiction, assume there are uncountably many such nonthread objects.

Let $\QQ_X$ be the light cone centered on $X$.  Every nonthread object $Y \in \QQ$ with $\r(X,Y) = n$ corresponds to an object $Y' = \t^{-n} Y \in \QQ_X$.  It follows from the previous part that $\QQ_X$ has only countably many nonthread objects, such that uncountably many nonthread objects $Y \in \QQ$ with $\r(X,Y) = n$ correspond to thread objects $Y' \in \QQ_X$.  In particular, we know that $Y$ has either exactly two (nonisomorphic) direct predecessors or direct successors in $\QQ$.

We define a new hereditary section $\QQ_n$ generated by the indecomposables $\t^{-m_A} A$ where $A \in \ind \QQ$ and $m_A = \min(\r(X,A),n)$.  Note that $\QQ_0 = \QQ$.  To prove that $\QQ_n$ is indeed a hereditary section, it suffices to show that $\r(\t^{-m_A} A,\t^{-m_B} B) \geq 0$ for all $A,B \in \ind \QQ$ (see Corollary \ref{corollary:TauTilting}).  We have
\begin{eqnarray*}
\r(\t^{-m_A} A,\t^{-m_B} B) &=& \r(A,B) +m_A - m_B \\
&\geq& \r(A,B) + \r(X,A) - m_B \\
&\geq& \r(X,B) - m_B \geq 0
\end{eqnarray*}

Let $Y \in \ind \QQ$ be a nonthread object with $\r(X,Y) = n$.  If $Y$ has two direct predecessors $M_1,M_2$ in $\QQ$, then it follows from the triangle inequality that $\r(X,M_1) \geq n$ and $\r(X,M_2) \geq n$ such that $Y'$ is a nonthread object in $\QQ_n$.  Since $X$ and $Y'$ both lie in $\QQ_n$ and $\r(X,Y') = 0$, we know there are only countably many of such objects.

Consider the case where $Y$ has two direct successors $N_1,N_2$ in $\QQ$.  For any direct successor $N_i$ we have $\r(Y,N_i) = 0$ and $\r(N_i,Y) = 1$ such that the triangle inequality shows that either $\r(X,N_i) = n$ or $\r(X,N_i) = n-1$.

If both $\r(X,N_1) = n$ and $\r(X,N_2) = n$, then $Y'$ is a nonthread object in $\QQ_n$ with $\r(X,Y')$ and we know there are only countably many of such objects.  We may thus assume that $\r(X,N_1) = n-1$.  In this case $\t^{-n+1} N_1$ is a nonthread object in $\QQ_{n-1}$ since $\t^{-n+1} N_1$ has at least two nonisomorphic direct predecessors: $\t^{-n+1} Y$ and one lying in $[X,\t^{-n+1} N_1]$.  Thus $\t^{-n+1} Y$ is a direct neighbor of a nonthread object $\t^{-n+1} N_1$ in $\QQ_{n-1}$, and there can again only be countably many of these objects.

\end{enumerate}
\end{proof}

\begin{proposition}\label{proposition:NonthreadPath}
Let $\AA$ be an abelian category with Serre duality and let $\QQ$ be a hereditary section in $\Db \AA$.  If $\bZ \QQ$ is connected then for every $X,Y \in \ind \QQ$ with $\d(X,Y)$ there is a sequence $X=X_0, X_1, X_2, \ldots, X_n = Y$ in $\ind \QQ$ such that
\begin{enumerate}
\item either $\r(X_i,X_{i+1}) < \infty$ or $\r(X_{i+1},X_i) < \infty$ for all $0 \leq i \leq n-1$, and
\item $\r(X_i,X_{i+2}) = \r(X_{i+2},X_i) = \infty$, for all $0 \leq i \leq n-2$, and
\item the objects $X_i$ are nonthread objects in $\QQ$, for $1 \leq i \leq n-1$.
\end{enumerate}
\end{proposition}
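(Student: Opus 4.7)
The plan is to build the sequence in three stages: first produce any connecting sequence in $\ind \QQ$ from $X$ to $Y$ using connectedness; second, substitute intermediate thread objects with neighboring nonthread objects; third, prune the sequence to satisfy condition (2). (I will interpret the ``$\d(X,Y)$'' clause as the nontrivial case $\d(X,Y)=\infty$, since for $\d(X,Y)<\infty$ one may take $n=1$ and all three conditions become vacuous or follow trivially.)

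First I would use connectedness of $\bZ \QQ$ to obtain indecomposables $W_0 = X, W_1, \ldots, W_m = Y$ in $\ind \bZ \QQ$ with a path between consecutive $W_i$ in one direction or the other. Since $\QQ$ meets every $\tau$-orbit exactly once, I can replace each $W_i$ by its $\tau$-representative lying in $\QQ$; by Lemma \ref{lemma:LightConeDistanceAndShifts} each pair $(W_i, W_{i+1})$ still satisfies $\r(W_i,W_{i+1})<\infty$ or $\r(W_{i+1},W_i)<\infty$, establishing condition (1).

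For the second stage, I would replace every intermediate thread object by a nonthread object in the same $\d$-cluster. Given a thread object $W_i$ with $0<i<m$, consider the chain of thread objects through $W_i$ obtained by iterating $(-)^+$ and $(-)^-$. If this chain terminates at a nonthread object $W_i'$, then by Lemma \ref{lemma:Neighbors} and the triangle inequality we have $\r$-finite connections from $W_i'$ to both $W_{i-1}$ and $W_{i+1}$, so we substitute $W_i \mapsto W_i'$. If the chain extends infinitely, one may pass to a light cone tilt centered on $W_{i-1}$ (or $W_{i+1}$) and invoke Lemma \ref{lemma:FewRoughsInIntervals}(3) inside the corresponding interval to find a nonthread replacement, possibly at the cost of inserting additional intermediate terms. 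This stage is the principal obstacle, as one must check that the substitution preserves finite $\r$-distance to both neighbors simultaneously and does not introduce new thread-object intermediates; the key input is the fact (from Lemma \ref{lemma:CountablyInFiniteDistance} and its proof) that long thread chains still lie adjacent to nonthread objects in the surrounding Auslander--Reiten structure.

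Finally, I would prune to enforce condition (2): whenever $\r(X_i,X_{i+2})<\infty$ or $\r(X_{i+2},X_i)<\infty$ for some $0 \leq i \leq n-2$, delete the intermediate term $X_{i+1}$; the resulting sequence still satisfies conditions (1) and (3), with length strictly decreased. Iterating terminates in finitely many steps, producing a sequence satisfying all three conditions. As a sanity check, the fact that $\d(X,Y)=\infty$ prevents the pruning from collapsing the sequence to length one, guaranteeing that at least one genuine intermediate nonthread object remains.
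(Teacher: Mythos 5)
Your stage 1 (connectedness plus $\tau$-shift into $\QQ$) and stage 3 (pruning) match the paper's first sentence, which dispatches conditions (1) and (2) by exactly this route. The real content of the proposition is condition (3), and here your proposal and the paper diverge: the paper reduces to the single claim that if $\r(X,Z)<\infty$, $\r(Y,Z)<\infty$, but $\d(X,Y)=\infty$, then some \emph{nonthread} $Z'$ also has finite $\r$-distance from both $X$ and $Y$, and proves this by contradiction: assuming every such $A$ is a thread object, it picks $A$ minimizing $\r(X,A)+\r(Y,A)$, passes to $B=A^+$, shows $A$ lies in both $[X,B]^\bullet$ and $[Y,B]^\bullet$, and then uses Proposition~\ref{Proposition:InbetweenObjects}, the co-light cone, the left adjoint of the embedding $[\t^{\r(X,B)}X,B]\to\QQ^B$, and Lemma~\ref{lemma:ThreadsAndAdjoints} to force $\r(X,Y)<\infty$ or $\r(Y,X)<\infty$, a contradiction. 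This sidesteps any need to ``walk'' from a thread object to a nearby nonthread object.

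Your stage 2 is where the gap is. First, the direction issue: if $\r(W_{i-1},W_i)<\infty$ and $\r(W_i,W_{i+1})<\infty$ (opposite sides), iterating $(-)^+$ or $(-)^-$ cannot preserve finite $\r$-distance to both neighbours simultaneously; you need to prune \emph{before} replacing so that both neighbours sit on the same side. Second, and more seriously, the infinite-chain case is not resolved: your fallback via a light cone tilt and Lemma~\ref{lemma:FewRoughsInIntervals}(3) requires a nonthread object as one endpoint of the interval, and even then the $Z$ it produces may be that same endpoint when $]X,Y[^\bullet$ contains no nonthread objects at all, which is exactly the situation you are trying to escape. In other words, Lemma~\ref{lemma:FewRoughsInIntervals}(3) locates the nonthread object in $[X,Y]^\bullet$ \emph{closest} to $Y$; it does not manufacture a new one when the interval is a pure thread. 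The scenario that actually has to be ruled out---two indecomposables $X,Y$ at infinite round-trip distance such that \emph{every} object with finite $\r$-distance from both is a thread object---is precisely what the paper's contradiction argument addresses, and your proposal does not contain a substitute for that argument.
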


\begin{proof}
The existence of the sequence satisfying the first two properties follows from the connectedness of $\bZ \QQ$ and the triangle inequality, so we need only to prove the last property.  It suffices to prove the following statement: let $X,Y,Z \in \ind \QQ$ with $\r(X,Z)<\infty$ and $\r(Y,Z)<\infty$.  If $\r(X,Y) = \infty$ and $\r(Y,X)=\infty$, then there is a nonthread object $Z' \in \ind \QQ$ with $\r(X,Z')<\infty$ and $\r(Y,Z')<\infty$.

We may assume all $A \in \ind \QQ$ with $\r(X,A) < \infty$ and $\r(Y,A) < \infty$ are thread objects.  Let $A$ be such a thread object such that $\r(X,A) + \r(Y,A) \geq 0$ is minimal.  Using the triangle inequality, we see that $\r(X,A) = \r(X,B)$ and $\r(Y,A) = \r(Y,B)$ for $B = A^+$.  This implies that $A \in \ind [X,B]^\bullet$ and $A \in \ind [Y,B]^\bullet$.  Proposition \ref{Proposition:InbetweenObjects} shows $A$ lies in the co-light cone $\QQ^B$ centered on $B$.

The embedding $[\t^{\r(X,B)} X, B] \to \QQ^B$ has a left adjoint $l: \QQ^B \to [\t^{\r(X,B)} X, B]$.  Consider the object $l(\t^{\r(Y,B)} Y)$.  By our initial assumption, every direct summand of $l(\t^{\r(Y,B)} Y)$, except possibly $B$, will be a thread object in $\QQ^B$.

However, there is at least one indecomposable direct summand which maps nonzero to $A$.  Lemma \ref{lemma:ThreadsAndAdjoints} shows that either $\t^{\r(Y,B)} Y \in [\t^{\r(X,B)} X, B]$ or $\Hom(\t^{\r(Y,B)} Y,\t^{\r(X,B)} X) \not= 0$.  This contradicts $\r(X,Y) = \infty$ or $\r(Y,X) = \infty$, respectively.
\end{proof}

\begin{proposition}\label{proposition:NonthreadStructure}
Let $\AA$ be an abelian category with Serre duality and let $\QQ$ be a hereditary section in $\Db \AA$.  If $\bZ \QQ$ is connected, then $\QQ$ has only countably many nonthread objects.
\end{proposition}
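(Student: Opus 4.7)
The plan is to bootstrap from the already-established local countability result Lemma \ref{lemma:CountablyInFiniteDistance}(3), using Proposition \ref{proposition:NonthreadPath} to spread countability globally via connectedness of $\bZ\QQ$. Define a symmetric ``neighbor'' relation on the set of nonthread objects of $\ind \QQ$ by $X \sim Y$ if and only if $\r(X,Y) < \infty$ or $\r(Y,X) < \infty$. Lemma \ref{lemma:CountablyInFiniteDistance}(3) says that for a fixed $X \in \ind\QQ$ there are only countably many nonthread $Y$ with $\r(X,Y)<\infty$; applying the same lemma to the opposite hereditary section $\QQ^\circ \subset \Db \AA^\circ$ (using the self-duality noted in the remark after the definition of hereditary sections) yields the analogous statement with $\r(Y,X) < \infty$. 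Hence each nonthread object has only countably many $\sim$-neighbors.

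With that in hand, the proof is a one-line induction. The trivial case $\QQ$ has no nonthread objects is dealt with immediately. Otherwise fix a nonthread $X_0 \in \ind\QQ$ and set $N_0 = \{X_0\}$, and inductively
\[
N_{k+1} \ = \ N_k \ \cup\ \{\, Y \in \ind \QQ \ \mid\ Y \text{ nonthread and } \exists\, X \in N_k \text{ with } X \sim Y\,\}.
\]
Each $N_k$ is countable by the previous paragraph and induction on $k$, so $N_\infty = \bigcup_{k \geq 0} N_k$ is countable. To conclude, let $Y \in \ind \QQ$ be any nonthread object. Since $\bZ\QQ$ is connected, Proposition \ref{proposition:NonthreadPath} produces a finite sequence $X_0 = Z_0, Z_1, \ldots, Z_n = Y$ in $\ind \QQ$ whose intermediate terms are nonthread and whose consecutive pairs satisfy $Z_i \sim Z_{i+1}$. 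A straightforward induction on $i$ gives $Z_i \in N_i$, so $Y \in N_n \subseteq N_\infty$, proving the proposition.

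The only mildly delicate step is the appeal to the dualized form of Lemma \ref{lemma:CountablyInFiniteDistance}(3), where one must check that the construction of $\QQ^\circ$ does preserve the ``hereditary section'' hypotheses and that $\tau$-orbits and left/right light cone distances interchange under this duality; the explicit self-duality statement for hereditary sections in the excerpt (and for the light cone construction in \S\ref{section:HereditarySections}) makes this automatic. After that, no further geometric input is needed — the result is really just ``countable valence plus connectedness implies countability of the connected component''.
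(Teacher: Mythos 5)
Your proof is correct and matches the paper's own argument essentially verbatim: both invoke Proposition \ref{proposition:NonthreadPath} to connect any nonthread object to a fixed base point by a finite chain of nonthread objects with consecutive pairs at finite one-sided light-cone distance, and both invoke Lemma \ref{lemma:CountablyInFiniteDistance}(3) (together with its dual under $\QQ \mapsto \QQ^\circ$) for countable ``valence.'' You merely make explicit the inductive $N_k$ construction that the paper leaves implicit in the phrase ``Lemma \ref{lemma:CountablyInFiniteDistance} then yields\ldots.''
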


\begin{proof}
If $\QQ$ has no nonthread objects, then the statement is trivial.  Thus assume $\QQ$ has at least one nonthread object $X$.  It follows from Proposition \ref{proposition:NonthreadPath} that for every nonthread object $Y \in \ind \QQ$ there is a sequence $X=X_0, X_1, X_2, \ldots, X_n = Y$ in $\ind \QQ$ with either $\r(X_i,X_{i+1}) < \infty$ or $\r(X_{i+1},X_i) < \infty$ such that $X_i$ is a nonthread object when $0<i\leq n$.  Lemma \ref{lemma:CountablyInFiniteDistance} then yields that there are only countably many nonthread objects in $\QQ$.
\end{proof}

\subsection{Rays and corays}\label{subsection:Rays}

Let $\AA$ be an abelian Ext-finite category with Serre duality and $\QQ$ a hereditary section in $\Db \AA$.  Let $\TT \ind \QQ$ be the subset of all nonthread objects.  An object $X \in \ind \QQ$ is called a \emph{ray object} if $\r(X,\TT) = \infty$ and is called a \emph{coray object} if $\r(\TT,X) = \infty$.

Note that ray objects and coray objects are always thread objects.

We will define an equivalence relation on ray objects as follows: two ray objects $X,Y \in \ind \QQ$ are equivalent if and only if $\r(X,Y) < \infty$ or $\r(Y,X) < \infty$.  Reflexivity and symmetry are clear, while transitivity follows from the following lemma.

\begin{lemma}
Let $\QQ$ be a hereditary section and let $X,Y,Z \in \ind \QQ$ be ray objects.
\begin{itemize}
\item If $\r(X,Y) < \infty$ and $\r(Y,Z) < \infty$, then $\r(X,Z) < \infty$,
\item if $\r(Y,X) < \infty$ and $\r(Z,Y) < \infty$, then $\r(Z,X) < \infty$,
\item if $\r(X,Y) < \infty$ and $\r(Z,Y) < \infty$, then $\r(X,Z) < \infty$ or $\r(Z,X) < \infty$,
\item if $\r(Y,X) < \infty$ and $\r(Y,Z) < \infty$, then $\r(X,Z) < \infty$ or $\r(Z,X) < \infty$.
\end{itemize}
\end{lemma}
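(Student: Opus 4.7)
The first two items are immediate applications of the triangle inequality (Proposition \ref{proposition:rTriangle}): $\r(X,Z) \leq \r(X,Y) + \r(Y,Z) < \infty$, and $\r(Z,X) \leq \r(Z,Y) + \r(Y,X) < \infty$. The real content lies in items (3) and (4).

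For item (4), the plan is to work in the light cone $\QQ_Y$ centered on $Y$. Setting $X' = \t^{-\r(Y,X)}X$ and $Z' = \t^{-\r(Y,Z)}Z$ puts both in $\QQ_Y$ with $\r(Y,X') = \r(Y,Z') = 0$, and Lemma \ref{lemma:LightConeDistanceAndShifts} converts any finiteness statement about $\r(X,Z)$ or $\r(Z,X)$ into the corresponding statement about $X'$ and $Z'$. The first step is the observation that $[Y,X']$ and $[Y,Z']$ are threads: any nonthread object in either would witness $\r(Y,\TT)<\infty$, contradicting that $Y$ is a ray object. The second step is to show that both threads contain $Y^+$ (indecomposable because $Y$ is a thread object). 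For this I factor the first edge of any path $Y \to W_1 \to \cdots \to X'$ through $Y \to Y^+$, observe that $\r(Y^+,X') = 0$ inside $\QQ_Y$, and apply Proposition \ref{proposition:LeftmostComposition} to conclude $\Hom(Y^+,X') \neq 0$; similarly for $Z'$. With $Y^+$ identified as a common element beyond the shared endpoint $Y$, Proposition \ref{Proposition:InbetweenObjects} forces one of the two threads to contain the other, which translates back (via Lemma \ref{lemma:LightConeDistanceAndShifts}) into $\r(X,Z) < \infty$ or $\r(Z,X) < \infty$.

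Item (3) is most cleanly handled by contradiction, using the auxiliary statement embedded in the proof of Proposition \ref{proposition:NonthreadPath}. Suppose $\r(X,Z) = \r(Z,X) = \infty$. Then that internal claim, applied with sources $X,Z$ and target $Y$, yields a nonthread object $Y' \in \ind\QQ$ with $\r(X,Y') < \infty$ and $\r(Z,Y') < \infty$. But $X$ is a ray object, so $\r(X,N) = \infty$ for every nonthread $N$, a contradiction. (Alternatively, one can mirror the argument for item (4) in the co-light cone $\QQ^Y$, showing that $Y^-$ lies in both threads $[X',Y]$ and $[Z',Y]$ and invoking the dual of Proposition \ref{Proposition:InbetweenObjects}.)

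The structural subtlety that dictates the shape of the argument is that ``ray'' is a one-sided condition: it asserts $\r(X,\TT) = \infty$ but nothing about $\r(\TT,X)$. This is why item (3) yields cleanly to the contradiction scheme, whereas the naive dual of that scheme for item (4) would only manufacture a nonthread $Y'$ with $\r(Y',X) < \infty$, which is not excluded by ray-ness of $X$. This asymmetry is the main obstacle, and it is precisely what forces the more delicate thread-comparison argument via Proposition \ref{Proposition:InbetweenObjects} for item (4).
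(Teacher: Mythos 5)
Your proof correctly identifies the structure: items (1) and (2) are the triangle inequality, and the crux of items (3) and (4) is to show that the two relevant threads share an indecomposable next to $Y$ and then invoke Proposition \ref{Proposition:InbetweenObjects}; this is exactly the paper's strategy. Your treatment of item (3) by contradiction, using the auxiliary claim established inside the proof of Proposition \ref{proposition:NonthreadPath}, is a genuinely different (and quite clean) route; the paper instead proves item (3) by the thread-comparison argument and dismisses item (4) as ``similar''. Your parenthetical alternative for (3) is the paper's own argument, and your closing remark about the one-sidedness of the ray condition --- and why the contradiction scheme does not dualize to item (4) --- is a correct and useful observation.

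The one step that needs repair is the factoring in item (4). You factor the first edge $Y\to W_1$ of a path in $\Db\AA$ through $Y\to Y^{+}$, but $Y\to Y^{+}$ is (left) almost split only \emph{in the hereditary section} $\QQ$; the intermediate object $W_1$ need not lie in $\QQ$, and the left almost split map from $Y$ inside the light cone $\QQ_Y$ need not have $Y^{+}$ as its target, so neither version of ``almost split'' applies directly to that edge. The paper sidesteps this entirely by staying inside $\QQ$ and applying Lemma \ref{lemma:Neighbors}(2): for item (3), since $[X,Y]^\bullet_\QQ$ is a thread (every object $\r$-in-between is within finite $\r$-distance of the ray object $X$, hence cannot be nonthread) and we may assume $\r(Y,X)=\infty$ (the ``$\r(X,Y)=\infty$'' in the paper's proof is a typo), the contrapositive of Lemma \ref{lemma:Neighbors}(2) gives $Y^{-}\in[X,Y]^\bullet_\QQ$; the dual statement gives $Y^{+}\in[Y,X]^\bullet_\QQ$ for item (4) once we assume $\r(X,Y)=\infty$. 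Replacing your factoring step by this appeal to Lemma \ref{lemma:Neighbors} closes the gap; the rest of your argument, including the passage to the light cone via Lemma \ref{lemma:LightConeDistanceAndShifts} and Proposition \ref{Proposition:SameOrbits}, is sound.
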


\begin{proof}
The first two cases follow from the triangle inequality.  For the third case, we may assume that both $\r(X,Y) = \infty$ and $\r(Y,Z) = \infty$ as otherwise the required result would follow directly from the triangle inequality.  It follows from Lemma \ref{lemma:Neighbors} that $Y^- \in [X,Y]^\bullet_\QQ$ and $Y^- \in [Z,Y]^\bullet_\QQ$.  Proposition \ref{Proposition:InbetweenObjects} yields the required result.

The last case is similar.
\end{proof}

The full replete additive subcategory of $\QQ$ generated by an equivalence class of ray objects is called a \emph{ray}.  A ray is necessarily infinite.

Dually, we define a coray as the full replete additive category generated by a maximal set of coray objects such that for any two objects $X,Y$ either $\r(X,Y) < \infty$ or $\r(Y,X) < \infty$.

\begin{example}
The hereditary section in Example \ref{example:NotStar} has a ray, and the hereditary section in Example \ref{example:NotStar2} has a coray.
\end{example}

\begin{example}\label{example:Halfopen2}
Let $\aa$ be the dualizing $k$-variety given by
$$\xymatrix@1{A_0\ar[r]&A_1\ar[r]&\ar@{..}[r]&\ar[r]&B_{-1}\ar[r]&B_0 \ar[r]& B_1 \ar[r]&\ar@{..}[r]&\ar[r]& C_{-1} \ar[r]&C_0 }$$
or, equivalently, given by the thread quiver $\xymatrix@1{\cdot \ar@{..>}[r]^{1}&\cdot}$.  The category $\Db \mod \aa$ may be sketched as the first part of Figure \ref{fig:Halfopen2} where the triangles represent $\bZ A_\infty$-components, and the squares represent $\bZ A_\infty^\infty$-components.  As usual, the abelian category $\mod \aa \subset \Db \mod \aa$ has been marked with gray.

\begin{figure}[tb]
	\centering
		\includegraphics[width=0.50\textwidth]{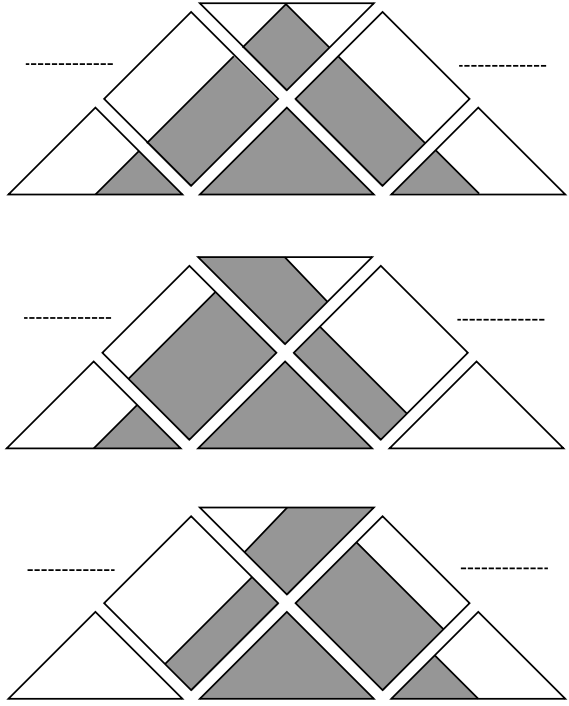}
	\caption{Illustration of Example \ref{example:Halfopen2}}
	\label{fig:Halfopen2}
\end{figure}

Choosing a hereditary section $\QQ$ spanned by objects of the form $\aa(-,A_i)$ and $\tau \aa(-,B_i)$ gives rise to an abelian category as sketched in the second part of Figure \ref{fig:Halfopen2}.  Here, $\QQ$ has a ray, but no coray.

Likewise, choosing a hereditary section $\QQ$ spanned by objects of the form $\tm\aa(-,B_i)$ and $\tau \aa(-,C_i)$ gives rise to an abelian category as sketched in the last part of Figure \ref{fig:Halfopen2}.  We see that $\QQ$ has a coray, but no ray.
\end{example}

Note that if $\QQ$ does not have any nonthread objects (thus $\TT = \emptyset$), then $\r(\TT,X) = \r(X,\TT) = \infty$ for every $X \in \ind \QQ$ so that every object is both ray object and a coray object.  If $\bZ \QQ$ is furthermore connected, then Proposition \ref{proposition:NonthreadPath} shows there is only one ray and one coray, and both coincide with $\QQ$.  Conversely it follows from Proposition \ref{proposition:NonthreadPath} that if $\bZ \QQ$ is connected and has an object which is both a ray and a coray object then $\QQ$ has no nonthread objects.

In light of Proposition \ref{proposition:NonthreadStructure}, the following observation is obvious.

\begin{observation}\label{observation:NeedsRays}
If $\QQ$ is a connected hereditary section without rays or corays, then $\QQ$ satisfies the condition (*).
\end{observation}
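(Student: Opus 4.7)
The plan is to take $\TT$ to be the set of all nonthread objects of $\ind \QQ$ and to verify directly that this countable set witnesses condition~(*). First I would establish countability of $\TT$, then show $\d(\TT, Y) < \infty$ for every $Y \in \ind \QQ$, and finally extend this finiteness to $\ind \bZ \QQ$ via $\tau$-invariance.

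For the first step, I would invoke Proposition~\ref{proposition:NonthreadStructure}: the connectedness of $\QQ$ (hence of $\bZ \QQ$) forces $\TT$ to be countable. I would also record the small but essential fact that $\TT \neq \emptyset$: otherwise, as noted in the paragraph just after Example~\ref{example:Halfopen2}, every $X \in \ind \QQ$ would be both a ray and a coray object, and by connectedness the whole of $\QQ$ would constitute a single ray, contradicting the hypothesis.

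For the second step, I would unpack the definitions: the hypothesis that $\QQ$ has no rays means no $Y \in \ind \QQ$ is a ray object, i.e.\ $\r(Y, \TT) < \infty$ for every $Y$, and dually the absence of corays gives $\r(\TT, Y) < \infty$. Adding these two finite integers yields $\d(\TT, Y) = \r(\TT, Y) + \r(Y, \TT) < \infty$. To bootstrap from $\ind \QQ$ to $\ind \bZ \QQ$ I would use the observation recorded in \S\ref{section:Distances} that $\d$ depends only on $\tau$-orbits: any $X \in \ind \bZ \QQ$ has the form $\tau^n Y$ for some $Y \in \ind \QQ$ and some $n \in \bZ$, so $\d(\TT, X) = \d(\TT, Y) < \infty$, as required.

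The only substantive input is Proposition~\ref{proposition:NonthreadStructure}, which controls the size of the nonthread locus; everything else reduces to unpacking the definitions of ray and coray object together with the $\tau$-invariance of $\d$. I do not anticipate any genuine obstacle here, only the minor bookkeeping verification that $\TT$ is nonempty so that the infima defining $\r(\TT, -)$ and $\r(-, \TT)$ are not vacuously infinite; this is what makes the ``no rays and no corays'' hypothesis actually usable.
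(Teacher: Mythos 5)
Your proof is correct and matches the paper's intended argument: the paper presents this as an immediate consequence of Proposition~\ref{proposition:NonthreadStructure}, and your choice of $\TT$ as the set of all nonthread objects, together with the observations that absence of rays/corays is exactly the statement $\r(Y,\TT) < \infty$ and $\r(\TT,Y) < \infty$ for all $Y$, and that $\d$ is $\tau$-invariant, is precisely the bookkeeping the authors leave implicit. Your remark that $\TT$ must be nonempty (else every object is a ray and coray object, so $\QQ$ would be a single ray, contradicting the hypothesis) correctly identifies the one point that could otherwise be overlooked.
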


Let $\RR$ be a ray.  We will say $A \in \ind \QQ$ is an \emph{anchor} of $\RR$ if $A$ is a nonthread object with $\r(A,X) < \infty$ for all $X \in \ind \RR$, and it is the only nonthread object in $[A,X]^\bullet_\QQ$.  Dually, a coancher $A'$ of a coray $\RR'$ is defined to be a nonthread object with $\r(X',A') < \infty$ for all $X' \in \ind \RR'$, and it is the only nonthread object in $[X',A']^\bullet_\QQ$.

\begin{proposition}\label{proposition:RayOrdered}
Let $\QQ$ be a hereditary section with at least one nonthread object.  Let $X,Y \in \ind \QQ$ be elements of the same ray $\RR$ and let $A \in \ind \QQ$ be a nonthread object with $\r(A,X) < \infty$.  Then we also have that $\r(A,Y) < \infty$, and either $[A,X]^\bullet \subseteq [A,Y]^\bullet$ or $[A,Y]^\bullet \subseteq [A,X]^\bullet$ holds.
\end{proposition}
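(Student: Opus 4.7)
\emph{Plan.}
My plan is to reduce first to a nonthread anchor inside $[A,X]^\bullet$, then establish both $\r(A,Y)<\infty$ and the interval comparability by placing $X$ and $Y$ linearly inside the thread emanating from that anchor.

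For the reduction, since $X$ is a ray object and hence a thread object, Lemma~\ref{lemma:FewRoughsInIntervals}(3) produces a nonthread $B \in \ind [A,X]^\bullet$ such that $]B,X[^\bullet$ contains no nonthread objects. Thus $]B,X]^\bullet$ consists entirely of thread objects, and $\r(A,B) < \infty$ since $B \in [A,X]^\bullet$. It therefore suffices to prove the analogous assertions with $B$ in place of $A$.

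For finiteness of $\r(B,Y)$, I observe that $X$ and $Y$ lie in the same ray, so either $\r(X,Y) < \infty$ or $\r(Y,X) < \infty$. The first case is handled at once by the triangle inequality $\r(B,Y) \leq \r(B,X) + \r(X,Y)$. In the second case I pick an indecomposable direct successor $M$ of $B$ lying in $]B,X]^\bullet$ (Lemma~\ref{lemma:Neighbors}(1)); then $[M,X]^\bullet$ is a genuine thread. Iterating Lemma~\ref{lemma:Neighbors}(2) backwards from $X$ along the predecessor chain $X^-, X^{--}, \ldots$, with the hypothesis $X^- \in [M,X]^\bullet$ at each stage forced by $X$ being a ray object (which makes the alternative conclusion $\r(X,M) < \infty$ incompatible with $M$ lying on an $\r$-geodesic originating at the nonthread $B$), fills $[M,X]^\bullet$ with a discrete linear chain terminating at $X$. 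The condition $\r(Y,X) < \infty$ combined with $Y$ being a ray object then forces $Y$, up to its unique $\tau$-shift in $\QQ$, onto a term of this chain, so $Y \in \ind [B,X]^\bullet$ and $\r(B,Y) \leq \r(B,X) < \infty$.

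For the comparability, the two finiteness subcases respectively yield $X \in \ind [B,Y]^\bullet$ or $Y \in \ind [B,X]^\bullet$, so Corollary~\ref{corollary:HalfConvex} gives $[B,X]^\bullet \subseteq [B,Y]^\bullet$ or $[B,Y]^\bullet \subseteq [B,X]^\bullet$. To lift this to intervals based at $A$, I would check the geodesic equality $\r(A,Y) = \r(A,X) + \r(X,Y)$ (or its symmetric counterpart), using that the $\r$-path $A \to B \to X$ extends through the thread to $Y$ without shortcut: any shortcut would force a nonthread target reachable from $Y$, contradicting $Y$ being a ray object. Corollary~\ref{corollary:HalfConvex} applied at $A$ then delivers the desired inclusion. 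Alternatively, Proposition~\ref{Proposition:InbetweenObjects} applied to the thread $]B,X]^\bullet$ can be used to obtain this inclusion in one stroke. The technical heart of the argument is the subcase $\r(Y,X) < \infty$, $\r(X,Y) = \infty$, in which $Y$ must be identified as a specific term of the thread from $B$ to $X$, ruling out that $Y$ sits in a sibling branch sprouting from $B$ that never reaches $X$; the ray condition on $Y$—that $\r(Y, A') = \infty$ for every nonthread $A'$—combined with the uniqueness of indecomposable direct successors along a thread is what forbids this, and weaving this into a clean application of Lemma~\ref{lemma:Neighbors}(2) is the delicate ingredient.
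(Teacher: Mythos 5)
Your proof has the right toolkit (Lemma~\ref{lemma:FewRoughsInIntervals}(3), Lemma~\ref{lemma:Neighbors}, Proposition~\ref{Proposition:InbetweenObjects}, Corollary~\ref{corollary:HalfConvex}) and the subcase $\r(Y,X)<\infty$ is handled correctly: identifying $Y$ on the discrete chain $[M,X]^\bullet$ via the ray-object condition and the (dual of) Proposition~\ref{Proposition:InbetweenObjects} gives $Y \in [B,X]^\bullet \subseteq [A,X]^\bullet$, and then Corollary~\ref{corollary:HalfConvex} finishes. However, there are two genuine gaps.

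\emph{Gap in the subcase $\r(X,Y)<\infty$.} You assert that the finiteness argument ``yields $X \in \ind [B,Y]^\bullet$,'' but the triangle inequality only gives $\r(B,Y) \leq \r(B,X) + \r(X,Y)$, not the equality that membership in $[B,Y]^\bullet$ demands. Moreover, the object $B$ you fixed was produced by Lemma~\ref{lemma:FewRoughsInIntervals}(3) from the pair $(A,X)$; there is no reason at this stage (anchors are only proven unique later, in Proposition~\ref{proposition:Anchors}) that $]B,Y[^\bullet$ is nonthread-free, so the chain argument you built around $[M,X]^\bullet$ does not transfer to $[\,\cdot\,,Y]^\bullet$. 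To close this subcase you would need a \emph{symmetric} version of your chain argument: take a possibly different nonthread $B'$ from Lemma~\ref{lemma:FewRoughsInIntervals}(3) applied to $(A,Y)$, show $Y^-$ lies both in $[M',Y]^\bullet$ and in $[X,Y]^\bullet$ (the latter using Lemma~\ref{lemma:Neighbors}(2) and $\r(Y,X)=\infty$), then invoke the dual of Proposition~\ref{Proposition:InbetweenObjects} and rule out $[M',Y]^\bullet \subseteq [X,Y]^\bullet$ by the ray condition on $X$.

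\emph{Gap in the lift from $B$ to $A$.} You say ``it suffices to prove the analogous assertions with $B$ in place of $A$,'' but this is not automatic: $[B,X]^\bullet \subseteq [B,Y]^\bullet$ implies $[A,X]^\bullet \subseteq [A,Y]^\bullet$ only if one additionally knows $B \in [A,Y]^\bullet$ (equivalently the geodesic equality $\r(A,B) + \r(B,Y) = \r(A,Y)$), which is exactly one of the things being sought. Your sketch of the geodesic equality at $A$ names the right target but the ``no shortcut'' argument is not carried out. The paper avoids both issues by working directly with $A$: after possibly swapping $X$ and $Y$ so that $[X,Y]^\bullet$ is a thread containing $Y^-$, one shows $Y^- \in [A,Y]^\bullet$ (this is where the ``last nonthread before $Y$'' idea is used, but packaged inside the proof of Lemma~\ref{lemma:Neighbors}), applies (the dual of) Proposition~\ref{Proposition:InbetweenObjects} to conclude $X \in [A,Y]^\bullet$, and then Corollary~\ref{corollary:HalfConvex}. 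This uniform treatment replaces both your subcase split at the comparability step and the lift from $B$.
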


\begin{proof}
If $\r(X,Y) < \infty$, then the triangle inequality implies that $\r(A,Y) < \infty$.  So assume that $\r(X,Y) = \infty$, thus also that $\r(Y,X) < \infty$.  Lemma \ref{lemma:Neighbors} then shows that $X^-$ lies in both $[A,X]^\bullet$ and $[Y,X]^\bullet$ such that Proposition \ref{Proposition:InbetweenObjects} then yields that $Y \in [A,X]^\bullet$.  We find that $\r(A,Y) < \infty$.

To prove the second claim, note that by Lemma \ref{lemma:Neighbors} we may assume, possibly by interchanging $X$ and $Y$, that $[X,Y]^\bullet$ is a thread containing $Y^-$.  Lemma \ref{lemma:Neighbors} shows that $Y^- \in [A,Y]^\bullet$.  Proposition \ref{Proposition:InbetweenObjects} then yields that $X \in [A,Y]^\bullet$.  From Corollary \ref{corollary:HalfConvex} we obtain that $[A,X]^\bullet \subseteq [A,Y]^\bullet$.
\end{proof}

\begin{proposition}\label{proposition:Anchors}
Let $\QQ$ be a hereditary section with nonthread objects.
\begin{enumerate}
\item Every ray has an anchor.
\item A ray is uniquely determined by its anchor and a (unique) direct successor of the anchor which lies $\r$-in-between the anchor and the ray.
\item Only finitely many rays can share an anchor.
\end{enumerate}
\end{proposition}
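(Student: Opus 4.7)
My plan is to establish the three parts in order, with the main technical work concentrated in (1).  For (1), I fix a ray object $X_0 \in \RR$.  Since $\QQ$ has nonthread objects and we may assume $\bZ\QQ$ is connected, Proposition \ref{proposition:NonthreadPath} provides a sequence from $X_0$ to some nonthread object; the first indecomposable $X_1$ after $X_0$ in this sequence is nonthread, and the ray-object condition $\r(X_0, \TT) = \infty$ forces the finite direction of $\r$ between $X_0$ and $X_1$ to be $\r(X_1, X_0) < \infty$.  Setting $A_0 = X_1$ and applying Lemma \ref{lemma:FewRoughsInIntervals}(3) to the pair $(A_0, X_0)$, I obtain a nonthread $A \in [A_0, X_0]^\bullet$ for which $]A, X_0[^\bullet$ contains no nonthread object.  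Proposition \ref{proposition:RayOrdered} then delivers $\r(A, Y) < \infty$ for every $Y \in \RR$ and a total order on the intervals $[A, Y]^\bullet$ by inclusion.

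The main obstacle is extending the uniqueness of $A$ as the only nonthread in $[A, X_0]^\bullet$ to every $[A, Y]^\bullet$ with $Y \in \RR$.  The case $[A, Y]^\bullet \subseteq [A, X_0]^\bullet$ is immediate; in the opposite case $[A, X_0]^\bullet \subsetneq [A, Y]^\bullet$, I suppose a nonthread $W \in ]A, Y[^\bullet$ exists and apply Proposition \ref{proposition:RayOrdered} a second time with $W$ playing the role of the nonthread, obtaining a nesting of $[W, X_0]^\bullet$ and $[W, Y]^\bullet$.  The subcase $X_0 \in [W, Y]^\bullet$ is resolved by combining the identities $\r(A, Y) = \r(A, W) + \r(W, Y) = \r(A, X_0) + \r(X_0, Y)$ with $\r(W, Y) = \r(W, X_0) + \r(X_0, Y)$, which yield $\r(A, W) + \r(W, X_0) = \r(A, X_0)$, placing $W$ into $]A, X_0[^\bullet$ and contradicting the choice of $A$.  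The subcase $Y \in [W, X_0]^\bullet$ forces $\d(X_0, Y) < \infty$ between distinct ray objects, and ruling it out is the principal technical point; I expect to handle it by strengthening the choice of $A$ to also minimize $\r(A, X_0)$ over all nonthread candidates, and then applying Lemma \ref{lemma:FewRoughsInIntervals}(3) a second time to $(W, X_0)$ to produce a nonthread at distance exactly $\r(A, X_0)$ from $X_0$ that is forced into $]A, X_0[^\bullet$, again contradicting the choice of $A$.

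Parts (2) and (3) follow from the thread structure of $]A, Y]^\bullet$.  For (2), Lemma \ref{lemma:Neighbors} supplies at least one direct summand $S$ of the target of $A$'s left almost split map inside $]A, Y]^\bullet$; since $]A, Y]^\bullet$ consists entirely of thread objects, it is a linear chain, and $S$ initiates a unique sequence $S, S^+, S^{++}, \ldots$ arriving at $Y$.  If two distinct such summands $S \neq S'$ both lay in $]A, Y]^\bullet$, their two threads would have to converge at $Y$, making $Y^-$ decomposable and contradicting $Y$ being a thread object; hence $S$ is unique, and the equivalence class (under the ray relation) of $S$ recovers $\RR$, so the pair $(A, S)$ determines the ray.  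For (3), Observation \ref{observation:PropertiesSection} gives that $\QQ$ is Hom-finite, whence the left almost split target of $A$ has only finitely many indecomposable summands; by (2) the rays sharing the anchor $A$ inject into these summands, giving the claimed finiteness.
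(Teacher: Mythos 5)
Your proposal for parts (2) and (3) is essentially the paper's argument: use Lemma \ref{lemma:Neighbors} and Proposition \ref{Proposition:InbetweenObjects} to get a unique direct successor of $A$ inside the thread, and bound the number of rays by the number of direct neighbors of $A$. (Minor point: the "equivalence class of $S$ under the ray relation" is not the right phrasing, since $S$, being a direct successor of the nonthread $A$, has $\r(S,\TT)\leq\r(S,A)=1<\infty$ and so is never a ray object; what is really needed, and what the paper shows in part (3), is that two rays with the same anchor and the same initial direction are equal, via Proposition \ref{Proposition:InbetweenObjects}.)

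The problem is part (1), and there you have a genuine gap which you yourself flag as "the principal technical point." Your Case B --- $W$ a nonthread in $]A,Y[^\bullet$, $Y\in[W,X_0]^\bullet$ --- forces $\d(X_0,Y)<\infty$ for the two ray objects $X_0,Y$, and you propose to rule this out. But this configuration cannot be ruled out: distinct ray objects at finite round-trip distance do occur. For instance, in Example \ref{example:NotStar}, two adjacent objects $\PP(-,\lambda)$, $\PP(-,\mu)$ ($\mu$ the direct successor of $\lambda$ in $\LL$) on the ray already satisfy $\d(\PP(-,\lambda),\PP(-,\mu))=1$. So the case must be handled, not eliminated, and your sketched fix (minimize $\r(A,X_0)$ over nonthreads, then apply Lemma \ref{lemma:FewRoughsInIntervals}(3) to $(W,X_0)$) does not close it: the nonthread $Z$ it produces need not satisfy $\r(Z,X_0)=\r(A,X_0)$, nor be forced into $]A,X_0[^\bullet$, and even granting $Z=A$ one only obtains $A\in[W,X_0]^\bullet$ and $\d(A,W)=\d(X_0,Y)>0$, from which no contradiction follows.

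The paper's proof of (1) proceeds by a different mechanism which sidesteps this. Rather than fixing one $A$ and verifying it works for every $Y\in\RR$, it associates to each ray object $X$ its own "last nonthread" $A_X$ from Lemma \ref{lemma:FewRoughsInIntervals}(3), then shows all $A_X$ coincide by passing to the co-light cone $\QQ^X$ and comparing the intervals $[A_X',X]$ and $[A_Y',X]$ through the shared element $X^-$, using the left adjoint to the embedding and Lemma \ref{lemma:ThreadsAndAdjoints}, before transporting back via Proposition \ref{Proposition:SameOrbits}. That adjoint argument is exactly what replaces your missing Case B; your proposal omits it and does not supply an alternative that withstands the $\d(X_0,Y)>0$ configurations.
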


\begin{proof}
Let $\RR$ be a ray.  For any ray object $X$ in $\RR$, Lemma \ref{lemma:FewRoughsInIntervals} gives a nonthread object $A_X$ which is the only nonthread object in $[A_X,X]_\QQ^\bullet$.  To show that $\RR$ has an anchor, we wish to show all these objects $A_X$ coincide.

Let $X,Y \in \RR$.  By Lemma \ref{lemma:Neighbors}, we may assume that $Y^- \in [X,Y]^\bullet$.  Since by $\r(A_Y,Y) = \infty$, Lemma \ref{lemma:Neighbors} also yields that $Y^- \in [A_Y,X]^\bullet$ such that Proposition \ref{Proposition:InbetweenObjects} shows that $X \in [A_Y,Y]^\bullet$.  Again using Lemma \ref{lemma:Neighbors}, we see that also $X^- \in [A_Y,Y]^\bullet$.

Consider a co-light cone $\QQ^X$ centered on $X$.  Denote by $A_X'$ and $A_Y'$ the objects in $\QQ^Y$ lying in the $\tau$-orbits of $A_X$ and $A_Y$, respectively. The embedding $[A_X',X] \to \QQ^X$ has a left adjoint $l:[\QQ^X,A_X'] \to \QQ^X$.  Since $\Hom(A_X',X^-) \not= 0$, we know that $X$ is not the only direct summand of $l(A_X')$.  Lemma \ref{lemma:ThreadsAndAdjoints} then shows that either $A_X' \in [A_Y',X]$, or $\Hom(A_X',A_Y') \not= 0$ and thus $A_Y' \in [A_X',X]$.

Proposition \ref{Proposition:SameOrbits} shows that $A_X \in [A_Y,X]^\bullet \subseteq [A_Y,X]^\bullet$ or $A_Y \in [A_X,X]^\bullet$.  We conclude that $A_X \cong A_Y$.  This shows that $\RR$ has an anchor.

For the second point, let $\RR$ be a ray with anchor $A$.  Let $X \in \ind \RR$.  It follows from Lemma \ref{lemma:Neighbors} that at least one direct successor of $A$ lies in $[A,X]^\bullet$. Let $A_1$ and $A_2$ be two direct successors of $A$, both lying in $[A,X]^\bullet$ where $X \in \ind \RR$.  We see that both $[A_1,X]^\bullet$ and $[A_2,X]^\bullet$ are threads.  Again using Lemma \ref{lemma:Neighbors}, we see that $X^-$ lies in both of them.  Applying Proposition \ref{Proposition:InbetweenObjects} yields that $A_1 = A_2$.

Let $\RR_1$ and $\RR_2$ be two rays with the same anchor $A$, and let $X_1 \in \ind \RR_1$ and $X_2 \in \ind \RR_2$.  It follows from \ref{lemma:Neighbors} that there are neighbors $A_1,A_2 \in \ind \QQ$ of $A$ such that $A_1 \in [A,X_1]^\bullet_\QQ$ and $A_2 \in [A,X_2]^\bullet_\QQ.$  If $A_1 = A_2$, then it follows from Proposition \ref{Proposition:InbetweenObjects} that $X_1$ and $X_2$ lie on the same thread.  We conclude that the number of rays which have $A$ as an anchor is limited by the number of direct neighbors of $A$.
\end{proof}

Because of Proposition \ref{proposition:Anchors}, it will sometimes be more convenient to assume a hereditary section has a nonthread object and hence every ray and coray has an anchor and a coanchor, respectively.  The following examples show that this is not necessarily the case.

\begin{example}
The category of projectives $\QQ$ of $\AA$ from Example \ref{example:LargerTilt} forms a hereditary section in $\Db \AA$ which has no nonthread objects.  The hereditary section $\QQ_\TT$ constructed in the aforementioned exercise has nonthread objects and satisfies $\bZ \QQ = \bZ \QQ_\TT$.
\end{example}

\begin{example}\label{example:NoNonthreads}
Let $Q$ be the thread quiver
$$\xymatrix@1{x \ar@<3pt>@{..>}[r] \ar@<-3pt>[r] & y}$$
as in Example \ref{example:DifferentDistances}.  Let $X$ be the indecomposable projective object corresponding to the vertex $x$.  In $\Db \mod kQ$ there is a unique hereditary section which has no nonthread objects as given in Figure \ref{fig:NoNonThreads}.
\begin{figure}[tb]
	\centering
		\includegraphics[width=0.50\textwidth]{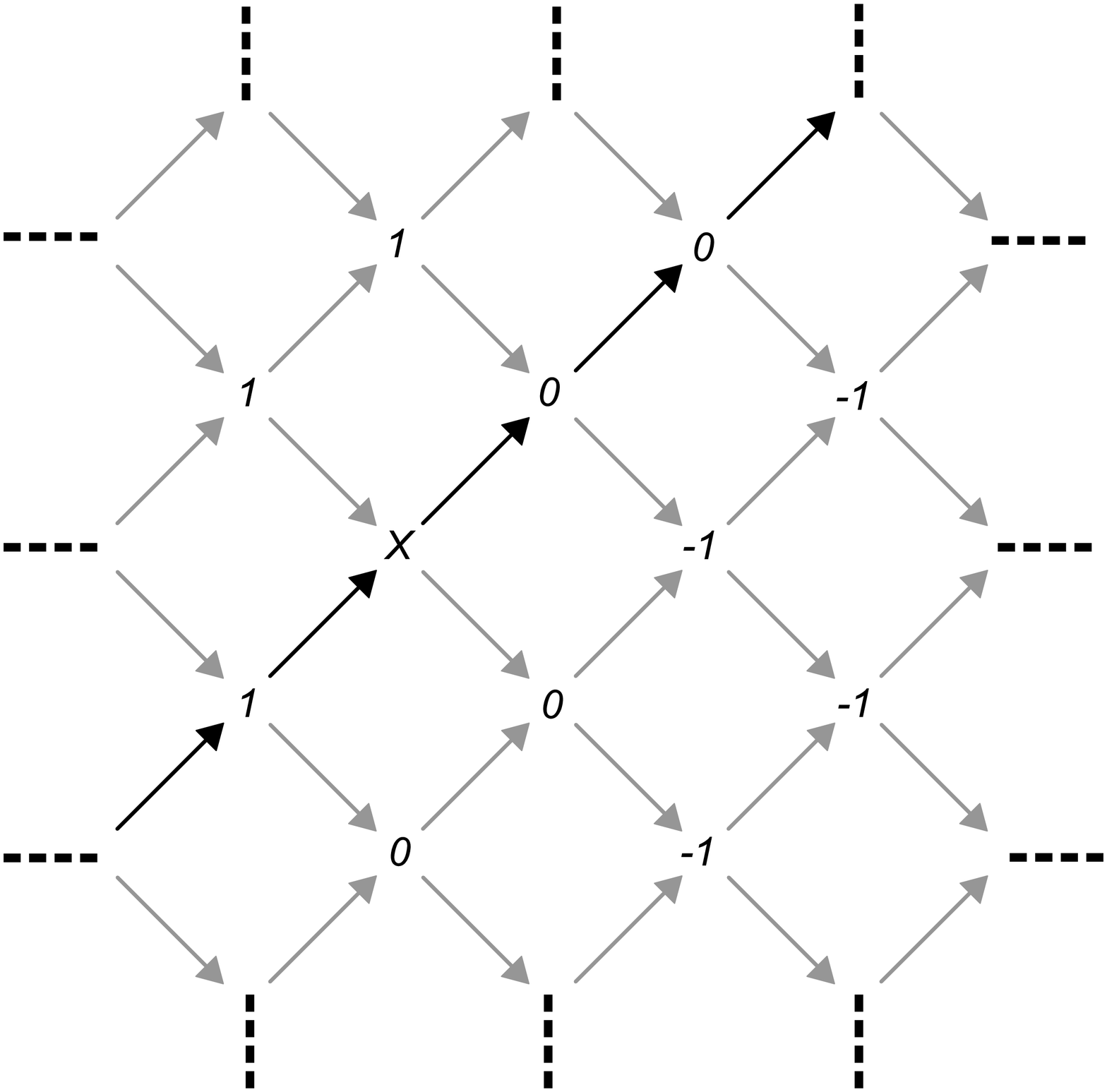}
	\caption{A hereditary section which has no nonthread objects from Example \ref{example:NoNonthreads} (the arrows between the indecomposable objects in the chosen hereditary section have been drawn in black).  Every vertex is labeled with $\r(X,-)$.}
	\label{fig:NoNonThreads}
\end{figure}
\end{example}

The next proposition shows we can always replace a hereditary section without nonthread objects by a hereditary section which has nonthread objects.

\begin{proposition}\label{proposition:HasRoughObject}
Let $\QQ$ be a hereditary section in $\Db \AA$.  There is a hereditary section $\QQ'$ with $\bZ \QQ = \bZ \QQ'$ having at least one nonthread object.
\end{proposition}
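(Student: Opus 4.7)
The plan is as follows. If $\QQ$ already contains a nonthread object we may take $\QQ' = \QQ$, so assume throughout that $\QQ$ has no nonthread objects, and by working componentwise that $\bZ \QQ$ is connected. The strategy is to replace $\QQ$ by the light cone centred on a suitably chosen indecomposable.

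Pick $A \in \ind \QQ$ and consider the Auslander--Reiten triangle $A \to M \to \tau^{-1} A \to A[1]$ in $\Db \AA$. The summands of $M$ are precisely the direct successors of $A$ in the Auslander--Reiten quiver of $\Db \AA$; since $\QQ$ sits inside $\Db \AA$ as the category of projectives of a hereditary heart (Theorem \ref{theorem:SectionTilting}), the indecomposable codomain $A^+$ of the left almost split morphism of $A$ in $\QQ$ appears as a direct summand of $M$, so we may write $M = A^+ \oplus B$ where the summands of $B$ are those direct successors of $A$ in $\Db \AA$ which do not lie in $\QQ$. Some choice of $A$ must yield $B \neq 0$: otherwise every indecomposable of $\QQ$ would have a unique direct successor and, by Serre duality, a unique direct predecessor in $\Db \AA$, forcing the Auslander--Reiten component of $A$ to be a bi-infinite $\tau$-invariant linear chain, contradicting the fact that $\QQ$ meets each $\tau$-orbit at most once. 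Take $\QQ' := \QQ_A$, the light cone centred on $A$ defined just before Lemma \ref{lemma:ProjectivesCopresented}; it is a hereditary section by Proposition \ref{proposition:SectionByDistance}. Every direct successor $Y$ of $A$ in $\Db \AA$ satisfies $\r(A,Y) = 0$: the irreducible morphism $A \to Y$ supplies $\r(A,Y) \leq 0$, while $\r(A,Y) \geq 0$ holds because no path can go from $A$ to $\tau Y$ in the Auslander--Reiten quiver. Consequently both $A^+$ and every summand of $B$ lie in $\ind \QQ_A$, so the left almost split morphism of $A$ in $\QQ'$ has decomposable codomain and $A$ is a nonthread object of $\QQ'$.

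It remains to verify $\bZ \QQ' = \bZ \QQ$. When $\bZ \QQ$ is contained in the single Auslander--Reiten component of $A$, every $X \in \ind \bZ \QQ$ has $\r(A,X) \in \bZ$, and Lemma \ref{lemma:LightConeDistanceAndShifts} then places the shift $\tau^{\r(A,X)} X$ in $\ind \QQ_A$, yielding both inclusions. The main obstacle is this final step in full generality: when $\bZ \QQ$ is connected yet spans several Auslander--Reiten components linked by morphisms that cross components, the naive light cone $\QQ_A$ may miss some $\tau$-orbits of $\bZ \QQ$ or contain indecomposables outside $\bZ \QQ$. One then refines the construction, for instance by intersecting $\QQ_A$ with $\bZ \QQ$ and supplementing with appropriate $\tau$-shifts of $\QQ$-objects in the remaining components, and verifies via Corollary \ref{corollary:TauTilting} together with the triangle inequality (Proposition \ref{proposition:rTriangle}) that the condition $\r(X,Y) \geq 0$ survives across components, without disturbing the nonthread property of $A$.
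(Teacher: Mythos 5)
The first part of your argument is sound in spirit: if $\QQ$ already has a nonthread object you are done, and otherwise you want to build a section in which some chosen object acquires a second direct successor. The paper realizes this idea differently, however, and the difference matters at exactly the step you flag yourself as the ``main obstacle.'' Rather than take a light cone, the paper fixes $X \in \ind\QQ$ and replaces each $Y \in \ind\QQ$ by $\tau^{-m_Y}Y$ with $m_Y = \min(\r(X,Y),1)$. This produces a category $\QQ'$ that by construction meets exactly the same $\tau$-orbits as $\QQ$ (so $\bZ\QQ'=\bZ\QQ$ is automatic), and one checks $\r(\tau^{-m_Y}Y,\tau^{-m_Z}Z)\ge 0$ via Corollary \ref{corollary:TauTilting}. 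Since $X$ is directed, $\r(X,X^-)\ge 1$, so $m_{X^-}=1$: the predecessor $X^-$ of $X$ in $\QQ$ is replaced by $\tau^{-1}X^-$, which is a \emph{successor} of $X$ in $\Db\AA$, and $X$ then has at least two non-isomorphic direct successors in $\QQ'$ (namely $X^+$, which has $m_{X^+}=0$, and $\tau^{-1}X^-$). This makes $X$ nonthread in $\QQ'$ regardless of whether $X$ has multiple successors in $\Db\AA$, so the paper does not need your preliminary choice of an $A$ with $B\ne 0$.

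The genuine gap in your argument is the verification of $\bZ\QQ_A=\bZ\QQ$. The light cone $\QQ_A$ is defined via $\Sr(A,0)$ taken inside all of $\Db\AA$, not inside $\bZ\QQ$; so it may contain indecomposables whose $\tau$-orbits do not meet $\QQ$ (e.g., if $\Db\AA$ is not generated by $\bZ\QQ$). Conversely, if $\bZ\QQ$ is connected but spread over several Auslander--Reiten components --- which does happen, as in Example \ref{example:BrokenThread} --- then $\r(A,Y)=\infty$ for $Y$ in another component, and those $\tau$-orbits are missed entirely. Your proposed repair, ``intersecting $\QQ_A$ with $\bZ\QQ$ and supplementing with appropriate $\tau$-shifts,'' is precisely where the real work lies, and it is left as a sketch: the intersection alone does not restore the missing $\tau$-orbits, and choosing the supplementary shifts so that the $\r(\cdot,\cdot)\ge 0$ condition survives across components is exactly what the paper's bounded-shift construction does. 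As written, the proposal does not establish $\bZ\QQ'=\bZ\QQ$, so the proof is incomplete. A secondary remark: your existence argument for an $A$ with $B\ne 0$ (via ``bi-infinite $\tau$-invariant linear chain'') is more roundabout than necessary and in fact is not needed once one adopts the shift construction, but the real issue is the final step.
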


\begin{proof}
Assume $\QQ$ has only thread objects.  We define a new hereditary section $\QQ_1$ generated by the indecomposables $\t^{-m_Y} Y$ where $Y \in \QQ$ and $m_Y = \min(\r(X,Y),1)$.  To prove this is indeed a hereditary section, it suffices to show that $\r(\t^{-m_Y} Y,\t^{-m_Z} Z) \geq 0$ for all $Y,Z \in \ind \QQ$ (see Corollary \ref{corollary:TauTilting}).  We have
\begin{eqnarray*}
\r(\t^{-m_Y} Y,\t^{-m_Z} Z) &=& \r(Y,Z) +m_Y - m_Z \\
&\geq& \r(Y,Z) + \r(X,Y) - m_Z \\
&\geq& \r(X,Z) - m_Z \geq 0
\end{eqnarray*}

Let $X \in \ind \QQ$ and $X^- \in \ind \QQ$ be the unique direct predecessor of $X$ in $\QQ$.  Since $X$ is directed, we have $\r(X,X^-) \not= 0$.  This shows that $m_{X^-} = 1$.  We deduce that $X$ is a nonthread object in $\QQ'$.
\end{proof}

\begin{proposition}\label{proposition:CountableRays}
Let $\bZ \QQ$ be connected.  Then $\QQ$ has only countably many rays and corays.
\end{proposition}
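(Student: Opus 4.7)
The plan is to split into two cases based on whether $\QQ$ has any nonthread objects, and then leverage Proposition \ref{proposition:Anchors} together with Proposition \ref{proposition:NonthreadStructure}.

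First I would dispose of the trivial case: if $\QQ$ has no nonthread objects, then as noted in the paragraph following Example \ref{example:Halfopen2}, every indecomposable is both a ray object and a coray object, and since $\bZ \QQ$ is connected, Proposition \ref{proposition:NonthreadPath} forces there to be exactly one ray and exactly one coray (each coinciding with $\QQ$). So the statement holds trivially.

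In the main case, $\QQ$ has at least one nonthread object, so the hypotheses of Proposition \ref{proposition:Anchors} are satisfied. The strategy is then to exhibit an injection from the set of rays into a countable set. By Proposition \ref{proposition:Anchors}(1), every ray $\RR$ has an anchor $A_\RR \in \ind \QQ$, which is by definition a nonthread object. By Proposition \ref{proposition:NonthreadStructure}, since $\bZ \QQ$ is connected, there are only countably many nonthread objects in $\QQ$. By Proposition \ref{proposition:Anchors}(3), only finitely many rays can share any given anchor. Therefore the set of rays is a countable union of finite sets, hence countable.

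The dual argument (working in $\CC^\circ$, using that a hereditary section in $\CC$ gives a hereditary section in $\CC^\circ$ as noted in the remark after the definition of hereditary section) yields that $\QQ$ has only countably many corays. The only subtlety worth checking is that the notion of nonthread object is self-dual, so the dual of Proposition \ref{proposition:NonthreadStructure} and of Proposition \ref{proposition:Anchors} apply to corays with their coanchors; this is immediate from the symmetric definition of thread object in terms of left and right almost split maps. I do not expect a serious obstacle; the real work has already been done in Propositions \ref{proposition:NonthreadStructure} and \ref{proposition:Anchors}.
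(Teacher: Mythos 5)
Your proof matches the paper's proof in structure and substance: the paper likewise disposes of the case with no nonthread objects as trivial, and otherwise cites Proposition \ref{proposition:Anchors} and Proposition \ref{proposition:NonthreadStructure}; you merely spell out the injection-into-a-countable-set argument and the dualization that the paper leaves implicit. Correct and essentially identical in approach.
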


\begin{proof}
If $\QQ$ does not have any nonthread objects, then this statement is trivial.  Otherwise, this follows from Proposition \ref{proposition:Anchors} together with Proposition \ref{proposition:NonthreadStructure}.
\end{proof}
\section{Categories generated by $\bZ \QQ$}\label{section:GeneratedBybZQQ}

Let $\AA$ be a $k$-linear abelian Ext-finite category with Serre duality and let $\QQ$ be a nonzero hereditary section in $\CC=\Db \AA$.  We will consider the case where $\Db \AA$ is generated by $\bZ \QQ$, thus the smallest thick triangulated subcategory of $\Db \AA$ containing $\bZ \QQ$ is $\Db \AA$ itself.  This is, for example, the case if $\QQ$ is the standard hereditary section when $\AA$ is generated by projectives or --more generally-- by preprojective objects.

If $\QQ$ satisfies the condition (*), then Theorem \ref{theorem:MainTilting} shows that $\Db \AA \cong \rep Q$ for a strongly locally finite thread quiver $Q$.  We are thus interested in the case where $\QQ$ does not satisfy condition (*).  In this case, there will always be rays and/or corays (see Observation \ref{observation:NeedsRays}).  We will replace $\QQ$ by another another (larger) hereditary section $\QQ'$ such that $\bZ \QQ \subset \bZ \QQ'$ and such that $\QQ'$ does not have rays nor corays.  We may then apply Theorem \ref{theorem:MainTilting} to obtain our main result: $\AA$ is derived equivalent to $\rep Q'$ for a locally finite thread quiver $Q'$.

We start by defining marks and comarks which will be used to enlarge $\QQ$.

\subsection{Marks and comarks}
While an anchor should indicate ``the beginning'' of a ray (how it is attached to the nonthread objects), a mark should indicate ``the direction'' or ``the ending'' of a ray.

Let $\QQ$ be a hereditary section with at least one nonthread object and let $\RR$ be a ray with anchor $A$.  Let $B$ be the direct successor of $A$ lying in $[A,X]^\bullet_\QQ$ (see Proposition \ref{proposition:Anchors}).  The \emph{mark} $M$ of $\RR$ is defined to be the cone of the irreducible morphism $\tau B \to A$.  Since $\Hom(A,\tau B[1]) = 0$, one easily verifies using Proposition \ref{proposition:RingelTriangles} (cf. \cite[Corollary 1.4]{HappelZacharia08} or \cite[Lemma 7.6]{vanRoosmalen06}) that $M$ is indecomposable.

Dually, one defines comarks for corays.

\begin{example}
Examples \ref{example:NotStar} and \ref{example:Halfopen2} are obtained starting from a thread quiver $Q = \xymatrix@1{x \ar@{..>}[r]^{\PP} & z}$.  In these examples, the simple projective $P_x \in \rep_k Q$ also lies in the given hereditary section and is the anchor of the unique thread.  The mark is then given by $P_z[0] \in \Db \rep_k Q$.
\end{example}

\begin{lemma}\label{lemma:MarksBound}
Let $\QQ$ be a hereditary section with at least one nonthread object.  Let $\RR$ be a ray with anchor $A$ and mark $M$.  For every $X \in \ind \RR$, we have $\r(A,X) \geq 0$ and  $\r(X,M) < \infty$.
\end{lemma}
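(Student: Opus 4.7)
My plan is to handle the two claims in turn. For $\r(A,X) \geq 0$, I would simply apply Proposition \ref{proposition:SectionByDistance}, since both $A$ and $X$ lie in $\ind \QQ$; the finiteness of $\r(A,X)$ is already part of the anchor condition. The substantive claim is $\r(X,M) < \infty$, and for this I would apply $\Hom(X,-)$ to a $\tau$-shifted version of the defining triangle $\tau B \to A \to M \to \tau B[1]$ and read off the answer using Serre duality.

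For the setup, note that the irreducible morphism $A \to B$ gives $\Hom(A,B) \neq 0$, hence $\r(A,B) \leq 0$, and Proposition \ref{proposition:SectionByDistance} forces $\r(A,B) = 0$. Because $B \in [A,X]^\bullet_\QQ$ by Proposition \ref{proposition:Anchors}, I obtain $k := \r(A,X) = \r(A,B) + \r(B,X) = \r(B,X)$, a nonnegative integer which is finite by the anchor condition. Applying $\tau^k$ to the defining triangle produces
\[
\tau^{k+1} B \to \tau^k A \to \tau^k M \to \tau^{k+1} B[1],
\]
and taking $\Hom(X,-)$ yields a long exact sequence that I then analyze term by term.

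Both Hom-terms involving $A$ vanish. Since $X$ is a ray object and $A$ is nonthread, $\r(X,A) = \infty$; by Lemma \ref{lemma:LightConeDistanceAndShifts}, $\r(X, \tau^k A) = \infty$ and so $\Hom(X, \tau^k A) = 0$. By Serre duality, $\Hom(X, \tau^k A[1]) \cong \Hom(\tau^{k-1} A, X)^*$, and Lemma \ref{lemma:LightConeDistanceAndShifts} gives $\r(\tau^{k-1} A, X) = \r(A,X) + (1-k) = 1 > 0$, so this Hom also vanishes. The long exact sequence therefore collapses to an isomorphism $\Hom(X, \tau^k M) \cong \Hom(X, \tau^{k+1} B[1]) \cong \Hom(\tau^k B, X)^*$, the last step again by Serre duality. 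Finally, $\r(\tau^k B, X) = \r(B,X) - k = 0$, so Proposition \ref{proposition:LeftmostComposition} delivers a nonzero element of $\Hom(\tau^k B, X)$ and hence of $\Hom(X, \tau^k M)$. This produces a nonzero morphism $X \to \tau^k M$, giving $\r(X, \tau^k M) \leq 0$ and, by another application of Lemma \ref{lemma:LightConeDistanceAndShifts}, $\r(X,M) \leq -k \leq 0 < \infty$.

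I don't expect a deep obstacle: the argument is essentially bookkeeping once the shift $k$ has been identified. The main point to monitor carefully is the signs in Lemma \ref{lemma:LightConeDistanceAndShifts} and in the Serre-duality identifications. Structurally, the proof works because of the asymmetry $\r(A,X) < \infty$ versus $\r(X,A) = \infty$: this is exactly what kills the $A$-contributions in the long exact sequence while allowing the $B$-contribution (which is nonzero because $B$ sits at the foot of the ray) to supply the required morphism into a $\tau$-shift of $M$.
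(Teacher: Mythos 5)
Your proof is correct and follows essentially the same route as the paper: both arguments take the long exact sequence obtained by applying $\Hom$ against the defining triangle of $M$, kill the terms involving the anchor $A$ using $\r(A,X)=\r(B,X)$ together with Serre duality, and then use $\r(\tau^k B,X)=0$ (via Proposition \ref{proposition:LeftmostComposition}) to produce the nonzero map landing in a $\tau$-shift of $M$. The only cosmetic difference is that you $\tau^k$-shift the triangle and apply $\Hom(X,-)$ while the paper applies $\Hom(\tau^{-n}X,-)$ to the unshifted triangle, and you prove the extra vanishing $\Hom(X,\tau^k A)=0$ to upgrade the surjection onto $\Hom(\tau^kB,X)^*$ to an isomorphism, neither of which changes the substance.
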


\begin{proof}
The first statement is included in the definition of an anchor.  For the second statement, let $B$ be the unique direct successor of $A$ in $[A,X]^\bullet$ and write $n = \r(A,X)$.  From $\r(A,\t^{-n}X) = 0$ and $\r(B,\t^{-n}X) = \r(A,\t^{-n}X) - \r(A,B) = 0$ follows (using Proposition \ref{proposition:LeftmostComposition})
\begin{eqnarray*}
\dim \Hom(\t^{-n}X,A[1]) = &\dim \Hom(A,\t^{-n+1}X)& = 0, \\ 
\dim\Hom(\t^{-n} X,\t B[1]) = &\dim\Hom(B,\t^{-n}X)& \not= 0.
\end{eqnarray*}
Applying $\Hom(\t^{-n} X,-)$ to the triangle $A \to M \to \t B[1] \to A[1]$ then yields that $\Hom(\t^{-n} X,M) \not= 0$ such that $\r(\t^{-n} X,M) \leq 0$ and hence also $\r(X,M) \leq n < \infty$.
\end{proof}

In general, we may have $\r(X,M) = -\infty$ as the following example shows.

\begin{example}\label{example:AddingMarksToD}
Let $Q$ be the thread quiver
$$\xymatrix@R=5pt{ &&& \cdot \\x \ar@{..>}[rr]^1 && y \ar[ru] \ar[rd] \\ &&& \cdot}$$
Denote by $P_x$ and $P_y$ the indecomposable projective objects in $\rep Q$ corresponding to the vertices $x$ and $y$ respectively.

The category $\Db \rep Q$ is sketched in the upper part of Figure \ref{fig:NotGenerated}.  Let $\QQ$ be the standard hereditary section, and let $\QQ'$ be the the hereditary section spanned by all indecomposables of $\QQ$ which do not lie in the Auslander-Reiten component of $P_y$.  Let $\HH$ be a heart corresponding to $\QQ'$ as in the lower part of Figure \ref{fig:NotGenerated}.
\begin{figure}
	\centering
		\includegraphics[width=.60\textwidth]{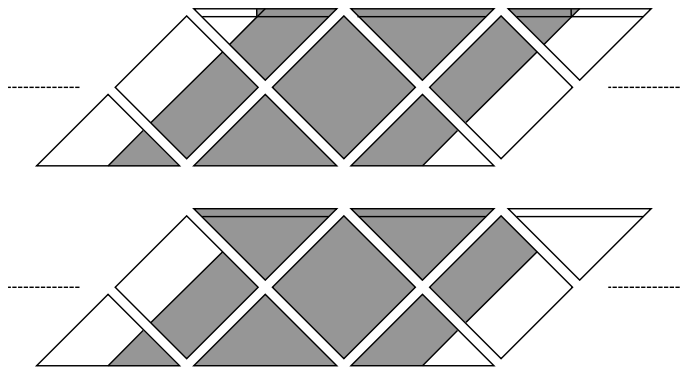}
	\caption{Illustration of Example \ref{example:AddingMarksToD}.}
	\label{fig:NotGenerated}
\end{figure}
The hereditary section $\QQ'$ has a unique ray $\RR$ with anchor the projective indecomposable $P_x$ and as mark the injective indecomposable $\bS P_x = I_x$.  Note that $\r(I_x,P_x) = -\infty$.
\end{example}

\subsection{Enlarging hereditary sections}

This subsection is devoted to proving Proposition \ref{proposition:ExtendingSections} below where we will extend $\bZ \QQ$ to a subcategory of the form $\bZ \QQ'$ which does satisfy condition (*).  It will be the main step in the proof of Theorem \ref{theorem:GeneratedByZQQ}.  The proof will follow from Lemma \ref{lemma:NiceGeneratingSet} and Proposition \ref{proposition:BiggerTT}.

\begin{proposition}\label{proposition:ExtendingSections}
Let $\QQ$ be a hereditary section with nonthread objects.  Assume $\bZ \QQ$ generates $\Db \AA$.  Then there is a hereditary section $\QQ'$ such that $\bZ \QQ'$ generates $\Db \AA$ and satisfies condition (*).
\end{proposition}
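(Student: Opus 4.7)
The plan is to adjoin the marks of all rays and the comarks of all corays of $\QQ$, and to take $\TT$ to be the union of these with the set of nonthread objects of $\QQ$; this is a countable set by Proposition \ref{proposition:NonthreadStructure} and Proposition \ref{proposition:CountableRays}, and it is the natural candidate for witnessing condition (*) on the enlarged hereditary section $\QQ'$.

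The first main step is to construct $\QQ'$ itself. For each ray $\RR$ with anchor $A$ and mark $M_\RR=\cone(\tau B\to A)$ (and dually for corays), I would select a $\tau$-shift of $M_\RR$ that can be added to $\QQ$ without violating the criterion $\r(X,Y)\geq 0$ of Corollary \ref{corollary:TauTilting}. In general this fails, as Example \ref{example:AddingMarksToD} shows, but here one uses the hypothesis that $\bZ\QQ$ thickly generates $\Db\AA$: every mark is built from $\bZ\QQ$ by the defining triangle, which pins down the admissible positions of $M_\RR$ (in particular excluding the pathology $\r(M_\RR,X)=-\infty$ that occurs in the counterexample). Once compatible $\tau$-shifts have been chosen for all rays and corays simultaneously, $\tau$-convexity is checked directly and Corollary \ref{corollary:TauTilting} produces the hereditary section $\QQ'$, with $\bZ\QQ\subseteq\bZ\QQ'$ by construction, so $\bZ\QQ'$ still generates $\Db\AA$.

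The second step is to verify condition (*) on $\bZ\QQ'$ with the above $\TT$. For a ray object $X$ in a ray $\RR$, the anchor $A_\RR\in\TT$ satisfies $\r(A_\RR,X)<\infty$ by the definition of anchor, while Lemma \ref{lemma:MarksBound} gives $\r(X,M_\RR)<\infty$, so $\d(\TT,X)<\infty$; the coray case is dual. A thread object of $\QQ$ that is neither ray nor coray lies at finite distance from a nonthread object of $\QQ$, by Proposition \ref{proposition:NonthreadPath} and the triangle inequality. Nonthread objects, marks, and comarks lie in $\TT$ themselves. Finally, any $X\in\ind\bZ\QQ'\setminus\ind\bZ\QQ$ is a $\tau$-shift of some mark or comark in $\TT$, and since $\d$ is $\tau$-invariant one has $\d(\TT,X)=0$.

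The hard part is the first step: fitting infinitely many marks and comarks at once into a single hereditary section $\QQ'$ above $\QQ$, keeping all pairwise light-cone distances nonnegative. This is where the generation hypothesis is essential, and presumably where the auxiliary statements Lemma \ref{lemma:NiceGeneratingSet} and Proposition \ref{proposition:BiggerTT} referred to by the authors enter: one first fixes a countable generating subset of $\bZ\QQ$, then processes the (countable collection of) rays and corays one at a time, at each stage adjusting the $\tau$-shift of the new mark by Lemma \ref{lemma:LightConeDistanceAndShifts} so as to remain compatible with everything previously added.
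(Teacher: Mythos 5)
Your choice of $\TT$ — all nonthread objects together with the marks of rays and comarks of corays — is exactly the set the paper uses, and the countability argument via Propositions \ref{proposition:NonthreadStructure} and \ref{proposition:CountableRays} is correct.  Your second-step case analysis (anchor and mark bound a ray object on both sides via Lemma \ref{lemma:MarksBound}; a thread object that is neither ray nor coray has finite $\d$ to the nonthread objects by definition) is also essentially fine, up to the caveat below.  However, there is a genuine gap in your first step, and it is precisely the part you yourself flag as the hard part.

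The gap is twofold.  First, you propose to build $\QQ'$ by keeping $\QQ$ intact and adjoining a carefully $\tau$-shifted copy of each mark and comark, processing the rays one at a time and readjusting shifts as you go.  This is not what the paper does, and it is not clear it can work: each new mark must satisfy $\r(X,\tau^n M)\geq 0$ and $\r(\tau^n M,X)\geq 0$ against \emph{all} of $\ind\QQ$ (an infinite set), and there is no a priori reason a single admissible $n$ exists, let alone that later insertions won't force you to revise earlier ones.  The paper sidesteps this entirely by \emph{not} requiring $\QQ\subseteq\QQ'$; Lemma \ref{lemma:NiceGeneratingSet} discards $\QQ$ and instead takes the full subcategory $\CC$ of indecomposables $X$ with $\d(\TT,X)\in\bZ$, then chooses one object per $\tau$-orbit via the global formula $\r(\TT,X)=\lfloor\d(\TT,X)/2\rfloor$, exactly as in Construction \ref{construction:ChoosingTilt}.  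This global choice is what makes the construction go through, and it also makes your second-step verification unnecessary, since $\bZ\QQ'=\CC$ is carved out by the very condition $\d(\TT,X)<\infty$.

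Second, and more importantly, before one can even speak of ``adjusting $\tau$-shifts'' one must know that $\r(X,Y)\neq -\infty$ for all $X,Y\in\TT$, and you offer only the assertion that the generation hypothesis ``presumably'' excludes the pathology of Example \ref{example:AddingMarksToD}.  This is the content of Proposition \ref{proposition:BiggerTT}, and the paper's proof is genuinely nontrivial: one assumes $\r(X,Y)=-\infty$, picks a countable $\SS\subseteq\bZ\QQ$ whose thick closure captures all the witnessing paths, forms the sub-hereditary-section $\QQ_\SS$ (which \emph{does} satisfy (*) by construction), and then invokes Proposition \ref{proposition:MarksInbZQQ} — itself proved via the thread-quiver surgery of Proposition \ref{proposition:ReplacingZigZag}, which replaces an $A_\infty$ zig-zag subquiver by a single thread arrow — to place both $X$ and $Y$ inside a hereditary section and derive the contradiction.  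None of this machinery appears in your sketch.  In short: the set $\TT$ is right, and the obstruction is correctly identified, but the construction of $\QQ'$ and the proof that $\r$ stays above $-\infty$ on $\TT$ are both left unestablished.
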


We start by giving a lemma we will use to find the required ``larger'' hereditary section.

\begin{lemma}\label{lemma:NiceGeneratingSet}
Let $\TT \subset \ind \Db \AA$ be a countable set such that $\r(T_i,T_j) \not= -\infty$, for all $T_i,T_j \in \TT$.  Then there is a hereditary section $\QQ'$ in $\Db \AA$ such that $\TT \subset \bZ \QQ'$ and $\bZ \QQ'$ satisfies condition (*).
\end{lemma}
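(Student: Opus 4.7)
The plan is to mimic Construction~\ref{construction:ChoosingTilt} and Proposition~\ref{proposition:IsHereditarySection}, taking the given $\TT$ itself (after a $\tau$-shift on each element) as the witness for condition~(*).  First, since $\r(T_i,T_i) \not= -\infty$, Proposition~\ref{Proposition:DirectingCriterium} forces $\r(T_i,T_i)=0$, so every $T_i$ is directing; more generally, Corollary~\ref{corollary:DirectingCriterium} shows that every $X \in \ind \Db \AA$ with $\d(\TT,X)<\infty$ is directing.  Write $\VV$ for the set of such $X$; by $\tau$-invariance of $\d$ the set $\VV$ is $\tau$-closed.

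Next, for each $\tau$-orbit meeting $\VV$, I would fix a representative $X$ satisfying
$$\r(\TT,X) \;=\; \left\lfloor \frac{\d(\TT,X)}{2} \right\rfloor,$$
which is possible by Lemma~\ref{lemma:LightConeDistanceAndShifts}, and let $\QQ'$ be the full replete additive subcategory of $\Db \AA$ spanned by these chosen representatives.  By construction $\QQ'$ meets each $\tau$-orbit at most once, and $\bZ \QQ'$ consists precisely of those indecomposables whose orbit lies in $\VV$.

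I would then verify that $\QQ'$ is a hereditary section via Proposition~\ref{proposition:SectionByDistance}.  The $\tau$-convexity is immediate: if $X \in \ind \QQ'$ and $\d(X,Y)<\infty$, the triangle inequality gives $\d(\TT,Y) \leq \d(\TT,X)+\d(X,Y)<\infty$, so $Y \in \VV$ and its orbit is represented in $\QQ'$.  The inequality $\r(X,Y) \geq 0$ for $X,Y \in \ind \QQ'$ follows from the same case split as in Proposition~\ref{proposition:IsHereditarySection}: apply the triangle inequality for $\r$ twice (via $\TT$ on the left and on the right), use $\r(\TT,\cdot)=\lfloor \d(\TT,\cdot)/2 \rfloor$ together with its mirror $\r(\cdot,\TT)=\lceil \d(\TT,\cdot)/2 \rceil$, and split on whether $\d(\TT,X)\leq\d(\TT,Y)$ or the reverse holds.

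Finally, each $T_i \in \VV$, so the orbit of $T_i$ has a (unique) representative $T'_i \in \ind \QQ'$; this already gives $\TT \subseteq \bZ \QQ'$.  The countable set $\TT' = \{T'_i\}$ lies in $\ind \bZ \QQ'$, and for any $X=\tau^n Y$ with $Y \in \ind \QQ'$, picking $T_i \in \TT$ with $\d(T_i,Y) < \infty$ and using $\tau$-invariance of $\d$ yields $\d(T'_i,X) = \d(T_i,Y) < \infty$, so $\d(\TT',X)<\infty$ and $\bZ \QQ'$ satisfies~(*).  The only real obstacle is the hereditary-section verification; but this is a direct transcription of Proposition~\ref{proposition:IsHereditarySection}, and no new geometric input is required.
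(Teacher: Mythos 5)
Your proposal follows the same underlying construction as the paper: pick $\tau$-shifts so that $\r(\TT,X) = \lfloor \d(\TT,X)/2 \rfloor$, then verify the result is a hereditary section via the two-case triangle-inequality argument of Proposition~\ref{proposition:IsHereditarySection}. However, there is a genuine gap at the very start. You work directly with the given $\TT$, whereas the paper first shifts each $T_i$ along its $\tau$-orbit so that $\r(T_i,T_j) \geq 0$ for all $i,j$ (and then additionally passes to a subset $\TT'$ via Lemma~\ref{lemma:ChooseTT}). This normalization is not cosmetic: $\r(\TT,X) = \inf_j \r(T_j,X)$ is an infimum over a countably infinite set, so it can equal $-\infty$ even though the hypothesis only rules out $\r(T_i,T_j) = -\infty$ for each \emph{individual} pair. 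In that case $\d(\TT,X)$ is not a well-defined finite integer, the floor formula $\r(\TT,X) = \lfloor \d(\TT,X)/2 \rfloor$ does not single out a $\tau$-shift, and the case split in your hereditary-section check involves the undefined expression $\r(\TT,Z) - \r(\TT,Y) = (-\infty) - (-\infty)$.

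Worse, your key claim ``each $T_i \in \VV$'' can simply fail without the normalization. Take $T_j = \tau^{-j} T_0$ for $j \in \bN$ (with $T_0$ directing). Then $\r(T_i,T_j) = i - j$ is finite for every pair, so the lemma's hypothesis is satisfied, yet $\r(\TT, T_0) = \inf_j (-j) = -\infty$ and $\r(T_0,\TT) = 0$; thus $\d(\TT,T_0)$ is $-\infty$ rather than an integer, $T_0 \notin \VV$, and the conclusion $\TT \subseteq \bZ \QQ'$ fails. Your opening parenthetical ``(after a $\tau$-shift on each element)'' gestures at the fix, but the only $\tau$-shifting you actually perform is the choice of representatives via the floor formula, which is downstream of — and presupposes — the finiteness you have not established. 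You should insert the paper's normalization: replace each $T_i$ by a $\tau$-shift so that $\r(T_i,T_j) \geq 0$ for all $i,j$ (this is a potential-function/shortest-path argument, solvable because $\r(T_i,T_i)=0$ rules out negative cycles). After that, for any $X$ with $\d(\TT,X) \in \bZ$, both $\r(\TT,X)$ and $\r(X,\TT)$ are forced to be finite, $\r(\TT,T_i)=\r(T_i,\TT)=0$, and the rest of your argument goes through. Your observation that one can skip passing to the subset $\TT'$ of Lemma~\ref{lemma:ChooseTT} does appear to be sound for the purposes of this particular lemma (those extra properties are used downstream in Lemma~\ref{lemma:FiniteTLA} and Theorem~\ref{theorem:MainTilting}), so that simplification is legitimate.
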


\begin{proof}
Possibly by taking different objects from the same $\t$-orbits, we may assume that $\r(T_i,T_j)\geq 0$ for all $T_i,T_j \in \TT$.  Consider the full replete additive category $\CC$ spanned by all indecomposables $X$ of $\Db \AA$ such that $\d(\TT,X) \in \bZ$.  We choose a new set $\TT'$ from objects in $\CC$ as in Lemma \ref{lemma:ChooseTT} (the proof carries over from $\bZ \QQ'$ to $\CC$) and define a full additive subcategory $\QQ'$ of $\CC$ such that $\r (\TT, X) = \left\lfloor \frac{\d (\TT, X)}{2}\right\rfloor$.  As in Proposition \ref{proposition:IsHereditarySection} one shows $\QQ'$ is a hereditary section in $\Db \AA$.
\end{proof}

We are thus reduced to finding a suitable set $\TT$ satisfying the properties of the previous lemma.  We show that, if $\QQ$ has nonthread objects, one can choose $\TT$ to be the set of all nonthread objects, all marks, and all comarks.

For this, we first consider the following situation.  Let $R_1$ be the thread quiver whose underlying quiver is an $A_\infty$-quiver with zig-zag orientation (the zigs and zags can have arbitrary finite length) and where the thread arrows all point away from the base point $x$ (see for example Figure \ref{figure:R1Quiver}).  Since $R_1$ is a strongly locally finite thread quiver, we know that $\rep R_1$ is a hereditary category with Serre duality (Theorem \ref{corollary:ShDualizing}).   Denote by $\RR_1$ the standard hereditary section in $\Db \rep R_1$ and consider the light cone $\RR_2$ centered on $P_x$.

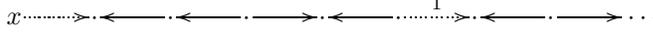
\begin{figure}
\label{figure:R1Quiver}
$$\xymatrix@1{x \ar@{..>}[r]&\cdot&\cdot\ar[l]&\cdot\ar[l]\ar[r]&\cdot&\cdot\ar[l]\ar@{..>}[r]^1&\cdot&\cdot\ar[l]\ar[r]&\cdots&}$$
\caption{An example of the thread quiver $R_1$}
\end{figure}

It is readily verified (for example by using the classification provided in \cite{vanRoosmalen06}) that $\RR_2$ is a semi-hereditary dualizing $k$-variety given by a thread quiver $R_2 = \xymatrix@1{x \ar@{..>}[r]^{\PP} & z}$ for a suitably chosen linearly ordered poset $\PP$.  Furthermore, the categories $\rep R_1$ and $\rep R_2$ are derived equivalent.

Let $Q$ be a strongly locally finite thread quiver and let $x$ be any vertex.  We construct the thread quivers $Q_1$ and $Q_2$ by identifying the vertex $x \in Q$ with the base point of $R_1$ and $R_2$, respectively.

\begin{proposition}\label{proposition:ReplacingZigZag}
In the above situation, $Q_1$ and $Q_2$ are strongly locally finite thread quivers and $\Db \rep Q_1 \cong \Db \rep Q_2$ as triangulated categories.
\end{proposition}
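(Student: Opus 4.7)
First verify strong local finiteness of $Q_1$ and $Q_2$. For $(R_1)_r$ (an $A_\infty$-quiver with zig-zag orientation) the indecomposable projectives and injectives at every vertex have finite support because paths in either direction from a fixed vertex alternate and thus terminate; for $(R_2)_r = (x \to z)$ this is trivial. Since $Q$ is strongly locally finite and the attaching happens at the single vertex $x$, both $(Q_1)_r$ and $(Q_2)_r$ are strongly locally finite, so $Q_1$ and $Q_2$ are strongly locally finite thread quivers. For the derived equivalence, the plan is to construct a hereditary section $\QQ' \subset \Db \rep Q_1$ that is isomorphic, as a $k$-category, to $kQ_2$. Theorem \ref{theorem:SectionTilting} then produces a hereditary heart $\HH \subset \Db \rep Q_1$ with $\QQ'$ as its category of projectives and $\Db \rep Q_1 \cong \Db \HH$; since $\QQ' \cong kQ_2$ is a semi-hereditary dualizing $k$-variety, Theorem \ref{corollary:ShDualizing} identifies $\HH$ with $\rep Q_2$, completing the argument.

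To construct $\QQ'$, I combine the light cone tilt on the $R_1$-piece (which by hypothesis realizes the equivalence $\Db \rep R_1 \cong \Db \rep R_2$) with the trivial identification on the $Q$-piece. Let $\RR_2 \subset \Db \rep R_1$ be the light cone hereditary section centered at $P_x$, so $\RR_2 \cong kR_2$. Define $\QQ'$ to be the full replete additive subcategory of $\Db \rep Q_1$ generated by (i) the images of the objects of $\RR_2$ under the functor $\Db \rep R_1 \to \Db \rep Q_1$ induced by the inclusion $kR_1 \hookrightarrow kQ_1$, together with (ii) the indecomposable projectives $P^{Q_1}_v$ for every vertex $v$ of $Q$ other than $x$ (the projective at $x$ is already present as the image of $P^{R_1}_x \in \RR_2$). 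Verify via Proposition \ref{proposition:SectionByDistance} that $\QQ'$ is a hereditary section: the conditions $\r(X,Y) \geq 0$ for $X, Y \in \ind \QQ'$ and $\t$-convexity both reduce to the corresponding properties on each piece separately (where they hold because $\RR_2$ is already a hereditary section in $\Db \rep R_1$ and the $Q$-projectives form the standard hereditary section in $\Db \rep Q$), since any path in $\Db \rep Q_1$ crossing between the two pieces factors through the shared object $P_x$. A direct comparison of $\Hom$-spaces then identifies $\QQ'$ with $kQ_2$: the $R_2$-side matches $kR_2$ via $\RR_2 \cong kR_2$, the $Q$-side is tautological, and the gluing at $x$ is consistent because $P^{R_1}_x$ and $P^{Q_1}_x$ have the same universal role.

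The main obstacle is giving precise meaning to the ``embedding'' $\Db \rep R_1 \to \Db \rep Q_1$ and justifying the claim that paths between the two pieces factor through $P_x$. Since the vertex $x$ has outgoing arrows into both $R_1$ and $Q$, the subcategory $kR_1 \subset kQ_1$ is not closed under successors, so the extension-by-zero functor $\rep R_1 \to \rep Q_1$ is not exact and one cannot naively descend it to derived categories. The cleanest fix is to exploit the $2$-pushout presentation of $kQ_1$ introduced in the preliminaries: it decomposes every $\Hom$-space in $\Db \rep Q_1$ between an object supported on the $R_1$-side and one supported on the $Q$-side in terms of $\Hom$-spaces factoring through $P_x$, which is exactly what is needed to reduce the verification of the hereditary-section conditions to separate computations inside $\Db \rep R_1$ and $\Db \rep Q$, both of which are already under control.
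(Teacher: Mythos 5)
Your route is genuinely different from the paper's. The paper constructs a fully faithful exact functor $G : \Db\rep Q_1 \to \Db\rep Q_2$ commuting with the Serre functor (determined by sending the projectives of $\rep Q_1$ into $\bZ\QQ_2$), and then checks essential surjectivity: the projective $P_z$ sits in a short exact sequence $0 \to P_x \to P_z \to I_y \to 0$ whose outer terms already lie in the image of $G$, and compatibility with the Serre functor then captures the entire Auslander--Reiten component of $P_z$. You instead try to realize $kQ_2$ directly as a hereditary section $\QQ'$ inside $\Db\rep Q_1$ and then read off the heart via Theorem \ref{theorem:SectionTilting}.

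The final step of your argument has a genuine gap. Theorem \ref{theorem:SectionTilting} gives you a heart $\HH$ with $\Db\rep Q_1 \cong \Db\HH$ and with $\QQ'$ as its category of projectives, and Theorem \ref{corollary:ShDualizing} tells you that $\QQ' \cong kQ_2$. But neither result lets you conclude $\HH \cong \rep Q_2$. A hereditary heart is not determined by its category of projectives (Example \ref{example:NotUniqueTilt} exhibits two distinct hearts sharing a hereditary section as their projectives), and even the particular heart produced by the $t$-structure of Theorem \ref{theorem:SectionTilting} need not coincide with $\mod kQ_2 = \rep Q_2$: you would still have to check that every object of $\HH$ is finitely presented by $\QQ'$, or, equivalently, that $\QQ'$ (and not merely $\bZ\QQ'$) generates $\Db\rep Q_1$ as a thick triangulated subcategory. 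This is precisely what the paper's short-exact-sequence and Serre-compatibility argument accomplishes, and nothing in your write-up substitutes for it. (You also flag that the construction of $\QQ' \cong kQ_2$ is itself incomplete; the factoring-through-$P_x$ claim for $\Hom$-spaces between the $R_1$- and $Q$-sides does need a real argument, though that part looks repairable.)
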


\begin{proof}
It is an easy observation that both thread quivers are indeed strongly locally finite such that $\rep Q_{1}$ and $\rep Q_{2}$ have Serre duality.  Denote by $\QQ_1$ and $\QQ_2$ the categories of projectives of $\rep Q_{1}$ and $\rep Q_{2}$, respectively.

There is an obvious fully faithful functor from $\QQ_1$ into $\bZ \QQ_2$ lifting to a fully faithful exact functor $G:\Db \rep \QQ_1 \to \Db \rep \QQ_2$ which commutes with the Serre functor.  To show $G$ is an equivalence it suffices to show the essential image of $G$ contains $\QQ_2$.

Let $R_2= \xymatrix{x \ar@{..>}[r]^{\PP} & z}$ be the thread quiver attached to the thread quiver $Q$ to form $Q_2$.  We show that the projective object $P_z$ associated to $z$ lies in the essential image of $G$.  Let $P_{y} \in \ind \rep Q_1$ be the unique direct successor of $x$ lying in $[P_x, P_z]$ and let $I_y$ be the corresponding injective; there is a short exact sequence $0 \to P_x \to P_z \to I_y \to 0$.  Both $P_x$ and $I_y$ lie in $\bZ \QQ_1$, and hence it follows that $P_z$ lies in the essential image of $G$.  Since $G$ commutes with the Serre functor, the entire Auslander-Reiten component containing $P_z$ also lies in the essential image of $G$.
\end{proof}

This situation will be encountered in the following case.  Let $\QQ$ be a hereditary section with nonthread objects and satisfying condition (*).  Let $\RR$ be a ray in $\QQ$ with anchor $A$.  By Theorem \ref{theorem:MainTilting} there is a hereditary section $\QQ'$ which is a semi-hereditary dualizing $k$-variety such that $\bZ \QQ = \bZ \QQ'$.  We will assume, for ease of notation, that $A \in \ind \QQ'$.

Denote by $\RR'$ the full additive subcategory of $\QQ'$ lying in $\bZ \RR$.  For every $Y' \in \ind \RR'$, there is a full additive subcategory $[A,Y']^\bullet_{\QQ'}$ in $\QQ'$; denote by $\RR'_1$ the smallest full replete additive subcategory containing all these $[A,Y']^\bullet_{\QQ'}$.  Thus $\RR'_1$ contains $A$, every object $Y'$, and every object $\r$-in-between $A$ and $Y'$.

Let $Q'$ be a thread quiver of $\QQ'$ containing a vertex $a$ corresponding to $A$ and denote by $R'_1$ the subquiver corresponding with $\RR'_1$.

\begin{lemma}
The thread quiver $R'_1$ described above is an $A_\infty$-quiver with zig-zag orientation (the zigs and zags can have arbitrary finite length) and where the thread arrows all point away from the base point $a'$.
\end{lemma}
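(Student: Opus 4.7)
The plan is to identify the objects of $\RR'_1$ apart from $A$ as thread objects in $\QQ'$, translate this combinatorial fact into a statement about the shape of the corresponding subquiver $R'_1$ of $Q'$, and finally pin down the direction of the thread arrows by using that ray objects in $\bZ\RR$ cannot reach nonthread objects of $\QQ$ in finite $\r$-distance.

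First I would show that every $Z \in \ind\RR'_1$ with $Z \not\cong A$ lies in $\bZ\RR$ and is a thread object in $\QQ'$. By construction, such a $Z$ belongs to $[A,Y']^\bullet_{\QQ'}$ for some $Y' \in \ind \RR'$, so $\r(A,Z), \r(Z,Y') < \infty$. If $\r(Z,B) < \infty$ for some nonthread object $B$ of $\QQ$, then by the triangle inequality $\r(Y',B) < \infty$, contradicting the ray property of $Y'$ (which is preserved under $\tau$-shifts by Lemma \ref{lemma:LightConeDistanceAndShifts}). Hence $Z$ lies in $\bZ\RR$ and is a thread object in $\QQ$; using $\bZ\QQ = \bZ\QQ'$ together with Proposition \ref{proposition:RayOrdered} one then verifies that the unique indecomposable immediate predecessor and successor of $Z$ in $\bZ\QQ$ also lie in $\QQ'$, so $Z$ is a thread object in $\QQ'$ as well.

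Second, since $\QQ'$ is a semi-hereditary dualizing $k$-variety, Theorem \ref{corollary:ShDualizing} identifies it with $kQ'$ for some strongly locally finite thread quiver $Q'$, whose indecomposable projectives correspond to points of $Q'$ (vertices plus intermediate points arising from thread arrows). Under this identification, the thread objects of $\QQ'$ are precisely the $P_v$ such that $v$ has exactly one incoming and one outgoing edge in $Q'$, counting regular and thread arrows alike. Therefore every vertex of $R'_1$ distinct from $a$ has in-degree and out-degree equal to one in the underlying quiver; combined with Proposition \ref{proposition:Anchors}(2), which asserts that $A$ has a unique direct successor inside the ray, this forces the underlying quiver of $R'_1$ to be an $A_\infty$-chain based at $a$. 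As the orientations of consecutive edges may independently point in either direction, we obtain a zig-zag pattern whose zigs and zags can have arbitrary finite length.

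For the final claim, suppose for contradiction that some thread arrow $\alpha$ of $R'_1$ pointed toward $a$, so that its source $w$ lies farther from $a$ than its target $v$ along the chain. Such a thread arrow yields a chain of nonzero morphisms through the associated $\bZ A_\infty^\infty$-component from $P_w$ to $P_v$; composing with the finite path of morphisms along $R'_1$ from $P_v$ back to $A$ would give $\r(P_w, A) < \infty$. But $P_w \in \bZ\RR$, so $\r(P_w, A) = \infty$ by the ray property, a contradiction. The main obstacle lies in the first stage, where the thread-object property must be transferred from $\QQ$ to $\QQ'$ with care, since this depends on controlling exactly which $\tau$-shifts of the relevant indecomposable neighbours of $Z$ actually appear in $\QQ'$.
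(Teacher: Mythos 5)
Your proposal contains a genuine gap at the very first step, and this gap propagates through the whole argument.

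You claim that every $Z \in \ind\RR'_1$ with $Z \not\cong A$ is a \emph{thread object in $\QQ'$}. This is too strong and in fact contradicts the statement you are trying to prove: if every non-anchor vertex of $R'_1$ had in-degree one and out-degree one, the underlying quiver would be linearly oriented, not a genuine zig-zag, yet you conclude a zig-zag a paragraph later. The correct fact is only that the corresponding object in $\QQ$ is a thread object (this follows directly from the anchor being, by definition, the \emph{only} nonthread object in $[A,Y]^\bullet_\QQ$). In passing from $\QQ$ to $\QQ'$, the $\tau$-shift chosen for $Z$ need not agree with the $\tau$-shifts chosen for its two $\bZ\QQ$-neighbours, so in $\QQ'$ the object $Z$ can instead be a source or a sink with exactly two direct neighbours. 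The paper's proof says exactly this. Your attempt to rule this out by invoking Proposition \ref{proposition:RayOrdered} does not control which $\tau$-shifts of the neighbours land in $\QQ'$, and the triangle inequality you use to show $Z \in \bZ\RR$ is applied in the wrong direction: from $\r(Z,Y')<\infty$ and $\r(Z,B)<\infty$ one cannot deduce $\r(Y',B)<\infty$, since that would require $\r(Y',Z)<\infty$, which you do not have. What survives of this step is only that every non-$A$ vertex of $R'_1$ has \emph{total} degree two, which, together with Proposition \ref{proposition:Anchors}, gives the $A_\infty$ shape (one must also discard the cyclic $\tilde A_n$ alternative the paper mentions, using that $a$ has degree one). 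Getting the zig-zag, rather than linear, orientation depends precisely on the sources and sinks you excluded.

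The argument for the direction of the thread arrows also fails as written. You posit a ``finite path of morphisms along $R'_1$ from $P_v$ back to $A$'' and compose it with the thread to conclude $\r(P_w,A) < \infty$. But $R'_1$ is zig-zag oriented, so no such chain of morphisms from $P_v$ to $A$ need exist, and indeed $\r(P_v,A)=\infty$ by the ray property (so $\r(P_w,P_v)+\r(P_v,A)$ gives no information). The paper instead argues from the opposite inequality, using that the anchor satisfies $\r(A,X)<\infty$ for every $X\in\ind\RR$; this is the inequality that actually becomes problematic when a thread arrow is oriented towards $a$.
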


\begin{proof}
For all $Y \in \RR$, we know that every $X \in \ind ]A,Y]^\bullet_{\QQ}$ is a thread object.  Hence for every $X' \in \ind ]A,Y']^\bullet_{\QQ'}$ (with $Y' \in \RR'_1$), we have only the following possibilities: either $X'$ is a source or a sink with exactly two direct successors or predecessors respectively, or $X'$ is a thread object.

Let $R'_{1,r}$ be the underlying quiver of $R'_1$, thus forgetting the distinction between thread arrows and regular arrows.  A straightforward argument shows that $R'_{1,r}$ is either an $A_\infty$-quiver or an $\tilde{A}_n$-quiver (with arbitrary orientation).

Since Proposition \ref{proposition:Anchors} yields there is a unique neighbor of $A$ lying in $[A,Y]^\bullet_\QQ$, for all $Y \in \RR$, we know $R'_{1,r}$ is an $A_\infty$-quiver.  Furthermore, since for every $X \in \ind \RR$, we have $\r(A,X) < \infty$, all thread arrows in $R_1$ point away from the base point $a$.
\end{proof}

We may now apply Proposition \ref{proposition:ReplacingZigZag} to replace the subquiver $R'_1$ of $Q'$ by $R'_2 = \xymatrix@1{a \ar@{..>}[r]^{\PP} & z}$ and obtain thus a quiver $Q'_2$.  Let $\QQ'_2$ be the hereditary section in $\Db \rep Q'_2$ given by the projective representations in $\rep Q'_2$.  Using the derived equivalence in Proposition \ref{proposition:ReplacingZigZag}, we will interpret $\QQ'_2$ as a hereditary section in $\Db \AA$.  Note that $A,B \in \ind \bZ \QQ'_2$.  It is now readily verified that $P_z$ corresponds to the mark $M$.  Hence we have shown the following proposition.

\begin{proposition}\label{proposition:MarksInbZQQ}
Let $\QQ$ be a hereditary section with nonthread objects.  Assume furthermore that $\bZ \QQ$ satisfies condition (*).  Let $\RR$ be a ray with mark $X$, then there is a hereditary section $\QQ'$ such that $X \in \bZ \QQ'$ and $\bZ \QQ \subset \bZ \QQ'$. 
\end{proposition}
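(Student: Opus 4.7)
The plan is to leverage the detailed setup already assembled in this subsection, so the proof becomes essentially a reorganization of that material into a clean argument. Since $\bZ\QQ$ satisfies condition (*), I first invoke Theorem \ref{theorem:MainTilting} to replace $\QQ$ by a semi-hereditary dualizing $k$-variety $\QQ'$ with $\bZ\QQ=\bZ\QQ'$; without loss of generality, the anchor $A$ of the ray $\RR$ may be taken to lie in $\ind \QQ'$ (otherwise shift along the $\tau$-orbit).  By Theorem \ref{corollary:ShDualizing}, $\QQ'$ is described by a strongly locally finite thread quiver $Q'$ containing a vertex $a$ corresponding to $A$.

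Next I would isolate the local structure of the thread quiver near the ray.  Let $\RR'$ be the full additive subcategory of $\QQ'$ generated by the $\tau$-orbits meeting $\RR$, and let $\RR'_1$ be the smallest replete additive subcategory containing $[A,Y']^\bullet_{\QQ'}$ for all $Y'\in \ind \RR'$; denote by $R'_1\subseteq Q'$ the corresponding subquiver.  Using that every indecomposable of $]A,Y]^\bullet_\QQ$ is a thread object (because $Y \in \RR$ is a ray object) together with Proposition \ref{Proposition:SameOrbits}, each $X'\in \ind\,]A,Y']^\bullet_{\QQ'}$ is either a thread object or a sink/source with exactly two neighbors, forcing the underlying quiver of $R'_1$ to be either an $A_\infty$-quiver or an $\tilde A_n$.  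Proposition \ref{proposition:Anchors} (uniqueness of the direct successor of $A$ in $[A,Y]^\bullet_\QQ$) rules out the cyclic case, and since $\r(A,X)<\infty$ for all $X\in \RR$ (Lemma \ref{lemma:MarksBound}), all thread arrows in $R'_1$ point away from $a$.  This is precisely the shape required to apply Proposition \ref{proposition:ReplacingZigZag}.

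Applying that proposition, I replace $R'_1$ inside $Q'$ by the minimal thread quiver $R'_2 = \xymatrix@1{a \ar@{..>}[r]^{\PP} & z}$, obtaining a new strongly locally finite thread quiver $Q'_2$ and a derived equivalence $\Db \rep Q' \cong \Db \rep Q'_2$.  Composing with the identification $\Db\rep Q'\cong \Db\AA$ gives a hereditary section $\QQ'_2$ in $\Db\AA$ with $A,B\in \ind\bZ\QQ'_2$ (where $B$ is the direct successor of $A$ in the ray direction), and in particular $\bZ\QQ \subseteq \bZ\QQ'_2$.  It remains to identify the new projective $P_z \in \ind\QQ'_2$ with the mark $M$: both sit in a triangle of the form $A \to (\cdot) \to \tau B[1] \to A[1]$ in $\Db\AA$ (for $P_z$, this is the short exact sequence $0\to P_a\to P_z\to I_b\to 0$ in $\rep Q'_2$ shifted appropriately; for $M$ it is the defining triangle $A\to M \to \tau B[1] \to A[1]$), and an indecomposable cone of $\tau B \to A$ is unique up to isomorphism.

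The main obstacle is the structural lemma showing $R'_1$ is a zig-zag $A_\infty$-quiver; the rest is essentially bookkeeping between the various derived equivalences.  The delicate point is to rule out the $\tilde A_n$ case and to verify that \emph{every} object $\r$-in-between $A$ and the ray is accounted for, so that when we replace $R'_1$ by $R'_2$ we do not lose any object of $\bZ\QQ$.  Once that is in place, Proposition \ref{proposition:ReplacingZigZag} and the triangle characterization of $M$ complete the argument.
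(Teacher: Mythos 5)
Your proposal is correct and follows essentially the same route as the paper: replace $\QQ$ by a dualizing $k$-variety $\QQ'$ via Theorem \ref{theorem:MainTilting}, isolate the zig-zag $A_\infty$-subquiver $R'_1$ attached at the anchor, rule out the $\tilde A_n$ case using Proposition \ref{proposition:Anchors}, apply Proposition \ref{proposition:ReplacingZigZag} to swap $R'_1$ for $R'_2$, and identify $P_z$ with the mark via the defining triangle. Your triangle-based identification of $P_z$ with $M$ fills in a step the paper only asserts (``readily verified''); the only point worth being explicit about is why the connecting morphism $\tau B\to A$ from the short exact sequence must agree with the irreducible one, which follows since $\Hom(\tau B,A)$ is one-dimensional in a directed category when $\tau B$ is a direct predecessor of $A$.
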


We obtain following result.

\begin{proposition}\label{proposition:BiggerTT}
Let $\QQ$ be a hereditary section with nonthread objects, such that $\bZ \QQ$ generates $\Db \AA$.  Let $\TT$ be the set of all nonthread objects, all marks of rays, and all comarks of corays.  Then $\TT$ is countable, and $\r(X,Y) \not= -\infty$, for all $X,Y \in \TT$.  Furthermore, for every $A \in \bZ \QQ$, we have $\d(\TT,A) \in \bZ$.
\end{proposition}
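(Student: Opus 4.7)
The plan is to dispose of the three assertions in turn, handling the two easier ones first.

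The countability of $\TT$ is immediate from the structural results of \S\ref{subsection:Roughs}--\S\ref{subsection:Rays}. Since $\bZ \QQ$ generates $\Db \AA$ and $\Hom$ between different connected components of a Krull-Schmidt category vanishes, $\bZ \QQ$ is itself connected. Propositions \ref{proposition:NonthreadStructure} and \ref{proposition:CountableRays} then supply countably many nonthread objects and countably many rays and corays respectively; since each ray contributes exactly one mark and each coray exactly one comark, $\TT$ is a countable union of countable sets.

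For the finiteness of $\d(\TT,A)$ on $\bZ \QQ$, I first reduce to $A \in \ind \QQ$ using the $\tau$-invariance of $\d$ (an immediate consequence of Lemma \ref{lemma:LightConeDistanceAndShifts}), and then split into cases on the type of $A$. If $A$ is nonthread, $A \in \TT$ and $\d(\TT,A)=0$. If $A$ is a thread object that is neither a ray nor a coray object, both $\r(A,\text{nonthreads})$ and $\r(\text{nonthreads},A)$ are finite by the defining dichotomy of \S\ref{subsection:Rays}, so $\d(\TT,A) < \infty$. If $A$ is a ray object lying on a ray $\RR$ with anchor $A_0 \in \TT$ and mark $M \in \TT$, then the definition of an anchor gives $\r(\TT,A) \leq \r(A_0,A) < \infty$, and Lemma \ref{lemma:MarksBound} gives $\r(A,\TT) \leq \r(A,M) < \infty$. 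Coray objects are dual.

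The main step is to establish $\r(X,Y) \neq -\infty$ for all $X,Y \in \TT$. I would reduce this to showing that every element of $\TT$ is directing: given directing indecomposables $X,Y$, the identity $\r(X,X)=0$ of Proposition \ref{Proposition:DirectingCriterium} and the triangle inequality of Proposition \ref{proposition:rTriangle} already rule out $\r(X,Y) = -\infty$ with $\r(Y,X)$ finite; the residual case $\r(X,Y) = -\infty$ with $\r(Y,X) = +\infty$ would force the entire $\tau$-orbit of $Y$ to lie in the right light cone $\{Z : \r(X,Z) \leq 0\}$ of $X$, which is incompatible with $Y$ being directing (so its $\tau$-orbit is infinite and unbounded in the AR-quiver). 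Nonthread objects lie in $\QQ$ and are directing by Observation \ref{observation:PropertiesSection}. For a mark $M$ with anchor $A$ and distinguished successor $B$, rotating the defining triangle (using $\tau B[1] = \bS B = I_B$) produces a short exact sequence $0 \to A \to M \to I_B \to 0$ in the hereditary heart $\HH$ of $\QQ$ (Theorem \ref{theorem:SectionTilting}), so $M \in \HH$.

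The principal obstacle is then to exclude $M$ from lying in a $\tau$-periodic AR-component. For this I would exploit the octahedral triangle $\tau A \to E_A' \to M \to \tau A[1]$: the middle term $E_A'$, being a sum of direct predecessors of $A$ in the AR-component of $A$, lies in $\bZ \QQ$, and any isomorphism $\tau^n M \cong M$ with $n > 0$ would, via comparison of this triangle with its $\tau^n$-shift, force a compatibility between the two cone presentations of $M$ incompatible both with $A$ being directing (so $\tau^n A \not\cong A$) and with the hypothesis that $\bZ \QQ$ generates $\Db \AA$. Comarks are treated dually. This ``no-tube'' verification is by far the most delicate part of the argument.
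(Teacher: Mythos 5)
Your treatment of the countability of $\TT$ and of $\d(\TT,A)\in\bZ$ for $A\in\bZ\QQ$ is correct and in line with the paper's (implicit) appeals to Propositions \ref{proposition:NonthreadStructure}, \ref{proposition:CountableRays}, \ref{proposition:Anchors} and Lemma \ref{lemma:MarksBound}; your case split on ray/coray objects is a slightly more explicit version of the paper's closing sentence.

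The central claim $\r(X,Y)\neq-\infty$, however, has a genuine gap: the reduction to showing every element of $\TT$ is directing does not work. Directingness of $X$ and $Y$ does rule out $\r(X,Y)=-\infty$ with $\r(Y,X)$ finite, as you observe, but it does not exclude $\r(X,Y)=-\infty$ together with $\r(Y,X)=+\infty$; your parenthetical argument that the $\tau$-orbit of $Y$ cannot lie entirely in the forward light cone of a directing $X$ is unjustified and is in fact false. Example \ref{example:AddingMarksToD} makes this concrete: there $\rep Q$ is a directed hereditary category, so $I_x$ and $P_x$ are directing in $\Db\rep Q$, and both belong to the set $\TT$ attached to the hereditary section $\QQ'$, yet $\r(I_x,P_x)=-\infty$. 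The proposition does not fail there only because $\bZ\QQ'$ does not generate $\Db\rep Q$. Consequently your ``no-tube'' step, which again aims only at re-establishing directingness of the marks, cannot close the argument; the generation hypothesis must enter in an essential way. The paper's proof uses it exactly for this: assuming $\r(X,Y)=-\infty$, one records the countably many paths from $X$ to $\tau^{n}Y$, chooses a countable $\SS\subseteq\ind\bZ\QQ$ whose $\bZ$-closure generates (as thick subcategory) every indecomposable occurring in those paths and meets the relevant rays and corays, and then applies Proposition \ref{proposition:MarksInbZQQ} and its dual to produce a hereditary section $\QQ'_\SS$ with $X,Y\in\bZ\QQ'_\SS$. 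Since the right light cone distance between objects of $\bZ$ of a hereditary section is never $-\infty$, while all the recorded path-objects lie in the thick subcategory generated by $\bZ\QQ'_\SS$, this yields the desired contradiction. That mechanism is what your proof is missing.
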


\begin{proof}
Seeking a contradiction, let $X,Y \in \TT$ such that $\r(X,Y) = -\infty$.  Thus, for every $n \in \bZ$ there is a path $X = Z_{n,0} \to Z_{n,1} \to \cdots \to Z_{n,k_n} = \t^{n} Y$ in $\Db \AA$.

With an $\SS \subseteq \bZ \QQ$ we associate the hereditary section $\QQ_\SS$ generated by $A \in \QQ_\SS$ with $\d(\SS,A) < \infty$.  If $\SS$ is countable, then $\bZ \QQ_\SS$ satisfies condition (*) by construction.

We will choose $\SS$ such that $\bZ \QQ_\SS$ generates all objects $Z_{n,i}$ as above.  Furthermore, if $X$ or $Y$ are a mark or comark of a ray or coray, we will assume $\SS$ has at least one ray or coray object from the corresponding ray or coray.  It is clear that $\SS$ can be chosen countably.

Proposition \ref{proposition:MarksInbZQQ} yields that there is a hereditary section $\QQ'_\SS$  such that $X,Y \in \bZ \QQ'_\SS$ and $\bZ \QQ_\SS \subseteq \bZ \QQ'_\SS$, thus $\r(X,Y)\not= -\infty$.  A contradiction, since we had assumed that all objects $Z_{n,i}$ were generated by $\bZ \QQ'_\SS$.

We are left to proving that for every $A \in \bZ \QQ$, we have $\d(\TT,A) \in \bZ$.  Since $\TT$ contains all nonthread objects of $\QQ$, this should be clear when $A$ is not a ray or coray object.  In case $A$ is a ray or a coray object, then this follows easily from Proposition \ref{lemma:MarksBound} or its dual, respectively.
\end{proof}

We can now use Proposition \ref{proposition:ExtendingSections} to prove our main theorem.

\begin{theorem}\label{theorem:GeneratedByZQQ}
Let $\AA$ be a $k$-linear abelian hereditary Ext-finite category with Serre duality and assume $\Db \AA$ is generated by $\bZ \QQ$ as triangulated category, then $\AA$ is derived equivalent to $\rep Q$ for a strongly locally finite thread quiver $Q$.
\end{theorem}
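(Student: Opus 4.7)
The plan is to string together the reductions established in the preceding sections. First I would reduce to the case that $\AA$ is connected, by decomposing $\AA$ into its connected components (each component inherits the hypothesis that its derived category is generated by $\bZ$ applied to its category of projectives). Let $\QQ$ denote the category of projectives of $\AA$; then $\QQ$ is automatically a hereditary section of $\Db \AA$. Invoking Proposition \ref{proposition:HasRoughObject} if necessary, replace $\QQ$ by a $\bZ$-equivalent hereditary section having at least one nonthread object; this preserves $\bZ \QQ$ and hence the generation hypothesis.

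Next I would apply Proposition \ref{proposition:ExtendingSections}: this yields a hereditary section $\QQ'$ in $\Db \AA$ with $\bZ \QQ \subseteq \bZ \QQ'$, such that $\bZ \QQ'$ still generates $\Db \AA$ as a thick triangulated subcategory and moreover satisfies condition (*). By Theorem \ref{theorem:MainTilting}, there exists a hereditary section $\QQ_\TT$ with $\bZ \QQ_\TT = \bZ \QQ'$ and such that $\QQ_\TT$ is a semi-hereditary dualizing $k$-variety. Then Theorem \ref{corollary:ShDualizing} identifies $\QQ_\TT$ with $kQ$ for some strongly locally finite thread quiver $Q$.

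Finally, I would apply Theorem \ref{theorem:SectionTilting} to the hereditary section $\QQ_\TT$: this produces a hereditary abelian category $\HH$ with Serre duality, derived equivalent to $\AA$, whose category of projectives is $\QQ_\TT \cong kQ$. The construction of $\HH$ forces every indecomposable $X \in \HH$ to satisfy $\r(\QQ_\TT, X) < \infty$, so every object of $\HH$ has a finite projective resolution by objects of $\QQ_\TT$; the Yoneda functor therefore furnishes an equivalence $\HH \cong \mod \QQ_\TT = \rep Q$. Hence $\Db \AA \cong \Db \HH \cong \Db \rep Q$, as required.

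The hard work has already been absorbed into Proposition \ref{proposition:ExtendingSections}, whose nontrivial content is that although $\bZ \QQ$ may admit uncountably many ``directions at infinity'' (the rays and corays), the countable set consisting of nonthread objects together with the marks of rays and comarks of corays is enough to witness condition (*) after enlargement. The steps above are then bookkeeping, chaining together the successive replacements of the hereditary section via the structural theorems established earlier in the paper.
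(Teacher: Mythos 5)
Your overall chain of reductions---replace $\QQ$ by a $\bZ$-equivalent section with nonthread objects via Proposition~\ref{proposition:HasRoughObject}, invoke Proposition~\ref{proposition:ExtendingSections} to obtain a section $\QQ'$ whose $\bZ$-closure satisfies condition (*) and still generates, apply Theorem~\ref{theorem:MainTilting} to replace $\QQ'$ by a dualizing $k$-variety $\QQ_\TT$, and identify $\QQ_\TT$ with $kQ$ via Theorem~\ref{corollary:ShDualizing}---is exactly the route the paper takes, whose own proof is the single line ``this follows easily from Proposition~\ref{proposition:ExtendingSections} and Theorem~\ref{theorem:MainTilting}.''

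There is, however, a genuine gap in your last step. You assert that because every indecomposable $X \in \HH$ satisfies $\r(\QQ_\TT,X) < \infty$, every object of $\HH$ has a finite projective resolution by objects of $\QQ_\TT$, and so the Yoneda functor gives $\HH \cong \mod \QQ_\TT$. That implication is false: having a path from some projective to a $\tau$-shift of $X$ does not give $X$ a projective presentation. Example~\ref{example:NotUniqueTilt} shows this concretely---there a hereditary section $\QQ'$ is taken and the heart $\HH$ produced by the proof of Theorem~\ref{theorem:SectionTilting} strictly contains $\mod\QQ'$; the objects outside $\mod\QQ'$ satisfy $\r(\QQ',X)<\infty$ yet have no finite projective resolution by $\QQ'$. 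What saves the argument here is precisely the hypothesis that $\Db\AA$ is generated (as a thick triangulated subcategory) by $\bZ\QQ$, and your final paragraph never actually uses it. The correct formulation is: since preprojectives in $\mod\QQ_\TT$ have finite projective resolutions (Proposition~\ref{proposition:PreprojectivesCopresented} and the argument of Proposition~\ref{proposition:ReplacingZigZag}), the natural exact fully faithful embedding $\Db\mod\QQ_\TT \to \Db\HH \cong \Db\AA$ has essential image containing $\bZ\QQ_\TT$, hence contains its thick closure, which is all of $\Db\AA$ by hypothesis; therefore $\Db\AA \cong \Db\rep Q$. Without the generation hypothesis, the conclusion simply fails, so its role must be made explicit.
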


\begin{proof}
This follows easily from Proposition \ref{proposition:ExtendingSections} and Theorem \ref{theorem:MainTilting}.
\end{proof}

\begin{corollary}
Let $\AA$ be a $k$-linear abelian hereditary Ext-finite category with Serre duality which is generated by preprojective objects.  Then $\AA$ is derived equivalent to $\rep Q$ where $Q$ is a strongly locally finite thread quiver.
\end{corollary}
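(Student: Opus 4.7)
The plan is to derive this corollary as an essentially immediate consequence of Theorem \ref{theorem:GeneratedByZQQ}. The only work required is to exhibit a hereditary section $\QQ$ in $\Db\AA$ such that $\bZ\QQ$ generates $\Db\AA$ as a thick triangulated subcategory.

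First I would take $\QQ$ to be the standard hereditary section, namely (the image of) the category of projectives of $\AA$ inside $\Db\AA$. By the example preceding Proposition \ref{proposition:SectionByDistance}, this is indeed a hereditary section. Next, recall that a preprojective object in $\AA$ is by definition an indecomposable of the form $\tau^{-n}P$ for some indecomposable projective $P$ and some $n \geq 0$; in particular every such object lies in $\ind \bZ\QQ$. Thus the full additive subcategory of $\Db\AA$ generated by the preprojectives of $\AA$ is contained in $\bZ\QQ$.

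Since $\AA$ is hereditary and generated (as an abelian category) by its preprojective objects, every object of $\AA$ is a subquotient of a preprojective, and using the standard description of $\Db\AA$ for hereditary $\AA$ (every object is the direct sum of its homologies), we see that the smallest thick triangulated subcategory of $\Db\AA$ containing the preprojectives is all of $\Db\AA$. Consequently $\bZ\QQ$ generates $\Db\AA$ as a triangulated category.

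Theorem \ref{theorem:GeneratedByZQQ} then immediately yields that $\AA$ is derived equivalent to $\rep Q$ for a strongly locally finite thread quiver $Q$, as desired. There is no real obstacle here; the entire content of the corollary has been packaged into Theorem \ref{theorem:GeneratedByZQQ}, and the remaining observation is simply the tautology that preprojectives lie in $\bZ\QQ$ for $\QQ$ the category of projectives.
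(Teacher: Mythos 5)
Your proposal is correct and takes essentially the same route as the paper. The paper gives no separate proof for this corollary: it simply records, in the sentence immediately preceding Theorem~\ref{theorem:GeneratedByZQQ}, that when $\AA$ is generated by (pre)projective objects the standard hereditary section $\QQ$ (the category of projectives) has $\bZ\QQ$ generating $\Db\AA$, and the corollary is then read off from that theorem — which is exactly your reduction.
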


\providecommand{\bysame}{\leavevmode\hbox to3em{\hrulefill}\thinspace}
\providecommand{\MR}{\relax\ifhmode\unskip\space\fi MR }
\providecommand{\MRhref}[2]{%
  \href{http://www.ams.org/mathscinet-getitem?mr=#1}{#2}
}
\providecommand{\href}[2]{#2}


\begin{thebibliography}{10}

\bibitem{Auslander74}
Maurice Auslander, \emph{Representation theory of {A}rtin algebras. {I}}, Comm.
  Algebra \textbf{1} (1974), 177--268.

\bibitem{AuslanderReiten74}
Maurice Auslander and Idun Reiten, \emph{Stable equivalence of dualizing
  {$R$}-varieties}, Advances in Math. \textbf{12} (1974), 306--366.

\bibitem{AuslanderReiten75}
\bysame, \emph{Stable equivalence of dualizing {$R$}-varieties. {II}.
  {H}ereditary dualizing {$R$}-varieties}, Advances in Math. \textbf{17}
  (1975), 93--121.

\bibitem{Beilinson82}
A.~A. Be{\u\i}linson, J.~Bernstein, and P.~Deligne, \emph{Faisceaux pervers},
  Analysis and topology on singular spaces, I (Luminy, 1981), Ast\'erisque,
  vol. 100, Soc. Math. France, Paris, 1982, pp.~5--171.

\bibitem{BergVanRoosmalen08}
Carl~Fredrik Berg and Adam-Christiaan van Roosmalen, \emph{The quiver of
  projectives in hereditary categories with {S}erre duality}, J. Pure Appl.
  Algebra, accepted.

\bibitem{BergVanRoosmalen09}
\bysame, \emph{Representations of thread quivers}, preprint.

\bibitem{BondalKapranov89}
A.~I. Bondal and M.~M. Kapranov, \emph{Representable functors, {S}erre
  functors, and reconstructions}, Izv. Akad. Nauk SSSR Ser. Mat. \textbf{53}
  (1989), no.~6, 1183--1205, 1337.

\bibitem{HappelZacharia08}
Dieter Happel and Dan Zacharia, \emph{A homological characterization of
  piecewise hereditary algebras}, Math. Z. \textbf{260} (2008), no.~1,
  177--185.

\bibitem{Keller07}
Bernhard Keller, \emph{Derived categories and tilting}, Handbook of Tilting
  Theory (Lidia~Angeleri H{\"u}gel, Dieter Happel, and Henning Krause, eds.),
  London Mathematical Society Lecture Notes Series, vol. 332, Cambridge
  University Press, Cambridge, 2007, pp.~49--104.

\bibitem{Lenzing07}
Helmut Lenzing, \emph{Hereditary categories}, Handbook of Tilting Theory
  (Lidia~Angeleri H{\"u}gel, Dieter Happel, and Henning Krause, eds.), London
  Mathematical Society Lecture Notes Series, vol. 332, Cambridge University
  Press, Cambridge, 2007, pp.~105--146.

\bibitem{LoVdB06}
Wendy Lowen and Michel Van~den Bergh, \emph{Deformation theory of abelian
  categories}, Trans. Amer. Math. Soc. \textbf{358} (2006), no.~12, 5441--5483
  (electronic).

\bibitem{ReVdB02}
I.~Reiten and M.~Van~den Bergh, \emph{Noetherian hereditary abelian categories
  satisfying {S}erre duality}, J. Amer. Math. Soc. \textbf{15} (2002), no.~2,
  295--366.

\bibitem{Reiten02}
Idun Reiten, \emph{Hereditary categories with {S}erre duality}, Representations
  of algebra. Vol. I, II, Bejing Norm. Univ. Press, Bejing, 2002, pp.~109--121.

\bibitem{Ringel02}
Claus~Michael Ringel, \emph{The diamond category of a locally discrete ordered
  set}, Representations of algebra. Vol. I, II, Beijing Norm. Univ. Press,
  Beijing, 2002, pp.~387--395.

\bibitem{Ringel02b}
\bysame, \emph{A ray quiver construction of hereditary {A}belian categories
  with {S}erre duality}, Representations of algebra. Vol. I, II, Beijing Norm.
  Univ. Press, Beijing, 2002, pp.~398--416.

\bibitem{Ringel05}
\bysame, \emph{Hereditary triangulated categories}, Compositio Math. (2005).

\bibitem{VandenBergh01}
Michel Van~den Bergh, \emph{Blowing up of non-commutative smooth surfaces},
  Mem. Amer. Math. Soc. \textbf{154} (2001), no.~734, x+140.

\bibitem{vanRoosmalen06}
Adam-Christiaan van Roosmalen, \emph{Classification of abelian hereditary
  directed categories satisfying {S}erre duality}, Trans. Amer. Math. Soc.
  \textbf{360} (2008), no.~5, 2467--2503.

\end{thebibliography}
\end{document}